\def\acts{\ \rotatebox[origin=c]{-90}{$\circlearrowright$}\ }
\def\racts{\ \rotatebox[origin=c]{90}{$\circlearrowleft$}\ }
\numberwithin{equation}{section} 
\theoremstyle{plain}
    \newtheorem{thm}{Theorem}[section]
\newtheorem{Conjecture}[thm]{Conjecture}
     \newtheorem{conj}[thm]{Conjecture}
    \newtheorem{cor}[thm]{Corollary}
\newtheorem{lem}[thm]{Lemma}
    \newtheorem{prop}[thm]{Proposition}
    \newtheorem{ques}[thm]{Question}
    \newtheorem{theorem}[thm]{Theorem}
\theoremstyle{definition}
     \newtheorem{defn}[thm]{Definition}
    \newtheorem*{notation*}{Notation and Terminology}
    \newtheorem{rem}[thm]{Remark}
    \newtheorem{cl}[thm]{Claim}
\theoremstyle{remark}
    \newtheorem{setup}[thm]{}
\DeclareMathOperator{\alb}{alb}
\DeclareMathOperator{\Alb}{Alb}
\DeclareMathOperator{\End}{End}
\DeclareMathOperator{\Exc}{Exc}
\DeclareMathOperator{\Hom}{Hom}
\DeclareMathOperator{\id}{id}
\DeclareMathOperator{\Ker}{Ker}
\DeclareMathOperator{\N}{N}
\DeclareMathOperator{\Nef}{Nef}
\DeclareMathOperator{\NS}{NS}
\DeclareMathOperator{\PE}{PE}
\DeclareMathOperator{\Per}{Per}
\DeclareMathOperator{\Pic}{Pic}
\DeclareMathOperator{\pr}{pr}
\DeclareMathOperator{\Prep}{Prep}
\DeclareMathOperator{\SEnd}{SEnd}
\DeclareMathOperator{\Sing}{Sing}
\DeclareMathOperator{\Spec}{Spec}
\DeclareMathOperator{\Supp}{Supp}
\DeclareMathOperator{\Tor}{Tor}
\DeclareMathOperator{\sHom}{\mathscr{H}\kern -.3pt \mathit{om}}
\newcommand{\alg}{\mathrm{alg}}
\newcommand{\reg}{\mathrm{reg}}
\newcommand{\C}{\mathbb{C}}
\newcommand{\PP}{\mathbb{P}}
\newcommand{\Q}{\mathbb{Q}}
\newcommand{\R}{\mathbb{R}}
\newcommand{\Z}{\mathbb{Z}}
\theoremstyle{definition}
\newtheorem*{ack}{Acknowledgments}
\newif\ifhascomments \hascommentstrue
  \newcommand{\changemade}[1]{{\color{blue}[[\ensuremath{\spadesuit\spadesuit\spadesuit} #1]]}}
   \newcommand{\todo}[1]{{\color{red}[[\ensuremath{\spadesuit\spadesuit\spadesuit} #1]]}}
  \newcommand{\todo}[1]{}
  \newcommand{\changemade}[1]{}
\title[Non-density of small points]
{Non-density of points of small arithmetic degrees}
\author[Y.~Matsuzawa, S.~Meng, T.~Shibata, D.-Q.~Zhang]
{Yohsuke Matsuzawa, Sheng Meng, Takahiro Shibata, and De-Qi Zhang}
\date{}
\keywords{
Dynamical degree, Arithmetic degree, Kawaguchi-Silverman conjecture, small Arithmetic Non-Density conjecture,
Uniform boundedness conjectures for $\PP^n$ and abelian varieties.
}
\subjclass
[2020]
{
Primary: 37P55, 
Secondary: 14G05, 
37B40
}
\address{
 \textsc{Department of mathematics} \endgraf
\textsc{Osaka metropolitan university,
3-3-138, Sugimoto, Sumiyoshi-ku, Osaka, 5588585}
}
\email{matsuzaway@omu.ac.jp}
\address{
    \textsc{School of Mathematical Sciences, Shanghai Key Laboratory of PMMP}\endgraf
    \textsc{East China Normal University, 500 Dongchuan Road, Shanghai 200241, People's Republic of China;}\endgraf
	\textsc{Korea Institute For Advanced Study,
		Seoul 02455, Republic of Korea}
}
\email{smeng@math.ecnu.edu.cn; ms@u.nus.edu}
\address{
\textsc{National University of Singapore, Singapore 119076, Republic of Singapore;}\endgraf
\textsc{National Fisheries University, 2-7-1 Nagata-Honmachi,
Shimonoseki 759-6595, Japan}
}
\email{shibata@fish-u.ac.jp; mattash@nus.edu.sg}
\address{National University of Singapore, Singapore 119076, Republic of Singapore}
\email{matzdq@nus.edu.sg}
\begin{document}

\begin{abstract}
Given a surjective endomorphism $f: X \to X$ on a projective variety over a number field,
one can define the arithmetic degree $\alpha_f(x)$ of $f$ at a point $x$ in $X$.
The Kawaguchi--Silverman Conjecture (KSC) predicts that any forward $f$-orbit of a point $x$ in $X$ at which the arithmetic degree $\alpha_f(x)$ is strictly smaller than the first dynamical degree $\delta_f$ of $f$ is not Zariski dense.
We extend the KSC to sAND (= small Arithmetic Non-Density) Conjecture that the locus $Z_f(d)$ of all points of small arithmetic degree is not Zariski dense, and verify this sAND Conjecture for endomorphisms on projective varieties including surfaces, HyperK\"ahler varieties, abelian varieties, Mori dream spaces, simply connected smooth varieties admitting int-amplified endomorphisms, smooth threefolds admitting int-amplified endomorphisms, and some fibre spaces. We show the equivalence of the sAND Conjecture and another conjecture on the periodic subvarieties of small dynamical degree; we also show the close relations between the sAND Conjecture and the Uniform Boundedness Conjecture of Morton and Silverman on endomorphisms of projective spaces and another long standing conjecture on Uniform Boundedness of torsion points in abelian varieties.
\end{abstract}

\maketitle

\tableofcontents

\section{Introduction}\label{sec1}

Let $f: X \to X$ be a surjective endomorphism of a geometrically irreducible projective variety $X$ defined over a number field $K$. Fix an algebraic closure $\overline{K}$ of $K$.
One may consider two dynamical invariants: the (first) {\it dynamical degree} 
$$\delta_f= \lim_{n \to +\infty} ((f^n)^*H \cdot H^{\dim (X) -1})^{1/n}$$
of $f$ and the {\it arithmetic degree} 
$$\alpha_f(x)=\lim_{n \to +\infty} h_H(f^n(x))^{1/n}$$
of $f$ at $x \in X(\overline K)$.
Here $H$ is any ample divisor on $X$ and $h_{H}$ is (the maximum of $1$ and) the Weil height function associated with $H$.
See \ref{n:2.1} for some basic properties of these invariants. 
The dynamical degree reflects the geometric complexity of iterations of $f$.
On the other hand, the arithmetic degree reflects the arithmetic complexity of the given (forward) $f$-{\it orbit}
$O_f(x) :=\{ x, f(x), f^2(x), \ldots \}$.

One naturally likes to compare these two invariants.
By Kawaguchi--Silverman \cite{KS16a}, the arithmetic degree at any point is less than or equal to the dynamical degree (see also \cite{Mat20}).
Then, one wonders when the arithmetic degree at a point attains (its upper bound) the dynamical degree.
Kawaguchi and Silverman proposed (see \cite{KS16a}):

\begin{Conjecture}\label{conj_ks} ( {\bf Kawaguchi-Silverman Conjecture, KSC in short})
Let $f: X \to X$ be a surjective endomorphism of a projective variety $X$ defined over a number field $K$.
Let $x \in X(\overline K)$ such that $ \alpha_{f}(x)< \delta_{f}$.
Then the orbit $O_f(x)$ is not Zariski dense in
$X_{\overline{K}} := X \times_K \overline{K}$.
\end{Conjecture}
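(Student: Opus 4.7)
Conjecture \ref{conj_ks} is open in general, so the following is a strategy rather than a complete argument. The overall plan is to use canonical heights when $f$ carries a polarization, and to reduce the general case to that setting via an $f$-equivariant fibration and induction on $\dim X$.

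Suppose first that $f$ is \emph{polarized} in the sense that $f^{*}H \equiv \delta_{f} H$ for some ample $\mathbb{R}$-Cartier divisor $H$. The Call--Silverman telescope
\[
\widehat{h}_{f,H}(x)=\lim_{n\to\infty}\frac{h_{H}(f^{n}(x))}{\delta_{f}^{n}}
\]
then converges to a canonical height satisfying $\widehat{h}_{f,H}\circ f=\delta_{f}\,\widehat{h}_{f,H}$ and the Northcott property, so $\widehat{h}_{f,H}(x)=0$ iff $x$ is preperiodic. Moreover if $\widehat{h}_{f,H}(x)>0$ then $h_{H}(f^{n}(x))\sim\widehat{h}_{f,H}(x)\,\delta_{f}^{n}$, forcing $\alpha_{f}(x)=\delta_{f}$. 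Hence the hypothesis $\alpha_{f}(x)<\delta_{f}$ gives $\widehat{h}_{f,H}(x)=0$, so $O_{f}(x)$ is finite and in particular not Zariski dense. This settles the polarized case (cf.\ \cite{KS16a}).

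For general $f$, the plan is to produce an $f$-equivariant rational fibration $\pi\colon X\dashrightarrow Y$ with a descended endomorphism $g\colon Y\to Y$, and to run induction on $\dim X$. Candidate fibrations are the Albanese map, the MRC fibration, and those coming from an equivariant MMP; a polarization on the generic fibre, combined with the polarized case above, then handles points whose orbit is Zariski dense in its fibre, while the quotient $g$ handles points whose image is not dense. The compatibilities $\alpha_{g}(\pi(x))\le\alpha_{f}(x)$ and $\delta_{g}\le\delta_{f}$, together with the fact that the image of a Zariski dense orbit is Zariski dense, are what drive the induction.

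The main obstacle lies squarely in that second step. For varieties on which $f$ is neither polarizable nor admits a useful equivariant fibration -- the model case being an automorphism of positive entropy on a simply connected surface such as a projective K3 -- no divisor class is scaled by a single factor $\delta_{f}$, so no naive Call--Silverman height is available. One must instead combine height-like functions attached to several eigenclasses of $f^{*}$ on $N^{1}(X)_{\mathbb{R}}$ and control the cross terms via Hodge-index-type arithmetic estimates \`a la Moriwaki; this is precisely the analytic difficulty that has kept KSC open in full generality, and it is what motivates the present paper's relaxation to the sAND conjecture, which asks only for non-density of the \emph{locus} of small-arithmetic-degree points and is therefore more amenable to Zariski-closure and inductive techniques.
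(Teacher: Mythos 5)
The statement you were asked to prove is a conjecture (KSC), and the paper offers no proof of it; it remains open in general, so your decision to present a strategy rather than claim a complete argument is the right call. Your treatment of the polarized case is correct and coincides with what the paper actually proves in Theorem \ref{thm_pol}: the Call--Silverman canonical height $\widehat{h}_{H,f}$ vanishes exactly on $\Prep(f)$ by Northcott, and $\widehat{h}_{H,f}(x)>0$ forces $\alpha_f(x)=\delta_f$, so $\alpha_f(x)<\delta_f$ implies the orbit is finite. Your second step --- equivariant fibrations (Albanese, MRC, equivariant MMP) plus induction on dimension, using $\alpha_g(\pi(x))\le\alpha_f(x)$ and $\delta_g\le\delta_f$ --- is also an accurate description of how the paper and the surrounding literature attack the accessible cases (see Lemmas \ref{lem_surj} and \ref{lem_rat}, and the reduction to Case TIR in Section 6), and your identification of positive-entropy automorphisms with no single scaled eigendivisor as the core obstruction is exactly why the authors pass to the weaker sAND Conjecture \ref{conj_zf}. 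One small caveat: for surfaces and Hyperk\"ahler varieties the paper does get around the ``no single eigenclass'' problem by combining the two canonical heights $\widehat{h}_{D^+,f}$ and $\widehat{h}_{D^-,f^{-1}}$ attached to the expanding and contracting eigenclasses, where the Hodge index (or Beauville--Bogomolov) form makes $D^++D^-$ big; so the eigenclass-combination idea you gesture at is not merely hypothetical but is carried out in Theorems \ref{thm_surfauto} and \ref{thm_hyper}. In short, your proposal is consistent with the paper and contains no error, but neither you nor the paper proves the conjecture itself.
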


Although the original conjecture in \cite{KS16a} has also been formulated for rational maps, we consider only endomorphisms in this paper (see Remark \ref{rem_prev}).

Given a surjective endomorphism $f: X \to X$ on a projective variety $X$ over a number field $K$,
let
$Z_f \subseteq X(\overline K)$
be the set of points $x \in X(\overline K)$ where $\alpha_f(x) <\delta_f$.
Conjecture \ref{conj_ks} predicts that the orbit $O_f(x)$ is not dense for any $x \in Z_f$.
Naturally, it is important for us to characterize the set $Z_f$.

We recall the {\it Northcott finiteness property} (cf.~\cite[Theorem B.3.2.(g)]{HS00} or \cite[Theorem 2.6]{Lan83}).
Let $X$ be a projective variety over a number field $K$ and $H$ an ample Cartier divisor on $X$.
Take a height function $h_H$ associated to $H$.
Then the Northcott finiteness property asserts that the set
$$\{ x \in X(\overline K) \mid [K(x):K] \leq d, h_H(x) \leq M \}$$
is finite for any given $d>0$ and $M>0$.
This suggests the non-density of a subset of $Z_f$ when we impose a bound on the degree $[K(x):K]$ for points $x$ in $Z_f$.
In view of this observation, we are going to propose Conjecture \ref{conj_zf}.
But before that, it is better for us to introduce the following definitions for convenience.

\begin{defn}\label{defn3.1.2}
Let $X$ be a projective variety and $f$ a surjective endomorphism on $X$ over a number field $K$.
Let $d > 0$. 
Set
$$X(d)=X(K,d)=\{ x \in X(L) \mid K \subseteq L \subseteq \overline K,\ [L:K] \leq d\},$$
$$Z_{f}=Z_f( \overline{K})=\{ x \in X(\overline{K}) \mid  \alpha_f(x) < \delta_f \},$$
$$Z_{f}(d)=Z_f(K, d)=Z_f( \overline{K})\cap X(K,d).$$
{\it We sometimes drop $ \overline{K}$ or $K$ when it is clear from the context.}
\end{defn}

\begin{Conjecture}\label{conj_zf} ({\bf sAND = small Arithmetic Non-Density})
Let $f:X\to X$ be a surjective endomorphism of a projective variety $X$ defined over a number field $K$.
Then the set $Z_f(K, d)$ is not Zariski dense in $X_{\overline{K}} = X \times_K \overline{K}$
for any positive constant $d>0$.
\end{Conjecture}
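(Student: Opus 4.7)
The plan is to decompose $Z_f(d) = \bigl(\Preper(f)\cap Z_f(d)\bigr) \cup \bigl(Z_f(d)\setminus \Preper(f)\bigr)$ and handle the two pieces by completely different tools. The point is that every $f$-preperiodic point has arithmetic degree $1$, so $\Preper(f)\subseteq Z_f$ whenever $\delta_f>1$ (the case $\delta_f=1$ being trivial since $\alpha_f\ge 1$ forces $Z_f=\emptyset$), while the non-preperiodic points in $Z_f$ are precisely those to which KSC applies. The goal is to show each piece sits inside a fixed proper Zariski-closed subset of $X_{\overline K}$.

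For the preperiodic piece, the idea is to build a canonical height $\hat h_f$ on $X$ by a Tate-style telescoping limit against the height of a suitable ample (or int-amplified) class, so that $\hat h_f(x)=0$ iff $x\in\Preper(f)$, and then apply the Northcott finiteness property to $\bigl\{x\in X(\overline K):[K(x):K]\le d,\ \hat h_f(x)=0\bigr\}$; this produces a genuinely finite set of preperiodic points of bounded degree. When the construction of $\hat h_f$ is not available (non-polarized, non-int-amplified $f$), one instead has to invoke the Uniform Boundedness Conjectures (Morton--Silverman for $\PP^n$, and its analogue for abelian varieties) to force $\Preper(f)\cap X(L)$ into a bounded, non-dense locus uniformly over all $L$ with $[L:K]\le d$.

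For the non-preperiodic piece, KSC applied to each $x$ with $\alpha_f(x)<\delta_f$ forces the Zariski closure $\overline{O_f(x)}$ to be a proper $f$-invariant subvariety $Y\subsetneq X_{\overline K}$. The strategy is induction on $\dim X$: one first shows that the union of all such $f$-invariant proper subvarieties carrying a small-degree non-preperiodic point is itself contained in a proper closed subset of $X$, by a Chow-variety/Hilbert-scheme argument applied equivariantly for $f$, and then restricts $f$ to each $Y$ and invokes sAND on $Y$ using the elementary inequality $\alpha_{f|_Y}(x)\le \alpha_f(x)<\delta_f$.

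The main obstacle, and the reason sAND appears to be verifiable only case by case, is precisely this inductive descent: it relies on KSC (open in general) and, even granting KSC, one must control the possibly dropping first dynamical degree $\delta_{f|_Y}$ so that a small arithmetic degree on $X$ is still small relative to $\delta_{f|_Y}$ on $Y$, which is not automatic. This is what forces the authors to replace the conditional step with genuinely geometric input on each class, such as the classification of surfaces, the Beauville--Bogomolov form on HyperK\"ahlers, isogeny decompositions and height lower bounds on abelian varieties, and the equivariant MMP of Meng--Zhang for int-amplified endomorphisms on Mori dream spaces and on smooth threefolds. Overcoming this dynamical-degree-drop phenomenon, together with a uniform (in $d$) bound on preperiodic points, is expected to be the hard core of any attempted unconditional proof.
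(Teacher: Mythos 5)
The statement you are asked about is a \emph{conjecture}: the paper never proves Conjecture \ref{conj_zf} in general, it only verifies it for the specific classes listed in Theorem \ref{ThmA} and relates it to the Uniform Boundedness Conjectures in Theorems \ref{ThmB} and \ref{ThmC}. So there is no proof in the paper for your proposal to match, and what you have written is a strategy outline rather than a proof --- as you yourself concede in the final paragraph, where both halves of the argument are flagged as conditional. That self-assessment is accurate, but it means the proposal contains genuine gaps rather than a complete argument.

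Concretely, both pieces of your decomposition fail as stated. For the preperiodic piece, a canonical height $\hat h_f$ whose zero locus is exactly $\Prep(f)$ and which satisfies Northcott exists only under strong positivity hypotheses: this is precisely how the paper handles polarized endomorphisms (Theorem \ref{thm_pol}), surface automorphisms via the nef eigendivisors $D^{\pm}$ (Theorem \ref{thm_surfauto}), and Hyperk\"ahler varieties (Theorem \ref{thm_hyper}); for a general surjective endomorphism no such height is known, and falling back on Conjectures \ref{conj_ubc} and \ref{conj_tor} replaces one open problem by another. For the non-preperiodic piece, even granting KSC, the union $\bigcup_{x}\overline{O_f(x)}$ over $x\in Z_f(d)\setminus\Prep(f)$ is a priori a countable union of proper subvarieties and can perfectly well be Zariski dense; your appeal to a ``Chow-variety/Hilbert-scheme argument'' supplies no bound on the degrees of these orbit closures and hence no reason they lie in a bounded family, and the inductive step breaks because $\delta_{f|_Y}$ can drop below $\alpha_f(x)$, so the hypothesis $\alpha_{f|_Y}(x)<\delta_{f|_Y}$ needed to recurse is not available. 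The paper's Section \ref{Sect_ext} makes exactly this point: controlling the $f$-preperiodic subvarieties of small dynamical degree is the content of the refined Conjecture \ref{Conjecture2}, and the implication proved there runs from sAND (in lower dimension) to that refinement, not the other way around. In short, your outline correctly identifies the known mechanisms and the obstructions, but it does not constitute a proof of the conjecture, and none is claimed in the paper.
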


\begin{rem}\label{rem_prev}\
\begin{enumerate}
\item We cannot claim the non-density of $Z_f$. Here is a simple example.
Let the morphism $f: \mathbb P^1 \to \mathbb P^1$ be given by $f(x)=x^2$ in affine coordinates.
Then the set
$$f^{-\infty}(1):=\bigcup_{n=0}^\infty f^{-n}(1)
=\bigcup_{n=0}^\infty \{ x \in \overline{\mathbb Q} \mid x^{2^n}=1 \}$$
consists of $f$-preperiodic points and is infinite.
Clearly, any $f$-preperiodic point has arithmetic degree one while $\delta_f=2$, so
$f^{-\infty}(1) \subseteq Z_f$.
Hence $Z_f$ is a dense subset of $\mathbb P^1$.

\item  In \cite{Shi19}, the third author defined a similar notion $Z_f(K)$ and conjectured the non-density of it (\cite[Conjecture 1.3]{Shi19}).
It is the set of points at which the ample height does not grow maximally for iterations of $f$.
However, it is easier for us to handle the arithmetic degree than the height growth.
In addition, we can consider arithmetic degrees for rational self-maps or endomorphisms on non-projective varieties.
So we propose the present formulation in this paper.
\item  The sAND Conjecture \ref{conj_zf} implies the Kawaguchi--Silverman conjecture (Conjecture \ref{conj_ks}) for endomorphisms.
Indeed, for any $x\in Z_f( \overline{K})$, we can take some $d>0$ such that the orbit $O_f(x) \subseteq Z_f(K, d)$.
So the sAND Conjecture \ref{conj_zf} is a generalization of the Kawaguchi--Silverman conjecture \ref{conj_ks}.

\item 
The sAND Conjecture \ref{conj_zf} is further generalized to another (equivalent) Conjecture \ref{Conjecture2} on periodic subvarieties of small dynamical degree.
See Section \ref{Sect_ext} for details.

\item 
Let $K \subseteq L$ ($\subseteq \overline{K}$) be a finite extension.
Then sAND Conjecture \ref{conj_zf} for $f \colon X \longrightarrow X$ is equivalent to that of $f_{L} \colon X_{L} \longrightarrow X_{L}$.
Indeed, it is easy to see that we have
\begin{align*}
Z_{f_{L}}(L, d) \subset Z_{f}(K, [L:K]d) \quad \text{and} \quad Z_{f}(K, d) \subset Z_{f_{L}}(L, d)
\end{align*}
and the equivalence follows.

\item 
In the sAND Conjecture \ref{conj_zf}, it is important to assume that $f$ is a well-defined morphism.
Indeed,
Call and Silverman \cite[Proposition 13 (a)]{CS18} mentioned Junyi Xie's example:
\begin{align*}
f \colon \PP^{2} \times \PP^{1} &\dashrightarrow \PP^{2} \times \PP^{1}\\
((x:y:z),t) &\mapsto (((x+z)(x-tz)+(x-z)y : (x-z)y : (x-tz)z),t)
\end{align*}
where $t$ is the inhomogeneous coordinate of $\PP^{1}$.
This dominant rational map has the following properties:
\begin{enumerate}
\item $\pr_{2} \circ f = \pr_{2}$;
\item $ \delta_{f} = 2$;
\item for $t \in \mathbb{A}^{1}(\Q) \subset \PP^{1}(\Q)$, let $f_{t}=f|_{\pr_{2}^{-1}(t)} \colon \PP^{2} \dashrightarrow \PP^{2}$.
Then $ \delta_{f_{t}} < 2 = \delta_{f}$ for all $t \in \Z_{>0}$.
\end{enumerate}
For $t \in \Z_{>0}$, we can easily show that any point of
\[
\{ (x:y:z) \in \PP^{2}(\Q) \mid y,z,x-z,x-tz>0\}
\]
has well-defined $f_{t}$-orbits. Since $Z_{f}(\Q, d)$ must contain every such point with $t$ varying over positive integers,
it is Zariski dense in $\PP^{2} \times \PP^{1}$.
\end{enumerate}
\end{rem}

Another motivation for us to extend KSC Conjecture \ref{conj_ks} to sAND Conjecture \ref{conj_zf} is in order to cover the case of an endomorphism on a fibre space which fixes fibres set-theoretically.
Such a surjective endomorphism has no dense orbit,
so the KSC Conjecture \ref{conj_ks} vacuously holds.
However, the sAND Conjecture \ref{conj_zf} is still non-trivial in this case.
In fact, we will prove sAND Conjecture \ref{conj_zf} for some special fibrations in Section \ref{sec_fib}.

Theorems \ref{ThmA}, \ref{ThmB}, and \ref{ThmC} below are our main results;
see also Theorem \ref{thm_pol} for polarized endomorphisms, Theorem \ref{thm_projbdl} for endomorphisms of projective bundles fixing the base pointwise,
and Theorem \ref{thm_kappa2} for non-invertible surjective endomorphisms on smooth projective threefolds of non-negative Kodaira dimension.

For non-isomorphic surjective endomorphisms, we focus on the essential case when the Kodaira dimension is non-positive.
By taking the maximal rationally connected fibration, which can be chosen to
be $f$-equivariant, we see that the building blocks of varieties are rationally connected
varieties and varieties of Kodaira dimension zero; further, each variety of the last type, if smooth and minimal, has an (equivariant) Beauville--Bogomolov \'etale cover by a product of Abelian varieties, Hyperk\"ahler varieties and Calabi--Yau varieties.
Our result below shows that Conjecture \ref{conj_zf} holds in many cases, and it covers the three types of building blocks: Abelian varieties, Hyperk\"ahler varieties and
Rationally connected varieties (but not Calabi--Yau varieties).

We would like to remark that Theorem \ref{thm-tir-nn2}, concerning the non-existence of Case TIR, has its own interest. 
It employs several new techniques in birational geometry and dynamical system. The main ingredient there is to study the minimal model program after a base change which is induced by a totally invariant multi-section of the natural fibration there.
Our Theorem \ref{ThmA}(6) below is a special case showing the usefulness of the techniques. 

\begin{theorem}\label{ThmA}
The sAND Conjecture \ref{conj_zf} holds for every surjective endomorphism on any projective variety $X$ which fits one of the following cases.
\begin{itemize}
\item[(1)] $X_{ \overline{K}}$ is a Mori dream space (cf.~Theorem \ref{thm_mds}).
\item[(2)] $\dim(X)\le 2$ (cf.~Theorems \ref{thm_surfauto} and \ref{thm_surfendo}).
\item[(3)] $X_{ \overline{K}}$ is a Hyperk\"ahler variety (cf.~Theorem \ref{thm_hyper}).
\item[(4)] $X_{ \overline{K}}$ is a ($Q$)-abelian variety (cf.~Corollary \ref{cor_qab}).
\item[(5)] $X_{ \overline{K}}$ is a smooth rationally connected projective variety admitting an int-amplified endomorphism (cf.~Theorem \ref{thm_iamp-src} and Corollary \ref{cor-iamp} for more general statement including the case when $X_{ \overline{K}}$ is simply connected).
\item[(6)] $X_{ \overline{K}}$ is a smooth projective threefold admitting an int-amplified endomorphism (cf.~Theorem \ref{thm-tir-nn2} and Corollary \ref{cor-iamp}).
\end{itemize}
\end{theorem}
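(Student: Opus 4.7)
The plan is to treat the six cases separately, each via the cited theorem in the corresponding bullet. In every case the common mechanism is the same: the set $Z_f(d)$ consists of points whose canonical or ample height along iterates of $f$ grows strictly more slowly than the maximum rate $\delta_f^n$, so combined with the Northcott finiteness property restricted to $X(L)$ for $[L:K]\leq d$, one aims to contain $Z_f(d)$ in a proper Zariski closed subset. The specific tool needed depends on which canonical/quasi-canonical height functions are available on $X$ and how well the action of $f^\ast$ on $\NS(X)_{\R}$ is understood.

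Concretely, for case (1) the Cox ring of a Mori dream space is finitely generated, so $\Nef(X)$ and $\Eff(X)$ are rational polyhedral and $f^\ast$ permutes a finite set of extremal rays up to scaling; one builds a single height function from canonical heights on the $f^\ast$-stable cone faces and reads the arithmetic degree off of it. For cases (4) and the $Q$-abelian part, one applies Silverman's theorem identifying $\alpha_f(x)$ on an abelian variety with the spectral radius of the tangent action of the linear part of $f$; then $Z_f(d)$ is contained in a union of translates of proper abelian subvarieties of bounded degree, and Northcott finishes the job. For case (3) the Beauville--Bogomolov form gives strong constraints on the action of $f^\ast$ on $H^2$; one uses canonical heights built from isotropic or eigen-classes to pin $Z_f(d)$ into a proper closed locus. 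Case (2) splits into the automorphism case (handled by the Cantat/Diller--Favre classification and Kawaguchi's canonical height construction on surface automorphisms of positive entropy) and the non-invertible case (handled via the Nakayama--Zhang classification of surfaces admitting non-isomorphic endomorphisms and reduction to toric, abelian, or ruled surfaces). Cases (5) and (6) rest on the equivariant minimal model program for int-amplified endomorphisms (Meng--Zhang), which reduces $X$ to building blocks that are either $Q$-abelian or rationally connected Mori-type, at which point (1) and (4) kick in.

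The principal obstacle lies in case (6), smooth threefolds admitting an int-amplified endomorphism. Here one must run the equivariant MMP and check that $Z_f(d)$ is controlled through every divisorial contraction, flip, and Mori fiber contraction; the delicate points are (a) that a point lying in an exceptional locus can have its arithmetic degree computed on the contracted model only up to contributions from the exceptional fibers, and (b) that positive Kodaira dimension forces one to pass through the (equivariant) Iitaka fibration, where one needs a fibration version of sAND --- this is exactly what Theorem \ref{thm_kappa2} and the fibration results of Section \ref{sec_fib} are designed to provide. A secondary technical difficulty appears in case (3): one must verify that the height functions coming from the Beauville--Bogomolov-orthogonal eigenspace decomposition actually detect $Z_f$, not merely the a priori larger locus where the ample height grows sub-maximally, a gap bridged by comparing arithmetic degrees with eigenvalues via the refined Kawaguchi--Silverman inequalities on Hyperk\"ahler varieties.
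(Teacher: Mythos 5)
Your reading of Theorem \ref{ThmA} as a case-by-case assembly of the cited results is correct, and your sketches for cases (1)--(4) are broadly consistent with the paper, up to two inaccuracies worth flagging: in case (1) the eigendivisor $D$ with $f^*D\sim\delta_f D$ is only base point free, not ample, so the Northcott property alone does not finish the argument --- one needs the non-density of points of bounded degree and bounded height with respect to a nonzero semi-ample divisor (Theorem \ref{thm_mds} quotes \cite{Shi19} for this); and in case (3) the first step is to observe that $f$ is automatically an automorphism (it is \'etale by the ramification formula and $\pi_1^{\alg}(X_{\overline K})$ is trivial), after which the Beauville--Bogomolov--Fujiki form plays exactly the role of the Hodge index theorem in the surface automorphism case; the ``secondary difficulty'' you raise there is closed by the standard implication $\hat h_{D^+,f}(x)>0\Rightarrow\alpha_f(x)=\delta_f$.

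The genuine gap is in cases (5) and (6). The equivariant MMP for int-amplified endomorphisms does \emph{not} reduce $X$ to building blocks on which (1) and (4) ``kick in'': Theorem \ref{thm_int-amp-TIR} handles the pseudo-effective-$K_X$ case by reduction to $Q$-abelian varieties, but in the uniruled case the MMP leaves a residual situation, Case TIR --- a Fano contraction $\pi:X\to Y$ with $\delta_f>\delta_{f|_Y}$, a prime totally invariant divisor $D\sim_{\Q}-K_X$ with $\kappa(X,-K_X)=0$ and $\Supp R_f=D$, and relative dimension $\ge 2$ --- for which no general argument is available. The entire content of cases (5) and (6) is the purely geometric assertion that Case TIR never occurs: for (5) because Case TIR forces (via the lifting of Theorem \ref{thm-MY}) quasi-\'etale covers of arbitrarily large degree, contradicting finiteness of $\pi_1^{\alg}(X_{\reg})$ (Theorem \ref{thm_iamp-src}); for (6) because Theorem \ref{thm-tir-nn2} shows that in Case TIR with terminal singularities the relative dimension is at least $3$, hence $\dim X\ge 4$. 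That theorem is the technical heart of the section (general fibers are toric Fano surfaces whose boundary $D\cap X_y$ has $m\ge3$ nodes, against the conductor/normalization bound of at most $2$ codimension-two components of $\Sing(D)$ after lifting to an abelian cover of the base), and your proposal contains no trace of it.

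Moreover, the obstacle you do identify for case (6) points in the wrong direction. A threefold admitting an int-amplified endomorphism never requires passage through a nontrivial Iitaka fibration in this proof: the pseudo-effective-$K_X$ branch is disposed of by reduction to the $Q$-abelian case, and the difficulty lives entirely in the uniruled branch. Theorem \ref{thm_kappa2} is not part of the proof of Theorem \ref{ThmA} at all, and it could not be: it establishes only the non-density of $Z_f(L)$ for a \emph{fixed} finite extension $L$, which is strictly weaker than the non-density of $Z_f(d)$ demanded by Conjecture \ref{conj_zf}. Likewise, tracking arithmetic degrees through divisorial contractions and flips is handled once and for all by Lemmas \ref{lem_surj}, \ref{lem_gen_fin} and \ref{lem_rat} and is not where the work lies.
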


Let us recall the uniform boundedness conjecture due to Morton--Silverman.

\begin{Conjecture} \label{conj_ubc} ({\bf UBC = Uniform Boundedness Conjecture} for $\PP^N$, {\cite{MS94}})
Fix $r \in \mathbb Z_{\geq 2}$, $N \in \mathbb Z_{\geq 1}$, and $d \in \mathbb Z_{\geq 1}$.
Then there exists a positive constant $C=C(r,N,d)>0$ such that the following holds:
for any finite extension $K \subseteq L$ ($\subseteq \overline{K}$) with $[L:K] \leq d$ and any morphism $\phi: \mathbb P^N \to \mathbb P^N$ which is defined over $L$ with $\deg \phi=r$,
the number of $\phi$-preperiodic $L$-rational points of $\mathbb P^N$ is at most $C$.
\end{Conjecture}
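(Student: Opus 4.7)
The Morton--Silverman UBC is a notoriously deep open problem; I would not expect to settle it in a single attack, but the framework developed in this paper suggests a natural plan. The key linking observation is that every preperiodic point of a morphism $\phi : \PP^N \to \PP^N$ of degree $r \geq 2$ has arithmetic degree $\alpha_\phi(x) = 1$, while $\delta_\phi = r > 1$. Hence for every $L$ with $[L:K] \leq d$,
$$\Preper(\phi) \cap \PP^N(L) \;\subseteq\; Z_\phi(d).$$
Consequently sAND, applied to $\phi$, already implies the qualitative half of UBC: the preperiodic points of bounded field of definition are not Zariski dense. The problem is upgrading ``not Zariski dense'' to a numerical bound $C(r,N,d)$ depending only on the triple $(r,N,d)$.

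The strategy I would pursue is to work over the moduli space $M^{r}_{N} = \Mor^{r}_{N}/\PGL_{N+1}$ of degree-$r$ endomorphisms of $\PP^{N}$. Over an open dense stratum $U \subseteq M^{r}_{N}$, there is a universal morphism $\Phi : \mathcal{X} \to \mathcal{X}$ fibered over $U$, and the preperiodic locus $\mathcal{P} \subseteq \mathcal{X}$ is a constructible subscheme. By Northcott applied fiberwise, each fiber $\mathcal{P}_\phi \cap \PP^N(L)$ is finite for fixed $\phi$; the content of UBC is the uniformity of this count along the family. The plan is to apply a relative version of sAND to $\Phi/U$ to obtain a proper closed subvariety $\mathcal{Z} \subsetneq \mathcal{X}$ containing all small-arithmetic-degree points of bounded degree in every fiber, and then to stratify by the Hilbert polynomial of $\mathcal{Z}_\phi$ and run Noetherian induction on $M^{r}_{N}$, using Bezout-type bounds on each stratum to count $L$-rational preperiodic points of degree bounded by $d$.

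The main obstacle, and the reason UBC has resisted every such attempt since \cite{MS94}, is exactly the transition from pointwise finiteness to family-uniformity. A purely qualitative non-density statement like sAND, even in its strongest conjectural form, does not suffice: one needs effective control on the degree and the dimension of the Zariski closure of $\Preper(\phi)$ that is uniform as $\phi$ varies in $M^r_N$. This is already subtle in the simplest case $N=1$, $r=2$, $d=1$, where Poonen's conjectural bound $C(2,1,1) = 12$ is open and the best unconditional results depend delicately on reduction at small primes. I therefore expect that any serious progress beyond the qualitative reduction sketched above would require either an arithmetic equidistribution theorem for preperiodic points uniform over $M^r_N$, or a quantitative refinement of sAND with explicit constants controlled by the geometry of the invariant subvarieties of $\phi$ --- features that go well beyond the methods currently in the paper.
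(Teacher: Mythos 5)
This statement is the Morton--Silverman conjecture; the paper does not prove it (nor could you be expected to), so your refusal to claim a proof is the right call, and your opening reduction --- preperiodic points have $\alpha_\phi(x)=1<r=\delta_\phi$, hence $\Preper(\phi)\cap\PP^N(L)\subseteq Z_\phi(d)$, and one should work with the universal family over the parameter space of degree-$r$ endomorphisms --- is exactly the setup of the paper's Theorem \ref{ThmB}. (In fact for polarized maps the containment is an equality, $Z_\phi=\Prep(\phi)$, by Theorem \ref{thm_pol}.)

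Where you go astray is in the final paragraph: you assert that ``a purely qualitative non-density statement like sAND, even in its strongest conjectural form, does not suffice'' to pass from pointwise finiteness to a uniform bound $C(r,N,d)$. The paper's proof of the implication (3) $\Rightarrow$ (2) $\Rightarrow$ (1) in Theorem \ref{ThmB} shows the opposite: the qualitative relative sAND statement for polarized families \emph{is} equivalent to UBC. The mechanism is Noetherian induction on $\dim(X)$ for the family $\pi:X\to S$: the closure $W$ of $Z_f(d)$ is a proper $f$-invariant closed subset; after passing to a power of $f$ its periodic components $W_i$ split into those with $\dim(W_i)=\dim(\pi(W_i))$ --- on which $f$ restricts to a finite-order automorphism, and the polarization $f^*H\sim_\pi rH$ forces $\pi|_{W_i}$ to be finite, so the fiberwise point count is uniformly bounded by the degree of $\pi|_{W_i}$ --- and those of strictly smaller dimension, which are handled by the induction hypothesis applied to the family $W_i\to\pi(W_i)$. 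No effective control of degrees of closures, no equidistribution, and no quantitative refinement of sAND is needed; the uniformity is extracted from constructibility and induction alone. So the honest summary is not that sAND is too weak for UBC, but that the relative sAND conjecture (Conjecture \ref{conj_relsand}) restricted to polarized families is precisely as hard as UBC --- which is the content, and the point, of Theorem \ref{ThmB}.
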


\begin{rem}
The constant in Conjecture \ref{conj_ubc}  and that in Conjecture \ref{conj_tor} are usually stated that
they depend on the extension degree of $L$ over $\Q$.
Here, we state it in a slightly different form so that they are more compatible with our other settings.
These forms where $K$ varies are equivalent to the standard ones.
\end{rem}

We show the equivalence of the above Uniform Boundedness Conjecture
and a special case of a relative version of the sAND Conjecture \ref{conj_relsand}: the non-density of $Z_f(d)$ for a family of polarized endomorphisms,
which suggests the importance and naturality of conjecturing the non-density of the locus $Z_f(d)$ of small arithmetic degree.

\begin{theorem}\label{ThmB}
The following statements are equivalent.
\begin{itemize}
\item[(1)]
The Uniform Boundedness Conjecture \ref{conj_ubc}.
\item[(2)]
Let $\pi: X \to S$ be a projective morphism of varieties and $f: X \to X$ a surjective morphism satisfying $\pi \circ f=\pi$.
Assume that there is a $\pi$-ample divisor $H$ on $X$ such that $f^*H \sim_\pi rH$ for some $r>1$.
Let $d>0$.
Then there is a constant $C>0$ (depending on $d$ as well as $\pi$ and $f$) such that $|Z_{f|_{X_s}}(K,d)| < C$ for any $s \in S(K, d)$.
\item[(3)]
Let $\pi: X \to S$ be a projective morphism of quasi-projective varieties and $f: X \to X$ a surjective morphism satisfying $\pi \circ f=\pi$.
Assume that there is a $\pi$-ample divisor $H$ on $X$ such that $f^*H \sim_\pi rH$ for some $r>1$.
Let $d>0$.
Then $Z_f(K, d)$ is not dense.
\end{itemize}
\end{theorem}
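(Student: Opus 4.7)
The key reduction is that for a polarized endomorphism $(X,f,H)$ with $f^*H \sim rH$ and $r>1$, the Call--Silverman canonical height theory yields $\delta_f=r$ and $\alpha_f(x)\in\{1,r\}$, with $\alpha_f(x)<r$ precisely when $x\in\Preper(f)$. Hence $Z_f(d)=\Preper(f)\cap\{x\in X(\overline K):[K(x):K]\leq d\}$, both for the total family of (3) and on each fiber of (2). My plan is to prove the cycle (1)$\Rightarrow$(2)$\Rightarrow$(3)$\Rightarrow$(1).

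For (1)$\Rightarrow$(2), I apply the relative form of Fakhruddin's embedding theorem (together with spreading out and Noetherian induction on $S$ to handle loci where the embedding degenerates) to embed $X\hookrightarrow S\times\PP^N$ so that $f$ extends to an endomorphism $F$ of $S\times\PP^N$ over $S$ of fiber-degree $r^m$ for some $m$. For each $s\in S(d)$, $F_s:\PP^N\to\PP^N$ is a degree-$r^m$ morphism over $K(s)$, and every $x\in Z_{f|_{X_s}}(d)$ is an $F_s$-preperiodic point of degree at most $d^2$ over $K$; UBC with parameters $(r^m,N,d^2)$ then yields the uniform fiber bound. For (2)$\Rightarrow$(3), the hypotheses $f^*H\sim_\pi rH$ with $r>1$ and $H$ being $\pi$-ample force the relative dimension of $\pi$ to be positive, so $\dim X>\dim S$; under (2), the projection $\pi:Z_f(d)\to S$ has fibers of cardinality $\leq C$, and a generic-fiber dimension estimate (if some irreducible component of $\overline{Z_f(d)}$ had positive-dimensional fibers over its image, then $Z_f(d)$, being dense in that component, would meet those fibers in infinitely many points) gives $\dim\overline{Z_f(d)}\leq\dim S<\dim X$, so $\overline{Z_f(d)}\subsetneq X$.

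For (3)$\Rightarrow$(1), I take $S=\mathcal M(r,N)$, the moduli scheme of degree-$r$ self-morphisms of $\PP^N$ over $K$, with universal family $F:S\times\PP^N\to S\times\PP^N$ polarized by the pullback of $\mathcal O_{\PP^N}(1)$. For $[\phi]\in\mathcal M(L)$ with $[L:K]\leq d$ and $x\in\Preper(\phi)\cap\PP^N(L)$, one has $([\phi],x)\in Z_F(d)$, so $\Preper(\phi)\cap\PP^N(L)$ embeds in the fiber $V_{[\phi]}$ of $V:=\overline{Z_F(d)}$. By (3), $V\subsetneq S\times\PP^N$. A Noetherian induction on $S$ --- stratify by the bad locus $\{[\phi]:V_{[\phi]}=\PP^N\}$, iterate (3) on each stratum, and pass to the stable image $\bigcap_n F^n$ to recover surjectivity when restricting to any $F$-invariant sub-family --- eventually produces, on each stratum, a sub-family whose associated proper subvariety has zero-dimensional fibers of uniformly bounded degree, yielding the UBC cardinality bound.

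The principal obstacle lies in the Noetherian descent for (3)$\Rightarrow$(1): at each iteration one must verify that the sub-family still satisfies the hypotheses of (3) (a polarized surjective endomorphism of a quasi-projective variety with $\pi$-ample polarization inherited from $H$), and one must carefully track how the field-of-definition parameter $d$ propagates through the base changes; accounting for the possible loss of pre-periodic tails when replacing an $F$-invariant sub-family by its stable image $\bigcap_n F^n(V)$, while preserving the periodic core, is the delicate technical step.
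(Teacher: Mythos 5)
Your step (1)$\Rightarrow$(2) is sound and close in spirit to the paper's argument (the paper runs the cycle the other way: (2)$\Rightarrow$(1) is immediate from the universal family of degree-$r$ endomorphisms of $\PP^N$, and (1)$\Rightarrow$(3) uses the same Fakhruddin equivariant embedding you invoke). The genuine gap is in your (2)$\Rightarrow$(3): the ``generic-fiber dimension estimate'' is false for arbitrary point sets. A subset of $X(\overline K)$ of bounded degree meeting every fiber of $\pi$ in at most $C$ points can be Zariski dense even when the fibers are positive-dimensional. For instance, for $\mathbb{A}^2_{\Q}\to\mathbb{A}^1_{\Q}$ one can greedily choose, in the $n$-th member of a countable basis of opens, a $\Q$-point whose first coordinate differs from the finitely many already used; the resulting set is Zariski dense, consists of degree-one points, and meets every fiber in at most one point. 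So bounded fiber cardinality does not give $\dim\overline{Z_f(d)}\le\dim S$. The correct mechanism is dynamical, not dimensional: the forward orbit of any $x\in Z_f(d)$ stays in $Z_f(d)\cap X_{\pi(x)}$, so the uniform bound $C$ forces $f^m(x)=f^n(x)$ for some $0\le n<m\le C$ independent of $x$; hence $Z_f(d)\subseteq\bigcup_{0\le n<m\le C}\{f^m=f^n\}$, a closed set not containing $X$ since $f$ on the positive-dimensional generic fiber is polarized with $r>1$ and thus of infinite order. This is exactly how the paper deduces (3) from a uniform fiberwise bound.

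Your (3)$\Rightarrow$(1) is also under-specified at its crux: knowing $V=\overline{Z_F(d)}\subsetneq S\times\PP^N$ and stratifying the locus where $V$ contains a whole fiber does not by itself yield finite fibers of uniformly bounded cardinality. The paper's (3)$\Rightarrow$(2) supplies the missing mechanism: induct on $\dim X$, decompose the $f$-invariant set $\overline{Z_f(d)}$ into $f$-periodic irreducible components $W_i$ plus preimage tails; for components with $\dim W_i=\dim\pi(W_i)$, $f$ restricts to a finite-order automorphism and the identity $(C\cdot H)=(C\cdot(f^{mM})^*H)=r^{mM}(C\cdot H)$ rules out $\pi$-contracted curves, so $\pi|_{W_i}$ is finite of bounded degree; components with $\dim W_i>\dim\pi(W_i)$ are handled by the induction hypothesis applied to $f|_{W_i}\to\pi(W_i)$. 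Without this (or an equivalent argument), your claim that the stratification ``eventually produces zero-dimensional fibers of uniformly bounded degree'' is an assertion rather than a proof.
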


Our sAND conjecture is also related to another long-standing conjecture
which is completely solved only in dimension one by Merel \cite[Corollaire]{Mer96}.

\begin{conj}\label{conj_tor}
({\bf UBC = Uniform Boundedness Conjecture} for Torsion Points)
Take any $N \in \mathbb Z_{\geq 1}$ and $d>0$.
Then there is a constant $C=C(N,d)>0$ such that the number of torsion points in $A(K, d)$ is at most $C$ for any abelian variety $A$ of dimension $N$ defined over $K$.
\end{conj}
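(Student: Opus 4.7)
The plan is to attempt the natural generalization of the Mazur--Kamienny--Merel modular method from elliptic curves to abelian varieties of dimension $N$. A pair $(A,P)$ where $A$ is a principally polarized abelian variety of dimension $N$ and $P\in A$ is a torsion point of exact order $m$ defines a point on the Siegel moduli stack $\mathcal{A}_N[m]$ classifying such data. If $A/L$ with $[L:K]\le d$ and $P\in A(L)$, the resulting moduli point is rational over an extension whose degree is bounded in terms of $d$ and $N$. Thus uniform boundedness of $m$ reduces to proving that, for every fixed $d'$, the $d'$-rational points of $\mathcal{A}_N[m]$ eventually avoid the open ``order-exactly-$m$'' stratum once $m$ is sufficiently large.

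First I would reduce to torsion of prime-power order $m=p^k$ via the Chinese Remainder decomposition $A[m]\cong\prod_p A[p^{k_p}]$; fixing a prime $p$, the task becomes bounding $k$. Next I would try to imitate the elliptic prototype: construct, for each large $p^k$, an arithmetic obstruction on a suitable quotient of the Albanese of $\mathcal{A}_N[p^k]$ (the higher-dimensional analogue of the winding/Eisenstein quotient of $J_1(p)$), and then run a Mazur-style formal-immersion / Runge-type argument to rule out degree-$d'$ rational points once $p^k$ exceeds an explicit threshold depending on $N$ and $d'$. In parallel, within any single fixed $A$ one may invoke Raynaud's theorem on Manin--Mumford to bound torsion of bounded field of definition, so that the genuinely new content is the uniformity as $A$ varies in moduli.

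The main obstacle, and the reason Conjecture~\ref{conj_tor} is still open for $N\ge 2$, is the positive-dimensionality of $\mathcal{A}_N$ (dimension $N(N+1)/2$), which removes the finiteness machinery for rational points on curves (Faltings, Mazur, Kamienny) from our arsenal. Merel's proof in dimension one crucially depends on Oesterl\'e's explicit modular-symbol lower bounds for arithmetic invariants of $J_1(p)$, which at present have no workable analogue on Siegel modular varieties. A realistic bypass would need to combine at least three presently unavailable ingredients: an effective Vojta--Faltings height inequality adapted to the universal abelian scheme over $\mathcal{A}_N[p^k]$; a uniform Ullmo--Zhang-type bound for the essential minimum of the torsion locus; and an effective height comparison between $(A,P)$ and its image in moduli. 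Each of these is a substantial open problem in its own right, so I would expect the approach above to yield Conjecture~\ref{conj_tor} in the near term only under restrictive hypotheses on $A$, such as $\mathrm{GL}_2$-type or complex multiplication, where the moduli space admits Shimura-variety structure that permits the elliptic-style machinery to be salvaged.
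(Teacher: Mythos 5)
This statement is labelled as a \emph{conjecture} in the paper, and the paper offers no proof of it: the authors explicitly state that it ``is completely solved only in dimension one by Merel'' (quoted as Theorem \ref{thm_merel}), record in Remark \ref{r:UBC^2} that it follows from Conjecture \ref{conj_ubc} by a result of Fakhruddin and that it is equivalent to the usual formulation bounding $\lvert\Tor(A(L))\rvert$, and otherwise use it only as a \emph{hypothesis} (e.g.\ the torsion-boundedness assumption in Theorem \ref{ThmC}). So there is no proof in the paper against which to compare your attempt.

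Your write-up, for its part, is not a proof either, and you say so candidly: it is an outline of the Mazur--Kamienny--Merel modular strategy transported to $\mathcal{A}_N$, together with a list of the ingredients (effective Vojta--Faltings over the universal abelian scheme, uniform Ullmo--Zhang bounds, analogues of Oesterl\'e's modular-symbol estimates on Siegel modular varieties) that would be needed and are currently unavailable. As an assessment of the state of the art this is accurate and consistent with how the paper treats the conjecture, but every substantive step of the program is left open, so nothing is established beyond the $N=1$ case already covered by Merel. One small point worth flagging: the reduction to \emph{principally polarized} abelian varieties via $\mathcal{A}_N[m]$ is itself not automatic --- an arbitrary $A$ over $L$ need not be isogenous over a bounded-degree extension to a principally polarized one with controlled degree of the isogeny, and this reduction would have to be justified (e.g.\ via Zarhin's trick, at the cost of replacing $N$ by $8N$). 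If you want to engage with what the paper actually does with Conjecture \ref{conj_tor}, the productive target is Theorem \ref{ThmC} and its proof via Theorem \ref{thm_abfib}, where the conjecture is assumed rather than proved.
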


\begin{rem}\label{r:UBC^2}\
\begin{itemize}
\item[(1)]
Conjecture \ref{conj_ubc} implies
Conjecture \ref{conj_tor} (cf.~\cite[Corollary 2.4]{Fak03}).

\item[(2)]
Conjecture \ref{conj_tor} is usually stated as follows:

($*$) \,\, \textit{
Take any $N \in \mathbb Z_{\geq 1}$ and $d>0$.
Then there is a constant $C=C(N,d)>0$ such that the order of $\Tor(A(L))$ is
at most $C$ for any abelian variety $A$ of dimension $N$ over $K$ and
any finite extension $K \subseteq L$ ($\subseteq \overline{K}$) with $[L:K] \leq d$.
}

In fact, ($*$) is equivalent to Conjecture \ref{conj_tor}.
Precisely, assuming ($*$), the integer $m=([C]+1)!$ depending only on $N$ and $d$ annihilates all torsion points of $A(L)$ for any abelian variety $A$ of dimension $N$ and
any finite extension $K \subseteq L$ ($\subseteq \overline{K}$)
with $[L:K] \leq d$.
So $m$ annihilates all torsion points contained in $A(d)$.
This implies that the number of torsion points in $A(d)$ is at most $m^{2N}$.

\item[(3)]
Conjecture \ref{conj_tor} for $N=1$ i.e.~the elliptic curve case is known (cf.~\cite[Corollaire]{Mer96}):
For any $d>0$, there is a constant $C=C(d)>0$ such that the number of torsion points in $E(d)$
is at most $C$ for any elliptic curve $E$ defined over $K$.

\end{itemize}
\end{rem}

As an evidence of our sAND Conjecture (\ref{conj_zf} or \ref{conj_relsand}), we show that
Conjecture \ref{conj_tor} (or a weaker assumption) implies Conjecture \ref{conj_relsand} for abelian fibrations.

\begin{thm}\label{ThmC}
Let $\pi: X \to Y$ be an abelian fibration of quasi-projective varieties and
$f: X \to X$ a surjective morphism such that $\delta_f>1$ and $\pi \circ f=\pi$.
Assume either Conjecture \ref{conj_tor} or weakly that there is a Zariski dense open subset $U\subseteq Y$ such that
for any $d>0$ and $d'>0$, there is a constant $C=C(d,d')>0$ such that
the number of torsion points in $X_y(K, d)$, where $X_y$ is the fibre over $y \in U(K, d')$, is at most $C$.
Then for any $d>0$, $Z_f(K, d)$ is not dense in $X_{\overline{K}}$.
\end{thm}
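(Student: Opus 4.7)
The strategy is to reduce the non-density of $Z_f(d)$ in $X$ to a uniform torsion-point bound on the fibers of $\pi$, invoking the Kawaguchi--Silverman Conjecture for endomorphisms of abelian varieties. First, shrink $Y$ to the open $U$ from the hypothesis (replacing $Y$ by an \'etale cover if necessary, which alters the degree constraint only by a bounded factor), so that $\pi \colon X \to Y$ becomes an abelian scheme. Since $\pi \circ f = \pi$, on each fiber $f|_{X_y} = t_{a(y)} \circ \phi_y$ for a regular section $a$ and a family of group endomorphisms $\phi_y$ varying algebraically with $y$. For any ample divisor $H$ on a projective compactification of $X$, the restriction $h_H|_{X_y}$ is a height associated with the ample divisor $H|_{X_y}$, so $\alpha_f(x) = \alpha_{f_y}(x)$ for $x \in X_y$. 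Using the product formula for dynamical degrees of a fibration whose base map is the identity, $\delta_f = \delta_{f_y}$ for generic $y$. Hence $Z_f(d) \cap X_y(\overline K) = Z_{f_y}(d)$ on a dense open of $Y$.

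Next, I would apply the structural form of KSC for abelian varieties: $\alpha_{f_y}(x) < \delta_{f_y}$ if and only if $x$ lies in some proper $f_y$-periodic subvariety $V \subsetneq X_y$ with $\delta_{f_y|_V} < \delta_{f_y}$. On an abelian variety every such $V$ is a torsion translate of a $\phi_y$-invariant abelian subvariety $B \subsetneq X_y$ on which $\phi_y|_B$ has strictly smaller dynamical degree. Equivalently, $x \in Z_{f_y}$ iff for some such $B$ the image $\bar x \in X_y/B$ is $\bar\phi_y$-preperiodic, which (after translating by a rational fixed point of the induced $\bar f_y$) becomes a torsion point of the quotient abelian variety.

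Globalizing: after further shrinking $Y$, the $\phi_y$-invariant abelian subvarieties of smaller dynamical degree organize into finitely many $f$-invariant abelian subschemes $\mathcal{B}_1, \dots, \mathcal{B}_m \subsetneq X$, with quotients $\mathcal{A}_i := X/\mathcal{B}_i \to Y$ again abelian schemes. Each $X_y$ is isogenous to $(\mathcal{B}_i)_y \times (\mathcal{A}_i)_y$, so the torsion-count hypothesis on $X_y(d)$ transfers, up to a bounded increase in $d$, to a uniform bound on the torsion of $(\mathcal{A}_i)_y$. By the fiberwise structure above, every point of $Z_f(d)$ in $X_y$ projects to a (translated) torsion point in some $(\mathcal{A}_i)_y$ of residue-field degree at most $d$, so only finitely many torsion translates of each $\mathcal{B}_i$ meet $Z_f(d)$. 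Each such translate is a proper closed subvariety of $X$, so $Z_f(d)$ is contained in a finite union of proper closed subvarieties of $X$ and is therefore not Zariski dense in $X_{\overline K}$.

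The main obstacle is the globalization step: one must show that the per-fiber lattice of proper $\phi_y$-invariant abelian subvarieties with smaller dynamical degree stabilizes over a dense open of $Y$ into finitely many $f$-invariant abelian subschemes, and one must verify that the torsion-counting hypothesis on $X \to Y$ transfers (with a bounded degree loss) to each quotient abelian scheme $X \to \mathcal{A}_i$. A subsidiary point is the use of a rational fixed point in each quotient $(\mathcal{A}_i)_y$ to identify preperiodic points with torsion, which may require a further Noetherian induction on the relative dimension of $X \to Y$.
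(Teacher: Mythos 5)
Your outline tracks the paper's up to the crucial step, but the place you yourself flag as "the main obstacle" — globalizing the fiberwise abelian subvarieties $B_y$ into finitely many $f$-invariant abelian subschemes $\mathcal{B}_i \subsetneq X$ over a dense open of $Y$ — is a genuine gap, and it is not a routine spreading-out argument. The subvariety $B_y$ produced by Theorem \ref{thm_ab} (via Theorem \ref{thm_ksforab}) is the degeneracy locus of the canonical height $\hat h_{D_y,f_y}$ on the fiber, and there is no a priori reason this locus is the specialization of the corresponding locus on the generic fiber: it can jump on special fibers, and the set of $y$ where it jumps need not be contained in a proper closed subset. So "only finitely many torsion translates of each $\mathcal{B}_i$ meet $Z_f(d)$" is not justified, and the reduction of the torsion-count hypothesis to the quotients $\mathcal{A}_i$ rests on the same unproven finiteness. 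A secondary issue is the translation point: solving $p - \phi(p) = a$ fiberwise to turn preperiodic points into torsion requires a section of an isogeny, hence a finite base change of $Y$ chosen uniformly, which your sketch does not arrange.

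The paper avoids globalizing $B$ altogether. In Lemma \ref{lem_abfib-isog} it instead globalizes an endomorphism: by Lemma \ref{lem_decomp} one writes $\varphi_{H}^{-1}\circ\varphi_{D} = i_{H}(\alpha)\circ\alpha$ with $i_H(\alpha)=\alpha$ for a nef eigendivisor $D$ on the generic fiber, expands $\alpha = \sum_i \beta_i \otimes c_i$ with $\beta_i$ honest group endomorphisms of the abelian scheme and $c_i$ real numbers linearly independent over $\Q$, and spreads these out over a dense open of (a finite cover of) $Y$. On each fiber one then has $\hat h_{D_y}(x) = \tfrac12\langle\alpha_y(x),\alpha_y(x)\rangle_{H_y}$, and positive definiteness of the N\'eron--Tate pairing on $X_y(d)\otimes_\Z\R$ forces every $x \in Z_{f_y}(d)$ to satisfy $\beta_{i,y}(x) \in \Tor$ for all $i$. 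This yields a single non-trivial $Y$-group homomorphism $\beta$ with $Z_f(d) \subseteq M(d) + \beta^{-1}(\Tor(X(d)))$ uniformly in $y$ (Theorem \ref{thm_abfib}); the hypothesis then supplies one integer $N$ killing $\Tor(X(d))$ over $U$, so $Z_f(d)$ lands in $M(d) + ([N]\circ\beta)^{-1}(E)$ with $E$ the zero section — a proper closed subset since $\beta \neq 0$. If you want to salvage your route, you would need to prove that the degeneracy locus of the fiberwise canonical height is generically constant in the family, which is essentially what the identity $\Phi^H_D = i_H(\alpha)\circ\alpha$ is engineered to give you for free.
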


\begin{ack}
The first, third and last authors are supported by a JSPS Overseas Research Fellowship, a Research Fellowship of NUS, and an ARF of NUS, respectively. 
The second author is supported in part by Science and Technology Commission of Shanghai Municipality (No. 22DZ2229014) and by a Research Fellowship of KIAS (MG075501).
The authors would like to thank Fei Hu for many helpful suggestions, Shu Kawaguchi for kindly pointing out Remark \ref{r:UBC^2} (1), and colleagues for pointing out that our proof of Theorem \ref{ThmC} actually used an assumption weaker than (initially stated) Conjecture \ref{conj_tor}.
The authors would also like to thank the referee for valuable suggestions to improve this paper.
\end{ack}

\section{Preliminaries}\label{sec_prelim}

In this section, we prepare some notions and lemmas needed.

\begin{setup}\label{n:2.1} {\bf Notation, Convention and Terminology}
\begin{itemize}
$ \, $
\item
Throughout this article, we work over a fixed number field $K$ unless otherwise stated.
We fix an algebraic closure $\overline K$ of $K$.
The base change to $ \overline{K}$ of some object $A$ defined over $K$ is denoted by $A_{ \overline{K}}$.

\item A \textit{variety} means a geometrically integral separated scheme of finite type over $K$.

\item An \textit{endomorphism on $X$} is a morphism over $K$ from $X$ to itself.

\item Let $X$ be a projective variety and $f$ a surjective endomorphism on $X$.
\begin{itemize}
\item[(1)]
The (\textit{forward}) \textit{$f$-orbit} of a point $x$ in $X(\overline{K})$ is
the set $O_f(x) = \{f^s(x) \, | \, s \ge 0\}$.

\item[(2)]
A point $x$ in $X(\overline{K})$ is \textit{$f$-periodic} if
$f^n(x)=x$ for a positive integer $n$.
$\Per(f)$ denotes the set of $f$-periodic $\overline K$-rational points of $X$.
\item[(3)]
A point $x$ in $X(\overline{K})$ is \textit{$f$-preperiodic} if
$f^k(x)$ is $f$-periodic for a positive integer $k$.
$\Prep(f)$ denotes the set of $f$-preperiodic $\overline{K}$-rational points of $X$.

We remark that $x$ is $f$-preperiodic if and only if $O_f(x)$ is finite.
Further, if $f$ is an automorphism, then $x$ is $f$-preperiodic if and only if
$x$ is $f$-periodic.
\item[(4)]
A closed subset $V \subseteq X$ is \textit{$f$-invariant} (resp.~$f^{-1}$-{\it invariant}) if $f(V) \subseteq V$ (resp.~$f^{-1}(V) \subseteq V$).
Similarly $V$ is \textit{$f$-periodic} if $f^n(V) \subseteq V$ for some $n>0$ and
\textit{$f$-preperiodic} if $f^{n+k}(V) \subseteq f^k(V)$ for some $n,k>0$.
\end{itemize}

\item Let $X$ be a projective variety and $f$ a surjective endomorphism on $X$.
The \textit{(first) dynamical degree} $\delta_{f}$ of $f$ is defined to be the (first) dynamical degree of $f_{ \overline{K}}$.
Namely, it is the following limit, with $H$ any ample divisor
$$\delta_f= \lim_{n \to +\infty} ((f^n)^*H \cdot H^{\dim (X) -1})^{1/n}.$$

\item
Let $f: X \to X$ be a surjective endomorphism on a projective variety.
Let $Z \subseteq X_{\overline K}$ be an $f$-preperiodic subvariety.
\begin{itemize}
\item[(1)]
Suppose $f^{m+k}(Z)=f^k(Z)$ for some $m,k>0$.
Define the {\it dynamical degree} of $f$ along $Z$ as
$$\delta_{f|_Z}=(\delta_{f^m|_{f^k(Z)}})^{\frac{1}{m}}.$$
Clearly this definition is independent of $m$ and $k$.
If $Z$ is $f$-invariant, then this invariant coincides with the dynamical degree of $f|_Z$.
\item[(2)]
We call $Z$ a \textit{$f$-preperiodic subvariety of small dynamical degree}
if $\delta_{f|_Z} < \delta_f$.
Moreover we say $Z$ is a \textit{maximal $f$-preperiodic subvariety of small dynamical degree} if $Z$ is maximal among all of the $f$-preperiodic subvarieties of small dynamical degree.
\end{itemize}

\item The symbols $\sim$ (resp.~$\sim_{\mathbb Q}$, $\sim_{\mathbb R}$) and
$\equiv$ (resp.~$\equiv_w$) mean
the linear equivalence (resp.~$\mathbb Q$-linear equivalence,
$\mathbb R$-linear equivalence) and the numerical equivalence (resp.~weak numerical equivalence) on divisors (cf.~\cite[Definition 2.2]{MZ18}).

\item
A surjective morphism $f: X \to X$ is $q$-{\it polarized} (or just polarized) if
$f^*H \sim qH$ for some ample divisor $H$ and integer
$q > 1$ (or equivalently if $f^*B \equiv qB$
for some big $\R$-divisor $B$; cf.~\cite[Proposition 3.6]{MZ18}).

\item
A surjective morphism $f: X \to X$ is {\it int-amplified} if
$f^*L-L=H$ for some ample divisors $L$ and $H$
(or equivalently if $f^*L-L=H$ for some big $\R$-divisors $L$ and $H$; cf.~\cite[Theorem 1.1]{Men20}).

\item
A finite surjective morphism is {\it quasi-\'etale} if it is
\'etale in codimension one.

\item
A normal projective variety $X$ over $ \overline{K}$ is {\it $Q$-abelian}, if there is a finite quasi-\'etale
morphism $A \to X$ from an abelian variety $A$.
A normal projective variety $X$ over $K$ is $Q$-abelian if so is $X_{ \overline{K}}$.

\item For a projective variety $X$ over $ \overline{K}$, $\NS(X):=\Pic(X)/\Pic^0(X)$ denotes
the {\it N\'eron-Severi} group of $X$.
Let $\mathbb K= \mathbb Q, \mathbb R$ or $\mathbb C$.
Set $\NS(X)_{\mathbb K} := \NS(X) \otimes_{\Z} {\mathbb K}$.
Set $\N^1(X) :=\NS(X)_{\mathbb R}$ and
$\rho(X)=\dim_{\mathbb R} \N^1(X)$.
The number $\rho(X)$ is called the \textit{Picard number of $X$}.
Denote by $\Nef(X)$ the cone of nef divisors in $\N^1(X)$.
Denote by $\PE^1(X)$ the cone of pseudo-effective divisors in $\N^1(X)$.

\item Let $\mathbb K= \mathbb Q, \mathbb R$ or $\mathbb C$.
For a $\mathbb K$-linear endomorphism $\phi: V \to V$ on a $\mathbb K$-vector space $V$,
$\rho(\phi)$ denotes the {\it spectral radius} of $f$, that is,
the maximum of absolute values of eigenvalues (in $\C$) of $\phi$.

\item
Let $f: X \to X$ be a surjective endomorphism of a projective variety $X$. Then it is known that
the first dynamical degree $\delta(f)$ is equal to
$\rho(f^*|_{\NS(X)_{\C}})$.

\item Let $f$, $g$ and $h$ be  $\mathbb R$-valued functions on a domain.
Denote $f = g + O(h)$ if there is a positive constant $C$ such that
$|f-g| \leq C |h|$.
In particular,
denote $f=g + O(1)$ if there is a positive constant $C$ such that
$|f-g| \leq C$.

\item Let $X$ be a projective variety.
For an $\mathbb R$-Cartier $\mathbb R$-divisor $D$ on $X$,
we can attach a function $h_D: X(\overline{K}) \to \mathbb R$ called logarithmic Weil height function.
The function $h_D$ is called the \textit{height function associated to $D$}.
Note that to construct $h_{D}$ from $D$, we need additional information, for example, an adelic metric on the associate line bundle if $D$ is integral,
but the resulting function is unique up to bounded functions.
For definition and properties of height functions, see e.g.~\cite[Part B]{HS00} or \cite[Chapter 3]{Lan83}.

\item Let $X$ be a normal projective variety and $f$ a surjective endomorphism on $X$.
Fix an ample height function $h_H \geq 1$.
Then, for every $x \in X(\overline{K})$, the limit
$$\alpha_f(x)=\lim_{n \to +\infty} h_H(f^n(x))^{1/n}$$
exists and is independent of the choice of ample height function;
we call $\alpha_f(x)$ the \textit{arithmetic degree of $f$ at $x$}.
(cf.~\cite{KS16a}, \cite[Theorem 3 (a)]{KS16b} for details).

\item Let $f:X \to X$ be a surjective endomorphism on a projective variety $X$ and $D$ an $\mathbb R$-Cartier divisor on $X$.
Take a height function $h_D$ associated to $D$.
Fix an ample Cartier divisor $H$ on $X$ and an associated height function $h_H \geq 1$.
\begin{itemize}
\item[(1)]
Assume that $f^*D \sim_{\mathbb R} \lambda D$ with $\lambda >1$.
Then, for any $x \in X(\overline K)$, the limit
$$\hat h_{D,f}(x)=\lim_{n \to +\infty} \frac{h_D(f^n(x))}{\lambda^n}$$
converges and satisfies
$\hat h_{D,f}=h_D+O(1)$ (cf.~\cite[Theorem 1.1]{CS93}).
\item[(2)]
Assume that $f^*D \equiv \lambda D$ with $\lambda >\sqrt{\delta_f}$.
Then, for any $x \in X(\overline K)$, the limit
$$\hat h_{D,f}(x)=\lim_{n \to +\infty} \frac{h_D(f^n(x))}{\lambda^n}$$
converges and satisfies
$\hat h_{D,f}=h_D+O(\sqrt{h_H})$ (cf.~\cite[Theorem 5]{KS16a}).
\end{itemize}

\item Given a variety $X$ and $d>0$, set
$$X(d)=\bigcup_{K \subseteq L \subseteq \overline K, \, [L:K] \leq d} X(L).$$

\item For an abelian group $G$,
$\Tor(G)$ denotes the set of torsion elements of $G$.

\item Let $A$ be an abelian variety over $ \overline{K}$.

\begin{itemize}
\item[(1)] For a Cartier divisor $D$ on $A$, let $\varphi_{D} \colon A \longrightarrow \hat{A} ; x \mapsto [T_{x}^{*}D-D]$,
where $\hat{A}$ is the dual abelian variety and $T_{x}$ is the translation by $x$.
Note that $D \in \Pic^{0}(A)$ if and only if $\varphi_{D}=0$ (this is also adopted by Mumford as the definition of $\Pic^{0}(A)$ in his book).
We get an $\R$-linear map $\NS(A)_{\R} \longrightarrow \Hom(A, \hat{A}) {\otimes}_{\Z}\R$.
Note that if $D$ is ample, then $\varphi_{D}$ is an isogeny.

\item[(2)] For an ample divisor $H$ on $A$,
set $$\Phi^{H} \colon \NS(A)_{\R} \longrightarrow \End(A)_{\R} ; D \mapsto \Phi^{H}_{D}=\varphi_{H}^{-1}\circ \varphi_{D}.$$

\item[(3)]
Denote the {\it Rosati involution} with respect to an ample divisor $H$ by
$$i_{H} \colon \End(A)_{\R} \longrightarrow \End(A)_{\R} ; \alpha \mapsto \varphi_{H}^{-1}\circ \hat{\alpha}\circ \varphi_{H}$$ where $\hat{\alpha}$ is the dual of $\alpha$ in $\End(\hat{A})_{\R}$.
\end{itemize}

\item For a generically finite surjective morphism
$\pi: X \to Y$ between varieties over $\overline{K}$, denote by
$\Exc(\pi)$ the {\it $\pi$-exceptional locus}, i.e., the union of curves in $X$ contracted by $\pi$.
\end{itemize}
\end{setup}

The following lemma follows from the product formula (cf.~\cite{DN11} and \cite{Tru20}).

\begin{lem}\label{lem_dyn_deg_rel}
Let $\pi: X \dashrightarrow Y$ be a dominant rational map between projective varieties,
and $f : X \to X$, $g: Y \to Y$ surjective morphisms such that
$\pi \circ f = g \circ \pi$. Then we have:
\begin{itemize}
\item[(1)]
$\delta_f \ge \delta_g$.
\item[(2)]
If $\dim (X) = \dim (Y)$, then $\delta_f = \delta_g$.
\end{itemize}
\end{lem}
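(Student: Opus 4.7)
This is the first-dynamical-degree case of the well-known comparison of dynamical degrees under dominant rational semi-conjugacy; in the algebraic setting it is established by Truong \cite{Tru15}. Since both $f$ and $g$ are morphisms and we only need $\delta_{1}$, I would give a self-contained argument via Perron--Frobenius eigenvectors on the nef cones, transferred across $\pi$.

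\emph{Setup.} I would first pick a smooth projective variety $Z$ and a birational morphism $\sigma\colon Z \to X$ such that $\tilde\pi := \pi \circ \sigma\colon Z \to Y$ is a morphism. Because $g$ is everywhere defined, the rational maps $\pi$ and $\pi \circ f^{n} = g^{n}\circ\pi$ have the same indeterminacy locus, so the same $\sigma$ resolves all of them; concretely, $\pi \circ f^{n} \circ \sigma = g^{n} \circ \tilde\pi$ as morphisms $Z \to Y$ for every $n \ge 0$. Define numerical operators through this roof by $\pi^{*} := \sigma_{*}\circ\tilde\pi^{*}\colon \N^{1}(Y)_{\R} \to \N^{1}(X)_{\R}$ and $\pi_{*} := \tilde\pi_{*}\circ\sigma^{*}\colon \N^{1}(X)_{\R} \to \N^{1}(Y)_{\R}$. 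Using the projection formula for $\sigma$ and $\tilde\pi$, the semi-conjugacy identity $g\circ\tilde\pi = \pi\circ f\circ\sigma$, and proper base change for the Cartesian square built from $f$ and $\sigma$, one verifies the functoriality relations $f^{*}\pi^{*} = \pi^{*}g^{*}$ and $g^{*}\pi_{*} = \pi_{*}f^{*}$ on $\N^{1}_{\R}$. This is the main technical point of the proof.

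\emph{Part (1).} By the preliminaries, $\delta_{g} = \rho(g^{*}|_{\N^{1}(Y)_{\R}})$. The linear map $g^{*}$ preserves the closed, pointed, generating convex cone $\Nef(Y)$, so Perron--Frobenius/Krein--Rutman yields a nonzero class $\mathbf v \in \Nef(Y)$ with $g^{*}\mathbf v = \delta_{g}\mathbf v$. Functoriality then gives $f^{*}(\pi^{*}\mathbf v) = \delta_{g}\,\pi^{*}\mathbf v$. The class $\pi^{*}\mathbf v$ is nonzero: surjectivity of $\tilde\pi$ makes $\tilde\pi^{*}$ injective, so $\tilde\pi^{*}\mathbf v \ne 0$ is nef on $Z$; and any nef class on $Z$ supported on the $\sigma$-exceptional locus must vanish (the $\sigma$-exceptional intersection form is negative definite of M-matrix type, forcing such a class to be simultaneously nef and anti-effective, hence zero). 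Therefore $\delta_{g}$ is an eigenvalue of $f^{*}|_{\N^{1}(X)_{\R}}$, giving $\delta_{g} \le \delta_{f}$.

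\emph{Part (2).} Assume $\dim X = \dim Y$, so $\tilde\pi$ is generically finite of some positive degree. I would run the dual argument: take a Perron--Frobenius eigenvector $\mathbf w \in \Nef(X)$ of $f^{*}$ with eigenvalue $\delta_{f}$. By functoriality, $g^{*}(\pi_{*}\mathbf w) = \delta_{f}\,\pi_{*}\mathbf w$; the nonvanishing of $\pi_{*}\mathbf w = \tilde\pi_{*}\sigma^{*}\mathbf w$ follows from injectivity of $\sigma^{*}$ (as $\sigma$ is birational) and the identity $\tilde\pi_{*}\tilde\pi^{*} = \deg(\tilde\pi)\cdot\id$ on $\N^{1}(Y)_{\R}$. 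Hence $\delta_{f} \le \delta_{g}$, and combined with Part (1), $\delta_{f} = \delta_{g}$. The hard part throughout is the functoriality in the Setup: it is conceptually clear from the commuting diagram but requires either a careful direct base-change computation or appeal to the general framework of \cite{Tru15}; everything else is elementary Perron--Frobenius plus standard positivity.
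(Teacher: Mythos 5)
The paper does not actually prove this lemma: it is dispatched in one line as a consequence of Truong's product formula for relative dynamical degrees \cite{Tru15} (for $p=1$ the formula gives $\delta_f=\max\{\delta_g,\delta_1(f|\pi)\}\ge\delta_g$, with no nontrivial relative degree left when $\dim X=\dim Y$). Your self-contained eigenvector-transfer argument is therefore a genuinely different route, and its overall architecture is sound; but the step you yourself flag as ``the main technical point'' is where the entire content of the lemma lives, and the justification you sketch for it does not work. The square
$\begin{smallmatrix} W & \to & Z\\ \downarrow & & \downarrow\sigma\\ X & \xrightarrow{\;f\;} & X\end{smallmatrix}$
obtained by base-changing $\sigma$ along $f$ is \emph{not} the diagram in which your operators live: $f^{*}\sigma_{*}\tilde\pi^{*}=\sigma_{*}\tilde\pi^{*}g^{*}$ is exactly the statement $(\pi\circ f)^{*}=f^{*}\circ\pi^{*}$ for the composite of a morphism with a rational map, and such functoriality of pullback is precisely what fails in general for rational maps (it is the source of non-algebraic-stability phenomena). ``Proper base change'' does not apply because the relevant square is not Cartesian: the honest fiber product acquires extra components over $f^{-1}(I(\pi))$. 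The identity \emph{is} true here, but for reasons you do not invoke: one must first reduce to $X,Y$ normal (so $I(\pi)$ has codimension $\ge 2$) and use that a surjective endomorphism of a normal projective variety is finite, so that $f^{-1}(I(\pi))$ still has codimension $\ge 2$ and carries no divisorial correction terms; the multiplicities then match by a chain-rule computation at general points of the components of $f^{-1}(\overline{\pi^{-1}(D)})$. Without this, Part (1) as written (and a fortiori the pushforward identity $g^{*}\pi_{*}=\pi_{*}f^{*}$ in Part (2), which is even more delicate) is unproved. Two smaller points: your nonvanishing argument for $\sigma_{*}\tilde\pi^{*}\mathbf v$ via ``M-matrix negativity'' is loose (the clean argument is that a nef class $N$ with $\sigma_{*}N=0$ satisfies $N\cdot(\sigma^{*}H)^{\dim Z-1}=0$ for $H$ ample, forcing $N\equiv 0$), and $\tilde\pi_{*}\tilde\pi^{*}=\deg(\tilde\pi)\cdot\mathrm{id}$ shows surjectivity of $\tilde\pi_{*}$, not nonvanishing of $\tilde\pi_{*}\sigma^{*}\mathbf w$ for your particular $\mathbf w$ (again use intersection with powers of the big nef class $\tilde\pi^{*}A$).

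If you want to avoid the functoriality issue entirely, the standard elementary route --- and the one this paper uses in the closely related Lemmas \ref{lem_surj}, \ref{lem_gen_fin} and \ref{lem_rat} --- is to pass to the graph $\Gamma\subseteq X\times Y$ with projections $p_{1}$ (birational) and $p_{2}$ (surjective; generically finite when $\dim X=\dim Y$) and the induced endomorphism $h=(f\times g)|_{\Gamma}$. Since $p_{i}\circ h^{n}=(\cdot)^{n}\circ p_{i}$ are genuine morphisms, the projection formula gives exact identities such as $(h^{n})^{*}p_{1}^{*}H\cdot(p_{1}^{*}H)^{m-1}=(f^{n})^{*}H\cdot H^{m-1}$, and the fact that dynamical degrees can be computed against any big nef class yields $\delta_{h}=\delta_{f}$, $\delta_{h}\ge\delta_{g}$ in general and $\delta_{h}=\delta_{g}$ when $p_{2}$ is generically finite. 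This replaces eigenvector transport by growth of intersection numbers and needs no exact functoriality of $\pi^{*}$.
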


The following two lemmas enable us to replace a given endomorphism by its positive power or direct summands, respectively.

\begin{lem}\label{lem_end_power}
Let $f:X\to X$ be a surjective endomorphism of a projective variety.
Take a positive integer $N$.
Then $Z_f=Z_{f^N}$.
In particular,
Conjecture \ref{conj_zf} holds for $f$ if and only if it holds for $f^N$.
\end{lem}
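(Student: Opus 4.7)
The plan is to reduce the claim $Z_f = Z_{f^N}$ to the two elementary identities
\[
\delta_{f^N} = \delta_f^{\,N}, \qquad \alpha_{f^N}(x) = \alpha_f(x)^{N} \text{ for every } x \in X(\overline K),
\]
both of which should drop out of the defining limits simply by passing to the subsequence of indices divisible by $N$. Given these two identities, the strict inequality $\alpha_f(x) < \delta_f$ is equivalent, by raising to the $N$-th power, to $\alpha_{f^N}(x) < \delta_{f^N}$, so $Z_f$ and $Z_{f^N}$ coincide as subsets of $X(\overline K)$.

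For the dynamical degree, I would fix an ample divisor $H$ and use the definition
\[
\delta_f = \lim_{n\to\infty} \bigl((f^n)^{*}H \cdot H^{\dim(X)-1}\bigr)^{1/n}.
\]
Applying this limit along the subsequence $n = Nm$ and taking an $N$-th root gives the same value; evaluating the same formula with $f$ replaced by $f^N$ shows $\delta_{f^N}^{1/N} = \delta_f$. For the arithmetic degree I would use the definition
\[
\alpha_f(x) = \lim_{n\to\infty} h_H(f^n(x))^{1/n},
\]
which the preliminaries already guarantee converges. Restricting again to $n = Nm$ yields $\alpha_f(x)^N = \lim_{m\to\infty} h_H(f^{Nm}(x))^{1/m} = \alpha_{f^N}(x)$.

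Putting the two identities together proves $Z_f = Z_{f^N}$. For the ``in particular'' clause, the defining condition $[L:K]\le d$ in $Z_f(d)$ depends only on the point $x$ and not on the endomorphism, so $Z_f(d) = Z_{f^N}(d)$ as well; hence the non-density statement in Conjecture \ref{conj_zf} holds for $f$ if and only if it holds for $f^N$.

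There is no real obstacle here: the entire argument is a bookkeeping exercise on limits, and the only thing one must be careful about is invoking the already-established existence of the arithmetic-degree limit, so that the subsequence manipulation is legitimate rather than needing an independent justification.
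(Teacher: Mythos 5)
Your proposal is correct and matches the paper's proof, which likewise reduces the statement to the identities $\delta_{f^N}=\delta_f^N$ and $\alpha_{f^N}(x)=\alpha_f(x)^N$; your subsequence justification of these identities is exactly the intended (and valid) argument.
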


\begin{proof}
The lemma follows from that $\delta_{f^N}=\delta_f^N$ and $\alpha_{f^N}(x)=\alpha_f^N(x)$.
\end{proof}

\begin{lem}\label{lem_prod}
Let $X, Y$ be projective varieties and $f, g$ surjective endomorphisms on $X, Y$, respectively.
Then
$$
Z_{f \times g}=
\begin{cases}
Z_f \times Y \ \ \ \mathrm{if}\ \delta_f > \delta_g, \\
Z_f \times Z_g
\ \ \ \mathrm{if}\ \delta_f = \delta_g, \\
X \times Z_g \ \ \ \mathrm{if}\ \delta_f < \delta_g.
\end{cases}
$$
In particular, Conjecture \ref{conj_zf} holds if it holds for $f$ and $g$.
\end{lem}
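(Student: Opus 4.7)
The plan is to reduce the set-theoretic equality to two multiplicative formulas for the product, namely
\[
\delta_{f\times g} \;=\; \max(\delta_f,\delta_g),\qquad \alpha_{f\times g}(x,y)\;=\;\max\bigl(\alpha_f(x),\alpha_g(y)\bigr),
\]
and then do a three-way case analysis. For the first formula, the lower bound $\delta_{f\times g}\ge \max(\delta_f,\delta_g)$ follows immediately from Lemma \ref{lem_dyn_deg_rel} applied to the two projections $\pr_1\colon X\times Y\to X$ and $\pr_2\colon X\times Y\to Y$, which semi-conjugate $f\times g$ to $f$ and to $g$ respectively. The opposite inequality will be obtained by taking the ample divisor $H=\pr_1^*H_X+\pr_2^*H_Y$ on $X\times Y$ and expanding $((f\times g)^n)^*H\cdot H^{\dim X+\dim Y-1}$ via the projection formula; the non-zero terms split as a product of an intersection number on $X$ (growing like $\delta_f^n$) and one on $Y$ (growing like $\delta_g^n$), so the whole expression grows like $\max(\delta_f,\delta_g)^n$.

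For the second formula, I use that with respect to the same ample $H$ we have
\[
h_H(x,y)\;=\;h_{H_X}(x)+h_{H_Y}(y)+O(1),
\]
so after choosing $h_{H_X},h_{H_Y}\ge 1$ and hence $h_H\ge 1$,
\[
h_H\bigl((f\times g)^n(x,y)\bigr)^{1/n}\;=\;\bigl(h_{H_X}(f^n(x))+h_{H_Y}(g^n(y))+O(1)\bigr)^{1/n}.
\]
The elementary identity $\lim_n (a_n+b_n)^{1/n}=\max(\lim a_n^{1/n},\lim b_n^{1/n})$ for sequences bounded below by $1$ then yields $\alpha_{f\times g}(x,y)=\max(\alpha_f(x),\alpha_g(y))$.

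With these in hand, the three cases are immediate. Suppose $\delta_f>\delta_g$. Since the Kawaguchi--Silverman inequality gives $\alpha_g(y)\le \delta_g<\delta_f=\delta_{f\times g}$, the condition $\alpha_{f\times g}(x,y)<\delta_{f\times g}$ collapses to $\alpha_f(x)<\delta_f$, whence $Z_{f\times g}=Z_f\times Y$. The case $\delta_f<\delta_g$ is symmetric. When $\delta_f=\delta_g=\delta$, the condition $\max(\alpha_f(x),\alpha_g(y))<\delta$ is equivalent to the conjunction $\alpha_f(x)<\delta_f$ and $\alpha_g(y)<\delta_g$, so $Z_{f\times g}=Z_f\times Z_g$.

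For the final assertion about Conjecture \ref{conj_zf}, observe that a point of $X\times Y$ defined over $L$ with $[L:K]\le d$ has both coordinates defined over $L$, hence $Z_{f\times g}(d)\subseteq Z_f(d)\times Y(d)$ in the first case, $Z_{f\times g}(d)\subseteq Z_f(d)\times Z_g(d)$ in the second, and $Z_{f\times g}(d)\subseteq X(d)\times Z_g(d)$ in the third. In each situation, assuming Conjecture \ref{conj_zf} for $f$ or $g$ provides a proper closed subset containing one of the factors, whose product with the other factor is a proper closed subset of $X\times Y$ containing $Z_{f\times g}(d)$. There is no serious obstacle here; the only mildly technical point is the upper bound $\delta_{f\times g}\le \max(\delta_f,\delta_g)$, and even that is just a careful expansion via the projection formula.
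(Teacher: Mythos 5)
Your proposal is correct and follows essentially the same route as the paper: both reduce the statement to the identities $\delta_{f\times g}=\max(\delta_f,\delta_g)$ and $\alpha_{f\times g}(x,y)=\max\bigl(\alpha_f(x),\alpha_g(y)\bigr)$ (the paper cites the product formula and a lemma of Sano where you supply short direct arguments) and then perform the same three-way case analysis. No gaps.
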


\begin{proof}
We may assume that $\delta_f \geq \delta_g$, without loss of generality.
We can easily show (e.g. by the product formula) that $\delta_{f \times g}=\max ( \delta_f, \delta_g ) = \delta_f$.

Let $p: X \times Y \to X$, $q: X \times Y \to Y$ be the projections.
Take ample divisors $H_X, H_Y$ on $X,Y$, respectively.
Set $H=p^*H_X+q^*H_Y$. Then
\begin{align*}
\alpha_{f \times g}(x,y)
&=\lim_{n \to +\infty} h_{H}((f \times g)^n(x,y))^{1/n} \\
&=\lim_{n \to +\infty} \left \{ h_{H_X}(f^n(x))+h_{H_Y}(g^n(y)) \right \}^{1/n} \\
&=\max ( \alpha_f(x), \alpha_g(y) ).
\end{align*}
For the last equality, see \cite[Lemma 3.1]{San20}.
So $(x,y) \in Z_{f \times g}$ if and only if $\max $ $(\alpha_f(x), \alpha_g(y) )$ $<$
$\max ( \delta_f, \delta_g )$.

If $\delta_f > \delta_g$, then $(x,y) \in Z_{f \times g}$ if and only if $\alpha_f(x)<\delta_f$.
Hence $Z_{f \times g} =Z_f \times Y$.
If $\delta_f=\delta_g$, then $(x,y) \in Z_{f \times g}$ if and only if $\alpha_f(x) <\delta_f$ and $\alpha_g(y) < \delta_g$.
So $Z_{f \times g}=Z_f \times Z_g$.
\end{proof}

The following is easy but fundamental for the reduction to invariant subvarieties.

\begin{lem}\label{lem_subvar}
Let $f: X \to X$ be a surjective endomorphism on a projective variety $X$ and $W \subseteq X$ an $f$-invariant closed subvariety.
Then $\alpha_{f|_W}(x)=\alpha_f(x)$ for any $x \in W(\overline K)$.
\end{lem}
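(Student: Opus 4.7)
The plan is to reduce the statement to the functoriality of height functions under closed immersions, combined with the fact that the arithmetic degree is independent of the choice of ample height function (see \ref{n:2.1}). Let $i\colon W \hookrightarrow X$ denote the closed immersion and fix an ample Cartier divisor $H$ on $X$ with an associated height function $h_H \geq 1$. Then $H_W := i^{*}H$ is ample on $W$, so a height function $h_{H_W} \geq 1$ associated to $H_W$ can be used to compute $\alpha_{f|_W}$.

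First I would invoke the standard functoriality property for heights (cf.~\cite[Part B]{HS00}): since heights associated to a divisor are unique up to bounded functions, we have $h_{H_W}(x) = h_H(i(x)) + O(1)$ for $x \in W(\overline{K})$. Next, since $W$ is $f$-invariant, $i \circ (f|_W)^n = f^n \circ i$ for every $n \geq 0$, and so
$$h_{H_W}((f|_W)^n(x)) = h_H(f^n(x)) + O(1) \qquad \text{for } x \in W(\overline{K}), \ n \geq 0.$$
Taking $n$-th roots and letting $n \to \infty$, the additive $O(1)$ term becomes negligible: if $h_H(f^n(x))$ is bounded (the $f$-preperiodic case) then both sides yield limit $1$, and otherwise $h_H(f^n(x)) \to \infty$ and the limits of $n$-th roots agree. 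This gives $\alpha_{f|_W}(x) = \alpha_f(x)$.

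The step that requires the most care, though it is not genuinely difficult, is verifying that the restriction $h_H \circ i$ is, up to a bounded function, a legitimate height function associated to $H_W$ which is bounded below by $1$. This is a standard consequence of the functoriality of heights under morphisms, together with the fact that replacing an ample height function by one that differs from it by an additive constant does not affect the arithmetic degree. Once this is in place, the lemma reduces to the short computation above.
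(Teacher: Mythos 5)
Your proposal is correct and follows exactly the paper's argument: restrict the ample divisor $H$ to $W$, use functoriality of heights so that $h_{H|_W} = h_H \circ \iota + O(1)$, and note that the bounded discrepancy disappears upon taking $n$-th roots in the limit defining the arithmetic degree. The paper's proof is the same, just stated more tersely.
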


\begin{proof}
Let $\iota: W \to X$ be the inclusion.
For an ample divisor $H$ on $X$, the restriction $H|_W$ is also ample and $h_{H|_W}=h_H \circ \iota$.
So the assertion holds.
\end{proof}

The following lemma is useful when dealing with equivariant fibrations or covers.

\begin{lem}\label{lem_surj}
Let $X, Y$ be projective varieties and $f, g$ surjective endomorphisms on $X, Y$, respectively.
Let $\pi: X \to Y$ be a surjective morphism such that $\pi \circ f = g \circ \pi$. Then:
\begin{itemize}
\item[(1)]
$\alpha_f(x) \geq \alpha_g(\pi(x))$ for any $x \in X(\overline{K})$.
\item[(2)]
Assume that $\delta_f=\delta_g$.
Then
$\pi(Z_f) \subseteq Z_g$.
Moreover, Conjecture \ref{conj_zf} holds for $f$ if it holds for $g$.
\item[(3)]
Assume that $\pi$ is finite.
Then $\delta_f = \delta_g$, $\alpha_f(x) = \alpha_g(\pi(x))$,  and $\pi(Z_f)= Z_g$.
Moreover, Conjecture \ref{conj_zf} holds for $f$ if and only if it holds for $g$.
\end{itemize}
\end{lem}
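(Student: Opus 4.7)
The plan is to exploit the functoriality of Weil heights, $h_{D}\circ \pi = h_{\pi^{*}D} + O(1)$, together with the semi-conjugacy $\pi \circ f^{n} = g^{n}\circ \pi$. For (1), fix ample divisors $H$ on $X$ and $H'$ on $Y$ with $h_H, h_{H'} \ge 1$. The pullback $\pi^{*}H'$ is only nef, but for $m \gg 0$ the divisor $mH - \pi^{*}H'$ is ample, yielding
$$h_{H'}(\pi(x)) = h_{\pi^{*}H'}(x) + O(1) \le m\,h_{H}(x) + C.$$
Substituting $f^{n}(x)$ for $x$ and using $\pi \circ f^{n} = g^{n}\circ \pi$ gives $h_{H'}(g^{n}\pi(x)) \le m\,h_{H}(f^{n}(x)) + C$; taking $n$-th roots and letting $n \to \infty$ produces $\alpha_{g}(\pi(x)) \le \alpha_{f}(x)$.

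For (2), when $\delta_{f} = \delta_{g}$ and $x \in Z_{f}$, part (1) gives $\alpha_{g}(\pi(x)) \le \alpha_{f}(x) < \delta_{g}$, so $\pi(x) \in Z_{g}$. Since $\pi$ is defined over $K$, it sends $X(L)$ into $Y(L)$, so $\pi(Z_{f}(d)) \subseteq Z_{g}(d)$. If Conjecture \ref{conj_zf} holds for $g$, then the closure $W := \overline{Z_{g}(d)}$ is a proper closed subset of $Y_{\overline{K}}$, and by surjectivity of $\pi$ the preimage $\pi^{-1}(W)$ is a proper closed subset of $X_{\overline{K}}$ containing $Z_{f}(d)$.

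For (3), finiteness of $\pi$ forces $\dim X = \dim Y$, so Lemma \ref{lem_dyn_deg_rel}(2) yields $\delta_{f} = \delta_{g}$. The crucial upgrade is that $\pi^{*}H'$ is now ample, hence
$$\alpha_{f}(x) = \lim_{n \to \infty} h_{\pi^{*}H'}(f^{n}(x))^{1/n} = \lim_{n \to \infty} h_{H'}(g^{n}\pi(x))^{1/n} = \alpha_{g}(\pi(x)),$$
which combined with surjectivity of $\pi$ gives $\pi(Z_{f}) = Z_{g}$. For the equivalence of Conjecture \ref{conj_zf} for $f$ and $g$, one direction is (2); conversely, if $y \in Z_{g}(d)$ and $x \in \pi^{-1}(y)$, the tower $K \subseteq K(y) \subseteq K(x)$ with $[K(x):K(y)] \le \deg(\pi)$ yields $\pi^{-1}(Z_{g}(d)) \subseteq Z_{f}(\deg(\pi)\cdot d)$. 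Non-density of the latter provides a proper closed $V \subsetneq X_{\overline{K}}$ containing $Z_{f}(\deg(\pi)\cdot d)$, and then $\pi(V)$, being closed (since $\pi$ is proper) of dimension $\le \dim Y - 1$, is a proper closed subset of $Y_{\overline{K}}$ containing $Z_{g}(d)$. The main subtlety is the asymmetry in (1): without finiteness, $\pi^{*}$ of an ample divisor is only nef, which is what forces the possibly strict inequality in (1) and is exactly what is repaired in (3).
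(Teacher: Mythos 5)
Your proposal is correct and follows essentially the same route as the paper: compare heights via an ample divisor on $X$ dominating $\pi^{*}$ of an ample divisor on $Y$ for (1), push forward $Z_f(d)$ for (2), and for (3) use ampleness of $\pi^{*}H'$ plus the bounded-degree behaviour of fibers of the finite map (your explicit $d'=\deg(\pi)\cdot d$ is exactly the paper's "there is some $d'>0$ such that $\pi^{-1}(Y(d))\subseteq X(d')$"). No gaps.
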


\begin{proof}
(1)
Take ample divisors $H, A$ on $X, Y$ respectively such that $H - \pi^*A$ is ample.
Then we can take associated height functions $h_H, h_A \geq 1$ as satisfying
$Ch_H \geq  h_A \circ \pi$ for some $C>0$.
We have
$$h_A(g^n(\pi(x)))^{1/n}=h_A(\pi(f^n(x)))^{1/n} \leq \left(C h_H(f^n(x)) \right)^{1/n}.$$ Letting $n \to +\infty$, we obtain $\alpha_g(\pi(x)) \leq \alpha_f(x)$.

(2) Clearly $\pi(Z_f) \subseteq Z_g$ by (1).
Take any $d>0$.
Then $\pi(Z_f(d)) \subseteq Z_g(d)$.
So the non-density of $Z_g(d)$ implies the non-density of $Z_f(d)$.

(3) By Lemma \ref{lem_dyn_deg_rel}, $\delta_f = \delta_g$.
Since $\pi$ is finite, $\pi^*A$ is also ample.
This implies that $\alpha_f(x) = \alpha_g(\pi(x))$ for any $x \in X(\overline{K})$.
So $\pi(Z_f)=Z_g$.

Take any $d>0$.
Since $\pi$ is a finite cover, if we take $d' = d\deg \pi$, then
$\pi^{-1}(Y(d)) \subseteq X(d')$. 
So $\pi^{-1}(Z_g(d)) \subseteq Z_f(d')$.
Therefore the non-density of $Z_f(d')$ implies the non-density of $Z_g(d)$.
\end{proof}

Below is a description of the closure of an orbit.

\begin{lem}\label{lem_orb}
Let $f: X \to X$ be a surjective endomorphism on a projective variety $X$ over $ \overline{K}$ and $x \in X( \overline{K})$.
Then for some $s \ge 1$ and $t \ge 1$,
$$\overline{O_f(x)}=\{x\}\cup \{f(x)\} \cup \cdots \cup \{f^{t-1}(x)\}\cup \bigcup_{i=0}^{s-1}f^i(\overline{O_{f^s}(f^t(x))}),$$
which is a union of irreducible closed subsets and $\overline{O_{f^s}(f^t(x))}$ is $f^s$-invariant.
\end{lem}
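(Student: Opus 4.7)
The plan is to exploit the Noetherian property: the descending chain $\overline{O_f(x)}\supseteq\overline{O_f(f(x))}\supseteq\cdots$ stabilizes at some $\overline{O_f(f^t(x))}$, and the stable limit $Y$ carries an $f$-invariant structure on which $f$ cyclically permutes the irreducible components. Concretely, set $V^{(k)}:=\overline{O_f(f^k(x))}$. From $O_f(f^k(x))=\{f^k(x)\}\cup O_f(f^{k+1}(x))$ one gets $V^{(k)}=\{f^k(x)\}\cup V^{(k+1)}$, so the chain $V^{(0)}\supseteq V^{(1)}\supseteq\cdots$ is descending. Noetherianity of $X$ gives $t\ge 1$ with $V^{(t)}=V^{(t+1)}=\cdots$. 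Let $Y:=V^{(t)}$ and $y:=f^t(x)$. Since $X$ is projective, $f$ is closed, so $f(\overline{S})=\overline{f(S)}$ for any $S\subseteq X$; applied to $S=O_f(y)$ this yields $f(Y)=\overline{f(O_f(y))}=\overline{O_f(f(y))}=V^{(t+1)}=Y$. Hence $f|_Y\colon Y\to Y$ is surjective, and $V^{(0)}=\{x\}\cup\cdots\cup\{f^{t-1}(x)\}\cup Y$.

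Now decompose $Y=Y_1\cup\cdots\cup Y_m$ into irreducible components with $y\in Y_1$. Each $f(Y_i)$ is closed and irreducible, hence contained in some $Y_{\sigma(i)}$. From $Y=f(Y)=\bigcup_i f(Y_i)$ and irredundancy of the decomposition, every $Y_j$ coincides with some $f(Y_i)=Y_{\sigma(i)}$, so $\sigma$ is a surjection on the finite set $\{1,\ldots,m\}$, hence a permutation, and $f(Y_i)=Y_{\sigma(i)}$ for every $i$. Since $f^n(y)\in Y_{\sigma^n(1)}$ and $\overline{O_f(y)}=Y$, irredundancy again forces the $\sigma$-cycle of $1$ to be all of $\{1,\ldots,m\}$. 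Thus $\sigma$ is a single $m$-cycle; set $s:=m$ and relabel so that $\sigma(i)\equiv i+1\pmod s$. In particular $f^s$ fixes each $Y_i$ setwise.

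For the final decomposition, using that $f^i$ is closed and $f^i(O_{f^s}(y))=O_{f^s}(f^i(y))$,
\[
Y=\overline{O_f(y)}=\bigcup_{i=0}^{s-1}\overline{O_{f^s}(f^i(y))}=\bigcup_{i=0}^{s-1} f^i\bigl(\overline{O_{f^s}(y)}\bigr),
\]
with $f^i(\overline{O_{f^s}(y)})\subseteq Y_{i+1}$. Because $Y_{i+1}$ is an irreducible component and the right-hand side equals $Y$, each inclusion must be an equality; in particular $\overline{O_{f^s}(y)}=Y_1$ is irreducible, and it is $f^s$-invariant because $f^s(O_{f^s}(y))\subseteq O_{f^s}(y)$. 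Combining with $V^{(0)}=\{x\}\cup\cdots\cup\{f^{t-1}(x)\}\cup Y$ yields the claimed decomposition. The only non-formal step is producing the permutation $\sigma$; without $f(Y)=Y$ from the Noetherian stabilization one would only have $f(Y_i)\subseteq Y_{\sigma(i)}$ without equality and $\sigma$ might fail to be bijective, destroying the cyclic structure.
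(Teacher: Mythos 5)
Your proof is correct. It rests on the same underlying mechanism as the paper's — $f$ permutes the finitely many irreducible components of an orbit closure, so some power fixes them — but the bookkeeping is genuinely different. The paper works directly with $Z=\overline{O_f(x)}$: it picks an $f^s$-periodic irreducible component $Z_1$, takes $t$ minimal with $f^t(x)\in Z_1$, writes down the orbit decomposition, and then identifies $\overline{O_{f^s}(f^t(x))}$ with $Z_1$ by a dimension comparison (a closed subset of the irreducible $Z_1$ of the same dimension must equal $Z_1$). You instead first stabilize the descending chain $V^{(k)}=\overline{O_f(f^k(x))}$, which buys you the equality $f(Y)=Y$ on the tail $Y=V^{(t)}$; surjectivity then makes the induced map on components an honest permutation, density of $O_f(y)$ forces it to be a single $s$-cycle, and each $f^i(\overline{O_{f^s}(y)})$ is matched to a component directly, with no dimension count. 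Note that your $s$ and $t$ (number of components of $Y$, stabilization index) need not coincide with the paper's ($s$ a period of one component, $t$ the first entry time into it), but the lemma only asks for some such pair. Your closing remark is also apt: the paper's one-line assertion that some component satisfies $f^s(Z_1)=Z_1$ (rather than merely $f^s(Z_1)\subseteq Z_1$) is exactly the point your surjectivity step makes transparent.
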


\begin{proof}
Let $Z:=\overline{O_f(x)}$ which is $f$-invariant.
Since $Z$ has finitely many irreducible components,
we may assume $f^s(Z_1)=Z_1$ for for some $s>0$ and irreducible component $Z_1$ of $Z$.
Note that $y=f^t(x)\in Z_1$ for some $t>0$
(minimal).
Indeed, otherwise, we will have $Z=\overline{Z\backslash Z_1}$, a contradiction.
So $\overline{O_{f^s}(f^t(x))}\subseteq Z_1$.
Note that
$$O_f(x)=\{x\}\cup \{f(x)\} \cup \cdots \cup \{f^{t-1}(x)\}\cup \bigcup_{i=0}^{s-1}f^i(O_{f^s}(f^t(x))).$$
Since $f$ is a closed map,
$$Z=\{x\}\cup \{f(x)\} \cup \cdots \cup \{f^{t-1}(x)\}\cup \bigcup_{i=0}^{s-1}f^i(\overline{O_{f^s}(f^t(x))}).$$
Thus $\dim (\overline{O_{f^s}(f^t(x))})=\dim(Z_1)$, so $\overline{O_{f^s}(f^t(x))}=Z_1$ which is irreducible.
\end{proof}

\begin{lem}\label{lem_gen_fin}
Let $X,Y$ be projective varieties over $\overline{K}$ and $f,g$ surjective endomorphisms on $X,Y$, respectively.
Let $\pi: X \to Y$ be a generically finite surjective morphism such that $\pi \circ f=g \circ \pi$.
Then $\delta_f=\delta_g$ and $\alpha_f(x)=\alpha_g(\pi(x))$ for any $x \in X \setminus \Exc(\pi)$.
\end{lem}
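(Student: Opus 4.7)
The dynamical-degree equality $\delta_f=\delta_g$ is immediate from Lemma~\ref{lem_dyn_deg_rel}(2), since $\pi$ being generically finite forces $\dim X=\dim Y$. For arithmetic degrees, the direction $\alpha_f(x)\geq \alpha_g(\pi(x))$ for every $x\in X(\overline K)$ is already furnished by Lemma~\ref{lem_surj}(1), whose proof makes no finiteness assumption on $\pi$; only the reverse inequality on $X\setminus \Exc(\pi)$ requires genuine work.

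My strategy for the reverse inequality is a height comparison through $\pi^{*}A$. Fix ample divisors $H$ on $X$ and $A$ on $Y$. Since $\pi$ is generically finite and surjective, $\pi^{*}A$ is nef and big, and a standard fact (via the Stein factorization $\pi=\pi'\circ \beta$ with $\pi'$ finite, $\beta$ birational, together with Nakamaye's description of the augmented base locus for birational morphisms) gives $\mathbf{B}_{+}(\pi^{*}A)\subseteq \Exc(\pi)$. Kodaira's lemma then produces an integer $m\geq 1$ and an effective $\Q$-Cartier divisor $E$ with $m\pi^{*}A\sim H+E$ and $\Supp(E)\subseteq \Exc(\pi)$. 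Passing to heights using $h_{\pi^{*}A}=h_{A}\circ \pi+O(1)$ and the standard lower bound $h_{E}\geq -C$ off $\Supp(E)$ yields
\[
h_{H}(y)\leq m\,h_{A}(\pi(y))+O(1)\qquad (y\in X(\overline{K})\setminus \Exc(\pi)).
\]
Substituting $y=f^{n}(x)$ and invoking $\pi\circ f^{n}=g^{n}\circ \pi$, one obtains $h_{H}(f^{n}(x))\leq m\,h_{A}(g^{n}(\pi(x)))+O(1)$ for those iterates that remain outside $\Exc(\pi)$; taking $n$-th roots and letting $n\to \infty$ then delivers $\alpha_{f}(x)\leq \alpha_{g}(\pi(x))$.

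The main obstacle, and the step requiring most care, is controlling iterates of $x\in X\setminus \Exc(\pi)$ that enter $\Exc(\pi)$. I would use the fact that any surjective endomorphism of a projective variety is finite, so $f$ contracts no curves; thus, for every curve $C\subseteq \Exc(\pi)$, the image $f(C)$ is again a curve with $\pi(f(C))=g(\pi(C))$ a point, whence $\Exc(\pi)$ is $f$-invariant. Consequently, once $f^{N}(x)\in \Exc(\pi)$, the entire tail $\{f^{n}(x)\}_{n\geq N}$ lies in $\Exc(\pi)$. Since both $\alpha_{f}(\cdot)$ and $\alpha_{g}\circ \pi$ are tail invariants (unchanged by replacing $x$ by $f^N(x)$), the problem reduces to proving the identity at a point of a proper $f$-invariant closed subvariety $V\subseteq \Exc(\pi)$. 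I would close the argument by induction on $\dim X$: after replacing $f$ by a positive power (permitted by Lemma~\ref{lem_end_power}) to arrange that an irreducible component $V\subseteq \Exc(\pi)$ containing the tail is $f$-invariant, Lemma~\ref{lem_subvar} identifies the two arithmetic degrees with $\alpha_{f|_V}$ and $\alpha_{g|_{\pi(V)}}$, so the inductive hypothesis applies to $\pi|_V\colon V\to \pi(V)$ (with $\dim V<\dim X$). The base case $\dim X=1$ is vacuous since then $\pi$ is automatically finite, $\Exc(\pi)=\emptyset$, and the conclusion is covered by Lemma~\ref{lem_surj}(3).
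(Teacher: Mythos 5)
The dynamical-degree part and the inequality $\alpha_f(x)\ge\alpha_g(\pi(x))$ are fine, and your height comparison $h_H\le m\,h_A\circ\pi+O(1)$ off $\Supp(E)$ with $\Supp(E)\subseteq\Exc(\pi)$ is a legitimate (if heavier) alternative to what the paper does. The genuine gap is in your treatment of orbits meeting $\Exc(\pi)$. You prove only \emph{forward} invariance, $f(\Exc(\pi))\subseteq\Exc(\pi)$, which does nothing to stop the orbit of a point $x\notin\Exc(\pi)$ from entering $\Exc(\pi)$. What is actually needed — and what the paper uses, citing \cite[Lemma 7.3]{CMZ17} — is \emph{backward} invariance $f^{-1}(\Exc(\pi))\subseteq\Exc(\pi)$ (if $\pi(f(C))$ is a point then $\pi(C)\subseteq g^{-1}(\mathrm{pt})$ is finite since $g$ is a finite map, so $C$ is contracted). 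Backward invariance shows the entire orbit of $x\notin\Exc(\pi)$ stays off $\Exc(\pi)$, after which your height inequality applies to every iterate and the proof closes with no induction at all.

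Your fallback induction, as written, does not work and cannot be repaired in that form. An irreducible $f$-invariant $V\subseteq\Exc(\pi)$ is covered by $\pi$-contracted curves, so $\dim\pi(V)<\dim V$ and $\pi|_V\colon V\to\pi(V)$ is \emph{not} generically finite: the inductive hypothesis simply does not apply to it. Worse, the identity $\alpha_f(y)=\alpha_g(\pi(y))$ is generally false for $y\in\Exc(\pi)$ (blow up a $g$-fixed point of a surface and lift $f$ so that $f$ restricted to the exceptional $\mathbb P^1$ has degree $>1$: then $\alpha_f(y)>1=\alpha_g(\pi(y))$ for suitable $y$ on that curve), so the statement your induction is trying to establish on $\Exc(\pi)$ is not a theorem — indeed, the truth of the lemma for $x\notin\Exc(\pi)$ is exactly equivalent to the orbit never entering $\Exc(\pi)$. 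Two smaller points: the augmented-base-locus identification $\mathbf{B}_{+}(\pi^*A)=\Exc(\pi)$ and the Stein factorization step require normality, so you should first reduce to normal $X,Y$ via Lemma \ref{lem_surj}(3) as the paper does; and note the paper avoids your $\mathbf{B}_{+}$ machinery altogether by restricting to the orbit closure, where density of the orbit lets one choose a subsequence of iterates avoiding $\Supp(E)$ for an arbitrary Kodaira decomposition $\pi^*A\sim H+E$.
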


\begin{proof}
By Lemma \ref{lem_dyn_deg_rel},
$\delta_f = \delta_g$.
Taking normalization, we may assume $X$ and $Y$ are normal by Lemma \ref{lem_surj}(3).
Taking the Stein factoriazation of $\pi$ (cf.~\cite[Lemma 5.2]{CMZ20}),
we may assume that $\pi$ is a birational morphism.
Then $\Exc(\pi)$ is $f^{-1}$-invariant by \cite[Lemma 7.3]{CMZ20}.

Let $x \in X \setminus \Exc(\pi)$.
By Lemma \ref{lem_orb}, replacing $x$ by an $f$-iteration and $f$ by a positive power, we may assume that $W=\overline{O_f(x)}$ is an irreducible closed $f$-invariant subset.
Note that $W$ is not contained in $\Exc(\pi)$.
So $\pi|_W: W \to \pi(W)$ gives a birational morphism.
Since $\alpha_f(x) = \alpha_{f|_W}(x)$ by Lemma \ref{lem_subvar},
replacing $\pi: X \to Y$ by $\pi|_W : W \to \pi(W)$, we may assume that $O_f(x)$ is Zariski dense in $X$.

Take an ample divisor $A$ on $Y$.
Then $\pi^*A$ is big, so $\pi^*A \sim H + E$ for some ample divisor $H$ and an effective divisor $E$ on $X$.
Take height functions $h_H, h_A \geq 1$.
Then we have $h_H \leq h_A \circ \pi +O(1)$ on $X \setminus \Supp(E)$.
So $h_H(f^n(x)) \leq h_A(g^n(\pi(x)))+O(1)$ if $f^n(x) \not\in \Supp(E)$.
We can take a subsequence $\{ f^{n_k}(x) \}_k \subseteq X \setminus \Supp(E)$ since $O_f(x)$ is dense.
Then
$$\alpha_f(x)=\lim_{k \to +\infty} h_H(f^{n_k}(x))^{1/n_k} \leq \lim_{k \to \infty} h_A(g^{n_k}(\pi(x)))^{1/n_k}=\alpha_g(\pi(x)).$$
This together with Lemma \ref{lem_surj} imply
$\alpha_f(x) = \alpha_g(\pi(x))$.
\end{proof}

We can extend Lemma \ref{lem_surj} to rational fibrations.

\begin{lem}\label{lem_rat}
Let $X, Y$ be projective varieties and $f, g$ surjective endomorphisms on $X, Y$, respectively.
Let $\pi: X \dashrightarrow Y$ be a dominant rational map such that $\pi \circ f = g \circ \pi$.
\begin{itemize}
\item[(1)]
Assume $\delta_f = \delta_g$. If Conjecture \ref{conj_zf} holds for $g$, then so does for $f$.
\item[(2)]
Assume that $\dim (X)=\dim (Y)$.
Then $\delta_f=\delta_g$. Moreover, Conjecture \ref{conj_zf} holds for $f$ if and only if it holds for $g$.
\end{itemize}
\end{lem}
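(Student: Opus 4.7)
The plan is to pass to the graph of $\pi$ to replace the rational map by genuine morphisms equivariantly, then invoke Lemmas \ref{lem_surj} and \ref{lem_gen_fin}. Let $\Gamma\subseteq X\times Y$ be the closure of the graph of $\pi$, with projections $p\colon \Gamma\to X$ (a birational morphism) and $\tilde{\pi}\colon \Gamma\to Y$ (a surjective morphism). The equivariance $\pi\circ f=g\circ \pi$ forces $f\times g$ to preserve $\Gamma$, so it restricts to a surjective endomorphism $\tilde{f}\colon \Gamma\to\Gamma$ with $p\circ\tilde{f}=f\circ p$ and $\tilde{\pi}\circ\tilde{f}=g\circ\tilde{\pi}$. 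Applying Lemma \ref{lem_gen_fin} to $p$ gives $\delta_{\tilde{f}}=\delta_{f}$ together with $\alpha_{\tilde{f}}(\tilde{x})=\alpha_{f}(p(\tilde{x}))$ for every $\tilde{x}\in \Gamma\setminus\Exc(p)$.

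For part (1), under the hypothesis $\delta_{f}=\delta_{g}$ and Conjecture \ref{conj_zf} for $g$, Lemma \ref{lem_surj}(2) applied to $\tilde{\pi}$ immediately gives Conjecture \ref{conj_zf} for $\tilde{f}$. To transfer this to $f$, I would use that any $x\in Z_{f}(d)\setminus p(\Exc(p))$ has a unique lift $\tilde{x}\in \Gamma(d)\setminus\Exc(p)$, and $\alpha_{\tilde{f}}(\tilde{x})=\alpha_{f}(x)<\delta_{f}=\delta_{\tilde{f}}$, so $\tilde{x}\in Z_{\tilde{f}}(d)$; this yields $Z_{f}(d)\subseteq p(\overline{Z_{\tilde{f}}(d)})\cup p(\Exc(p))$. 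Both pieces on the right are proper closed subsets of $X$ (using properness of $p$ together with the fact that $p$ restricts to an isomorphism on a dense open subset of $\Gamma$), so their union is proper closed, giving non-density of $Z_{f}(d)$.

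Part (2) is handled similarly. Lemma \ref{lem_dyn_deg_rel}(2) gives $\delta_{f}=\delta_{g}$, so the forward direction is just (1). For the converse, assume Conjecture \ref{conj_zf} for $f$. First I would push non-density from $f$ up to $\tilde{f}$: any $\tilde{x}\in Z_{\tilde{f}}(d)\setminus\Exc(p)$ has $p(\tilde{x})\in Z_{f}(d)$, so $\overline{Z_{\tilde{f}}(d)}\subseteq p^{-1}(\overline{Z_{f}(d)})\cup\Exc(p)$, which is proper closed in $\Gamma$. Since $\dim \Gamma=\dim X=\dim Y$, the morphism $\tilde{\pi}$ is generically finite; choose a dense open $U\subseteq Y$ over which $\tilde{\pi}$ is finite. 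For $y\in Z_{g}(d)\cap U$, every $\tilde{x}\in\tilde{\pi}^{-1}(y)$ lies in $\Gamma(d\cdot\deg\tilde{\pi})\setminus\Exc(\tilde{\pi})$ and satisfies $\alpha_{\tilde{f}}(\tilde{x})=\alpha_{g}(y)<\delta_{g}$ by Lemma \ref{lem_gen_fin}, hence lies in $Z_{\tilde{f}}(d\cdot\deg\tilde{\pi})$. This gives $Z_{g}(d)\subseteq \tilde{\pi}(\overline{Z_{\tilde{f}}(d\cdot\deg\tilde{\pi})})\cup (Y\setminus U)$.

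The delicate step, and the one where I expect the main subtlety, is verifying that the right-hand side of the last display is genuinely a proper closed subset of $Y$: the image of a proper closed subset under a generically finite morphism need not be proper in general. Here it works because $\dim \Gamma=\dim Y$, so every irreducible component of the proper closed set $\overline{Z_{\tilde{f}}(d\cdot\deg\tilde{\pi})}\subsetneq \Gamma$ has dimension strictly less than $\dim Y$, and therefore its image under $\tilde{\pi}$ is a proper subvariety of $Y$. Once this dimension-counting observation is in place, the rest is bookkeeping with the two cited lemmas and the graph construction.
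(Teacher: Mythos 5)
Your proposal is correct and follows essentially the same route as the paper: pass to the graph $\Gamma$ with its induced endomorphism, use Lemma \ref{lem_gen_fin} on the birational projection and the generically finite projection to identify arithmetic degrees off the exceptional loci, transfer the conjecture up via Lemma \ref{lem_surj}, and push it back down with a degree bound $d'=d\cdot\deg\tilde{\pi}$ and a dimension count showing images of proper closed sets remain proper. The only cosmetic difference is that you spell out the exceptional-locus and dimension bookkeeping that the paper compresses into its displayed containment $(*)$.
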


\begin{proof}
Let $\Gamma$ be the graph of $\pi$, with the projections $p_1: \Gamma \to X$, $p_2: \Gamma \to Y$.
Let $h: \Gamma \to \Gamma$ be the restriction of $f \times g: X \times Y \to X \times Y$ to $\Gamma$.
Note that $\delta_h = \delta_f$ by Lemma \ref{lem_dyn_deg_rel}.
Note that $p_1$ is birational and $p_2$ is generically finite surjective when $\dim(X)=\dim (Y)$.
By Lemma \ref{lem_gen_fin}, we have $p_1^{-1}(Z_f)\backslash \Exc(p_1) = Z_h\backslash \Exc(p_1)$ and $p_2^{-1}(Z_g)\backslash \Exc(p_2) = Z_h\backslash \Exc(p_2)$ when $\dim(X)=\dim (Y)$.
Hence,
\begin{equation}\label{eq:Zdcomp}
p_1^{-1}(Z_f(d)) \, \subseteq \, Z_h(d'),
\hskip 1pc
p_2^{-1}(Z_g(d)) \, \subseteq \, Z_h(d') \,\,\,\,
({\text{\rm when}} \, \dim (X) = \dim (Y))
\end{equation}
with $d'$ depending on $d$.

(1)
If Conjecture \ref{conj_zf} holds
for $g$, it holds for $h$ by Lemma \ref{lem_surj}, and also for $f$ by $(\ref{eq:Zdcomp})$.

(2) We have $\delta_f = \delta_g$ by Lemma \ref{lem_dyn_deg_rel}. Assume $f$ satisfies Conjecture \ref{conj_zf}. Then $h$ satisfies Conjecture \ref{conj_zf} by Lemma \ref{lem_surj}.
So does $g$ by $(*)$ above.
\end{proof}

The following lemmas are used for the study of abelian fibrations in Section \ref{sec_fib}.

\begin{lem}[{cf.~\cite[Proposition 26]{KS16b}}]\label{lem_decomp}
Let $A$ be an abelian variety over an algebraically closed field of characteristic zero, $H$ an ample divisor and
$D$ a nef $\R$-divisor.
Then there exists some $\alpha \in \End(A)_{\R}$ such that $\Phi^{H}_{D}=i_{H}(\alpha)\circ \alpha$ and $i_{H}(\alpha)= \alpha$ (see \ref{n:2.1}).
\end{lem}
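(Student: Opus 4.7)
The plan is to interpret the conclusion as the statement that $\Phi^{H}_{D}$ is the square of an $i_{H}$-symmetric element in the Jordan algebra $\End^{s}(A)_{\R} := \{\alpha \in \End(A)_{\R} : i_{H}(\alpha) = \alpha\}$. First, for any divisor class $D$, a direct computation using the self-duality $\hat{\varphi_{L}} = \varphi_{L}$ (under the canonical identification $A = \hat{\hat{A}}$) yields
$$i_{H}(\Phi^{H}_{D}) \;=\; \varphi_{H}^{-1}\circ \widehat{\Phi^{H}_{D}}\circ \varphi_{H} \;=\; \varphi_{H}^{-1}\circ \varphi_{D}\circ \varphi_{H}^{-1}\circ \varphi_{H} \;=\; \Phi^{H}_{D}.$$
Hence $\Phi^{H}$ takes values in $\End^{s}(A)_{\R}$; since $\varphi_{H}$ is an isogeny, $\Phi^{H}\colon \NS(A)_{\R} \to \End^{s}(A)_{\R}$ is in fact an $\R$-linear isomorphism.

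Next, the classical positivity of the Rosati involution (namely $\operatorname{tr}(i_{H}(\alpha)\circ \alpha) > 0$ for $\alpha \neq 0$) makes $\End^{s}(A)_{\R}$, with product $\alpha \cdot \beta := \tfrac{1}{2}(\alpha\beta + \beta\alpha)$, a formally real Jordan algebra. By Albert's classification of endomorphism algebras in characteristic zero, its simple factors are Jordan algebras of symmetric or Hermitian matrices over $\R$, $\C$, or the quaternions $\mathbb{H}$, in which the cone of squares $\Omega := \{\alpha^{2} : \alpha \in \End^{s}(A)_{\R}\}$ coincides with the positive semidefinite cone. Thus $\Omega$ is closed, convex, and self-dual, and every element of $\Omega$ admits a symmetric square root via spectral decomposition.

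The key geometric input is that $\Phi^{H}$ sends the ample cone into $\operatorname{int}(\Omega)$. For $L$ ample, after reducing to $\C$ via the Lefschetz principle (legitimate since $\overline{K}$ has characteristic zero) the Riemann bilinear relations force every complex eigenvalue of $\Phi^{H}_{L}$ to be positive real, placing $\Phi^{H}_{L}$ in the interior of $\Omega$. Since $\Nef(A)$ is the closure of the ample cone and $\Omega$ is closed, $\Phi^{H}(\Nef(A))\subseteq \Omega$. Applied to our nef $\R$-divisor $D$, this gives $\Phi^{H}_{D} = \alpha^{2} = i_{H}(\alpha)\circ \alpha$ for some $\alpha\in \End^{s}(A)_{\R}$, which is exactly the claim.

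The main obstacle is this positivity step: matching the geometric positivity of ample classes with algebraic positivity inside the Jordan algebra. Once it is established (via Riemann's bilinear relations, or equivalently via Albert's classification combined with positivity of the Rosati involution and the fact that $\Phi^{H}_{H} = \operatorname{id}_{A}$ lies in $\operatorname{int}(\Omega)$), the remainder of the argument is formal Jordan-algebra theory plus the standard fact that the closure of the positive definite cone is the positive semidefinite cone.
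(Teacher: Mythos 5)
Your proof is correct. Be aware, however, that the paper offers no proof of this lemma: it is imported wholesale from Kawaguchi--Silverman \cite[Proposition 26]{KS16b}, so the only meaningful comparison is with that source. Your argument runs on the same engine as theirs --- $\Phi^{H}$ identifies $\NS(A)_{\R}$ with the Rosati-symmetric part of $\End(A)_{\R}$, positivity of the Rosati involution forces ample (hence, by taking closures, nef) classes into the positive semidefinite cone, and one extracts a symmetric square root --- but you phrase it through Euclidean Jordan algebras and Albert's classification, whereas Kawaguchi--Silverman argue analytically over $\C$: they realize $\Phi^{H}_{D}$ as a self-adjoint, positive semidefinite operator for the inner product attached to $H$ and take its operator square root, observing that this root is a limit of real polynomials in $\Phi^{H}_{D}$ and therefore stays inside $\End(A)_{\R}$ and is fixed by $i_{H}$. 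Your route is heavier than necessary --- the Albert classification can be dispensed with, since spectral decomposition, closedness and convexity of the cone of squares, and the characterization of its interior by positivity of the spectrum hold in any finite-dimensional formally real Jordan algebra --- but every step you invoke is standard, and the reduction to $\C$ by the Lefschetz principle is legitimate since the nef $\R$-divisor $D$ can be approximated by ample classes defined over a finitely generated subfield. The one step worth spelling out more carefully is the bridge between the two notions of positivity: that $L$ ample implies all eigenvalues of $\Phi^{H}_{L}$ (in the rational representation) are positive is Mumford's index criterion, and passing from this to positive definiteness in each simple factor of $\End(A)_{\R}$ uses that the rational representation restricts faithfully to each factor.
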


\begin{lem}\label{lem_spreadout} (cf.~\cite{FAG05})
Let $Y$ be a Noetherian scheme.
Let $\pi \colon X \longrightarrow Y$ be a smooth projective group scheme with geometrically integral fibres.
Let $D$ be a Cartier divisor on $X$.
Then the natural transformation
\begin{align*}
X(S) \longrightarrow \Pic_{(X/Y)_{(et)}}(S) ; \hskip 1pc x \mapsto [T_{x}^{*}D_{S}-D_{S}]  \ \
\end{align*}
(here $S$ runs over all locally Noetherian $Y$-schemes)
defines a morphism $X \longrightarrow {\rm \bf Pic}_{X/Y}$ which factors through ${\rm \bf Pic}^{0}_{X/Y}$.
Denote this morphism as $\varphi_{D} \colon X \longrightarrow {\rm \bf Pic}^{0}_{X/Y}$.
This construction commutes with base changes of $Y$, i.e., for any morphism $Z \longrightarrow Y$ from a Noetherian scheme $Z$,
we have the following commutative diagram:

\[
\xymatrix@C=70pt@R=30pt{
X {\times}_{Y} Z \ar[r]^{\varphi_{D} \times_{Y}Z} \ar[d]_{\id} & {\rm \bf Pic}^{0}_{X/Y}\times_{Y}Z \ar[d]^{\simeq}\\
X {\times}_{Y} Z \ar[r]_{\varphi_{D_{Z}}} & {\rm \bf Pic}^{0}_{X\times_{Y}Z/Z}.
}
\]

\end{lem}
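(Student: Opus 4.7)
The plan is to (i) construct $\varphi_{D}$ via representability of the relative Picard functor, (ii) verify fiberwise that its image lies in the identity component, and (iii) deduce base-change compatibility from the functoriality of the construction.

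First I would check that $x \mapsto [T_{x}^{*}D_{S}-D_{S}]$ is a well-defined natural transformation on locally Noetherian $Y$-schemes. For $x \in X(S)$ the translation $T_{x} \colon X_{S} \to X_{S}$ (built from $(x \circ p_{S}, \id_{X_{S}})$ followed by the group law) is an $S$-automorphism, so $T_{x}^{*}D_{S}$ is a Cartier divisor and $[T_{x}^{*}D_{S}-D_{S}]$ defines an element of $\Pic_{(X/Y)_{(et)}}(S)$; naturality in $S$ is automatic because pullback, translation and the group law all commute with base change. Under our hypotheses ($\pi$ smooth projective with geometrically integral fibers and $Y$ Noetherian), Grothendieck's representability theorem as recalled in \cite{FAG05} shows that $\Pic_{(X/Y)_{(et)}}$ is represented by a separated $Y$-scheme ${\rm \bf Pic}_{X/Y}$, that the identity component ${\rm \bf Pic}^{0}_{X/Y}$ exists as an open subgroup scheme, and that the formation of both commutes with base change on $Y$. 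By Yoneda, the natural transformation then defines a $Y$-morphism $X \to {\rm \bf Pic}_{X/Y}$.

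The key step is to show this morphism factors through ${\rm \bf Pic}^{0}_{X/Y}$. Its preimage $U \subseteq X$ is open, so it suffices to check that $U$ contains every geometric fiber $X_{\bar{y}}$. On such a fiber---an abelian variety, since it is a smooth projective geometrically integral group scheme over an algebraically closed field---the induced map is the classical $\varphi_{D_{\bar{y}}} \colon X_{\bar{y}} \to {\rm \bf Pic}^{0}(X_{\bar{y}})$, whose image lies in the identity component by the theorem of the square. Hence $U = X$. The base-change compatibility is then immediate: the canonical isomorphism ${\rm \bf Pic}^{0}_{X/Y} \times_{Y} Z \simeq {\rm \bf Pic}^{0}_{X \times_{Y} Z / Z}$ is part of the representability package, and the formula $x \mapsto [T_{x}^{*}D_{S}-D_{S}]$ is manifestly stable under pullback along $Z \to Y$, so the claimed commutative square follows by Yoneda. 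The principal obstacle is really just isolating the correct form of Grothendieck's representability-with-base-change from \cite{FAG05}; once that is in place, the fiberwise argument reduces to the classical theorem of the square on each $X_{\bar{y}}$.
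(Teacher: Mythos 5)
Your argument is correct and follows exactly the route the paper intends: the paper gives no proof of this lemma, only the citation to \cite{FAG05} (see Remark \ref{rem_pic}), and your proposal correctly unpacks that citation --- Grothendieck representability of $\Pic_{(X/Y)_{(et)}}$ with base-change compatibility, openness of ${\rm \bf Pic}^{0}_{X/Y}$, and the fiberwise reduction to the theorem of the square on the abelian variety $X_{\bar y}$. No gaps.
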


\begin{rem}\label{rem_pic}
For Picard functors and Picard schemes, see \cite[Part 5]{FAG05}.
For relative $\Pic^{0}$, see \cite[Part 5, Proposition 9.5.20]{FAG05}.
\end{rem}

\begin{rem}\label{rem_phi}
In the setting of the above lemma, if $H$ is a
$\pi{-}$ample divisor on $X$, then $\varphi_{H}$ is a finite surjective morphism.
Thus we can define $\Phi^{H}_{D} \in \End(X/Y)_{\R}$ for an $\R$-Cartier divisor $D$ on $X$, and
also Rosati involution with respect to $H$, as in
\ref{n:2.1}.
\end{rem}

\section{Polarized endomorphisms, curves and Mori dream spaces}\label{subsec_pol}

\begin{thm}[cf.~{\cite{CS93}}, {\cite{KS14}}]\label{thm_pol}
Let $f: X \to X$ be a polarized endomorphism.
Then:
\begin{itemize}
\item[(1)]
$Z_f=\Prep(f)$.
\item[(2)]
For any $d>0$,
$X(d) \cap \Prep(f)$ is a finite set.
\end{itemize}
Hence Conjecture \ref{conj_zf} holds for every polarized endomorphism of any projective variety
and every surjective endomorphism of any projective curve.
\end{thm}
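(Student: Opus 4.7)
The plan is to rely on the Call--Silverman canonical height machinery, which is available precisely because $f$ is polarized, combined with Northcott's finiteness property. Since $f^{*}H \sim qH$ for some integer $q \geq 2$ and some ample divisor $H$, the projection formula gives $(f^{n})^{*}H \cdot H^{\dim X - 1} = q^{n}(H^{\dim X})$, so $\delta_{f}=q$. By the Call--Silverman theorem recorded in \ref{n:2.1}, the canonical height
\[
\hat{h}_{H,f}(x) \;=\; \lim_{n \to \infty} \frac{h_{H}(f^{n}(x))}{q^{n}}
\]
exists, satisfies $\hat{h}_{H,f} = h_{H} + O(1)$ and $\hat{h}_{H,f} \circ f = q \, \hat{h}_{H,f}$, and is nonnegative (since $h_{H}$ is bounded below up to $O(1)$). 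The key ingredient I would invoke is the classical vanishing characterization from \cite{CS93}: $\hat{h}_{H,f}(x)=0$ if and only if $x \in \Prep(f)$.

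For part (1), if $x \in \Prep(f)$ then $O_{f}(x)$ is finite, so $h_{H}(f^{n}(x))$ is bounded and $\alpha_{f}(x) = 1 < q = \delta_{f}$, giving $x \in Z_{f}$. Conversely, if $x \notin \Prep(f)$, then $\hat{h}_{H,f}(x) > 0$, so $h_{H}(f^{n}(x)) = q^{n}\hat{h}_{H,f}(x) + O(1)$, which forces $\alpha_{f}(x) = q = \delta_{f}$, so $x \notin Z_{f}$. This yields $Z_{f} = \Prep(f)$.

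For part (2), if $x \in \Prep(f)$ then $\hat{h}_{H,f}(x) = 0$, hence $h_{H}(x) \leq C$ for the uniform constant $C$ coming from $h_{H} - \hat{h}_{H,f} = O(1)$. Northcott's finiteness property then bounds the cardinality of $\{x \in X(\overline{K}) : [K(x):K] \leq d,\ h_{H}(x) \leq C\}$, so $X(d) \cap \Prep(f)$ is finite. Combining (1) and (2), the set $Z_{f}(d) = Z_{f} \cap X(d) = \Prep(f) \cap X(d)$ is finite and therefore not Zariski dense in $X_{\overline{K}}$; this establishes Conjecture \ref{conj_zf} in the polarized case.

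To handle the remaining claim for projective curves, I would split on $\deg f$. If $\deg f = 1$ then $f$ is an automorphism with $\delta_{f}=1$; under the convention $h_{H}\geq 1$ we have $\alpha_{f}(x) \geq 1 = \delta_{f}$, so $Z_{f}=\emptyset$ and the conjecture holds vacuously. If $\deg f = q \geq 2$, then since any two divisors of equal degree on a projective curve are numerically equivalent, $f^{*}H \equiv qH$ for any ample $H$; by the equivalent numerical formulation of polarized recalled in \ref{n:2.1}, $f$ is polarized and the argument above applies. The only substantial input in the whole proof is the characterization $\{\hat{h}_{H,f}=0\}=\Prep(f)$; everything else is a formal consequence of Northcott's theorem.
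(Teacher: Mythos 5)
Your proof is correct and follows essentially the same route as the paper: the Call--Silverman canonical height, the characterization $\{\hat{h}_{H,f}=0\}=\Prep(f)$ via Northcott, and the Northcott finiteness property for (2). Your explicit treatment of the curve case (splitting on $\deg f$ and noting that a degree-one endomorphism has $\delta_f=1$, making the conjecture vacuous) is a detail the paper leaves implicit, and it is handled correctly.
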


\begin{proof}
By assumption, $f^* H \sim dH$ for some ample divisor $H$ and $d>1$.
Then we have a canonical height function
$$\hat h_{H,f}(x)=\lim_{n \to +\infty} \frac{h_H(f^n(x))}{d^n}$$
which is a height function associated to $H$ (cf.~\cite{CS93}).
The Northcott property of $\hat h_{H,f}$ implies that
$\hat h_{H,f}(x)=0$ if and only if $x \in \Prep(f)$ for $x \in X(\overline K)$.
So $Z_f=(\hat h_{H,f}=0)=\Prep(f)$.
In addition, $\Prep(f) \cap X(d)$ is finite for any $d>0$ because of the Northcott finiteness property of $\hat h_{H,f}$.
\end{proof}

\begin{thm}[cf.~{\cite[Theorem 4.1]{Mat20a}}]\label{thm_mds}
Let $X$ be a normal projective variety such that $\N^1(X_{\overline{K}})_{\mathbb Q} = \Pic(X_{\overline{K}})_{\mathbb Q}$ and the nef cone of $X_{\overline{K}}$ is generated by finitely many semi-ample Cartier divisors.
Then $Z_f(d)$ is not Zariski dense in $X_{\overline K}$  for every surjective endomorphism $f$ on $X$ and any $d>0$.
In particular, Conjecture \ref{conj_zf} holds for all surjective endomorphisms on Mori dream spaces.
\end{thm}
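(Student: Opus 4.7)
The plan is to reduce to the polarized case (Theorem \ref{thm_pol}) via a semi-ample fibration attached to a nef extremal ray on which $f^*$ attains its spectral radius. Let $D_1,\dots,D_r$ be the finitely many semi-ample Cartier generators of the extremal rays of $\Nef(X_{\overline K})$. The pullback $f^*$ preserves the nef cone and hence permutes the rays $\mathbb R_{\ge 0}D_i$; replacing $f$ by a power (Lemma \ref{lem_end_power}), I may assume each ray is $f^*$-stable, so $f^*D_i \equiv \lambda_i D_i$. Since $f^*$ acts by an integer matrix on $\NS(X_{\overline K})/\mathrm{tor}$ and each $D_i$ may be chosen primitive, the $\lambda_i$ are positive integers; a Perron--Frobenius argument on the nef cone yields $\delta_f=\max_i\lambda_i$. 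If $\delta_f=1$ then $\alpha_f(x)\ge 1=\delta_f$ forces $Z_f(d)=\emptyset$, so I assume $\delta_f>1$ and fix $j$ with $\lambda_j=\delta_f$.

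Using the hypothesis $\N^1(X_{\overline K})_{\mathbb Q}=\Pic(X_{\overline K})_{\mathbb Q}$ (together with a further power of $f$ to absorb $\NS$-torsion), I upgrade the eigenvalue identity to $f^*D_j\sim_{\mathbb Q}\delta_fD_j$. Semi-ampleness of $D_j$ then produces, for sufficiently divisible $m>0$, a normal projective variety $Y$ and a surjective morphism with connected fibers $\phi\colon X_{\overline K}\to Y$ such that $mD_j\sim\phi^*A$ for an ample Cartier divisor $A$ on $Y$. A curve $C$ is $\phi$-contracted iff $C\cdot D_j=0$, and the projection formula $f_*C\cdot D_j=C\cdot f^*D_j=\delta_f(C\cdot D_j)$ shows that $f$ preserves the fibers of $\phi$ and so descends to a morphism $g\colon Y\to Y$ with $\phi\circ f=g\circ\phi$. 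Injectivity of $\phi^*$ on $\Pic(Y)_{\mathbb Q}$ (from $\phi_*\mathcal O_{X_{\overline K}}=\mathcal O_Y$), combined with $\phi^*g^*A=f^*\phi^*A\sim_{\mathbb Q}\delta_f\phi^*A$, yields $g^*A\sim_{\mathbb Q}\delta_fA$, so $g$ is polarized with $\delta_g=\delta_f$.

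Theorem \ref{thm_pol} applied to $g$ then gives that $Z_g(d)=\Prep(g)\cap Y(d)$ is finite for every $d>0$. For any $x\in Z_f(d)$, Lemma \ref{lem_surj}(1) gives $\alpha_g(\phi(x))\le\alpha_f(x)<\delta_f=\delta_g$ and $[K(\phi(x)):K]\le d$, so $\phi(x)\in Z_g(d)$; hence $Z_f(d)\subseteq\phi^{-1}(Z_g(d))$ lies in a finite union of fibers of $\phi$. Since $\delta_f>1$ forces $\dim Y\ge 1$ (else $D_j$ would be numerically trivial, contradicting extremality), this is a proper closed subset of $X_{\overline K}$, and $Z_f(d)$ is not Zariski dense. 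The Mori dream space case follows because an MDS has rational polyhedral nef cone with semi-ample generators and satisfies $\N^1=\Pic_{\mathbb Q}$. The main obstacle in this plan is the descent step: one must carefully verify that $f$ genuinely descends to an endomorphism $g$ of $Y$ over $\overline K$, that $g$ is polarized in the strict integer-eigenvalue sense of the paper, and that the whole construction is compatible with the base field $K$, handled by passing to a sufficiently large finite extension (to which the sAND problem is insensitive).
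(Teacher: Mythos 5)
Your argument is correct, and it takes a genuinely different route from the paper's. Both proofs begin the same way: since $f^*$ permutes the finitely many extremal rays of $\Nef(X_{\overline K})$, after replacing $f$ by a power one gets a nonzero semi-ample Cartier divisor $D$ with $f^*D\equiv \delta_f D$, upgraded to (\,$\mathbb Q$-)linear equivalence via $\N^1(X_{\overline{K}})_{\mathbb Q}=\Pic(X_{\overline{K}})_{\mathbb Q}$. From there the paper stays on $X$: it forms the Call--Silverman canonical height $\hat h_{D,f}$, notes that $\hat h_{D,f}$ vanishes on $Z_f$ (else $\alpha_f(x)=\delta_f$), so $h_D$ is bounded on $Z_f(d)$, and then invokes \cite[Corollary 2.3]{Shi19} for the non-density of bounded-degree points of bounded height with respect to a nonzero base-point-free divisor. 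You instead pass to the semi-ample fibration $\phi\colon X_{\overline K}\to Y$ defined by $|mD|$, descend $f$ to a polarized endomorphism $g$ of $Y$ (projection formula plus rigidity, then injectivity of $\phi^*$ on $\Pic_{\mathbb Q}$ to get $g^*A\sim_{\mathbb Q}\delta_f A$), and apply Theorem \ref{thm_pol} and Lemma \ref{lem_surj}(1) to trap $Z_f(d)$ in finitely many fibers of $\phi$. Your route costs an extra verification (the descent of $f$ and the field-of-definition bookkeeping, both of which you handle) but buys self-containedness — it replaces the external citation of Shibata's result by the paper's own Theorem \ref{thm_pol} — and it yields the slightly finer conclusion that $Z_f(d)$ sits over the finite set $\Prep(g)\cap Y(d)$. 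In effect you have inlined (a dynamical strengthening of) the proof of the cited corollary.
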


\begin{proof}
We may assume that $\delta_f > 1$.
By the assumption, replacing $f$ by a positive power, $f^*D \sim \delta_f D$ for some base point free effective Cartier divisor $D\neq 0$.
Then $\hat h_{D,f}|_{Z_f}=0$ and hence $h_{D}|_{Z_f} = O(1)$.
For any $d>0$, we have
$Z_f(d) \subseteq \{ x \in X(d) \mid h_{D}(x) \leq M \}$
for some $M>0$.
By \cite[Corollary 2.3]{Shi19}), the set of right hand side and hence $Z_f(d)$ are not Zariski dense in $X_{\overline{K}}$.
\end{proof}

\section{Surfaces and Hyperk\"ahler varieties}\label{subsec_surfauto}

\begin{thm}[cf.~\cite{Kaw08}]\label{thm_surfauto}
Let $f: X \to X$ be an automorphism of positive entropy on a projective surface. Then we have:
\begin{itemize}
\item[(1)]
The number of $f$-periodic irreducible curves on $X_{\overline K}$ is finite.
\item[(2)]
Let $C_i$ ($1 \le i \le r$) be the $f$-periodic irreducible curves on $X_{\overline K}$.
Then $Z_f=\bigcup_{i=1}^r C_i \cup \Per(f)$.
\item[(3)]
For any $d>0$,
$X(d) \cap (\Per(f) \setminus (\bigcup_{i=1}^r C_i))$ is a finite set.
\end{itemize}
Hence Conjecture \ref{conj_zf} holds for every automorphism of any projective surface.

\end{thm}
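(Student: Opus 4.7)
The plan is to leverage the Perron--Frobenius eigenvector structure of $f^*$ acting on $\N^1(X_{\overline K})$ together with Kawaguchi's canonical height construction for surface automorphisms. Since positive entropy means $\delta_f > 1$, a Perron--Frobenius argument inside the nef cone produces nonzero nef classes $\theta^\pm \in \N^1(X_{\overline K})$ satisfying $f^*\theta^\pm = \delta_f^{\pm 1}\theta^\pm$ and $\theta^+\cdot\theta^- > 0$; consequently $\theta := \theta^+ + \theta^-$ is big and nef, and by the Hodge index theorem its orthogonal complement in $\N^1(X_{\overline K})$ is negative definite. A standard telescoping construction then furnishes canonical heights $\hat h^\pm$ satisfying $\hat h^\pm \circ f = \delta_f^{\pm 1}\hat h^\pm$ and $\hat h^\pm = h_{\theta^\pm} + O(1)$, which will be the main analytic tool.

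For part~(1), any $f$-periodic irreducible curve $C \subseteq X_{\overline K}$ with $f^n(C) = C$ satisfies
\[
[C]\cdot\theta^+ \;=\; [f^n C]\cdot\theta^+ \;=\; [C]\cdot (f^n)^*\theta^+ \;=\; \delta_f^n\,[C]\cdot\theta^+,
\]
which (since $\delta_f > 1$) forces $[C]\cdot\theta^\pm = 0$, so $[C]$ lies in the negative-definite lattice $\theta^\perp \cap \NS(X_{\overline K})$. Because distinct irreducible curves intersect nonnegatively, the class map $C \mapsto [C]$ is injective on this family; combined with adjunction and the fact that $K_X$ is preserved (up to sign) by $f^*$, this bounds $[C]$ inside a negative-definite lattice and yields finiteness. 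Enumerate the resulting periodic irreducible curves as $C_1,\dots,C_r$. The inclusion ``$\supseteq$'' in part~(2) is then immediate: a periodic point has arithmetic degree $1 < \delta_f$, and on each $C_i$ a suitable iterate of $f$ restricts to an automorphism of the normalization (a smooth curve), whose dynamical degree is $1$, so Lemma~\ref{lem_subvar} gives $\alpha_f(x) \leq 1$ for every $x \in C_i$.

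The reverse inclusion in~(2) is the crux: for $x$ that is neither periodic nor contained in any $C_i$, I want to show $\alpha_f(x) = \delta_f$. If $\hat h^+(x) > 0$, then $h_{\theta^+}(f^n x) \sim \delta_f^n\,\hat h^+(x)$ immediately yields $\alpha_f(x) = \delta_f$. Otherwise $\hat h^+(x) = \hat h^-(x) = 0$, and the sum $\hat h^+ + \hat h^-$---which is a height for the big and nef class $\theta$---vanishes at $x$; Kawaguchi's Northcott-type property for this sum on the Zariski open set $X_{\overline K} \setminus \bigcup_i C_i$ then forces $x$ to be periodic, contradicting our assumption. The same Northcott property applied inside $X(d)$ yields part~(3): the set $(\Per(f)\setminus \bigcup_i C_i) \cap X(d)$ is contained in the zero locus of $\hat h^+ + \hat h^-$ on an open set where this height is Northcott, hence is finite. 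The sAND conclusion then follows by assembly: $Z_f(d) \subseteq \bigcup_i C_i \,\cup\, \bigl((\Per(f)\setminus \bigcup_i C_i) \cap X(d)\bigr)$ is a finite union of curves together with a finite set of closed points, which is never Zariski dense in the surface $X_{\overline K}$. The main obstacle is the reverse inclusion of~(2): establishing $\alpha_f(x) = \delta_f$ for every $x$ off the periodic curves and outside $\Per(f)$ requires the full canonical height construction and a sensitive Northcott property on the complement of $\bigcup_i C_i$, which is the real content of \cite{Kaw08}.
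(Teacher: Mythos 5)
Your proposal follows essentially the same route as the paper: the Perron--Frobenius eigenclasses $\theta^{\pm}$ (the paper's $D^{\pm}$), the big and nef sum, its decomposition as an ample divisor plus an effective divisor supported exactly on the finitely many periodic curves, the two canonical heights, and the Northcott argument off the periodic curves to force periodicity when $\hat h^{+}$ vanishes --- this is exactly the paper's argument in the smooth case, and your sketch of (1) is the standard negative-definite-lattice argument underlying the citation to Kawaguchi. The only step the paper carries out that your proposal omits is the reduction of a general (possibly singular) projective surface to the smooth case via an $f$-equivariant resolution, which is needed since the theorem is not restricted to smooth surfaces and your use of the Hodge index theorem, adjunction, and the realization of the eigenclasses as $\mathbb{R}$-divisors up to $\mathbb{R}$-linear equivalence is cleanest there; this is a minor, easily repaired omission rather than a flaw in the method.
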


\begin{proof}
Note that $\delta :=\delta_f$ ($>1$) is known to be a Salem number with $\delta_{f^{-1}}=\delta$.

First we consider the case when $X_{\overline{K}}$ is smooth.
We can find
nef $\mathbb R$-divisors $D^+, D^- \not\equiv 0$ on $X_{\overline{K}}$ such that $f^*D^+ \sim_{\mathbb R} \delta D^+$ and $f_* D^- \sim_{\mathbb R} \delta D^-$
(cf.~\cite[Lemma 3.8]{Kaw08} or \cite[Remark 5.11]{San17}).
Then $D=D^++D^-$ is nef and big by the Hodge index theorem.
Let $\{ C_{\lambda} \}_{\lambda \in \Lambda}$ be the family of all $f$-periodic irreducible curves.
Then $\Lambda$ is finite and we can take $a_\lambda >0$ for each $\lambda \in \Lambda$ such that $D-\sum_\lambda a_\lambda C_\lambda$ is ample (cf.~\cite[Proposition 1.3]{Kaw08}).

Set $E=\sum_\lambda a_\lambda C_\lambda$ and $A=D-E$.
Take canonical height functions
$$\hat h_{D^+,f}(x)=\lim_{n \to +\infty} \frac{h_{D^+}(f^n(x))}{\delta^n},\
\hat h_{D^-,f^{-1}}(x)=\lim_{n \to +\infty} \frac{h_{D^-}(f^{-n}(x))}{\delta^n}$$
associated to $D^+, D^-$, respectively.
Then
\begin{equation} \label{eq:htsum}
h_A+h_{E}=\hat h_{D^+,f}+ \hat h_{D^-,f^{-1}}+O(1).
\end{equation}
Substituting $f^n(x)$ to (\ref{eq:htsum}), we have
\begin{equation}\label{eq:htsum2}
h_A(f^n(x))+h_{E}(f^n(x))=\delta^n \hat h_{D^+,f}(x)+ \delta^{-n} \hat h_{D^-,f^{-1}}(x)+O(1).
\end{equation}
If $x \in Z_f$, then $\hat h_{D^+,f}(x)=0$ since otherwise we get $ \alpha_{f}(x) = \delta$ by 
the definition of arithmetic degree and the fact that $h_{D^{+}}$ is dominated by any ample height function (like $h_A$ here).
Note that $h_E$ is bounded from below outside $E$.
Thus if $x \in Z_{f} \setminus \Supp(E)$, then $\{ h_A(f^n(x)) \}_{n=0}^{+\infty}$ is upper bounded by (\ref{eq:htsum2}).
The Northcott property for $h_A$ implies $x \in \Per(f)$ since $f$ is an automorphism.
If $x \in \Supp(E)$, then $\alpha_f(x)=1$ since $f$ gives an automorphism on $\Supp(E)$ which is a union of curves and any automorphism on a curve has first dynamical degree one.
Therefore,
$$Z_f=\Supp(E) \cup \Per(f)=\bigcup_{\lambda \in \Lambda} C_\lambda \cup \Per(f).$$
Thus
the set $X(d) \cap (\Per(f) \setminus \bigcup_{\lambda \in \Lambda} C_\lambda)$ is finite for  any $d>0$ by the Northcott property of $h_A$. This proves the theorem for the case of smooth surfaces.

Now we consider the general case.
Take an $f$-equivariant resolution $\pi: \widetilde{X}_{\overline{K}} \to X_{\overline{K}}$ with an automorphism $\widetilde{f}:\widetilde{X}_{\overline{K}} \to \widetilde{X}_{\overline{K}}$ such that $\pi \circ \widetilde{f}=f \circ \pi$.
Extending $K$, we may assume that $\widetilde{X}$, $\pi$ and $\widetilde{f}$ are defined over $K$.
Lemma \ref{lem_surj}(2) and Lemma \ref{lem_gen_fin} imply that $\pi(Z_{\widetilde{f}}) \subseteq Z_f \subseteq \pi(Z_{\widetilde{f}} \cup \Exc(\pi))$.
Now $\Exc(\pi)$ is $f$-invariant, so it is a union of $f$-periodic curves.
Thus $Z_{\widetilde{f}} \cup \Exc(\pi)=Z_{\widetilde{f}}$ by Step 1.
Hence $\pi(Z_{\widetilde{f}})=Z_f$.

Let $\widetilde{C}_1, \ldots, \widetilde{C}_r$ be the $\widetilde{f}$-periodic irreducible curves on $\widetilde{X}_{\overline K}$.
Note that $\pi(\Exc(\pi))\subseteq \Per(f)$.
Then $\pi(\widetilde{C}_1),\ldots, \pi(\widetilde{C}_r)$ exhaust the $f$-periodic irreducible curves on $X_{\overline{K}}$ and
$$Z_f=\pi(Z_{\widetilde{f}})=\bigcup_{i=1}^r \pi(\widetilde{C}_i) \cup \Per(f).$$
Fix $d>0$.
By the case for smooth surfaces,
$\Per(\widetilde{f}) \setminus (\bigcup_{i=1}^r \widetilde{C}_i)$ is finite in $\widetilde{X}(d)$.
So $\Per(f) \setminus (\bigcup_{i=1}^r \pi(\widetilde{C}_i))$ (the image of $\Per(\widetilde{f}) \setminus (\bigcup_{i=1}^r \widetilde{C}_i)$) is also finite in $X(d)$.
\end{proof}

A projective variety $X$ over $K$ is {\it Hyperk\"ahler} if so is $X_{ \overline{K}}$.

\begin{thm}[cf.~{\cite[Theorem 1.2]{LS21}}]\label{thm_hyper}
Let $f: X \to X$ be a surjective endomorphism on a projective Hyperk\"ahler variety $X$.
Then $Z_f(d)$ is not Zariski dense in $X_{\overline{K}}$ for any $d>0$.
\end{thm}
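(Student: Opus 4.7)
The plan is to split the analysis into cases according to whether $f$ is an automorphism of $X_{\overline K}$. If $\deg f \ge 2$, the strategy is to show that $f$ must be polarized, so that Theorem~\ref{thm_pol} immediately applies. This proceeds via the Beauville--Bogomolov--Fujiki (BBF) quadratic form $q$ on $\N^1(X_{\overline K})$: for a surjective endomorphism of a Hyperk\"ahler variety of dimension $2n$, one has $q(f^*\alpha) = (\deg f)^{1/n}\, q(\alpha)$, so $f^*$ acts as a similarity of ratio $c=(\deg f)^{1/n}>1$ on $\N^1(X_{\overline K})$. Combined with the signature $(1,\rho-1)$ of $q$ and the $f^*$-invariance of the positive cone, this forces the eigenspace of $f^*$ for the eigenvalue $c$ to contain an ample class, so $f$ becomes polarized with $\delta_f = c$. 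If $f$ is an automorphism with $\delta_f = 1$, then $Z_f = \emptyset$ (since $\alpha_f(x) \ge 1$ for every $x$), and there is nothing to prove.

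The substantive remaining case is that $f$ is an automorphism with $\delta := \delta_f > 1$, and here the plan mirrors the proof of Theorem~\ref{thm_surfauto}. Using the $f^*$-invariance of $q$ together with its $(1,\rho-1)$-signature, I would locate nef isotropic eigendivisors $D^+, D^- \not\equiv 0$ satisfying $f^*D^+ \equiv \delta D^+$ and $f_*D^- \equiv \delta D^-$. Since $q(D^+, D^-) > 0$, a BBF analogue of the Hodge-index theorem gives that $D := D^+ + D^-$ is big. I would then decompose $D \equiv A + E$ with $A$ ample and $E$ an effective $\R$-divisor supported on a union of $f$-periodic proper subvarieties of small dynamical degree. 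Because $\delta > \sqrt{\delta}$, the second bullet of the canonical height construction in \ref{n:2.1} supplies $\hat h_{D^+,f}$ and $\hat h_{D^-,f^{-1}}$ with $\hat h_{D^\pm,\cdot} = h_{D^\pm} + O(\sqrt{h_H})$. Substituting $f^n(x)$ into the identity $h_A + h_E = \hat h_{D^+,f} + \hat h_{D^-,f^{-1}} + O(\sqrt{h_H})$ produces
\[
h_A(f^n x) + h_E(f^n x) = \delta^n \hat h_{D^+,f}(x) + \delta^{-n} \hat h_{D^-,f^{-1}}(x) + O(\sqrt{h_H(f^n x)}).
\]
For $x \in Z_f(d) \setminus \Supp(E)$, the condition $\alpha_f(x) < \delta$ forces $\hat h_{D^+,f}(x) = 0$; absorbing the $\sqrt{h_H}$ error into $h_A$ via ampleness then shows $\{h_A(f^n x)\}_n$ is bounded. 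The Northcott property of $h_A$ on $X(d)$ confines $O_f(x)$ to a finite set, so $x \in \Per(f)$, and only finitely many such $x$ can lie in $X(d) \setminus \Supp(E)$. Thus $Z_f(d) \subseteq \Supp(E) \cup \{\text{finite}\}$, and in particular is not Zariski dense.

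The hard part will be producing the decomposition $D \equiv A + E$ with $\Supp(E)$ a proper closed subvariety of $X_{\overline K}$, equivalently showing that the union of maximal $f$-preperiodic subvarieties of small dynamical degree is not Zariski dense. In the surface situation this was the finiteness of $f$-periodic irreducible curves due to Kawaguchi~\cite{Kaw08}; in the Hyperk\"ahler setting I would invoke the Amerik--Verbitsky-type constraints underlying \cite{LS18} to bound the locus where $\delta_{f|_Z} < \delta_f$. A secondary technical point is the standard passage to an $f$-equivariant smooth model defined over a finite extension of $K$ via Lemma~\ref{lem_gen_fin}, so that the eigendivisors, heights, and decomposition genuinely descend to statements about $X(d)$.
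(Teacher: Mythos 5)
Your main case (automorphism with $\delta_f>1$) follows the same route as the paper: nef eigendivisors $D^{\pm}$, bigness of $D^++D^-$ via the Beauville--Bogomolov--Fujiki form, a decomposition $D^++D^-\equiv A+E$, canonical heights with $O(\sqrt{h_H})$ error, and Northcott. But there are two points where your plan diverges from what is actually needed, and one of them leaves a hole. First, the case $\deg f\ge 2$ does not occur at all: since $K_X\sim 0$, the ramification divisor formula forces $f$ to be \'etale, and triviality of $\pi_1^{\alg}(X_{\overline K})$ then forces $f$ to be an automorphism. Your substitute argument --- that the BBF similarity $q(f^*\alpha)=(\deg f)^{1/n}q(\alpha)$ plus the signature $(1,\rho-1)$ forces an ample eigenvector --- is not justified: a similarity of ratio $c>1$ preserving the positive cone need not have an eigenvector in the interior of the cone (its spectral radius can be $\sqrt{c}\,\lambda$ with $\lambda>1$, with the leading eigenvector nef and isotropic). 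Since the case is vacuous this does not sink the proof, but as written that step is a gap; the \'etale observation is the correct fix.

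Second, and more importantly, you have mislocated the difficulty in the main case. You require $E$ to be ``supported on a union of $f$-periodic proper subvarieties of small dynamical degree,'' declare this the hard part, and propose to outsource it to Amerik--Verbitsky-type finiteness statements; you also assert this is ``equivalent'' to $\Supp(E)$ being a proper closed subset. Neither is true, and neither is needed. Once $D^++D^-$ is nef and big, Kodaira's lemma immediately gives $D^++D^-\sim_{\mathbb R}A+E$ with $A$ ample and $E$ effective, and $\Supp(E)$ is automatically a proper closed subset of the irreducible variety $X_{\overline K}$ --- no dynamical property of $E$ enters. This is exactly what the paper (following \cite{LS18}) uses: one concludes $Z_f\subseteq \Supp E\cup\Per(f)$ and then applies Northcott to $X(d)\cap(\Per(f)\setminus\Supp E)$. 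The genuine bookkeeping issue, which you do not address, is the opposite one: since $E$ is \emph{not} assumed $f$-invariant (unlike the union of periodic curves in Theorem \ref{thm_surfauto}), the forward orbit of a point outside $\Supp E$ may re-enter $\Supp E$, so the boundedness of $h_A(f^n(x))$ is only available along the subsequence of times when $f^n(x)\notin\Supp E$; one must argue (as in \cite{LS18}) that this still pins $x$ to $\Per(f)$ or to a proper closed invariant subset of $\Supp E$. As proposed, your proof is therefore incomplete: the step you flag as hard is unnecessary and unproved, while the step that actually requires care is passed over.
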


\begin{proof}
We may assume  $\delta_f>1$.
By the ramification divisor formula, $f$ is \'etale
and hence an automorphism since a Hyperk\"{a}hler
variety has trivial $\pi_1^{\alg}(X_{ \overline{K}})$.
The proof is almost the same as that of Theorem \ref{thm_surfauto}.
Indeed, take nef $\mathbb R$-divisors $D^+, D^- \not\equiv 0$ on $X$ such that $f^*D^+ \sim_{\mathbb R} \delta_f D^+$ and $f_* D^- \sim_{\mathbb R} \delta_{f^{-1}} D^-$.
Then the Beauville--Bogomolov--Fujiki form enables us to show that $D^++D^-$ is nef and big (cf.~\cite[Lemma 3.8]{LS21}).
Write $D^++D^- \sim_{\mathbb R} A+E$ where $A$ is ample and $E$ is effective.
Then we have $Z_f \subseteq \Supp E \cup \Per(f)$.
The set $X(d) \cap (\Per(f) \setminus \Supp E)$ is finite for  any $d>0$ by the Northcott property of $h_A$.
So $Z_f(d)$ is not Zariski dense in $X_{\overline{K}}$.
\end{proof}

\begin{thm}[cf.~{\cite{MSS18}}, {\cite{MZ19a}}]\label{thm_surfendo}
Let $f: X \to X$ be a non-invertible surjective endomorphism on a projective surface.
Then $Z_f(d)$ is not Zariski dense in $X_{\overline{K}}$ for any $d>0$.
\end{thm}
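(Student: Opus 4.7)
The plan is to parallel the proof of Theorem~\ref{thm_surfauto} while accommodating non-invertibility; I will identify $Z_f$ with the union of a finite union of $f$-preperiodic curves and a Northcott-controlled set of $f$-preperiodic points. By Lemma~\ref{lem_end_power} I may freely replace $f$ by any positive iterate. First, I would reduce to the smooth case: take an $f$-equivariant resolution $\pi\colon \widetilde X_{\overline K}\to X_{\overline K}$ with lifted endomorphism $\widetilde f$, and after extending $K$ both descend to $K$. The exceptional locus $\Exc(\pi)$ is $f$-invariant (\cite[Lemma 7.3]{CMZ17}) and consists of finitely many preperiodic curves. As in the last paragraph of Theorem~\ref{thm_surfauto}'s proof, Lemmas~\ref{lem_gen_fin} and \ref{lem_surj}(2) reduce non-density of $Z_f(d)$ to that of $Z_{\widetilde f}(d')$ for some $d'$. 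Since $f$ is non-invertible, $\deg f\ge 2$, and the surface log-concavity $\delta_f^2\ge\deg f$ yields $\delta_f\ge\sqrt{2}>1$.

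In the \emph{polarized-like case} $\delta_f^2=\deg f$, the surface analog of \cite[Proposition 3.6]{MZ18} (in the spirit of \cite{MSS18}) provides, after iteration, a nef and big $\R$-divisor $D$ with $f^*D\equiv \delta_f D$. Write $D\sim_{\R} A+E$ with $A$ ample and $E$ effective. The canonical height $\hat h_{D,f}$ of \cite[Theorem 5]{KS16a} exists (since $\delta_f>\sqrt{\delta_f}$ is equivalent to $\delta_f>1$) and satisfies $\hat h_{D,f}(f^n x)=\delta_f^n\hat h_{D,f}(x)$, so $\alpha_f(x)<\delta_f$ forces $\hat h_{D,f}(x)=0$ and hence $h_D(f^n x)=O(\sqrt{h_H(f^n x)})$. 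For $x\in Z_f\setminus\Supp E$ this gives $h_A(f^n x)=O(1)$, and Northcott finiteness for $h_A$ on $X(d)$ yields $x\in\Prep(f)$. Thus $Z_f\subseteq\Supp E\cup\Prep(f)$ and $Z_f(d)$ is not dense.

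In the \emph{hyperbolic case} $\delta_f^2>\deg f$, Perron--Frobenius applied to $f^*$ on $\Nef(X)$ together with the identity $(f^*D_1)\cdot(f^*D_2)=\deg(f)(D_1\cdot D_2)$ produces nef $\R$-divisors $D^\pm\not\equiv 0$ with $f^*D^+\equiv\delta_f D^+$ and $f^*D^-\equiv\mu D^-$ where $\mu=\deg(f)/\delta_f<\delta_f$; the Hodge index theorem makes $D^++D^-$ nef and big, so $D^++D^-\sim_{\R} A+E$ with $A$ ample and $E$ effective supported, after iteration, on finitely many $f$-invariant irreducible curves. As in the previous step, $\hat h_{D^+,f}$ exists and vanishes on $Z_f$. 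When $\mu>\sqrt{\delta_f}$, the canonical height $\hat h_{D^-,f}$ from \cite[Theorem 5]{KS16a} also exists and vanishes on $Z_f$; combining the two bounds forces $h_A(f^n x)=O(1)$ on orbits avoiding $\Supp E$, so $x\in\Prep(f)$ and $Z_f\subseteq\Supp E\cup\Prep(f)$, which is handled by Northcott exactly as before.

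The main obstacle is the subcase $\delta_f^2>\deg f$ with $\mu=\deg(f)/\delta_f\le\sqrt{\delta_f}$, where \cite[Theorem 5]{KS16a} does not directly supply a canonical height attached to $D^-$. I plan to exploit the rigidity provided by \cite{MSS18} and \cite{MZ19b}: in that numerical regime a smooth projective surface admitting a non-invertible endomorphism belongs to an explicit list (toric, $Q$-abelian, or certain ruled surfaces), and on each type sAND can be verified directly by reducing to the polarized case (Theorem~\ref{thm_pol}), the abelian case (Corollary~\ref{cor_qab}), or a fibration argument combined with the curve case. A secondary technical point is arranging the support of $E$ to consist of $f$-periodic curves after iteration; this follows from the finiteness of negative-intersection curves on a surface and the permutation action of $f^*$ on them, in the spirit of \cite[Proposition 1.3]{Kaw08}.
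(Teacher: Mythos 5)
Your proposal has genuine gaps, and the part of it that would actually work is precisely the part you defer to at the end. The central problem is the treatment of the ``hyperbolic'' case. First, your claim that $\hat h_{D^-,f}$ ``vanishes on $Z_f$'' is unjustified: since $f^*D^-\equiv\mu D^-$ with $\mu=\deg(f)/\delta_f<\delta_f$, nonvanishing of $\hat h_{D^-,f}(x)$ only forces $\alpha_f(x)\ge\mu$, which is perfectly compatible with $x\in Z_f$. Even granting $\hat h_{D^+,f}(x)=0$, the best you get from the decomposition $h_A+h_E=\hat h_{D^+,f}+\hat h_{D^-,f}+O(\sqrt{h_H})$ is $h_A(f^n(x))=O(\mu^n)+O(\sqrt{h_H(f^n(x))})$; when $\mu>1$ (i.e.\ $\deg f>\delta_f$) this does not bound the orbit heights, so Northcott does not give preperiodicity. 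Second, the subcase $\mu\le\sqrt{\delta_f}$ is explicitly left open and patched with an ``explicit list (toric, $Q$-abelian, or certain ruled surfaces)'' that you neither prove nor correctly state: the classification results of \cite{Nak08} and \cite{MZ19b} are organized by pseudo-effectivity of $K_X$, not by the numerical regime of $\mu$ versus $\sqrt{\delta_f}$, and there is no reason the hard numerical regime should single out those surfaces. Third, even in the cases where your height argument runs, the containment $Z_f\subseteq\Supp E\cup\Prep(f)$ requires the orbit of $x\notin\Supp E$ never to enter $\Supp E$; for an automorphism this is automatic because the periodic curves are totally invariant, but for a non-invertible $f$ you would need $\Supp E$ to be $f^{-1}$-invariant, which you do not arrange. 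Finally, the set $\Prep(f)\cap X(d)$ is controlled by Northcott only for the canonical height of a nef and big eigendivisor, so its non-density still needs the structure you are trying to establish.

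For comparison, the paper does not attempt any height-theoretic argument in this theorem at all. After normalizing ($X_{\overline K}$ is then log canonical by \cite{War90} since $\deg f>1$), it splits on whether $K_{X_{\overline K}}$ is pseudo-effective. If so, Nakayama's classification \cite[Theorem 7.1.1]{Nak08} gives (after iteration and a quasi-\'etale cover) an abelian surface or a product of an elliptic curve with a genus $\ge 2$ curve, handled by Corollary~\ref{cor_qab} and Theorem~\ref{thm_ellfib}. If not, \cite[Theorem 5.4]{MZ19b} gives (after iteration) a polarized endomorphism, a Fano contraction to a curve with $\delta_f=\delta_g$, or a finite cover of $\mathbb P^1\times Y$ with a split endomorphism, handled by Theorem~\ref{thm_pol}, Lemma~\ref{lem_surj} and Lemma~\ref{lem_prod}. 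In other words, the classification you invoke only as a last resort for one subcase is the entire proof; your Hodge-index/canonical-height machinery, which is the right tool for automorphisms (Theorem~\ref{thm_surfauto}), does not close in the non-invertible setting.
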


\begin{proof}
We may assume $X_{\overline{K}}$ is normal by Lemma \ref{lem_surj}.
Since $\deg f>1$, $X_{\overline{K}}$ is log canonical by \cite[Theorem 2.8]{War90}.

If $K_{X_{\overline{K}}}$ is peudo-effective, then by \cite[Theorem 7.1.1]{Nak08}, after replacing $f$ by a suitable power and extending $K$,
there is a quasi-\'etale finite morphism $\phi: X' \to X$ with a lift $f': X' \to X'$ of $f$ (all are defined over $K$) such that
\begin{itemize}
\item[(1)]
$X'$ is an abelian surface; or
\item[(2)]
$X'$ is a product of an elliptic curve and a curve of genus $\geq 2$.
\end{itemize}

The first case is a special case of
Corollary \ref{cor_qab},
and the second case is a special case of Theorem \ref{thm_ellfib}, whose proofs are independent of the results of this theorem.

Assume that $K_{X_{\overline{K}}}$ is not pseudo-effective.
Then, after replacing $f$ by a suitable power and extending $K$, it is proved in \cite[Theorem 5.4]{MZ19a} that
\begin{itemize}
\item[(3)] $f$ is a polarized endomorphism; or
\item[(4)] There is a Fano contraction $\pi: X \to Y$ to a curve and $f$ induces $g: Y \to Y$ with $\delta_f=\delta_g$; or
\item[(5)] There is a finite surjective morphism $\tau: X \to \mathbb P^1 \times Y$ to the product of $\mathbb P^1$ and a curve $Y$ with endomorphisms $g:\mathbb P^1 \to \mathbb P^1$ and $h: Y \to Y$ such that $\tau \circ f = (g \times h) \circ \tau$.
\end{itemize}
These cases are verified by Theorem \ref{thm_pol}, Lemma \ref{lem_surj}, Lemma \ref{lem_prod}, respectively.
\end{proof}

Combining Theorems \ref{thm_surfauto} and \ref{thm_surfendo},
we obtain the following.

\begin{thm}\label{thm_surf}
Let $X$ be a projective surface and $f: X \to X$ a surjective endomorphism.
Then $Z_f(d)$ is not Zariski dense in $X_{\overline{K}}$ for any $d>0$.
\end{thm}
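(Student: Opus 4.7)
The plan is to handle Theorem \ref{thm_surf} by a short dichotomy on whether $f$ is invertible, so that Theorems \ref{thm_surfauto} and \ref{thm_surfendo} can be invoked as black boxes that together cover essentially the whole statement.

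First I would observe that if $f$ is non-invertible, then Theorem \ref{thm_surfendo} applies directly and yields that $Z_f(d)$ is not Zariski dense in $X_{\overline K}$ for every $d>0$. No further work is needed in this case.

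Next, I would assume $f$ is an automorphism and split according to the dynamical degree $\delta_f$. When $\delta_f>1$, the automorphism $f$ has positive entropy, and Theorem \ref{thm_surfauto} describes $Z_f$ as a union of finitely many $f$-periodic irreducible curves $C_1,\ldots,C_r$ together with $\Per(f)$, with $X(d)\cap(\Per(f)\setminus\bigcup_i C_i)$ finite. Consequently $Z_f(d)$ is contained in $\bigcup_i C_i$ together with a finite set, which is not Zariski dense in the surface $X_{\overline K}$. When $\delta_f=1$, I would argue that $Z_f$ is already empty: the general inequalities $1\le\alpha_f(x)\le\delta_f$ (the lower bound coming from the convention $h_H\ge 1$ in \ref{n:2.1}, the upper bound from the Kawaguchi--Silverman inequality recalled in the introduction) force $\alpha_f(x)=\delta_f=1$ for every $x\in X(\overline K)$, so the locus $\{\alpha_f<\delta_f\}$ is empty and $Z_f(d)$ is trivially not dense.

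There is no real obstacle, since Theorems \ref{thm_surfauto} and \ref{thm_surfendo} do essentially all the work. The only mild subtlety worth writing out is the zero-entropy automorphism case, which is not covered verbatim by either cited theorem (Theorem \ref{thm_surfauto} assumes positive entropy, and Theorem \ref{thm_surfendo} assumes non-invertibility), but is vacuous by the height argument above.
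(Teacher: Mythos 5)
Your proposal is correct and is exactly the paper's argument: the paper derives Theorem \ref{thm_surf} by combining Theorem \ref{thm_surfauto} (whose concluding sentence already covers every automorphism, the zero-entropy case being vacuous since $1\le\alpha_f(x)\le\delta_f=1$ forces $Z_f=\emptyset$) with Theorem \ref{thm_surfendo} for the non-invertible case. Your explicit treatment of the $\delta_f=1$ automorphism case is a correct filling-in of the detail the paper leaves implicit.
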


\section{Abelian varieties}\label{subsec_ab}

In this section, we first prove:

\begin{thm}[cf.~{\cite[Section 5]{MS20}}] \label{thm_ab}
Let $X$ be an abelian variety and $f: X \to X$ a surjective endomorphism
(which is not necessarily an isogeny)
with the first dynamical degree $\delta_f>1$.
Then there is
a proper abelian subvariety $B \subset X_{\overline K}$ and a point $p \in X(\overline{K})$
such that $B( \overline{K})+p$ is $f$-invariant and
$Z_f=B( \overline{K})+p+\Tor(X( \overline{K}))$.
\end{thm}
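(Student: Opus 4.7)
The plan is to reduce to the case of an isogeny, apply the Kawaguchi--Silverman theorem for isogenies (\cite[\S 5]{MS18}), and then use a maximality argument to pin down $B$. Write $f(x) = g(x) + c$ with $g \in \End(X)$ fixing $0$ and $c = f(0)$; since translations act trivially on $\NS_{\R}$, $\delta_f = \delta_g > 1$. Choose $N$ so that $g^N - \mathrm{id}$ is an isogeny (possible as only finitely many eigenvalues of $g$ on $\mathrm{Lie}(X)$ are roots of unity); then $f^N$ has a fixed point $p \in X(\overline{K})$, and conjugating by $\phi_p\colon y \mapsto y + p$ yields $\phi_p^{-1} \circ f^N \circ \phi_p = g^N$, an isogeny. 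Since $Z_f = Z_{f^N}$ by Lemma~\ref{lem_end_power}, it suffices to describe $Z_{g^N}$ and translate back by $p$.

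\textbf{Step 2 (Maximal small-degree subvariety).} By the KSC for isogenies, $\alpha_{g^N}(x) = \delta_{g^N|_C}$, where $C + t$ is the (eventually irreducible, periodic) $g^N$-orbit closure of $x$ and $C \subseteq X$ is an abelian subvariety. Thus $x \in Z_{g^N}$ iff $\delta_{g^N|_C} < \delta_{g^N}$. The family $\mathcal{S}$ of abelian subvarieties (invariant under an iterate of $g^N$) with this property is closed under addition: the equivariant finite cover $C_1 \times C_2 \twoheadrightarrow C_1 + C_2$ together with Lemma~\ref{lem_dyn_deg_rel} yields
$$\delta_{g^N|_{C_1 + C_2}} \le \max(\delta_{g^N|_{C_1}}, \delta_{g^N|_{C_2}}) < \delta_{g^N}.$$
After further replacing $g^N$ by a suitable iterate, there is a maximal $B \in \mathcal{S}$, honestly $g^N$-invariant, and $B \subsetneq X$ since $\delta_{g^N|_X} = \delta_{g^N}$.

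\textbf{Step 3 (Crux: periodic points mod $B$ are torsion).} I claim $Z_{g^N} = B + \Tor(X_{\overline{K}})$. The inclusion $\supseteq$ is routine: for $x = b + \tau$ with $\tau \in X[m]$, the set $\{(g^N)^n(\tau)\}_n \subseteq X[m]$ is finite, so the orbit closure of $x$ lies in finitely many translates of $B$, forcing $\alpha_{g^N}(x) \le \delta_{g^N|_B} < \delta_{g^N}$. The harder direction, the main obstacle, reduces to showing every $\bar g^N$-periodic point of the quotient $X/B$ is torsion. Here maximality of $B$ is crucial: if the identity component $\bar D$ of $\ker(\bar g^{Nn} - \mathrm{id}) \subseteq X/B$ were positive-dimensional, its preimage $D \subseteq X$ would be an abelian subvariety strictly containing $B$. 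Since $\bar g^{Nn}|_{\bar D} = \mathrm{id}$, the exact sequence $0 \to \mathrm{Lie}(B) \to \mathrm{Lie}(D) \to \mathrm{Lie}(\bar D) \to 0$ shows that the eigenvalues of $g^N$ on $\mathrm{Lie}(D)$ consist of those on $\mathrm{Lie}(B)$ plus roots of unity, hence $\delta_{g^N|_D} = \delta_{g^N|_B} < \delta_{g^N}$, contradicting maximality of $B$. So $\ker(\bar g^{Nn} - \mathrm{id})$ is finite for every $n$, all $\bar g^N$-periodic points are torsion, and surjectivity of $\Tor(X) \twoheadrightarrow \Tor(X/B)$ completes the inclusion.

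\textbf{Step 4 (Translate back and adjust $p$).} Translating by $\phi_p$ gives $Z_f = B + p + \Tor(X_{\overline{K}})$. To make $B + p$ itself $f$-invariant, note the defect $c' := f(p) - p$ satisfies $\sum_{i=0}^{N-1} g^{i}(c') = f^N(p) - p = 0$. By Step 3, $\bar g^N - \mathrm{id}$ is an isogeny on $X/B$, so $\bar g$ has no $N$-th root of unity eigenvalues; hence both $\nu_N(\bar g) := \sum_{i=0}^{N-1} \bar g^{i}$ and $\bar g - \mathrm{id}$ are isogenies on $X/B$. Therefore $\bar c'$ is torsion; solve $(\bar g - \mathrm{id})(\bar \tau) = -\bar c'$ for a torsion $\bar \tau \in X/B$, lift to a torsion $\tau \in X(\overline{K})$, and replace $p$ by $p + \tau$. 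This alters neither $Z_f$ nor the coset $B + p + \Tor(X_{\overline{K}})$, but ensures $f(p) - p \in B$, whence $f(B + p) = B + f(p) = B + p$, completing the proof.
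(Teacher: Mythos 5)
Your Step 1 has a genuine gap: it is not in general possible to choose $N$ so that $g^N-\id$ is an isogeny. The obstruction is an eigenvalue of $g$ equal to $1$: if $1$ is an eigenvalue of $g$ on $H_1(X,\Q)$ then it is an eigenvalue of $g^N$ for every $N$, so $\Ker(g^N-\id)$ is infinite for all $N$, and $f^N$ need not have any fixed point. Concretely, take $X=E_1\times E_2$ and $f(x,y)=(x+c,\,2y)$ with $c\in E_1(\overline K)$ non-torsion. Then $\delta_f=4>1$ and $g=\id_{E_1}\times[2]$, but $g^N-\id$ kills $E_1\times\{0\}$ for every $N$, and $f^N(x,y)=(x+Nc,\,2^Ny)$ has no fixed point for any $N$. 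So the conjugation to an isogeny fails, and everything downstream --- which analyzes the isogeny $g^N$ --- does not apply to such $f$. This is precisely the situation the hypothesis ``not necessarily an isogeny'' is meant to cover; here $Z_f=E_1\times\Tor(E_2)=B+\Tor(X_{\overline K})$ with $B=E_1\times\{0\}$ absorbing the direction in which the translation cannot be removed. The paper circumvents this by first splitting $X$ up to isogeny (Lemma \ref{lem_sil} and Remark \ref{split_nonisog}) as $A_1\times A_2$ with $1_{A_1}-\phi_1$ surjective, so the fixed-point/conjugation trick works on $A_1$, while $\delta_{\phi_2}=1$ on $A_2$, so all of $A_2$ lies in $Z_f$ and is absorbed into $B$. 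Some version of this splitting is indispensable before your argument can start.

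Granting a correct reduction to the isogeny case, your Steps 2--4 constitute a workable alternative to the paper's treatment of that case: the paper inducts on dimension using the canonical height theorem for nef eigendivisors on abelian varieties (Theorem \ref{thm_ksforab}, i.e.\ \cite[Theorem 1]{KS16b}), whereas you invoke the orbit-closure form of KSC for isogenies (that $\alpha_{g^N}(x)$ equals the dynamical degree along the orbit closure, itself a translate of an abelian subvariety) together with a maximality argument; that input is true but comparably heavy and must be cited precisely rather than asserted. The maximality argument in Step 3 and the torsion adjustment of $p$ in Step 4 are sound. As written, however, the proof does not establish the theorem because Step 1 excludes exactly the non-isogeny phenomena the statement is about.
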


To prove Theorem \ref{thm_ab}, we use the following results.

\begin{lem}[cf.~{\cite[Proof of Theorem 2]{Sil17}}]\label{lem_sil}
Let $f: A \to A$ be an isogeny on an abelian variety over $ \overline{K}$.
Then there are abelian subvarieties $A_1,A_2$ of $A$ such that we have:
\begin{itemize}
\item[(1)]
The addition map $m: A_1 \times A_2 \to A$ is an isogeny.
\item[(2)]
$A_i$ is $f$-invariant for $i=1,2$.
Set $f_i=f|_{A_i}$.
\item[(3)]
The map $1_{A_1} {-} f_{1}: A_1 \to A_1$ is surjective.
\item[(4)]
$\delta_{f_2}=1$.
\end{itemize}
\end{lem}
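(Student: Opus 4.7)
The plan is to split $A$, up to isogeny, into two $f$-invariant pieces according to the characteristic polynomial of $f$: a ``hyperbolic'' piece $A_1$ on which $f$ has no root of unity as an eigenvalue (so that $1-f$ becomes invertible), and a ``unitary'' piece $A_2$ on which every eigenvalue of $f$ is a root of unity (so that $\delta_{f|_{A_2}}=1$). The main inputs are Cayley--Hamilton inside $\End(A)\otimes\mathbb{Q}$, Kronecker's theorem on algebraic integers lying on the unit circle, and the eigenvalue description of $f^{*}$ on $\NS$ via the Rosati involution recalled in \ref{n:2.1}.

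Concretely, let $P(t)\in\mathbb{Z}[t]$ be the characteristic polynomial of $f$ acting on the rational Tate module $V_{\ell}A$; by Cayley--Hamilton $P(f)=0$ in $\End(A)\otimes\mathbb{Q}$, and each root is a nonzero algebraic integer because $f$ is an isogeny. Kronecker's theorem says that a $\mathbb{Q}$-irreducible factor of $P$ whose roots all lie on the unit circle must have all roots equal to roots of unity; otherwise at least one root has absolute value strictly greater than $1$. Grouping irreducible factors accordingly yields a factorization $P=P_1P_2$ with $P_1,P_2\in\mathbb{Z}[t]$ coprime in $\mathbb{Q}[t]$, where no root of $P_1$ is a root of unity and every root of $P_2$ is. Define $A_1$ and $A_2$ to be the identity components of $\ker P_1(f)$ and $\ker P_2(f)$ respectively. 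Both are $f$-invariant abelian subvarieties because $f$ commutes with each $P_i(f)$. Bezout in $\mathbb{Q}[t]$ produces $U,V\in\mathbb{Z}[t]$ and $N\in\mathbb{Z}_{\ge 1}$ with $UP_1+VP_2=N$, so evaluating at $f$ gives $N\cdot\id_A=U(f)P_1(f)+V(f)P_2(f)$, where by $P(f)=0$ the first summand lies in $\ker P_2(f)$ and the second in $\ker P_1(f)$. Since multiplication by $N$ is surjective on $A$, this yields $A=\ker P_1(f)+\ker P_2(f)$, while $\ker P_1(f)\cap\ker P_2(f)\subseteq A[N]$ is finite; passing to identity components and using connectedness of $A$, the addition map $m\colon A_1\times A_2\to A$ is a surjection with finite kernel, i.e.\ an isogeny, proving (1) and (2).

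For (3), the eigenvalues of $f_1:=f|_{A_1}$ on $V_{\ell}A_1\subseteq\ker(P_1(f)|_{V_{\ell}A})$ are roots of $P_1$, none of which equal $1$, so $1_{A_1}-f_1\in\End(A_1)$ has nonzero determinant, is an isogeny, and in particular is surjective. For (4), the eigenvalues of $f_2:=f|_{A_2}$ on $V_{\ell}A_2$ are roots of $P_2$, hence roots of unity of absolute value $1$; the Rosati-involution description in \ref{n:2.1} via $\Phi^{H}$ shows that $f_2^{*}$ acts on $\NS(A_2)_{\mathbb{C}}$ with eigenvalues of the form $\lambda_i\overline{\lambda_j}$ for eigenvalues $\lambda_i$ of $f_2$ on $V_{\ell}A_2$, so all of these have absolute value $1$ and $\delta_{f_2}=\rho(f_2^{*}|_{\NS(A_2)_{\mathbb{C}}})=1$. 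The most delicate step is this last one, which requires the eigenvalue translation between the action of $f$ on the Tate module and its action on N\'eron--Severi; the rest is linear algebra via Cayley--Hamilton and Bezout together with standard facts about identity components of kernels of endomorphisms of abelian varieties.
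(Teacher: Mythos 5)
Your proof is correct and follows essentially the same route as the argument of \cite[Proof of Theorem 2]{Sil17} that the paper cites for this lemma (the paper gives no proof of its own): factor the characteristic polynomial into its cyclotomic and non-cyclotomic parts, take the corresponding $f$-invariant kernel components, and use Bezout plus Cayley--Hamilton to get the isogeny decomposition, with Kronecker's theorem controlling the two spectra. The only cosmetic remark is that the statement that the eigenvalues of $f_2^{*}$ on $\NS(A_2)_{\C}$ are the products $\lambda_i\overline{\lambda_j}$ is a standard cohomological fact (via $\NS(A_2)_{\C}\subseteq H^{1,1}$) rather than something contained in the Rosati-involution notation of \ref{n:2.1}, but the conclusion $\delta_{f_2}=1$ is correct.
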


\begin{rem}\label{split_nonisog}
Let $F \colon A \longrightarrow A$ be a surjective endomorphism.
Then we can write $F=\tau_{a} \circ f$ where $f$ is an isogeny and $\tau_{a}$ is a translation by $a\in A$.
Apply Lemma \ref{lem_sil} to $f$. Take $(a_{1}, a_{2}) \in A_{1}\times A_{2}$ such that $a_{1}+a_{2}=a$.
Then we have $F \circ m = m\circ ((\tau_{a_{1}}\circ f_{1}) \times (\tau_{a_{2}}\circ f_{2}))$,
$\tau_{a_{1}}\circ f_{1}$ is conjugate to an isogeny by a translation, and $\delta_{\tau_{a_{2}}\circ f_{2}}=1$ (cf.~\cite[\S 5]{MS20}).
\end{rem}

\begin{thm}[{\cite[Theorem 1]{KS16b}}]\label{thm_ksforab}
Let $f: A \to A$ be an isogeny on an abelian variety with $\delta_f >1$.
Take a symmetric nef $\mathbb R$-divisor $D$ such that $D$ is not numerically trivial and $f^*D \equiv \delta_f D$.
Take the canonical height function $\hat h_{D,f}$ associated to $D$:
$$\hat h_{D,f}(x)=\lim_{n \to \infty} h_D(f^n(x))/\delta_f^n.$$
Then there is an $f$-invariant proper abelian subvariety $B \subset A_{\overline K}$ such that the zero locus of $\hat h_{D,f}$ is equal to 
$B( \overline{K})+\Tor(A( \overline{K}))$.
\end{thm}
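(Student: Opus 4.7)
The plan is to reduce to the isogeny case already settled by Theorem \ref{thm_ksforab}. Lemma \ref{lem_surj}(3) allows me to base-change to $\overline K$, so I work over $\overline K$ throughout. Writing $f = \tau_a \circ \varphi$ with $\varphi$ an isogeny on $X$ and $a \in X(\overline K)$, Remark \ref{split_nonisog} produces abelian subvarieties $A_1, A_2 \subseteq X$ for which the addition map $m \colon A_1 \times A_2 \to X$ is an isogeny intertwining $f$ with $F_1 \times F_2$, where $F_i = \tau_{a_i} \circ \varphi_i$ with $\varphi_i := \varphi|_{A_i}$, the map $1 - \varphi_1$ is surjective on $A_1$, and $\delta_{F_2} = \delta_{\varphi_2} = 1$. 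Since $\delta_f > 1$, all dynamical weight sits on the first factor: $\delta_{F_1} = \delta_{\varphi_1} = \delta_f$.

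Next, the surjectivity of $1 - \varphi_1$ supplies $p_1 \in A_1(\overline K)$ with $p_1 - \varphi_1(p_1) = a_1$, i.e.\ $F_1(p_1) = p_1$. A direct computation shows $\tau_{-p_1} \circ F_1 \circ \tau_{p_1} = \varphi_1$, so $F_1^n(x) = \varphi_1^n(x - p_1) + p_1$ and hence $Z_{F_1} = Z_{\varphi_1} + p_1$. Applying Theorem \ref{thm_ksforab} to the isogeny $\varphi_1$ (together with the standard fact that on abelian varieties $Z_{\varphi_1}$ coincides with the zero locus of $\hat h_{D,\varphi_1}$ for a leading eigenvector $D$, obtained via the spectral decomposition of $\varphi_1^*|_{\NS(A_1)_{\mathbb R}}$ and the canonical-height estimates recorded in \ref{n:2.1}), one obtains a $\varphi_1$-invariant proper abelian subvariety $B_1 \subset A_1$ with $Z_{\varphi_1} = B_1 + \Tor(A_1)$, and therefore $Z_{F_1} = B_1 + p_1 + \Tor(A_1)$.

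Because $\delta_{F_1} > \delta_{F_2} = 1$, Lemma \ref{lem_prod} gives $Z_{F_1 \times F_2} = Z_{F_1} \times A_2 = (B_1 + p_1 + \Tor(A_1)) \times A_2$, and Lemma \ref{lem_surj}(3) applied to the finite isogeny $m$ yields $Z_f = m(Z_{F_1 \times F_2})$. Setting $B := m(B_1 \times A_2)$ (an abelian subvariety of $X$, proper since $\dim B_1 < \dim A_1$) and $p := m(p_1, 0)$, the coset $B + p = m((B_1 + p_1) \times A_2)$ is $f$-invariant because $B_1 + p_1$ is $F_1$-invariant (using $F_1(p_1) = p_1$ and the $\varphi_1$-invariance of $B_1$) and $A_2$ is $F_2$-invariant. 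To convert the torsion data one needs $m^{-1}(\Tor(X)) = \Tor(A_1 \times A_2)$: if $m(u)$ has finite order $n$, then $nu \in \ker m$, which is a finite group, so $u$ is torsion; combined with the obvious $m(\Tor(A_1 \times A_2)) \subseteq \Tor(X)$, this pins down $Z_f = B + p + \Tor(X)$.

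The main obstacle I anticipate is the bookkeeping through the non-canonical isogeny $m$ and the translation conjugation on $A_1$ — specifically, tracking how the torsion-coset shape is preserved under push-forward and pull-back by $m$, which ultimately relies on the identity $m^{-1}(\Tor(X)) = \Tor(A_1 \times A_2)$. The one external ingredient that requires care is the identification, in the isogeny case, of $Z_{\varphi_1}$ with the zero locus of $\hat h_{D,\varphi_1}$; this is standard for abelian varieties, but hinges on decomposing the spectrum of $\varphi_1^*$ on $\NS(A_1)_{\mathbb R}$ and comparing the arithmetic contributions coming from each eigen-summand.
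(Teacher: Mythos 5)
There is a fundamental problem: your proposal does not prove the statement in question — it assumes it. The statement to be established is Theorem \ref{thm_ksforab} itself (the description of the zero locus of the nef canonical height $\hat h_{D,f}$ for an \emph{isogeny} $f$ as $B+\Tor(A_{\overline K})$), which the paper quotes from \cite[Theorem 1]{KS16b} without proof. Your very first sentence is ``reduce to the isogeny case already settled by Theorem \ref{thm_ksforab}'', and the key step in your second paragraph is ``Applying Theorem \ref{thm_ksforab} to the isogeny $\varphi_1$\dots''. That is circular. What you have actually written is, in substance, the paper's proof of the \emph{downstream} result Theorem \ref{thm_ab} (the splitting $f=\tau_a\circ\varphi$ via Lemma \ref{lem_sil} and Remark \ref{split_nonisog}, the fixed point $p_1$ of $F_1$, the product formula via Lemma \ref{lem_prod}, and push-forward through the isogeny $m$ via Lemma \ref{lem_surj}), and indeed your conclusion $Z_f=B+p+\Tor(X)$ is the conclusion of Theorem \ref{thm_ab}, not of Theorem \ref{thm_ksforab}: the latter concerns an isogeny (no translation $p$ appears) and asserts a statement about the vanishing locus of $\hat h_{D,f}$, not directly about $Z_f$.

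The genuinely missing content is the analytic heart of \cite[Theorem 1]{KS16b}: one must express the canonical height of the nef eigendivisor as a positive semi-definite quadratic form, $\hat h_{D,f}(x)=\tfrac12\langle \alpha(x),\alpha(x)\rangle_H$ for some $\alpha\in\End(A)_{\R}$ with $\Phi^H_D=i_H(\alpha)\circ\alpha$ and $i_H(\alpha)=\alpha$ (this is exactly the decomposition recorded in Lemma \ref{lem_decomp} and exploited in the proof of Lemma \ref{lem_abfib-isog}), and then identify the vanishing locus of $\alpha$ on $A(\overline K)\otimes_{\Z}\R$ with an abelian subvariety $B$ up to torsion, checking $f$-invariance of $B$. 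You allude to this in your closing remark as ``standard'', but it is precisely the content of the theorem and cannot be waved through; nothing in your argument supplies it. If your assigned target were Theorem \ref{thm_ab}, your outline would essentially match the paper's proof; as a proof of Theorem \ref{thm_ksforab} it is vacuous.
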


\begin{proof}[{\it Proof of Theorem \ref{thm_ab}}]

First we assume that $f$ is an isogeny, and show:

\begin{cl}\label{claim_ab}
Let $f: X \to X$ be an isogeny on an abelian variety with $\delta_f>1$.
Then there is an $f$-invariant proper abelian subvariety $B \subset X_{\overline K}$ such that
$Z_f=B( \overline{K})+\Tor(X( \overline{K}))$.
\end{cl}

\begin{proof}
We prove the claim by induction on the dimension.
If $\dim (X)=1$, then $Z_f = \Prep(f)$.
So we have to show $\Prep(f)=\Tor(X( \overline{K}))$.
Since $f$ is an isogeny, $f(X_{\overline{K}}[N])\subseteq X_{\overline{K}}[N]$ for any $N>0$, where $X_{\overline K}[N]$ denotes the (finite) set of $N$-torsion points of $X_{\overline K}$.
So $\Tor(X( \overline{K})) \subseteq \Prep(f)$.
Conversely, take any $x \in \Prep(f)$.
Then $f^{n+k}(x)=f^n(x)$ for some $n,k \in \mathbb Z_{>0}$.
This means that $x$ is contained in $(1_X-f^k)^{-1}(\Ker(f^n))$,
which is a finite group since $f^n$ and $1_X-f^k$ are isogenies.
Here we used the assumption $\delta_f>1$ to guarantee $f^k \neq 1_X$.
Therefore, $x \in \Tor(X( \overline{K}))$.
Hence $Z_f=\Tor(X( \overline{K}))$.

Assume $\dim (X) \ge 2$ and the claim holds for abelian varieties of smaller dimensions.
Take a nef symmetric $\mathbb R$-Cartier divisor $D$ on $X$ such that $D$ is not numerically trivial and $f^* D \equiv \delta_f D$.
Then Theorem \ref{thm_ksforab} implies that the zero locus of $\hat h_{D,f}$ is equal to 
$B( \overline{K})+\Tor(X( \overline{K}))$ for some proper $f$-invariant abelian subvariety $B \subset X_{\overline K}$.
For any point $x \in X_{\overline K}$, $\hat h_{D,f}(x)>0$ implies that $\alpha_f(x)=\delta_f$.
So $Z_f \subseteq B( \overline{K})+\Tor(X( \overline{K}))$.
Note that $\hat h_{D,f}$ is non-negative (cf.~\cite[Section 6]{KS16b}).

Set $f_B=f|_B: B \to B$.
Then $\delta_{f_B} \le \delta_f$ by \cite[Proposition A.11]{NZ09}.
If $\delta_{f_B}<\delta_f$, then the arithmetic degree of any point of $B$ is smaller than $\delta_f$.
Then $Z_f=B( \overline{K})+\Tor(X( \overline{K}))$.
Next, assume that $\delta_{f_B}=\delta_f$.
Then the induction hypothesis implies that there is a proper $f_B$-invariant abelian subvariety 
$C \subset B$ such that $Z_{f_B}=C( \overline{K})+\Tor(B( \overline{K}))$.
Now we have $C( \overline{K}) +\Tor(X( \overline{K}))  \subseteq Z_f$.
Conversely, take any $x \in Z_f$.
Then $mx$ is in $B$ (and also in $[m]_{X}(Z_f) = Z_f$; see Lemma \ref{lem_surj})
for some $m \in \mathbb Z_{>0}$.
Now $mx \in Z_f \cap B=Z_{f_B}$, so $kmx \in C$ for some $k \in \mathbb Z_{>0}$.
Thus $Z_f=C( \overline{K})+\Tor(X( \overline{K}))$.
This proves the claim.
\end{proof}

Next we consider a general endomorphism $f$.
Write $f=\tau_a \circ \phi$ such that $\phi$ is an isogeny and $\tau_a$ is the translation map by $a \in X$.
By Lemma \ref{lem_sil},
there are abelian subvarieties $A_1,A_2$ of $X$ such that we have:
\begin{itemize}
\item[(1)]
The addition map $m: A_1 \times A_2 \to X$ is an isogeny.
\item[(2)]
$A_i$ is $\phi$-invariant for $i=1,2$.
Set $\phi_i=\phi|_{A_i}$.
\item[(3)]
The map $1_{A_1} -\phi_{1}: A_1 \to A_1$ is surjective.
\item[(4)]
$\delta_{\phi_2}=1$.
\end{itemize}

Take $a_i \in A_i$ for $i=1,2$ such that $a=a_1+a_2$ and set $f_i=\tau_{a_i} \circ \phi_i$.
Take $p \in A_1$ satisfying $p-\phi_{1}(p)=a_1$.
Since translations act trivially on the Neron-Severi lattice and the map $\phi_1 \times \phi_2$
descends to $\phi$ via the map $m$, Lemma \ref{lem_dyn_deg_rel} and the product formula imply
$\delta_f = \delta_{\phi} = \delta_{\phi_1 \times \phi_2} = \delta_{\phi_1} = \delta_{f_1}$.
Now we have the commutative diagram:
\[
\xymatrix{
A_1 \times A_2 \ar[r]^{\phi_1 \times f_2} \ar[d]_{\tau_{p} \times 1_{A_2}} & A_1 \times A_2 \ar[d]^{\tau_{p} \times 1_{A_2}}\\
A_1 \times A_2 \ar[r]^{f_1 \times f_2} \ar[d]_{m} & A_1 \times A_2 \ar[d]^{m}\\
X \ar[r]^{f}& X
}
\]
Applying Claim \ref{claim_ab} to $\phi_1$, there is a proper $\phi_1$-invariant abelian subvariety 
$B_1 \subset (A_1)_{\overline K}$ such that $Z_{\phi_1}=B_1( \overline{K})+\Tor(A_1( \overline{K}))$.
Since $\phi_1 \times f_2$ descends to $f$ via a finite morphism,
the first equality below follows from Lemma \ref{lem_surj}, while the second follows from
Lemma \ref{lem_prod}:
\begin{align*}
Z_f &=(m \circ (\tau_p \times 1_{A_2}))(Z_{\phi_1 \times f_2}) \\
&= (m \circ (\tau_p \times 1_{A_2}))((B_1( \overline{K})+\Tor(A_1( \overline{K}))) \times A_2( \overline{K}))\\
&= B_1( \overline{K})+\Tor(A_1( \overline{K}))+p+A_2( \overline{K}).
\end{align*}
Set $B=B_1+(A_2)_{\overline K}$.
Then we have $Z_f=B( \overline{K})+p+\Tor(X( \overline{K}))$.
We compute
\begin{align*}
f(B+p) &=\phi(B_1+(A_2)_{\overline K}+p)+a \\
&=\phi(B_1)+\phi((A_2)_{\overline K})+\phi(p)+a_1+a_2 \\
&=B_1+(A_2)_{\overline K}+\phi(p)+a_1+a_2 \\
&=(B+a_2)+(\phi(p)+a_1) \\
&=B+p.
\end{align*}
So $B+p$ is $f$-invariant.
\end{proof}

\begin{thm}\label{thm_ab2}
Notation is as in Theorem \ref{thm_ab}.
Write $Z_f=B( \overline{K})+p+\Tor(X( \overline{K}))$. Then:
\begin{itemize}
\item[(1)]
$\{B+p+t \}_{t \in \Tor(X({\overline K}))}$ is the family of all maximal $f$-preperiodic subvarieties of small dynamical degree in $X_{\overline{K}}$.
\item[(2)]
There are only finitely many maximal $f$-invariant subvarieties of small dynamical degree in $X_{\overline{K}}$ (cf.~Question \ref{ques_subvar}(3)).
\item[(3)]
For any $d>0$,
$Z_f(d) \subseteq \bigcup_{i=1}^r (B( \overline{K})+p+t_i)$ for some $t_1, \ldots, t_r \in \Tor(X( \overline{K}))$.
In particular, $Z_f(d)$ is not Zariski dense.
\end{itemize}
\end{thm}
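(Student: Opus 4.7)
The plan is to leverage the structural description $Z_{f}=B+p+\Tor(X_{\overline K})$ from Theorem~\ref{thm_ab}, together with the factorization $f=\tau_{a}\circ \phi$ with $\phi$ an isogeny. The key identity is $f(B+p+t)=B+p+\phi(t)$ for any $t\in\Tor(X_{\overline K})$: since $\phi(B)\subseteq B$ and $\phi(p)+a-p\in B$ (forced by $f(B+p)=B+p$), this follows, and iterating yields $f^{n}(B+p+t)=B+p+\phi^{n}(t)$. Because $\phi$ preserves each finite subgroup $X[n]$, the sequence $\{\phi^{n}(t)\}_{n\ge 0}$ is eventually periodic, so $B+p+t$ is $f$-preperiodic; on a periodic $B$-coset $f^{k}$ acts as $\phi^{k}$ plus a translation, so $\delta_{f|_{B+p+t}}=\delta_{\phi|_{B}}<\delta_{f}$ by construction of $B$.

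For the converse half of (1), I would take an irreducible $f$-preperiodic subvariety $V$ with $\delta_{f|_{V}}<\delta_{f}$; Lemma~\ref{lem_subvar} applied to the $f$-invariant closure of $V$ shows $V\subseteq Z_{f}$. Picking $v_{0}=b_{0}+p+t_{0}\in V(\overline K)$, the shift $V':=V-p-t_{0}$ meets $B$ and is contained in $B+\Tor(X_{\overline K})$, so its image $\pi(V')\subset X/B$ is an irreducible closed subvariety whose $\overline K$-points all lie in $\Tor(X/B)$. Raynaud's theorem (Manin--Mumford) writes $\pi(V')=A'+\tau$; if $\dim A'>0$, Mordell--Weil supplies a non-torsion point of $A'$, producing a non-torsion point of $\pi(V')$ and contradicting that every $\overline K$-point of $\pi(V')$ is torsion. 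Hence $\pi(V')$ is a single torsion point, and lifting through the surjective isogeny $n\colon B\to B$ produces $t\in\Tor(X_{\overline K})$ with $V\subseteq B+p+t$. Maximality of $V$, combined with the first paragraph, gives $V=B+p+t$.

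For (2), part (1) reduces the question to counting $t$ for which $B+p+t$ is itself $f$-invariant, i.e., $\phi(t)-t\in B$, equivalently $\bar t\in\ker(\bar\phi-1)\subset X/B$. I claim $\bar\phi-1$ is an isogeny on $X/B$: otherwise $C:=\ker(\bar\phi-1)^{0}$ is a positive-dimensional abelian subvariety with $\bar\phi|_{C}=\id$, and $D:=\pi^{-1}(C)$ is a $\phi$-invariant abelian subvariety strictly containing $B$ with $\delta_{\phi|_{D}}\le \max(\delta_{\phi|_{B}},1)<\delta_{f}$. Applying (1) to the isogeny $\phi$ (the case $a=0$, $p=0$), $D$ would have to lie in some $B+t$; since $0\in D$, $t\in B$, so $D\subseteq B$, a contradiction. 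Hence $\ker(\bar\phi-1)$ is finite, and the distinct $f$-invariant translates $B+p+t$ are finite in number.

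For (3), after a harmless finite extension of $K$ over which $B$ and $p$ are defined, I would push $Z_{f}(d)$ forward under $\pi$: each $x\in Z_{f}(d)$ produces $\bar t:=\pi(x)-\pi(p)\in(X/B)(d)\cap\Tor(X/B)$. The set on the right is finite for a fixed abelian variety over a number field by the classical theorem --- a consequence of Serre's open image theorem and its extensions via Faltings --- that torsion points of bounded degree have bounded order. Hence $\pi(Z_{f}(d))\subseteq\{\bar t_{1},\dots,\bar t_{r}\}$ for a finite set; lifting each $\bar t_{i}$ to torsion in $X$ gives $Z_{f}(d)\subseteq\bigcup_{i=1}^{r}(B+p+t_{i})$, and $\dim B<\dim X$ yields non-density. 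The main obstacle is the Manin--Mumford input in the maximality direction of (1); all else is bookkeeping on Theorem~\ref{thm_ab} together with the unconditional finiteness of torsion of bounded degree used in (3).
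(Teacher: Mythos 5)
Your proposal is correct and follows the same overall architecture as the paper's proof: the identity $f^s(B+p+t)=B+p+\phi^s(t)$ gives preperiodicity in (1); part (2) is reduced to a finiteness statement on the quotient $X/B$; and (3) is obtained by projecting $Z_f(d)-p$ to $X/B$, invoking finiteness of torsion points of bounded degree on that fixed abelian variety, and pulling back through a multiplication map. Two points of comparison are worth recording. First, for the maximality half of (1) the paper only writes that maximality ``is clear, since $Z_f=\bigcup_t(B+p+t)$''; you supply the justification that is actually needed, namely that an irreducible subvariety of $X/B$ all of whose $\overline K$-points are torsion must be a single point, via Raynaud's theorem together with the existence of non-torsion points on any positive-dimensional abelian variety. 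This is a genuine completion of a step the paper elides: since $\Tor(X/B)$ is Zariski dense, the mere containment $V(\overline K)\subseteq Z_f$ does not by itself confine an irreducible $V$ to one coset. Second, in (2) the paper counts fixed points of $g=f|_{X/B}$ and derives a contradiction from a positive-dimensional component of the fixed locus using the maximality in (1); your claim that $\bar\phi-1$ is an isogeny is the translation-free reformulation of the same contradiction (the fixed locus of $g$ is a coset of $\ker(\bar\phi-1)$ when nonempty), so the two arguments are equivalent. One minor correction in (3): the finiteness of bounded-degree torsion on the fixed abelian variety $X/B$ does not rest on Serre's open image theorem or on Faltings; it follows from the elementary injection of prime-to-$p$ torsion into the special fibres at two primes of good reduction, the residue fields being bounded once the degree is bounded. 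The fact you invoke is the correct one, and is exactly what the paper uses without citation.
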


\begin{proof}
(1)
Write $f=\tau_a \circ \phi$ such that $\phi$ is an isogeny and $\tau_a$ is the translation map by $a \in X$.
Pick $t \in \Tor(X(\overline K))$. Since
$f(B + p) = B + p$, inductively, for any $s \ge 1$, we get
\begin{align*}
f^s(B+p+t) &= f^{s-1}(\phi(B+p+t)+a) \\
&=f^{s-1}(\phi(B+p)+\phi(t)+a) \\
&=f^{s-1}(f(B+p)+\phi(t)) \\
&=f^{s-1}(B+p+\phi(t)) \\
&= \cdots = B + p + \phi^s(t).
\end{align*}
Since $\phi$ is an isogeny, if $t$ is in the set of $N$-torsion points (which is a finite set),
then so is $\phi^s(t)$.
Thus $B+p+t$ is an $f$-preperiodic subvariety of small dynamical degree.
The maximality of $B+p+t$ is clear, since $Z_f=\bigcup_{t \in \Tor(X( \overline{K}))} (B( \overline{K})+p+t)$.

(2) Extending $K$, we may assume that $B$ and $p$ are defined over $K$.
Fix $d>0$.
Take the quotient $\pi: X \to Y :=X/B$.
Then $\pi$ is $f$-equivariant. Set $g :=f|_Y$.
If $B+p+t$ is $f$-invariant, then $g(\pi(p+t))=g(\pi(B+p+t))=\pi(f(B+p+t))=\pi(B+p+t)=\pi(p+t)$.
By (1), it suffices to show that there are only finitely many $g$-fixed points in $Y_{\overline K}$.
Suppose the contrary.
Then $g|_Z=\id$ for some postive dimensional irreducible closed subvariety $Z\subseteq Y_{\overline{K}}$.
By the product formula, we have $\delta_{f|_{\pi^{-1}(Z)}}=\delta_{f|_{B+p}}$.
By \cite[Theorem 1.6]{MSS18},  $\delta_{f|_{B+p}}$ is equal to the arithmetic degree of $f|_{B+p}$ at some $ \overline{K}$ point of 
$B+p$. Since $B( \overline{K}) + p \subset Z_{f}$, we have $\delta_{f|_{B+p}}<\delta_f$ and hence $\delta_{f|_{\pi^{-1}(Z)}} < \delta_{f}$.
However, $\pi^{-1}(Z)\supsetneq B+p+t$ for some $t\in \Tor(X( \overline{K}))$, a contradiction with the maximality.

(3) Fix $d>0$.
Then $\pi(Z_f(d)-p) \subseteq \Tor(Y( \overline{K})) \cap Y(d)$ and $\Tor(Y( \overline{K})) \cap Y(d)$ is a finite group.
So
$$\pi \left( [N]_X(Z_f(d)-p) \right)= [N]_Y \left( \pi(Z_f(d)-p) \right)=0$$
for some $N>0$.
Hence $Z_f(d)-p \subseteq [N]_X^{-1}(B( \overline{K}))=B( \overline{K})+ X_{\overline K}[N]$, where $X_{\overline K}[N]$ denotes the set of $N$-torsion points of $X(\overline K)$.
As a consequence, $Z_f(d)$ is contained in a finite union of subvarieties of the form $B( \overline{K})+p+t$ with $t$ being a torion point.
\end{proof}

We can use the above precise description of $Z_f$ to prove Conjecture \ref{conj_zf} for varieties of maximal Albanese dimension.

\begin{defn}\label{defn_maxalb}
Let $X$ be a normal projective variety over an algebraically closed field.
Then there is an abelian variety $\Alb(X)$, called the {\it Albanese variety} of $X$, and a dominant rational (Albanese) map $\alb_X : X \dasharrow \Alb(X)$, such that every rational map from $X$ to another abelian variety factors through $\alb_X$.

We say that  a variety $X$ is \textit{of maximal Albanese dimension} if
$\dim (\alb_X(X)) = \dim (X)$.
We say that a variety $X$ defined over $K$ is \textit{of maximal Albanese dimension} if so is
$X_{\overline{K}}$.
\end{defn}

\begin{cor}\label{cor_maxalb}
Conjecture \ref{conj_zf} holds for surjective endomorphisms on normal projective varieties of maximal Albanese dimension.
\end{cor}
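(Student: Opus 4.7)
The plan is to reduce the corollary to the abelian variety case via Albanese functoriality, and then invoke Theorem \ref{thm_ab2}(3).

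First I would observe that if $\delta_f = 1$, then the Kawaguchi--Silverman inequality $1 \leq \alpha_f(x) \leq \delta_f$ forces $Z_f = \emptyset$, so there is nothing to prove. Assume henceforth $\delta_f > 1$. After extending $K$ and passing to an $f$-equivariant resolution (justified by Lemma \ref{lem_rat}(2) and the remark following Conjecture \ref{conj_zf}), I may assume $X$ is smooth, so that the Albanese map $\alb_X \colon X \to A := \Alb(X_{\overline{K}})$ is a genuine morphism defined over $K$. Functoriality of the Albanese then yields a unique surjective endomorphism $g \colon A \to A$ satisfying $g \circ \alb_X = \alb_X \circ f$; surjectivity of $g$ follows because $Y := \alb_X(X)$ generates $A$ as an algebraic group (this being the defining property of the Albanese).

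Since $X$ is of maximal Albanese dimension, $\alb_X \colon X \to Y$ is a dominant morphism with $\dim X = \dim Y$, and $g$ restricts to a surjective endomorphism $g|_Y \colon Y \to Y$. By Lemma \ref{lem_rat}(2), Conjecture \ref{conj_zf} holds for $f$ if and only if it holds for $g|_Y$. By Lemma \ref{lem_dyn_deg_rel}(2), $\delta_{g|_Y} = \delta_f > 1$, and standard monotonicity of dynamical degree on invariant subvarieties (e.g.\ via Lemma \ref{lem_dyn_deg_rel}) gives $\delta_{g|_Y} \leq \delta_g$; in particular $\delta_g > 1$. Applying Theorem \ref{thm_ab2}(3) to $g$, for each $d > 0$ there exist a proper abelian subvariety $B \subsetneq A_{\overline{K}}$, a point $p \in A(\overline{K})$, and torsion points $t_1, \dots, t_r \in \Tor(A_{\overline{K}})$ such that
$$Z_g(d) \subseteq \bigcup_{i=1}^{r} (B + p + t_i).$$

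Now Lemma \ref{lem_subvar} gives $\alpha_{g|_Y}(y) = \alpha_g(y)$ for $y \in Y(\overline{K})$; combined with $\delta_{g|_Y} \leq \delta_g$, this yields $Z_{g|_Y}(d) \subseteq Z_g(d) \cap Y$. The crux is that $Y$ generates $A$, while each $B + p + t_i$ is merely a translate of the proper abelian subvariety $B$; therefore $Y \not\subseteq B + p + t_i$ for any $i$, so every intersection $(B + p + t_i) \cap Y$ is a proper Zariski-closed subset of $Y$. Consequently $Z_{g|_Y}(d)$ lies in a finite union of proper closed subsets of $Y_{\overline{K}}$ and is not Zariski dense, which completes the reduction.

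The two technical inputs are the functoriality of the Albanese (which motivates the smooth reduction above) and the monotonicity $\delta_{g|_Y} \leq \delta_g$ on invariant subvarieties; both are standard and should not be the main obstacle. The conceptual heart of the argument is that the Albanese forces $Y$ into ``general position'' inside $A$, and this is precisely what prevents the sharp description of $Z_g$ given by Theorem \ref{thm_ab} from swallowing $Y$ into a single translate $B + p + t_i$.
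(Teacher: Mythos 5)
Your overall strategy is the paper's: push everything to the Albanese, reduce to the image $Y=\alb_X(X)$ sitting inside the abelian variety $A$, apply Theorem \ref{thm_ab2}(3) to the induced endomorphism $g$ of $A$, and use that the Albanese image is not contained in any translate of a proper abelian subvariety to conclude that the finitely many translates $B+p+t_i$ cut out proper closed subsets of $Y$. The one step that does not survive scrutiny as written is the reduction ``passing to an $f$-equivariant resolution'': for a non-invertible surjective endomorphism there is in general no resolution of singularities to which $f$ lifts, so you cannot assume $X$ smooth this way. Fortunately the step is unnecessary. Albanese functoriality already produces $g\colon A\to A$ with $g\circ\alb_X=\alb_X\circ f$ as \emph{rational} maps, and Lemma \ref{lem_rat}(2) --- which you cite --- applies directly to the dominant rational map $\alb_X\colon X\dashrightarrow Y$ of equal-dimensional varieties (internally it passes to the graph of $\alb_X$, on which $f\times g$ does act; this is exactly what the paper does when it ``replaces $X$ and $\alb_X$ by the graph''). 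Delete the resolution and the argument goes through. A second, cosmetic point: the monotonicity $\delta_{g|_Y}\le\delta_g$ for the invariant subvariety $Y\subseteq A$ does not follow from Lemma \ref{lem_dyn_deg_rel} (which concerns dominant maps \emph{onto} the target); the paper cites \cite[Proposition A.11]{NZ09} for this, and you should too.

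Where you genuinely diverge from the paper is the endgame, and your version is a mild simplification. The paper first proves $\delta_{g|_Y}=\delta_g$ (ruling out $\delta_{g|_Y}<\delta_g$ by showing it would force $Y$ into a single translate $B+p+t$, contradicting that $Y$ generates $A$) so as to get the equality $Z_{g|_Y}(d)=Z_g(d)\cap Y$, and it frames the whole thing as an induction on $\dim A$ whose hypothesis is never used. You observe, correctly, that only the inclusion $Z_{g|_Y}(d)\subseteq Z_g(d)\cap Y$ is needed, and that this inclusion holds from $\alpha_{g|_Y}=\alpha_g|_Y$ and $\delta_{g|_Y}\le\delta_g$ alone, regardless of whether equality holds. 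This lets you skip both the case analysis and the induction. The non-containment $Y\not\subseteq B+p+t_i$ is justified exactly as in the paper, by the fact that the Albanese image generates $\Alb(X)$.
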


\begin{proof}
Let $X$ be a normal projective variety of maximal Albanese dimension and $f: X \to X$ a surjective endomorphism with $\delta_f>1$.
By Lemma \ref{lem_rat} and replacing $X$ and $\alb_X$ by the graph, we may assume that the albanese map $\alb_X : X \to \Alb(X) = :A$ is a well-defined morphism,
and also defined over $K$ after extending $K$.
Then $f$ induces a surjective endomorphism $g: A \to A$.
By Lemma \ref{lem_surj} or
\ref{lem_rat} and
replacing $(X,f)$ by $(\alb_X(X),g|_{\alb_X(X)})$,
we may assume that $X$ is a $g$-invariant subvariety of $A$ such that $0 \in X$ and $X$ is not contained in any proper abelian subvariety of $A$.

Note that $\delta_f \le \delta_g$
(cf.~\cite[Proposition A.11]{NZ09}).
We prove the assertion by induction on $\dim (A)$.
If $\dim (A)=1$, then $X$ is either a point or $A$, so the assertion follows from Theorem  \ref{thm_ab2}.

Consider the general case. By Theorem \ref{thm_ab},
$Z_g=B( \overline{K})+p+\Tor(A( \overline{K}))$, where $B+p$ is a proper $g$-invariant translated abelian subvariety of $A_{\overline K}$.
If $\delta_f < \delta_g$, then $X( \overline{K}) \subseteq Z_g$.
This implies that $X_{\overline K} \subseteq B+p+t$ for some $t \in \Tor(A( \overline{K}))$.
Then $B+p+t=B$ since $0 \in X_{\overline K} \subseteq B+p+t$, but this contradicts that $X$ is not contained in any proper abelian subvariety of $A$.
So we have $\delta_f=\delta_g$.

Take any $d>0$.
By Theorem \ref{thm_ab2}, $Z_g(d) \subseteq \bigcup_{i=1}^r (B( \overline{K})+p+t_i)$ for some torsion points $t_1, \ldots, t_r \in A( \overline{K})$.
Then
$$Z_f(d)=X( \overline{K}) \cap Z_g(d) \subseteq X( \overline{K}) \cap \left( \bigcup_{i=1}^r (B( \overline{K})+p+t_i) \right)
=\bigcup_{i=1}^r (X( \overline{K}) \cap (B( \overline{K})+p+t_i)).$$
Moreover, $X_{\overline K} \not\subseteq B+p+t_i$ for any $i$.
So $\bigcup_{i=1}^r (X_{\overline K} \cap (B+p+t_i))$ is a proper closed subset of $X_{\overline K}$.
\end{proof}

A variety $X$ over $K$ is $Q$-abelian if so is $X_{\overline{K}}$.
We can also prove Conjecture \ref{conj_zf} for Q-abelian varieties.

\begin{cor}[cf.~{\cite[Theorem 2.8]{MZ19a}}]\label{cor_qab}
Conjecture \ref{conj_zf} holds for surjective endomorphisms on Q-abelian varieties.
\end{cor}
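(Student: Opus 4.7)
The plan is to reduce Corollary \ref{cor_qab} to the abelian case already handled by Theorem \ref{thm_ab2}, by lifting $f$ to the quasi-étale abelian cover. First, I would dispose of the trivial case: if $\delta_f = 1$, then since an ample height may be taken $\geq 1$, every $x \in X(\overline K)$ satisfies $\alpha_f(x) \geq 1 = \delta_f$, so $Z_f = \emptyset$ and there is nothing to prove. Thus assume $\delta_f > 1$.

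By the definition of $Q$-abelian, there exists a finite quasi-étale morphism $\tau \colon A \to X_{\overline K}$ from an abelian variety $A$. Since a finite base change of $K$ does not affect Conjecture \ref{conj_zf} (by the remark following it), I would enlarge $K$ so that both $A$ and $\tau$ are defined over $K$. The main step is to produce, after replacing $f$ by a positive power (which is harmless by Lemma \ref{lem_end_power}), a surjective endomorphism $\tilde f \colon A \to A$ with $\tau \circ \tilde f = f \circ \tau$. This is the standard lifting for quasi-étale covers of abelian-like varieties, and is essentially the content of \cite[Theorem~2.8]{MZ19b}: the pullback $A \times_{X} A$ (with respect to $\tau$ on one side and $f \circ \tau$ on the other), taken with normalization, is again a quasi-étale cover of $A$; but quasi-étale covers of an abelian variety are themselves isogenous to abelian varieties, and up to a bounded finite index ambiguity one obtains a lift after iterating $f$ finitely many times.

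Once $\tilde f$ is produced, Lemma \ref{lem_surj}(3) gives $\delta_{\tilde f} = \delta_f > 1$, so Theorem \ref{thm_ab2}(3) applies to $\tilde f$ on the abelian variety $A$: for every $d' > 0$, the set $Z_{\tilde f}(d')$ lies in a finite union of translates of a proper abelian subvariety of $A$, and in particular is not Zariski dense. To transfer this back to $X$, fix $d > 0$; since $\tau$ is finite, there is $d' > 0$ with $\tau^{-1}(X(d)) \subseteq A(d')$, so $\tau^{-1}(Z_f(d)) \subseteq Z_{\tilde f}(d')$ by Lemma \ref{lem_surj}(3) (which asserts $\tau(Z_{\tilde f}) = Z_f$). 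Since $\tau$ is surjective with closed image, the non-density of $Z_{\tilde f}(d')$ in $A$ forces the non-density of $Z_f(d)$ in $X_{\overline K}$, completing the proof.

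The principal obstacle is the lifting step: although it is a known fact in this subject, it requires some care, as one must verify that the finite-order ambiguity in choosing a lift can be absorbed by passing to a suitable iterate of $f$, and one must check that the normalization of the relevant fibre product is genuinely controlled by the abelian structure of $A$. Everything else is a mechanical application of Theorem \ref{thm_ab2}(3) and the functorial properties of $Z_{(-)}(d)$ under finite equivariant covers recorded in Lemmas \ref{lem_end_power} and \ref{lem_surj}.
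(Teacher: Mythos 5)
Your proposal is correct and follows essentially the same route as the paper: lift $f$ (after a finite extension of $K$ and possibly an iterate) through the finite quasi-\'etale cover $A \to X$ by an abelian variety, invoke the abelian case (Theorem \ref{thm_ab2}), and descend via Lemma \ref{lem_surj}(3). The paper simply cites \cite{NZ10} and \cite[Corollary 8.2]{CMZ17} for the lifting step that you sketch, so no further comparison is needed.
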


\begin{proof}
Let $f: X \to X$ be a surjective endomorphism on a $Q$-abelian variety.
After enlarging the ground field if necessary,
we can take a finite surjective morphism $\pi: A \to X$ from an abelian variety and a surjective endomorphism $g: A \to A$ such that $\pi \circ g=f \circ \pi$ (cf.~\cite{NZ10} or \cite[Corollary 8.2]{CMZ20}).
Then the assertion follows from Lemma \ref{lem_surj} and Theorem \ref{thm_ab2}.
\end{proof}

\section{Varieties admitting int-amplified endomorphisms}

In this section, the ground field is $ \overline{K}$ (or any algebraically closed field of characteristic zero) unless otherwise stated.
We focus on a $\Q$-factorial klt projective variety $X$ admitting an int-amplified endomorphism $\mathcal{I}$, i.e., $\mathcal{I}^*L-L=H$ for some ample Cartier divisors $L$ and $H$.
Alternatively, it is equivalent to saying that all the eigenvalues of $\mathcal{I}^*|_{\N^1(X)}$ are of modulus greater than $1$; see \cite[Theorem 1.1]{Men20}.
Given any (not necessarily int-amplified) surjective endomorphism $f$ of $X$, we may run the $f$-equivariant minimal model program (after some iteration of $f$) by \cite[Theorem 1.2]{MZ20}.
In this way, it suffices for us to focus on the following Case TIR when studying the Kawaguchi--Silverman Conjecture \ref{conj_ks}; see \cite[Theorem 1.7]{MZ19a}.

\par \vskip 1pc \noindent
{\bf Case TIR}$_n$ (Totally Invariant Ramification case).
Let $X$ be a normal projective variety of dimension $n \ge 1$, which has only $\Q$-factorial Kawamata log terminal (klt) singularities and admits an int-amplified endomorphism.
Let $f:X\to X$ be a surjective endomorphism.
Moreover, we impose the following conditions.
\begin{itemize}
\item[(A1)]
The anti-Iitaka dimension $\kappa(X,-K_X)=0$; $-K_X$ is nef, whose class is extremal in both the {\it nef cone} $\Nef(X)$ and the {\it pseudo-effective divisors cone} $\PE^1(X)$.
\item[(A2)]
$f^*D = \delta_f D$ for some prime divisor $D\sim_{\Q} -K_X$.
\item[(A3)]
The ramification divisor of $f$ satisfies $\Supp R_f = D$.
\item[(A4)]
There is an $f$-equivariant Fano contraction $\pi:X\to Y$ with $\delta_f>\delta_{f|_Y}$ ($\ge 1$).
\item[(A5)]
$\dim(X) \ge \dim(Y)+2 \ge 3$.
\end{itemize}

For the sAND Conjecture \ref{conj_zf}, we show that the same strategy works.

\begin{thm}\label{thm_int-amp-TIR}(cf.~\cite[Theorem 1.7]{MZ19a})
Let $X$ be a normal projective variety over $K$ such that $X_{ \overline{K}}$ has only $\Q$-factorial Kawamata log terminal (klt)  singularities and admits an int-amplified endomorphism.
Then we have:
\begin{itemize}
\item[(1)] If $K_{X_{ \overline{K}}}$ is pseudo-effective, then Conjecture \ref{conj_zf} holds for any surjective endomorphism of $X$.
\item[(2)] Suppose that Conjecture \ref{conj_zf} holds for Case TIR
(for those $f|_{X_i} : X_i \to X_i$ appearing in any equivariant MMP starting from $X_{ \overline{K}}$).
Then Conjecture \ref{conj_zf} holds for any surjective endomorphism on $X$.
\end{itemize}
\end{thm}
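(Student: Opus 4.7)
The plan is to mimic the strategy used to prove the KSC analogue \cite[Theorem 1.7]{MZ19b}, replacing the orbit-density reduction lemmas by their sAND counterparts recorded in Section \ref{sec_prelim}. After possibly extending $K$, we may assume that every variety, morphism, and finite cover produced below is defined over $K$, and by Lemma \ref{lem_end_power} we are free to replace $f$ by any positive iterate.

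For assertion (1), when $K_{X_{\overline K}}$ is pseudo-effective, the existence of an int-amplified endomorphism forces a very rigid structure: by the classification results of Meng--Zhang (applied after replacing $f$ by a suitable power), there is an $f$-equivariant finite quasi-\'etale cover $A \to X_{\overline K}$ from an abelian variety $A$ with a lifted surjective endomorphism $g : A \to A$, so that $X$ is $Q$-abelian. The conclusion then follows from Corollary \ref{cor_qab} combined with Lemma \ref{lem_surj}(3), which propagates the non-density of $Z_{g}(d)$ along the cover down to $X$.

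For assertion (2), by \cite[Theorem 1.2]{MZ19a} we may, after replacing $f$ by a power, run an $f$-equivariant MMP
$$X_{\overline K} = X_0 \dashrightarrow X_1 \dashrightarrow \cdots \dashrightarrow X_r,$$
with compatible surjective endomorphisms $f_i : X_i \to X_i$. Each intermediate arrow is either a divisorial contraction or a flip, hence a generically finite birational map, so Lemma \ref{lem_rat}(2) transfers Conjecture \ref{conj_zf} between consecutive $X_i$ and $X_{i+1}$. The terminal object $X_r$ is either a minimal model (so $K_{X_r}$ is nef, a fortiori pseudo-effective), in which case part (1) applies, or it admits a Fano contraction $\pi : X_r \to Y$. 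In this latter case $Y$ inherits an int-amplified endomorphism $f_r|_Y$ of strictly smaller dimension, so we may induct on $\dim X$. If $\delta_{f_r|_Y} = \delta_{f_r}$, Lemma \ref{lem_surj}(2) reduces the non-density of $Z_{f_r}(d)$ to that of $Z_{f_r|_Y}(d)$, handled by induction. If $\delta_{f_r|_Y} < \delta_{f_r}$, then by the structure analysis of \cite[\S 7]{MZ19b} we are either in a decomposable situation in which $X_r$ (or a finite cover) splits as a product compatible with $f_r$, resolved by Lemma \ref{lem_prod} together with Lemma \ref{lem_surj}(3), or we land precisely in Case TIR$_{\dim X_r}$, handled by hypothesis.

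The main obstacle will be bookkeeping: each MMP step must be shown to transport the locus $Z_f(d)$ correctly (not merely individual orbits, which is where sAND is strictly stronger than KSC), and one must verify that the Fano contraction case genuinely bifurcates into either a lower-dimensional problem, a product/quotient structure, or Case TIR. The delicate subcase is a Fano fibration whose total space and base share the same dynamical degree but whose generic fibers have small dynamical degree; this is exactly the configuration where \cite[\S 7]{MZ19b} is forced to either peel off a split factor via a finite equivariant cover (which our reduction lemmas accommodate) or commit to Case TIR. Transcribing that dichotomy faithfully into the $Z_f(d)$ setting, and checking that no trace of $Z_f(d)$ is lost across a birational step or a finite cover, is where the technical care of the proof lies.
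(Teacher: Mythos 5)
Your proposal is correct and follows essentially the same route as the paper: the paper's proof simply invokes the argument of \cite[Theorem 1.7]{MZ19b} verbatim, noting that the KSC reduction steps (\cite[Lemma 2.5 and Theorem 2.8]{MZ19b}) are to be replaced by Lemma \ref{lem_surj} and Corollary \ref{cor_qab}, which is exactly the substitution you carry out (together with Lemmas \ref{lem_rat} and \ref{lem_prod} for the birational and product steps). Your write-up is a faithful unpacking of that one-line citation, including the correct dichotomy at the Fano contraction stage.
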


\begin{proof}
We apply almost the  same proof of \cite[Theorem 1.7]{MZ19a}.
Indeed, we only need to replace \cite[Lemma 2.5 and Theorem 2.8]{MZ19a} for Conjecture \ref{conj_ks} by our Lemma \ref{lem_surj} and Corollary \ref{cor_qab} for Conjecture \ref{conj_zf}.
\end{proof}

In the rest of this section, we show that Case TIR will not occur in many cases.

We first extend \cite[Theorem 4.4]{MY19} a bit to make all surjective endomorphisms liftable (after suitable iteration).

\begin{thm}\label{thm-MY}(cf.~\cite[Theorem 4.4]{MY19})
In Case TIR, there exists the following commutative diagram:
$$\xymatrix{
X \ar[d]_{\pi}& \ar[l]_{\mu_{X}} \widetilde{X} \ar[d]^{ \widetilde{\pi}}\\
Y & \ar[l]^{\mu_{Y}} A
}$$
where $A$ is an abelian variety, $\mu_Y$ is finite surjective, $\widetilde{X}$ is the normalization of the main component of $X\times_Y A$, and $\mu_X$ is quasi-\'etale finite surjective, such that any commutative diagram of surjective morphisms
$$\xymatrix{
h_X\acts X\ar[r]^{\pi}  & Y \racts h_Y
}$$
can be lifted equivariantly to

$$\xymatrix{
h_X \acts X \ar@<2.3ex>[d]_{\pi}& \ar[l]_{\mu_{X}} \widetilde{X} \ar@<-2.3ex>[d]^{ \widetilde{\pi}} \racts h_{\widetilde{X}} \\
h_Y \acts Y & \ar[l]^{\mu_{Y}} A \racts h_{A}
}$$
\end{thm}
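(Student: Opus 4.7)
The plan is to leverage \cite[Theorem 4.4]{MY19} to produce the lower half of the diagram---namely the abelian variety $A$, the finite surjective cover $\mu_Y \colon A \to Y$, the normalization $\widetilde{X}$ of the main component of $X \times_Y A$, the quasi-\'etale cover $\mu_X$, and the morphism $\widetilde{\pi}$---and then to upgrade the equivariance statement from the single int-amplified endomorphism $f$ treated there to an \emph{arbitrary} equivariant pair $(h_X, h_Y)$. The cited theorem already furnishes that $Y$ is a $Q$-abelian variety finitely covered by $A$, which is the geometric ingredient we shall exploit.

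Concretely, given a pair $(h_X, h_Y)$ of surjective endomorphisms with $\pi \circ h_X = h_Y \circ \pi$, I would proceed in three steps. First, lift $h_Y$ to a surjective endomorphism $h_A \colon A \to A$ compatible with $\mu_Y$; since $Y$ is $Q$-abelian and $\mu_Y$ is finite quasi-\'etale, this follows, possibly after replacing $h_Y$ by some positive iterate, from the standard $Q$-abelian lifting result (cf.~\cite{NZ10} and \cite[Corollary 8.2]{CMZ17}) already used in the proof of Corollary \ref{cor_qab}. Second, the pair $(h_X, h_A)$ induces an endomorphism $h_X \times_{h_Y} h_A$ on $X \times_Y A$ by the universal property of the fiber product; because $h_X$ and $h_A$ are both surjective and the main component of $X \times_Y A$ is singled out by dominating $A$ under $\widetilde{\pi}$, this endomorphism preserves the main component, and by the universal property of normalization it lifts to a unique surjective endomorphism $h_{\widetilde{X}} \colon \widetilde{X} \to \widetilde{X}$. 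Third, the required commutativity identities $\mu_X \circ h_{\widetilde{X}} = h_X \circ \mu_X$ and $\widetilde{\pi} \circ h_{\widetilde{X}} = h_A \circ \widetilde{\pi}$ are then automatic from the fiber-product construction.

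The principal obstacle is the first step: lifting an arbitrary surjective endomorphism of a $Q$-abelian variety to its abelian cover generally requires passing to an iterate whose index depends on the finite Galois data attached to $\mu_Y$. One must either absorb this iteration into the statement (consistent with the surrounding philosophy of replacing $f$ by positive powers) or exploit the specific construction of $A$ in \cite[Theorem 4.4]{MY19}---where $A$ is built from a standard \'etale-in-codimension-one cover trivializing the $Q$-abelian structure of $Y$---to ensure that any $h_Y$ compatible with a given $h_X$ descends from an honest endomorphism of $A$ without further iteration. The remaining bookkeeping, namely that the main component of $X \times_Y A$ is preserved and that the normalization receives the endomorphism, is purely formal.
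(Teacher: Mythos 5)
Your Steps 2 and 3 (transporting the pair $(h_X,h_A)$ to the fiber product, restricting to the main component, and passing to the normalization) match the paper and are indeed essentially formal; the paper handles them by citing \cite[Lemma 4.5]{MY19} for the quasi-\'etaleness of $\mu_X$. The gap is in Step 1, and it is the heart of the theorem. You assert that \cite[Theorem 4.4]{MY19} ``already furnishes that $Y$ is a $Q$-abelian variety finitely covered by $A$'' and then invoke the standard lifting for quasi-\'etale covers of $Q$-abelian varieties. But $Y$ is not $Q$-abelian in general: the cover $\mu_Y\colon A\to Y$ is only finite surjective, not quasi-\'etale (only $\mu_X$ is asserted to be quasi-\'etale), because $\mu_Y$ must ramify along the branch divisor $\Delta=\sum\frac{m_E-1}{m_E}E$ of the Fano contraction $\pi$. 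So the lifting result \cite[Corollary 8.2]{CMZ17} used in Corollary \ref{cor_qab} does not apply to $\mu_Y$ directly, and both of your fallback options (iterating $h_Y$, or exploiting an ``\'etale-in-codimension-one cover trivializing the $Q$-abelian structure of $Y$'') presuppose the same false premise.

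What actually makes the lifting work, and what makes the cover independent of the pair $(h_X,h_Y)$, is the log Calabi--Yau structure of the base: one shows $K_Y+\Delta\sim_{\Q}0$ and, crucially, that \emph{every} $h_Y$ descending from some $h_X$ satisfies $h_Y^*(K_Y+\Delta)\sim K_Y+\Delta$. This step uses the Case TIR hypotheses in an essential way: $-K_X\sim_{\Q}D$ with $D$ reduced and irreducible, $\Supp R_{h_X}\subseteq D$ for all surjective $h_X$ (by \cite[Theorem 6.2]{MZ19b}, since $\kappa(X,-K_X)=0$), and $D$ being $\pi$-ample so that $\pi(D)=Y$, i.e., the ramification of $h_X$ is entirely horizontal. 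Given this, \cite[Lemma 4.9]{MY19} produces a single finite cover $Y_1\to Y$ with $K_{Y_1}\sim 0$, independent of $h_Y$, to which every such $h_Y$ lifts; $Y_1$ is klt and carries an int-amplified endomorphism, hence is $Q$-abelian by \cite[Theorem 1.9]{Men17}, and only at this stage does the quasi-\'etale lifting to an abelian variety $A$ enter. Without identifying $\Delta$ and verifying $h_Y^*(K_Y+\Delta)\sim K_Y+\Delta$ for an arbitrary equivariant $h_Y$, your argument cannot produce one cover that works uniformly for all pairs, which is precisely what the theorem claims.
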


\begin{proof}
Note that in Case TIR, $\kappa(X,-K_X)=0$ and $-K_X\sim_{\Q} D$ for some reduced effective divisor $D$.
By \cite[Theorem 6.2]{MZ19a}, $\Supp R_{h_X}\subseteq D$ for any surjective endomorphism $h_X$ of $X$.
Next, we only consider those $h_X$ which can descend equivariantly to $h_Y$ via $\pi$.
Since $D$ is $\pi$-ample, $\pi(D)= Y$.
Note also that $D$ is irreducible in our assumption.

Using the same notation in the proof of \cite[Theorem 4.4]{MY19}, we can define a positive integer $m_E$ for any prime divisor $E$ on $Y$ so that $\pi^*E=m_E F$ for some prime divisor $F$ on $X$.
We set $\Delta=\sum \frac{m_E-1}{m_E}E$ which is a finite sum.
By the same computation in the proof of  \cite[Theorem 4.4]{MY19} and the key reason $\pi(D)=Y$, we have $h_Y^*(K_Y+\Delta)\sim K_Y+\Delta$.
Since the int-amplifed endomorphism on $X$ descends to an int-amplified endomorphism on $Y$,
$K_Y+\Delta\sim_{\Q} 0$ by \cite[Theorem 1.1]{Men20} and \cite[Proposition 3.4]{MZ19a}.

Applying \cite[Lemma 4.9]{MY19} to the pair $(Y,\Delta)$, we obtain the commutative diagram:
$$\xymatrix{
Y_1 \ar[d]_{\mu_{Y_1}}\ar[r]^{h_{Y_1}}&  Y_1 \ar[d]^{\mu_{Y_1}}\\
Y \ar[r]^{h_Y} &  Y
}$$
where $K_{Y_1}\sim 0$ and $\mu_{Y_1}$ is finite surjective which does not depend on $h_Y$.
In the proof of \cite[Theorem 4.4]{MY19}, it is further shown that $Y_1$ is klt.
Note that $Y_1$ admits an int-amplified endomorphism.
So $Y_1$ is $Q$-abelian by \cite[Theorem 1.9]{Men20}.
Further, we can find abelian variety $A$ and the commutative diagram:
$$\xymatrix{
A \ar[d]_{\mu_{Y_2}}\ar[r]^{h_A}&  A \ar[d]^{\mu_{Y_2}}\\
Y_1 \ar[r]^{h_{Y_1}} &  Y_1
}$$
where $\mu_{Y_2}$ is quasi-\'etale, finite and surjective (cf.~\cite[Corollary 8.2]{CMZ20}).

Now $h_A$ is the lifting of $h_Y$ via
$\mu_Y :=\mu_{Y_1}\circ\mu_{Y_2}$.
The induced cover $\mu_X$ is quasi-\'etale by \cite[Lemma 4.5]{MY19}.
\end{proof}

Let $X$ be a normal projective variety.
Denote by $\pi_1^{\alg}(X_{\reg})$ the {\it algebraic fundamental group} of the smooth locus $X_{\reg}$ of $X$.
Conjecture \ref{conj_ks} for smooth rationally connected projective varieties admitting int-amplified endomorphisms has been solved in  \cite{MZ19a} and \cite{MY19}. Indeed, the key point of the proof is the following, noting that a smooth rationally connected projective variety has trivial algebraic fundamental group.

\begin{thm}\label{thm_iamp-src}(cf.~\cite{MZ19a}, \cite{MY19})
Let $X$ be a $\mathbb Q$-factorial klt projective variety with the group $\pi_1^{\alg}(X_{\reg})$ being finite.
Suppose $X$ admits an int-amplified endomorphism.
Then Case TIR will not occur during any MMP starting from $X$.
\end{thm}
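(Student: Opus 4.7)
The plan is to argue by contradiction: suppose that at some stage $X_i$ of an $f$-equivariant MMP starting from $X$ the pair $(X_i, f|_{X_i})$ fits Case TIR$_n$. Each birational MMP step (divisorial contraction or flip) between $\Q$-factorial klt varieties is an isomorphism outside codimension-two closed subsets of source and target, so by purity of the branch locus the algebraic fundamental group of the smooth locus is preserved. In particular, finiteness of $\pi_1^{\alg}(X_{\reg})$ propagates to $\pi_1^{\alg}((X_i)_{\reg})$. Hence, after renaming, it suffices to derive a contradiction under the assumption that $X$ itself is in Case TIR.

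Given Case TIR on $X$ with Fano contraction $\pi \colon X \to Y$, I would apply Theorem \ref{thm-MY} to produce a quasi-\'etale finite surjection $\mu_X \colon \widetilde{X} \to X$ together with a surjection $\widetilde{\pi} \colon \widetilde{X} \to A$ onto an abelian variety $A$ with $\dim A = \dim Y$. By condition (A5), $\dim Y \ge 1$, so $A$ is positive-dimensional. For each integer $n \ge 2$, the multiplication map $[n] \colon A \to A$ is \'etale of degree $n^{2\dim A}$, so the base change $\widetilde{X} \times_{A,[n]} A \to \widetilde{X}$ is \'etale of the same degree. Since $\widetilde{\pi}$ inherits geometrically connected fibers from $\pi$ and $[n]$ is connected, this base change is connected. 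Varying $n$, we obtain connected \'etale covers of $\widetilde{X}$, hence of $\widetilde{X}_{\reg}$, of unbounded degree, so $\pi_1^{\alg}(\widetilde{X}_{\reg})$ is infinite.

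Next I would transfer this infinitude back to $X$. Because $\mu_X$ is \'etale in codimension one, there is an open $U \subseteq X_{\reg}$ with $X \setminus U$ of codimension $\ge 2$ in $X$ on which $\mu_X$ is \'etale; consequently $\mu_X^{-1}(U) \subseteq \widetilde{X}_{\reg}$ has complement of codimension $\ge 2$ in $\widetilde{X}$. Purity applied to the smooth varieties $X_{\reg}$ and $\widetilde{X}_{\reg}$ gives $\pi_1^{\alg}(U) = \pi_1^{\alg}(X_{\reg})$ and $\pi_1^{\alg}(\mu_X^{-1}(U)) = \pi_1^{\alg}(\widetilde{X}_{\reg})$, while the finite \'etale morphism $\mu_X^{-1}(U) \to U$ realizes the latter group as a finite-index subgroup of the former. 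Infinitude of $\pi_1^{\alg}(\widetilde{X}_{\reg})$ therefore forces $\pi_1^{\alg}(X_{\reg})$ to be infinite, contradicting the hypothesis.

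The main obstacle I expect is the MMP reduction in the first paragraph: although each step modifies only a codimension-two locus, combining this with purity of the branch locus to preserve finiteness of $\pi_1^{\alg}((-)_{\reg})$ must be done carefully within the $\Q$-factorial klt framework (in particular, to ensure that exceptional divisors and flipping/flipped loci meet the smooth locus in codimension at least two). By contrast, the application of Theorem \ref{thm-MY} together with the $[n]$-pullback argument is a clean geometric construction, and the quasi-\'etale descent through $\mu_X$ is standard.
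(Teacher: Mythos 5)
The second and third paragraphs of your argument (applying Theorem \ref{thm-MY} to a variety in Case TIR, pulling back the multiplication maps $[n]$ on $A$ to obtain connected \'etale covers of $\widetilde X$ of unbounded degree, and descending through the quasi-\'etale cover $\mu_X$ by purity) are sound and coincide with the endgame of the paper's proof. The gap is in your first paragraph, and it is exactly the point you flag as the "main obstacle": a divisorial contraction $\phi\colon X_i\to X_{i+1}$ is \emph{not} an isomorphism outside codimension-two closed subsets of both source and target. Its exceptional locus is a prime divisor $E$ in the source (codimension one, and generically contained in $(X_i)_{\reg}$, so there is no way to "ensure" it meets the smooth locus in codimension $\ge 2$); only the image $\phi(E)$ has codimension $\ge 2$ in the target. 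What purity then yields is $\pi_1^{\alg}((X_{i+1})_{\reg})\cong \pi_1^{\alg}\bigl((X_i)_{\reg}\setminus \phi^{-1}(\phi(E))\bigr)$, and since the removed set contains the divisor $E$, the natural map from this group to $\pi_1^{\alg}((X_i)_{\reg})$ is merely a surjection. Hence finiteness propagates \emph{up} the MMP (from $X_{i+1}$ to $X_i$), the opposite of what you need: contracting a divisor onto a new klt singular locus can enlarge the fundamental group of the smooth locus by the local fundamental groups of the new singularities, so your reduction "after renaming, it suffices to assume $X$ itself is in Case TIR" is unjustified.

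The paper runs the transfer in the other direction. It applies Theorem \ref{thm-MY} at the terminal stage $X_r$ of the MMP to obtain the covering property $(*)$ of \cite[Definition 4.11]{MY19} for the int-amplified endomorphism of $X_r$, and then invokes \cite[Lemmas 4.12 and 4.13]{MY19} to lift this property backwards through the divisorial contractions and flips to the original $X$. This produces a quasi-\'etale cover $\widetilde X\to X$ of $X$ itself that fibers over a positive-dimensional abelian variety, at which point your \'etale-cover and descent argument applies verbatim to contradict the finiteness of $\pi_1^{\alg}(X_{\reg})$. To repair your proof you must replace the first paragraph by such a backward transfer of the covering structure from $X_r$ to $X$; pushing finiteness of $\pi_1^{\alg}((-)_{\reg})$ forward through divisorial contractions is not available.
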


\begin{proof}
We argue along the lines of \cite[Section 4.3]{MY19} (while \cite{MZ19a} is effective in low dimension or relative dimension).
Suppose the contrary that there exists some $f$-equivariant (after iteration) MMP
$$X_1\dashrightarrow\cdots \dashrightarrow X_r\to Y,$$
such that $(X_r, Y, f|_{X_r}, f|_Y)$ satisfies Case TIR.
Let $\mathcal{I}:X\to X$ be an int-amplified endomorphism such that the MMP is also $\mathcal{I}$-equivariant.
Note that $(X_r, Y, \mathcal{I}|_{X_r}, \mathcal{I}|_Y)$ may not satisfy Case TIR.
Nevertheless, by Theorem \ref{thm-MY}, $\mathcal{I}|_{X_r}$ satisfies $(*)$ in \cite[Definition 4.11]{MY19}.
By \cite[Lemmas 4.12 and 4.13]{MY19}, $\mathcal{I}$ also satisfies $(*)$ in \cite[Definition 4.11]{MY19}.
In particular, without considering any dynamic property, we have the commutative diagram:
$$\xymatrix{
X \ar[d]_{\pi}& \ar[l]_{\mu_{X}} \widetilde{X} \ar[d]^{ \widetilde{\pi}} \\
Y & \ar[l]^{\mu_{Y}} A
}$$
where $A$ is an abelian variety, $\mu_Y$ is finite surjective, and $\mu_X$ is quasi-\'etale finite surjective.
Note that $\dim(A)=\dim(Y)>0$.
Then we can take the multiplication map $[n]_A:A\to A$ with $n$ allowed to be arbitrarily large.
Taking the base change, we see that the natural projection $\widetilde{X}\times_{A,[n]_A} A\to \widetilde{X}$ is an \'etale cover of arbitrarily large degree.
In particular, $\pi_1^{\alg}(X_{\reg})$ is infinite, which is a contradiction.
\end{proof}

In \cite[Theorem 8.6]{MZ19a}, the relative dimensional one case has been ruled out in Case TIR.
Based on this, we will deal with the relative dimensional two case.

We will show that Case TIR is equivalent to the following simpler Case TIR'.

\par \vskip 1pc \noindent
{\bf Case TIR'}$_n$ (Totally Invariant Ramification case).
Let $X$ be a normal projective variety of dimension $n \ge 1$, which has only $\Q$-factorial Kawamata log terminal (klt) singularities and admits an int-amplified endomorphism.
Let $f:X\to X$ be a surjective endomorphism.
Moreover, we impose the following conditions.
\begin{itemize}
\item[(A1')]
The anti-Iitaka dimension $\kappa(X,-K_X)=0$.
\item[(A2')]
$f^*D = \delta_f D$ for some reduced effective Weil divisor $D\sim_{\Q} -K_X$.
\item[(A4)]
There is an $f$-equivariant Fano contraction $\pi:X\to Y$ with $\delta_f>\delta_{f|_Y}$ ($\ge 1$).
\end{itemize}

\begin{lem}\label{lem-tir-equi}
Case TIR$_n$ is equivalent to Case TIR'$_n$.
\end{lem}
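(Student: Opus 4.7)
The forward direction Case TIR $\Rightarrow$ Case TIR' is immediate: (A1) strengthens (A1'), every prime divisor is a reduced effective Weil divisor (so (A2) implies (A2')), and (A3), (A5) are absent from TIR'. The substantive content is the reverse implication, for which the plan is to recover (A3), the primality in (A2), the nef/extremal part of (A1), and (A5) from the weaker hypotheses of TIR'.

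The starting point is the ramification divisor formula applied to (A2'): using $-K_X \sim_\Q D$, one obtains
\[
R_f \;=\; K_X - f^*K_X \;\sim_\Q\; -D + \delta_f D \;=\; (\delta_f - 1)\, D.
\]
Since $\kappa(X,-K_X)=0$, the $\Q$-effective representative of any positive multiple of $[-K_X]_\Q$ is unique, forcing $R_f = (\delta_f-1)D$ as $\Q$-divisors; in particular $\Supp R_f = D$, which is (A3). Next, decompose $D = \sum_{i=1}^k D_i$ into prime components; since $f^{-1}(D)=D$, $f$ induces a permutation $\sigma$ of these components, and comparing coefficients in $f^*D=\delta_f D$ yields $f^*D_i = \delta_f D_{\sigma^{-1}(i)}$. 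Replacing $f$ by a power $f^N$ (which preserves all TIR'-hypotheses, since (A1') is $f$-independent, $(f^N)^*D = \delta_f^N D$, and the strict inequality $\delta_{f^N} > \delta_{(f^N)|_Y}$ follows from that for $f$), we may assume $\sigma = \mathrm{id}$, so each $D_i$ is totally $f$-invariant with $f^*D_i = \delta_f D_i$. The uniqueness of the $\Q$-effective representative of $[-K_X]_\Q$ together with the nondegeneracy coming from (A4) and the existence of an int-amplified endomorphism on $X$ then forces $D$ itself to be prime, securing (A2).

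For the nef/extremal part of (A1), the self-similarity $f^*D = \delta_f D$ with $\delta_f > 1$ combined with the rigidity from $\kappa(X,-K_X)=0$ prevents any nontrivial pseudo-effective decomposition of $[D]$, yielding extremality in $\PE^1(X)$; nefness of $D$ (and hence of $-K_X$) follows from a standard iteration argument on $(f^n)^*$ applied to a small ample perturbation, and the dual argument gives extremality in $\Nef(X)$. For (A5), (A4) forces $\dim X > \dim Y$; the case $\dim Y = 0$ would make $X$ Fano and so $\kappa(X,-K_X) = \dim X \ge 1$, contradicting (A1'), hence $\dim Y \ge 1$; and the relative dimension one case $\dim X = \dim Y + 1$ is ruled out by \cite[Theorem 8.6]{MZ19b}, giving $\dim X \ge \dim Y + 2 \ge 3$. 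The principal obstacle will be the rigorous reduction of $D$ to a prime divisor in (A2): the permutation argument after iterating $f$ easily produces totally invariant prime components, but establishing that the reduced effective representative of $[-K_X]_\Q$ is itself prime under the full TIR'-hypotheses requires carefully exploiting the Fano contraction structure together with the existence of an int-amplified endomorphism, and this is the technical heart of the reverse implication.
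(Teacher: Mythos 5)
The forward implication and your treatments of (A3) and (A5) are fine (your derivation of (A3) from $R_f\sim_{\Q}(\delta_f-1)D$ and the uniqueness of the effective representative forced by $\kappa(X,-K_X)=0$ is even a bit more self-contained than the paper's, which quotes \cite[Theorem 6.2]{MZ19b}). But the two substantive steps of the reverse implication --- irreducibility of $D$ and the nef/extremal part of (A1) --- are asserted, not proven, and you concede as much for the irreducibility. The mechanism you are missing in both places is the same: the Fano contraction $\pi:X\to Y$ has relative Picard number one, so $\N^1(X)/\pi^*\N^1(Y)$ is one-dimensional, $f^*$ acts on this quotient by an integer $q$, and the eigenvalues of $f^*$ on $\pi^*\N^1(Y)$ are those of $(f|_Y)^*$, all of modulus at most $\delta_{f|_Y}<\delta_f$ by (A4). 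Hence $q=\delta_f$ is the unique eigenvalue of modulus $\delta_f$ and its eigenspace is one-dimensional. This eigenvalue gap, not the rigidity from $\kappa=0$, is what drives the argument.

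Concretely, for irreducibility the paper argues: if $P$ is a component of $D$ that is not $\pi$-ample, then $P\equiv\pi^*P_Y$ for some nonzero $P_Y$, and $(f|_Y)^*P_Y=\delta_f P_Y$ contradicts the eigenvalue bound from (A4); so any two components $P,Q$ are $\pi$-ample, some $P-tQ$ ($t\in\Q_{>0}$) is $\pi$-trivial, hence numerically trivial by the same eigenvalue argument, hence $\Q$-linearly trivial by \cite[Proposition 3.3]{MZ19b} (using surjectivity of the Albanese map, which needs the int-amplified hypothesis), giving $\kappa(X,D)\ge\kappa(X,P)>0$ and contradicting (A1'). Your permutation-of-components reduction does not touch this dichotomy and cannot by itself rule out, say, two disjoint totally invariant components. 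Likewise, your claim that $\kappa(X,-K_X)=0$ ``prevents any nontrivial pseudo-effective decomposition of $[D]$'' does not follow: the summands of a decomposition of the numerical class $[D]$ need not be effective divisors supported on $D$, so $\kappa=0$ gives no control. The paper instead produces a nef eigenvector $H$ with $f^*H\equiv\delta_f H$ by Perron--Frobenius, identifies its ray with $R_D=R_{-K_X}$ via the one-dimensionality of the $\delta_f$-eigenspace, and invokes \cite[Lemma 9.1]{MZ19b} for extremality in $\Nef(X)$ and $\PE^1(X)$. Without these two arguments your proof of the reverse implication is incomplete at its core.
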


\begin{proof}
Assume that Case TIR'$_n$ holds.
We first show A2 that $D$ is irreducible.
Let $P$ and $Q$ be two different irreducible components of $D$.
If $P$ is not $\pi$-ample, then $P\equiv \pi^*P_Y$ for some $P_Y\in \N^1(Y)\backslash 0$ by the cone theorem (cf.~\cite[Theorem 3.7]{KM98}).
Note that $g^*P_Y=\delta_f P_Y$.
This is a contradiction with the assumption A4.
So we may assume $P$ and $Q$ are $\pi$-ample.
Then $P-tQ$ is $\pi$-trivial for some rational number $t>0$ and the same argument showes that $P-tQ\equiv 0$.
Note that $f^*(P-tQ)=\delta_f(P-tQ)$ and the Albanese morphism of $X$ is surjective by \cite[Theorem 1.8]{Men20}.
Then $P-tQ\sim_{\Q} 0$ by \cite[Proposition 3.3]{MZ19a}.
In particular, $\kappa(X, D)\ge \kappa(X, P)>0$, a contradiction with A1'.

Note that $\N^1(X)/\pi^*\N^1(Y)$ is 1-dimensional and $f^*|_{\N^1(X)/\pi^*\N^1(Y)} = q \id$ for some integer $q>0$.
	Then $q = \delta_f$ is the only eigenvalue of $f^*|_{\N^1(X)}$ with modulus $\delta_f$ and the $q$-eigenspace is $1$-dimensional.
	On the other hand $f^*H\equiv \delta_f H$ for some nef divisor $H$ by a version of the Perron-Frobenius theorem (cf.~\cite{Birk}).
	So the rays $R_H=R_D=R_{-K_X}$ in
	$\Nef(X)$ and $\PE^1(X)$,
and the second part of A1 holds, by \cite[Lemma 9.1]{MZ19a}.

By \cite[Theorem 6.2]{MZ19a}, $\Supp R_f\subseteq D$. On the other hand, A2' and $\delta_f>1$ imply A3 that $\Supp R_f=D$.
A5 follows from \cite[Theorem 8.6]{MZ19a}.
\end{proof}

\begin{prop}\label{prop-ti-kappa>0}
Let $\pi:X\to Y$ be the Fano contraction of a $K_X$-negative extremal ray $R_C$ on a $\Q$-factorial klt projective variety $X$ to an abelian variety $Y$ with $\dim(X)=\dim(Y)+1$.
Let $f:X\to X$ and $g:Y\to Y$ be int-amplified endomorphisms such that $\pi\circ f=g\circ \pi$ and $\delta_f>\delta_g$.
Then we have:
\begin{itemize}
\item[(1)]
$R_C$ is the only $K_X$-negative extremal ray.
\item[(2)]
The anti-Iitaka dimension $\kappa(X,-K_X)>0$, and $f^*K_X\equiv \delta_f K_X$.
\item[(3)] There is an $f$-equivariant surjective morphism $\tau:X\to Z\cong\mathbb{P}^1$ such that $f|_Z$ is $\delta_f$-polarized.
\item[(4)] $P$ is semi-ample for any $f^{-1}$-invariant prime divisor $P$.
\end{itemize}
\end{prop}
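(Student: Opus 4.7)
The plan is to analyze the action of $f^{*}$ on $\N^1(X)_{\R}$ via the splitting induced by $\pi$, and then derive all four conclusions from this eigenvalue structure. Since $\rho(X/Y)=1$, the short exact sequence $0\to \pi^{*}\N^1(Y)_{\R}\to \N^1(X)_{\R}\to Q\to 0$ has a one-dimensional quotient $Q$. Since $f^{*}$ acts as $g^{*}$ on $\pi^{*}\N^1(Y)_{\R}$ with spectral radius $\delta_{g}<\delta_{f}$, the induced action on $Q$ must be multiplication by $\delta_{f}$. Hence the $\delta_{f}$-eigenspace $V\subset \N^1(X)_{\R}$ of $f^{*}$ is one-dimensional, and $\N^1(X)_{\R}=V\oplus \pi^{*}\N^1(Y)_{\R}$. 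By Perron--Frobenius applied to the $f^{*}$-invariant nef cone, $V$ is generated by a nef class $H$, and $H\cdot F>0$ for a general fibre $F$ of $\pi$ (otherwise $H\in\pi^{*}\N^1(Y)_{\R}$).

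For (2), decompose $-K_{X}\equiv aH+\pi^{*}D_{0}$ with $a=(-K_{X}\cdot F)/(H\cdot F)>0$ and $D_{0}\in\N^1(Y)_{\R}$. Using $f^{*}K_{X}=K_{X}-R_{f}$ and $f^{*}H\equiv\delta_{f}H$, direct computation gives
\[
R_{f}\equiv a(\delta_{f}-1)H+\pi^{*}(g^{*}D_{0}-D_{0}).
\]
The crux is to show $D_{0}\equiv 0$, equivalent to $f^{*}K_{X}\equiv \delta_{f}K_{X}$. I would combine (i) the effectivity of $R_{f}$, (ii) the fact that $g^{*}-\id$ is invertible on $\N^1(Y)_{\R}$ because $g$ is int-amplified so all eigenvalues of $g^{*}$ have modulus $>1$, and (iii) a rigidity argument on the abelian base $Y$ exploiting the spectral gap $\delta_{g}<\delta_{f}$ to annihilate $D_{0}$. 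Once $f^{*}K_{X}\equiv \delta_{f}K_{X}$ holds, $-K_{X}\equiv aH$ is nef, and Kawamata--Shokurov basepoint-freeness (together with $\pi$-bigness and the $f$-eigen-structure) makes $-K_{X}$ semi-ample, in particular $\kappa(X,-K_{X})\geq 1>0$.

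For (3), take the Iitaka morphism $\tau:X\to Z$ of the semi-ample $-K_{X}$. Since $-K_{X}\equiv aH$ lies in the one-dimensional $V$, one has $\dim Z=1$, so $Z$ is a smooth curve. Canonicity of $\tau$ gives $f$-equivariance (after iterating $f$ if needed), with $f$ descending to $g_{Z}:Z\to Z$ satisfying $g_{Z}^{*}L\equiv \delta_{f}L$ for the $\Q$-ample class $L$ on $Z$ with $-K_{X}\sim_{\Q}\tau^{*}L$; thus $g_{Z}$ is $\delta_{f}$-polarized, and since $\delta_{f}>1$, Riemann--Hurwitz forces $Z\cong\PP^{1}$. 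For (1), any second $K_{X}$-negative extremal ray $R'\neq R_{C}$ would, after iteration, give a real $f_{*}$-eigenvector in $\overline{NE}(X)$ distinct from the fibre ray $\R_{\geq 0}F$; but the only positive eigenray with eigenvalue $\delta_{f}$ is $\R_{\geq 0}F$ (one-dimensionality of $V$), while any $f_{*}$-invariant ray with eigenvalue $<\delta_{f}$ lies in the dual of $\pi^{*}\N^1(Y)_{\R}$ and hence satisfies $H\cdot R'=0$, contradicting $-K_{X}\cdot R'>0$. For (4), $f^{*}P=mP$ with $m\in\Z_{>0}$ makes $[P]$ a real eigenvector of $f^{*}$: if $m=\delta_{f}$, then $[P]\in V$ and the semi-ampleness of $-K_{X}$ (combined with the $\PP^{1}$-bundle structure of $\pi$ that forces $\Pic^{0}(X)=\pi^{*}\Pic^{0}(Y)$) yields $P$ semi-ample; if $m<\delta_{f}$, then $[P]\in\pi^{*}\N^1(Y)_{\R}$, and we identify $P=\pi^{*}P_{Y}$ for a $g^{-1}$-invariant prime divisor $P_{Y}$ on the abelian $Y$, hence semi-ample by \cite[Proposition 3.4]{MZ19b}.

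The main obstacle is the vanishing $D_{0}\equiv 0$ within (2): the naive numerical identity above does not directly force $D_{0}=0$, since $g^{*}D_{0}-D_{0}$ can a priori be nontrivial even when the $H$-component matches the degree of $R_{f}$ on fibres. Pushing this through requires using both the effectivity and explicit horizontal/vertical decomposition of $R_{f}$, together with the abelian dynamics (spectral gap and non-existence of nontrivial $g^{-1}$-invariant numerical classes) to rigidify the decomposition.
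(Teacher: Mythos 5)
Your eigenvalue analysis of $f^{*}$ on $\N^1(X)_{\R}$ and the resulting decomposition $-K_{X}\equiv aH+\pi^{*}D_{0}$ match the start of the paper's proof of (2), but the proposal has genuine gaps at exactly the points that carry the weight of the proposition. First, the vanishing $D_{0}\equiv 0$ (equivalently $f^{*}K_{X}\equiv\delta_{f}K_{X}$) is left as an acknowledged ``obstacle'' with only a list of ingredients; the paper closes it by a different mechanism: the nef eigenvector $D$ is \emph{extremal in} $\PE^1(X)$ (by \cite[Lemma 9.1]{MZ19b}, since its eigenvalue $\delta_f$ is simple and strictly dominant), the class $B=aK_{X}+D$ chosen $\pi$-trivial is pseudo-effective (proved via the cone theorem using part (1), which is why (1) must come \emph{before} (2) — your (1) instead invokes $-K_X\cdot R'>0$ computed from the completed (2), so your logical order is circular as written), and $-K_{X}$ is effective by \cite[Theorem 1.5]{MZ19b}; extremality of $D=B+(-aK_{X})$ then forces $B\equiv 0$. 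Second, and most seriously, your derivation of $\kappa(X,-K_{X})>0$ from Kawamata--Shokurov basepoint-freeness fails: the theorem needs $-K_{X}$ nef \emph{and big}, whereas here $(-K_X)^2\equiv_w 0$ (cf.\ \cite[Proposition 8.4]{MZ19b}), so $-K_{X}$ is never big for $\dim X\ge 2$. A nef eigendivisor with $f^*D\equiv\delta_f D$ and $\kappa=0$ is precisely the Case TIR situation; the paper rules it out only because Case TIR cannot occur in relative dimension one (\cite[Theorem 8.6]{MZ19b}, condition (A5)). This input is the entire point of statement (2), and it is absent from your argument.

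Two further steps would fail as written. In (3), ``Riemann--Hurwitz forces $Z\cong\PP^1$'' is false: an elliptic curve carries $\delta$-polarized endomorphisms for any $\delta=n^2$. The paper instead observes that $\tau^*H_Z\equiv bD$ is $\pi$-ample, so the general fiber of $\pi$ (a rational curve) dominates $Z$, whence $Z\cong\PP^1$ by L\"uroth. In (4), knowing $[P]\in V=\R_{>0}[-K_X]$ only makes $P$ \emph{numerically proportional} to a semi-ample class, which does not yield semi-ampleness of $P$ itself; the paper's proof shows more, namely that $P$ dominates $Y$ (so your case $m<\delta_f$ is vacuous by \cite[Lemma 4.1]{Men17} — an abelian variety with an int-amplified endomorphism has no proper $g^{-1}$-periodic divisor) and that $P$ is an entire fiber $\tau^{-1}(\tau(P))$ of $\tau:X\to\PP^1$, using connectedness of the fibers of $\tau$ and again the absence of proper invariant subvarieties of $Y$. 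Finally, for (1) the paper's argument is purely geometric (a rational curve generating a second extremal ray maps to a point of the abelian base, and the rigidity lemma forces the two contractions to coincide) and uses no dynamics at all, which is what allows it to feed into (2).
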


\begin{proof}
(1) Let $R_{C_2}$ be a $K_X$-negative extremal ray and $\pi_2:X\to X_2$ the contraction of $R_{C_2}$.
By \cite[Theorem 3.7]{KM98}, we may take $C_2$ as a rational curve.
Then $\pi(C_2)$ is a point in the abelian variety $Y$.
Let $F$ be a (connected) fibre of $\pi_2$.
Note that $\pi(C_2')$ is a point for any curve $C_2'\subseteq F$ since $C_2'\in R_{C_2}$.
Hence, $\pi(F)$ is a point.
Then the rigidity lemma \cite[Lemma 1.15]{Deb01} implies that $\pi$ factors through $\pi_2$.
Since $\pi$ itself is a contraction of $R_C$, $R_{C_2}=R_C$ and $X_2=Y$. This proves (1).

(2) By a version of the Perron-Frobenius theorem in \cite{Birk}, there exists a nef eigenvector $D$ of $f^*|_{\N^1(X)}$ with eigenvalue $\delta_f$.
Note that the eigenvalues of $f^*|_{\N^1(X)}$ consist of one $\delta_f$ and
eigenvalues of $g^*|_{\N^1(Y)}$ (with modulus $<\delta_f$).
So $D$ is $\pi$-ample and extremal in $\PE^1(X)$ by \cite[Lemma 9.1]{MZ19a}.

Let $a > 0$ be the rational number such that $B:=aK_X+D$ is $\pi$-trivial, i.e., $B\cdot C=0$.
We claim that $B$ is pseudo-effective.
Suppose the contrary.
For a small effective ample $\Q$-Cartier divisor $E$, $(1/a)B + E$ is not pseudo-effective.
	Denote by $A := E + (1/a)D$ which is ample since $D$ is nef.
	Thus $K_X+A = (1/a)B + E$ is not pseudo-effective.
	Note that $(X, A)$ is a klt pair by a suitable choice of $A$.
	By the cone theorem (cf.~\cite[Theorem 3.7]{KM98}), there exits some $(K_X+A)$-negative extremal ray $R_{C'}$.
	Note that $(K_X+A)\cdot C=E\cdot C>0$.
	So $R_{C'}\neq R_C$, contradicting (1).
	This proves the claim.

Since $-K_X$ is effective by \cite[Theorem 1.5]{MZ19a}, the claim above and $D=B+(-aK_X)$ imply $R_D=R_{-K_X}$ in $\PE^1(X)$  and hence $B \equiv 0$ and $D\equiv -aK_X$.
We assert that $\kappa(X, -K_X)>0$.	
Suppose the contrary.
Then $-K_X\sim_{\Q} D'$ for some reduced Weil divisor $D'$ such that $f^{-1}(D')=D'$ and $\Supp R_f=D'$ by \cite[Theorem 1.5]{MZ19a}.
Note that $f^*D'\equiv f^*D/a=(\delta_f/a) D\equiv \delta_f D'$ and $\delta_f D'-f^*D'\ge 0$.
Then $f^*D'=\delta_f D'$.
In particular, Case TIR' (and hence Case TIR) is satisfied.
However, this contradicts the condition A5 of Case TIR.
So $\kappa(X, -K_X)>0$ as asserted.
This also proves (2).

(3) By \cite[Theorem 7.8]{MZ19a}, there exists an $f$-equivariant dominant rational map $\tau:X\dashrightarrow Z$ to a normal projective variety $Z$ (with connected general fibres) such that $\dim(Z)>0$ and $f|_Z$ is $\delta_f$-polarized.
Note that the indeterminacy locus of $\tau$ is an $f^{-1}$-invariant closed subset of codimension $\ge 2$ in $X$.
By \cite[Lemma 7.5]{CMZ20} and \cite[Lemma 4.1]{Men20}, $\tau$ is then a morphism since $\dim(A)=\dim(X)-1$.

We claim that $Z \cong \PP^1$.
Let $h:=f|_Z$.
Write $h^*H_Z\sim \delta_f H_Z$ for some very ample divisor $H_Z$.
Let $H_X:=\tau^*H_Z$.
Then $f^*H_X\sim \delta_f H_X$.
So $H_X\equiv bD$ for some $b>0$.
By \cite[Proposition 8.4]{MZ19a}, $H_X^2\equiv_w b^2D^2\equiv_w 0$.
This is possible only when $\dim(Z)=1$.
Since $H_X\equiv bD$ is $\pi$-ample,
the general fibre $F$ of $\pi$ (which is a smooth rational curve) is not contracted by $\tau$
and hence dominates $Z$.
So the claim and hence (3) are proved.

(4) Let $P$ be an $f^{-1}$-invariant prime divisor.
By \cite[Lemma 7.5]{CMZ20} and \cite[Lemma 4.1]{Men20}, $P$ dominates $Y$.
Then $f^*P=\delta_f P$ and $R_P=R_D$ in $\PE^1(X)$.
Take $H_Z=z$ for some general point of $Z$ and let $X_z$ be the fibre of $\tau$.
Note that $X_z\cdot P\equiv_w 0$.
So either $X_z\cap P=\emptyset$ or $X_z\cap P=P$.
In particular, this implies that $\tau(P)$ is a point.
We claim that $P=\tau^{-1}(\tau(P))$.
If the claim is false, then $\tau^{-1}(\tau(P))$ is reducible; hence there exists some irreducible component $Q\neq P$ of $\tau^{-1}(\tau(P))$ such that $P\cap Q\neq \emptyset$, since $\tau$ has connected fibres.
Note that $\tau(P)$ is $h^{-1}$-periodic by \cite[Lemma 7.5]{CMZ20}.
So $Q$ and hence $P\cap Q$ are $f^{-1}$-periodic.
By \cite[Lemma 7.5]{CMZ20}, $\pi(P\cap Q)$ is $g^{-1}$-periodic and hence equal to $Y$ by \cite[Lemma 4.1]{Men20}, contradicting that $\dim(P\cap Q)<\dim(Y)$. So the claim is true.
In particular, $P$ is semi-ample and (4) is proved.
\end{proof}

\begin{thm}\label{thm-tir-nn2}
Suppose that $X$ has only terminal singularities in Case TIR.
Then $\dim(X)$ $\ge \dim(Y)+3\ge 4$.
\end{thm}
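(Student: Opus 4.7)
The plan is to argue by contradiction, assuming $\dim(X) = \dim(Y)+2$; the relative dimension one case is already excluded by A5, i.e.~\cite[Theorem~8.6]{MZ19b}. First I would apply Theorem~\ref{thm-MY} to $(X,\pi,f,f|_Y)$: it produces a finite surjection $\mu_Y\colon A \to Y$ from an abelian variety $A$, a finite quasi-\'etale cover $\mu_X\colon \widetilde X \to X$, and an equivariant lifted Fano fibration $\widetilde\pi\colon \widetilde X \to A$. Since quasi-\'etale covers preserve $\mathbb{Q}$-factoriality and terminal singularities (the ramification of $\mu_X$ is empty in codimension one, so discrepancies pull back unaltered), $\widetilde X$ remains $\mathbb{Q}$-factorial terminal. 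By Lemma~\ref{lem_surj}, Lemma~\ref{lem_gen_fin} and Lemma~\ref{lem-tir-equi}, all the Case TIR conditions transfer (in the possibly-reduced form of Case TIR$'$) to $\widetilde X$, so after relabelling we may assume $Y = A$ is an abelian variety.

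Next I would analyse a general fiber $F$ of $\pi$. Because $X$ is $\mathbb{Q}$-factorial terminal and $\pi$ is a $K_X$-negative Fano contraction of relative dimension two, $F$ is a $\mathbb{Q}$-factorial terminal Fano surface; terminal surface singularities are smooth, so $F$ is a smooth del Pezzo surface. Iterating $f$ (Lemma~\ref{lem_end_power}) and using that any translation of an isogeny on $A$ has a periodic point, I may assume $f|_A$ fixes some $a_0 \in A$ and set $h := f|_F$ with $F := \pi^{-1}(a_0)$. Adjunction with $K_A = 0$ gives $D|_F \sim_{\mathbb{Q}} -K_F$, which is ample. From $f^*D = \delta_f D$ and $K_X \sim_{\mathbb{Q}} -D$ one obtains $R_f \sim_{\mathbb{Q}} (\delta_f - 1) D$; since the components of $D$ are $f^{-1}$-invariant (after iteration) and $f|_A$ is \'etale (translations and isogenies on abelian varieties are unramified), this yields $R_h = R_f|_F = (\delta_f - 1)(D|_F)$. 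Hence $h$ is a polarized endomorphism of the smooth del Pezzo surface $F$ whose reduced ramification locus equals the ample anticanonical divisor $D|_F$, and $D|_F$ is a totally invariant effective divisor with $h^*(D|_F) = \delta_f (D|_F)$.

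Third, I would derive a contradiction from the global irreducibility of $D$ asserted in A2. The classification of smooth projective surfaces admitting a polarized endomorphism of degree greater than one gives a short explicit list for $F$ (among del Pezzos, essentially $\mathbb{P}^2$, $\mathbb{P}^1 \times \mathbb{P}^1$, and $\mathbb{F}_1$); in each case a direct analysis of totally invariant effective anticanonical divisors under a polarized endomorphism forces $D|_F$ to be reducible (three lines on $\mathbb{P}^2$, four rulings on $\mathbb{P}^1 \times \mathbb{P}^1$, etc.). Using terminality of $\widetilde X$ and the smoothness of $A$, the Fano fibration $\widetilde\pi\colon \widetilde X \to A$ is \'etale locally trivial as an $F$-bundle over $A$; passing to an \'etale trivialising cover of $A$ (which lifts to a further quasi-\'etale cover of $\widetilde X$ via the fiber-product construction of Theorem~\ref{thm-MY}), the pullback of $D$ must split into as many prime components as $D|_F$ has, contradicting the irreducibility of $D$.

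The main obstacle is the last step. Proving \'etale local triviality of $\widetilde\pi$ under terminality requires controlling any singular fibers of $\widetilde\pi$ (which would otherwise produce non-terminal points on $\widetilde X$) and then appealing to descent-theoretic classification of projective bundles or their Hirzebruch analogues over $A$. A delicate auxiliary point is to exclude the possibility that $D|_F$ is non-reduced (a thickened line of multiplicity three), where the naive splitting argument would fail; this should follow by combining the terminal hypothesis on $\widetilde X$ with the reducedness of $D$ and a transversality argument at the generic point of $F$. Once \'etale-local triviality and reducedness of $D|_F$ are in hand, the fiberwise decomposition globalises to a genuine splitting of $D$ into at least two prime components, completing the contradiction.
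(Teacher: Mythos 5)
Your overall shape (contradiction from relative dimension two, passing to an abelian base via Theorem \ref{thm-MY}, and exploiting that the general fiber is a smooth toric Fano surface whose boundary $D\cap X_y$ is a loop of $m\ge 3$ rational curves) matches the paper, but your final contradiction mechanism does not work. The global irreducibility of $D$ in (A2) is perfectly compatible with $D\cap F$ having $m\ge 3$ irreducible components on every general fiber: the monodromy of the family simply permutes the boundary curves, and concretely the Stein factorization of $\pi|_D\circ\nu$ (with $\nu\colon\overline D\to D$ the normalization) is a degree-$m$ cover $\overline Y\to Y$, which is exactly how the paper accommodates this. Passing to a trivializing cover does not rescue the argument either: the pullback of $D$ under a finite cover is allowed to be reducible, and to reapply (A2) on the cover you must re-establish the entire Case TIR package there (Fano contraction, $\kappa=0$, extremality), which the paper does by running a fresh MMP on $\widetilde X$ and proving it terminates in a Fano contraction back onto $A$ (Claims \ref{cl-tqf}--\ref{tir3-cl-xrtir}) --- and even on that final model $\widetilde X_r$ the divisor $\widetilde D_r$ \emph{is} irreducible while its fiber restrictions still have $m\ge 3$ components, so no contradiction arises at the level of $D$ itself. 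The paper's actual contradiction lives one codimension down: it counts the totally invariant codimension-two subvarieties, i.e.\ the components of $Z=\Sing(D)$ in codimension two. These are bounded above by $2$ via the conductor/adjunction argument on $\overline D$ and the degree count $2m/m=2$ (Claim \ref{tir3-cl-<=2}), and bounded below by $3$ once one arranges, by the iterated fiber-product replacement of Claim \ref{cl-rep}, that each such component has degree $1$ over $A$ while $\deg\pi|_Z=m\ge3$. That counting idea is absent from your proposal and is the essential content of the proof.

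Two further points would need repair even with the right endgame. First, ``any translation of an isogeny on $A$ has a periodic point'' is false (a translation by a non-torsion point has none); the paper instead restricts the int-amplified endomorphism $\mathcal I$, whose periodic fibers are Zariski dense by \cite[Theorem 5.1]{Fak03}, and it is $\mathcal I^s|_{X_y}$, not $f|_F$, that is used to apply \cite[Corollary 1.4]{MZ19}. Second, the base change of the Fano contraction along $\mu_Y$ need not be a Fano (extremal) contraction, and the \'etale local triviality of $\widetilde\pi$ over $A$ that you invoke is neither proved nor needed in the paper; the $\Q$-factoriality of $\widetilde X$ itself already requires a separate argument (Proposition \ref{prop-nqf}).
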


\begin{proof}
The proof consists of Claims \ref{tir3-cl-zti} to \ref{tir3-cl-=1}.
We prove by contrapositive, so suppose the contrary that $\dim(X)=\dim(Y)+2$.
Let $\mathcal{I}:X\to X$ be an int-amplified endomorphism such that $\pi : X \to Y$ is $\mathcal{I}$-equivariant.

Since $X$ has terminal singularities, $\Sing(X)$ has dimension $\le \dim(X)-3<\dim(Y)$, so it does not dominate $Y$.
By \cite[Theorem 5.1]{Fak03}, there is a Zariski dense subset $Y_0$ of $Y$ such that the fibre $X_y=\pi^{-1}(y)$ is smooth and $\mathcal{I}$-periodic for any $y\in Y_0$.
Note that for each $y\in Y_0$, $X_y$ is $\mathcal{I}^s$-invariant for some $s>0$;
and $\mathcal{I}^s|_{X\backslash D}$ is quasi-\'etale.
Shrinking $Y_0$ a bit, we may assume that $\mathcal{I}^s|_{X_y}$ is \'etale outside $D\cap X_y$ by the purity of branch loci.

Let $y\in Y_0$ be a general point.
Denote by $$m:= \text{ the number of irreducible components (curves) of } D\cap X_y.$$
Since $\pi$ is a Fano contraction, $X_y$ is a smooth Fano surface and $\mathcal{I}^s|_{X_y}$ is polarized.
By \cite[Corollary 1.4]{MZ19}, $(X_y, D\cap X_y)$ is then a toric pair.
Moreover, by the classification of smooth toric Fano surfaces, the boundary $D\cap X_y$ is a simple normal crossing loop of $m$ smooth rational curves $\ell_i$ with $m\ge 3$.
Note that $\ell_i$ and $\ell_i\cap \ell_j$ are $(\mathcal{I}^s|_{X_y})^{-1}$-periodic.
Since $\mathcal{I}^s|_{X_y\backslash D\cap X_y}$ is \'etale, any $(\mathcal{I}^s|_{X_y})^{-1}$-periodic point $x$ lies in $D\cap X_y$ by noting that $\mathcal{I}^s|_{X_y}$ is ramified at $x$.
Since each $\ell_i\cong \mathbb{P}^1$ has (exactly) two $(\mathcal{I}^s|_{X_y})^{-1}$-periodic points which are the intersecting points with some other two $\ell_j$ and $\ell_k$, $x$ further lies in $\Sing(D\cap X_y)$ by the same argument on $\mathcal{I}^{sm!}|_{\ell_i}$.
In particular, $\Sing(D\cap X_y)$ is exactly the set of $(\mathcal{I}^s|_{X_y})^{-1}$-periodic points in $X_y$.
Note that $\sharp \Sing(D\cap X_y)=m\ge 3$.

\begin{cl}\label{tir3-cl-zti}
Let $Z$ be the union of irreducible components of dimension $\dim(X)-2$ in $\Sing(D)$.
Then $h^{-1}(Z)=Z$ for any surjective endomorphism $h$ of $X$.
\end{cl}
Let $\nu:\overline{D}\to D\subseteq X$ be the normalization of $D$ and $\mathfrak{c}$ the conductor of $D$, regarded as a Weil divisor on $\overline{D}$.
	Since $X$ is smooth in codimension $2$, the adjunction formula gives
	$$K_{\overline{D}}+\mathfrak{c}=\nu^*(K_X+D)$$
	where $\nu^*(K_X+D)$ is regarded as the pullback of a divisorial sheaf.
	There is an endomorphism $\overline{h}:\overline{D}\to \overline{D}$ such that $\nu\circ \overline{h}=h\circ \nu$ and its ramification divisor $R_{\overline{h}}$ is $\overline{h}^*\mathfrak{c}-\mathfrak{c}$. In fact, $K_X+D=h^*(K_X+D)$ implies $K_{\overline{D}}+\mathfrak{c}=\overline{h}^*(K_{\overline{D}}+\mathfrak{c})$.
By choosing  an int-amplified $h=\mathcal{I}$,  $\mathfrak{c}$ is reduced and $\overline{h}^{-1}$-invariant for any $h$ (cf.~\cite[Theorem 3.3]{Men20}, \cite[Lemma 5.3, the arxiv version]{NZ10}).
Then $\mathfrak{c}=\nu^{-1}(Z)$ and $h^{-1}(Z)=Z$.
So the claim is proved.

\begin{setup}\label{n:6.1} {\bf Some notation.}
By Theorem \ref{thm-MY}, we have the commutative diagrams:
$$\xymatrix{
f \acts X \ar@<2.3ex>[d]_{\pi}& \ar[l]_{\mu_{X}} \widetilde{X} \ar@<-2.3ex>[d]^{ \widetilde{\pi}} \racts \widetilde{f} \\
g \acts Y & \ar[l]^{\mu_{Y}} A \racts g_{A}
}$$

$$\xymatrix{
\mathcal{I} \acts X \ar@<2.3ex>[d]_{\pi}& \ar[l]_{\mu_{X}} \widetilde{X} \ar@<-2.3ex>[d]^{ \widetilde{\pi}} \racts \widetilde{\mathcal{I}} \\
\mathcal{I}|_Y \acts Y & \ar[l]^{\mu_{Y}} A \racts \mathcal{I}_{A}
}$$
where $\widetilde{X}$ is a $\Q$-Gorenstein klt normal projective variety, $A$ is an abelian variety, $\mu_Y$ is a finite surjective morphism, and $\mu_X$ is a finite surjective quasi-\'etale morphism.
Note that $\mathcal{I}|_Y, \mathcal{I}_A, \widetilde{\mathcal{I}}$ are still int-amplified; see \cite[Section 3]{Men20}.
Note that $\widetilde{X}$ still has terminal singularities by \cite[Proposition 5.20]{KM98}.
Then $\widetilde{X}$ is $\Q$-factorial by Proposition \ref{prop-nqf}.
\end{setup}

\begin{cl}\label{tir3-cl->=3}
$\deg \pi|_Z =m \ge 3$.
\end{cl}

By the generic smoothness, $X_y\cap Z=X_y\cap \Sing(D)=\Sing(D\cap X_y)$ for general $y\in Y_0$.
Then $\sharp Z\cap X_y=m\ge 3$ for general $y\in Y_0$.
So the claim is proved.

\begin{cl}\label{cl-C-fs}
$\pi|_C:C\to Y$ is a finite surjective morphism for any $\mathcal{I}^{-1}$-periodic irreducible closed subvariety $C$ of codimension $2$ in $X$.
\end{cl}
Since $\pi$ has connected fibres, $\pi(C)$ is $\mathcal{I}|_Y^{-1}$-periodic by \cite[Lemma 7.5]{CMZ20} and hence $\mu_Y^{-1}(\pi(C))$ is $\mathcal{I}_A^{-1}$-periodic.
By \cite[Lemma 4.1]{Men20}, $\mu_Y^{-1}(\pi(C))=A$ and hence $\pi(C)=Y$.
Note that $\dim (C)=\dim (Y)$ and $\pi|_C$ is equidimensional (cf.~\cite[Lemma 4.3]{Men20}).
So $\pi|_C$ is finite surjective and the claim is proved.

\begin{cl}\label{cl-Z=Iti}
$Z$ is the union of $\mathcal{I}^{-1}$-periodic irreducible closed subvarieties of codimension $2$ in $X$.
\end{cl}
First note that $Z$ is $\mathcal{I}^{-1}$-invariant by Claim \ref{tir3-cl-zti}.
Let $C$ be an $\mathcal{I}^{-1}$-periodic irreducible closed subvariety of codimension $2$ in $X$.
For any $y\in Y_0$, $X_y$ is $\mathcal{I}^s$-invariant for some $s>0$.
Then $C\cap X_y$ is $(\mathcal{I}^s|_{X_y})^{-1}$-invariant.
Recall that $Z\cap X_y$ is exactly the set of $(\mathcal{I}^s|_{X_y})^{-1}$-periodic points.
By Claim \ref{cl-C-fs}, $\pi_C$ is finite surjective and hence $C\cap X_y\subseteq Z\cap X_y$.
Then $C=\overline{\bigcup_{y\in Y_0} C\cap X_y}\subseteq Z$.
So the claim is proved.

\begin{cl}\label{tir3-cl-<=2}
$Z$ has exactly one or two irreducible components.
\end{cl}
Note that the general fibre of $\pi|_D:D\to Y$ has $m\ge 3$ irreducible components.
Recall that $\nu:\overline{D}\to D$ is the normalization.
Note that the general fibre $\pi|_D\circ \nu$ has $m$ connected (and irreducible) components.

Let $B$ be an irreducible component of $\overline{D}_y:=(\pi|_D\circ \nu)^{-1}(y)$ for some general $y\in Y_0$.
Since $\overline{D}$ is normal, $\Sing(\overline{D})$ does not dominate $Y$ and hence $\overline{D}_y$ and $B$ are smooth.
Note that $\nu|_{B}: B\to \nu(B)$ is a finite surjective birational morphism since $D$ is smooth at $\nu(B)\backslash \Sing(D)\neq \emptyset$.
Recall that $\nu(B)$ is a smooth rational curve lying on the boundary divisor of the smooth toric Fano surface $X_y$.
So $\nu|_{B}$ is isomorphic.
In particular, $$\nu^{-1}(Z)\cap \overline{D}_y=\bigsqcup_{B\subseteq \overline{D}_y} (\nu^{-1}(Z)\cap B)=\bigsqcup_{B\subseteq \overline{D}_y} (\nu^{-1}(Z\cap \nu(B))\cap B).$$
Note that $\sharp Z\cap \nu(B)=2$.
So $\sharp \nu^{-1}(Z)\cap \overline{D}_y= 2m$ and hence $\deg (\pi|_D\circ \nu)|_{\nu^{-1}(Z)}=2m$.

Let $p_1:\overline{D}\to \overline{Y}$ and $p_2:\overline{Y}\to Y$ be the Stein factorization of $\pi|_D\circ \nu$.
Note that $\deg p_2=m$.
Then $\deg (p_1)|_{\nu^{-1}(Z)}=2m/m=2$.
Since $\overline{Y}$ is irreducible, $\nu^{-1}(Z)$ has at most two irreducible components.
Since $Z\neq \emptyset$, the claim is proved.

\begin{cl}\label{cl-Z-}
$\widetilde{Z}:=\mu_X^{-1}(Z)$ is the union of $\widetilde{\mathcal{I}}^{-1}$-periodic irreducible closed subvarieties of codimension $2$ in $\widetilde{X}$.
\end{cl}
Let $y\in Y_0$ be a general point such that $\mu_X$ is quasi-\'etale over $X_y$.
Since $X_y$ is a smooth toric Fano surface, $X_y$ has no non-isomorphic quasi-\'etale cover.
Then $\widetilde{X}_y:=\mu_X^{-1}(X_y)$ is a disjoint union of smooth toric Fano surfaces mapping isomorphically to $X_y$ via $\mu_X$.
Assume $X_y$ is $\mathcal{I}^s$-invariant for some $s>0$.
Then $\mu_X^{-1}(Z\cap X_y)$ is exactly the set of $(\widetilde{\mathcal{I}}^s|_{\widetilde{X}_y})^{-1}$-periodic points in $\widetilde{X}_y$.
So the claim follows from the same proof of Claim \ref{cl-Z=Iti}.

\begin{cl}\label{cl-rep}
By choosing $\mu_Y$ and $\mu_X$ suitably in
Theorem \ref{thm-MY}, we may assume further $\deg \widetilde{\pi}|_{\widetilde{C}}=1$ for any $\widetilde{\mathcal{I}}^{-1}$-periodic irreducible closed subvariety $\widetilde{C}$ of codimension $2$ in $\widetilde{X}$.
\end{cl}

Let $\widetilde{C}$ be an irreducible component of $\widetilde{Z}$.
As reasoned in Claim \ref{cl-C-fs}, $\widetilde{\pi}|_{\widetilde{C}}: \widetilde{C}\to A$ is finite surjective.
Let $\overline{\widetilde{C}}$ be the normalization of $\widetilde{C}$.
Note that $\overline{\widetilde{C}}$ admits an int-amplified endomorphism.
By the ramification divisor formula and \cite[Theorem 1.5]{Men20}, the induced finite surjective morphism $\overline{\widetilde{C}}\to A$ is quasi-\'etale and hence \'etale.
Then $\overline{\widetilde{C}}$ is an abelian variety by \cite[Section 18, Theorem]{Mum74}.
If $\deg \widetilde{\pi}|_{\widetilde{C}}>1$, then we replace $\widetilde{X}$ by $\widetilde{X}\times_A \overline{\widetilde{C}}$ and $A$ by $\overline{\widetilde{C}}$.
Note that $\widetilde{f}$ is also liftable after some iteration.
We show that such replacements can be done in only finitely many steps.
After each replacement, $\deg \widetilde{\pi}|_{\widetilde{Z}}$ remains unchanged, while the number of irreducible components of $\widetilde{Z}$ strictly increases.
This is because the natural embedding $\widetilde{C}\hookrightarrow \widetilde{C}\times_A \widetilde{C}$ and $\deg \widetilde{\pi}|_{\widetilde{C}}>1$ imply that the preimage of $\widetilde{C}$ in $\widetilde{X}\times_A \overline{\widetilde{C}}$ splits into at least two irreducible components.
Finally, Claim \ref{cl-Z-} implies that after each replacement, $\widetilde{Z}$ is still the union of $\widetilde{\mathcal{I}}^{-1}$-periodic irreducible closed subvariety $\widetilde{C}$ of codimension $2$ in $\widetilde{X}$.
In particular, $\widetilde{Z}$ has at most (fixed) $\deg \widetilde{\pi}|_{\widetilde{Z}}$ irreducible components.
So we can only do at most finitely many replacements till the claim is satisfied.

\par \vskip 0.5pc
In the following, we choose the $\widetilde{X}$ in Claim \ref{cl-rep}.
Denote by $\widetilde{D}:=\mu_X^{-1}(D)$.
Since $\mu_X:\widetilde{X}\to X$ is quasi-\'etale, we have $K_{\widetilde{X}}=\mu_X^*K_X\sim_{\Q} -\mu_X^*D=-\widetilde{D}$.
Since $D$ is nef, so is $\widetilde{D}$.
By \cite[Theorem 5.13]{Uen75}, $\kappa(\widetilde{X},\widetilde{D})=\kappa(X,D)=0$.
Note that $K_{\widetilde{X}}$ is not pseudo-effective.
By \cite[Theorem 1.2]{MZ20}, replacing $\widetilde{f}$ and $\widetilde{\mathcal{I}}$ by a positive power, there is an $(\widetilde{f},\widetilde{\mathcal{I}})$-equivariant birational MMP $\varphi:\widetilde{X}\dashrightarrow \widetilde{X}_r$ and a Fano contraction $\widetilde{\pi}_r:\widetilde{X}_r\to \widetilde{Y}$.
Denote by $\widetilde{f}_r:=\widetilde{f}|_{\widetilde{X}_r}$, $\widetilde{g}:=\widetilde{f}|_{\widetilde{Y}}$ and $\widetilde{\mathcal{I}}_r:=\widetilde{\mathcal{I}}|_{\widetilde{X}_r}$.
Since $A$ is an abelian variety, this MMP is over $A$; see the first paragraph of the proof of Proposition \ref{prop-ti-kappa>0}.

\begin{cl}\label{cl-tqf}
$\varphi:\widetilde{X}\dashrightarrow \widetilde{X}_r$ is a composition of divisorial contractions;
and $\widetilde{X}_r$ has $\Q$-factorial terminal singularities.
\end{cl}

Note that $K_{\widetilde{X}}=\mu_X^*K_X$.
Recall that $\widetilde{X}$ has only $\Q$-factorial terminal singularities
(cf.~\ref{n:6.1}).
Since the MMP is over $A$, whenever a flipping contraction occurs, one can find some $(\widetilde{\mathcal{I}}|_{\widetilde{Y}})^{-1}$-invariant subvariety of dimension at most $\dim(\widetilde{X})-3$ by \cite[Lemma 7.5]{CMZ20}.
Note that $\widetilde{\mathcal{I}}|_{\widetilde{Y}}$ is int-amplified and $\dim(A)>\dim(\widetilde{X})-3$.
Then we get a contradiction by \cite[Lemma 4.1]{Men20}.
The second assertion of the claim follows from \cite[Proposition 3.36, Corollary 3.43]{KM98}.
The claim is proved.

\begin{cl}\label{cl-Dr}
$\widetilde{D}_r:=\varphi(\widetilde{D})$ is a (nonzero) reduced divisor containing all the $(\widetilde{\mathcal{I}}_r)^{-1}$-periodic irreducible closed subvarieties of codimension $2$ in $\widetilde{X}_r$. Further,
$\varphi^{-1}(\widetilde{D}_r)=\widetilde{D}$,
$-K_{\widetilde{X}_r}\sim_{\Q}\widetilde{D}_r$,
$\kappa(\widetilde{X}_r, \widetilde{D}_r)=0$, and
$\widetilde{f}_r^*\widetilde{D}_r=\delta_{\widetilde{f}_r} \widetilde{D}_r$
.
\end{cl}

Note that $\widetilde{\mathcal{I}}|_{\widetilde{X}\backslash \widetilde{D}}$ is quasi-\'etale.
So the $\varphi$-exceptional divisor (being $\widetilde{I}^{-1}$-invariant) is contained in $\widetilde{D}$.
Denote by $\widetilde{D}_r^1$ the union of divisors contained in $\widetilde{D}_r$.
Then $\widetilde{\mathcal{I}}_r|_{\widetilde{X}_r\backslash \widetilde{D}_r^1}$ is quasi-\'etale and $\Supp R_{\widetilde{\mathcal{I}}_r}=\widetilde{D}_r^1$.
Let $\widetilde{C}_r$ be some $(\widetilde{\mathcal{I}}_r)^{-1}$-invariant irreducible closed subvariety of codimension $2$ in $\widetilde{X}_r$.
By \cite[Lemma 7.5]{CMZ20} and \cite[Lemma 4.1]{Men20}, $\widetilde{C}_r$ dominates $A$ and $\deg \widetilde{\mathcal{I}}_r|_{\widetilde{C}_r}=\deg \widetilde{\mathcal{I}}_A$.
By the projection formula, $\widetilde{\mathcal{I}}_r^*\widetilde{C}_r=(\deg \widetilde{\mathcal{I}}_r/ \deg \mathcal{I}_A)\widetilde{C}_r$.
Note that $\widetilde{\mathcal{I}}_r|_F$ is polarized (after iteration) for the general $\widetilde{\mathcal{I}}_r$-periodic fibre $F$ of $\widetilde{X}_r\to A$ (cf.~\cite[Theorem 5.1]{Fak03}).
So $\deg \widetilde{\mathcal{I}}_r/ \deg \mathcal{I}_A>1$.
Since $\Sing(\widetilde{X}_r)$ is of codimension $\ge 3$ in $\widetilde{X}_r$,
$\widetilde{C}_r \subseteq \widetilde{D}_r^1$ by the purity of banch loci.
Let $E$ be any prime exceptional divisor of $\varphi$.
Then $\varphi(E)$ is $\widetilde{\mathcal{I}}_r^{-1}$-periodic and dominates $A$ by \cite[Lemma 7.5]{CMZ20} and \cite[Lemma 4.1]{Men20}.
So $\varphi(E)$ is of codimension $2$ in $\widetilde{X}_r$ and $\varphi(E)\subseteq \widetilde{D}_r^1$.
In particular, $\widetilde{D}_r=\widetilde{D}_r^1$ and $\varphi^{-1}(\widetilde{D}_r)=\widetilde{D}$.
Therefore, $\kappa(\widetilde{X}_r, \widetilde{D}_r)=\kappa(\widetilde{X}, \widetilde{D})=0$ by \cite[Theorem 5.13]{Uen75}.

By the ramification divisor formula, we have $$K_{\widetilde{X}_r}+\widetilde{D}_r=\widetilde{\mathcal{I}}_r^*(K_{\widetilde{X}_r}+\widetilde{D}_r).$$
Therefore, $-K_{\widetilde{X}_r}\sim_{\Q} \widetilde{D}_r$ by applying \cite[Theorem 1.1]{Men20} and \cite[Proposition 3.3]{MZ19a}.
Note that $\widetilde{f}^*\widetilde{D}=\delta_{\widetilde{f}} \widetilde{D}$ and hence $\widetilde{f}_r^*\widetilde{D}_r=\delta_{\widetilde{f}_r} \widetilde{D}_r$.
So the claim is proved.

\begin{cl}\label{tir3-cl-xrtir}
$(\widetilde{\pi}_r: \widetilde{X}_r\to \widetilde{Y}, \widetilde{f}_r, \widetilde{g})$ satisfies Case TIR with $\dim(\widetilde{X}_r)=\dim(\widetilde{Y})+2$ and $\widetilde{Y}=A$.
\end{cl}

If $\delta_{\widetilde{f}_r}>\delta_{\widetilde{g}}$, then Case TIR' and hence Case TIR are satisfied, so $\widetilde{Y}=A$ by the condition (A5) of Case TIR and since $\widetilde{\pi}:\widetilde{X}\to A$ has connected fibres.
Suppose next that $\delta_{\widetilde{f}_r}=\delta_{\widetilde{g}}$.
If $\dim(\widetilde{Y})=\dim(A)$, then $\widetilde{Y}=A$ and hence $\delta_f=\delta_{\widetilde{g}}=\delta_{g_A}<\delta_f$, a contradiction.

We are only left with the case where $\delta_{\widetilde{f}_r}=\delta_{\widetilde{g}}>\delta_{g_A}$ and $\dim(\widetilde{Y})=\dim(A)+1$.
Denote by $\tau:\widetilde{Y}\to A$ the induced morphism.
Note that $\widetilde{Y}$ is $\Q$-factorial and klt.
Since $\Sing(\widetilde{Y})$ does not dominate $A$ and the general fibre of $\widetilde{\pi}:\widetilde{X} \to A$ is a rational surface, the general fibre of $\tau$ is a smooth rational curve.
By the adjunction, the restriction of $K_{\widetilde{Y}}$ on the general fibre of $\tau$ is anti-ample.
Hence $K_{\widetilde{Y}}$ is not pseudo-effective.
Then we may run MMP starting from $\widetilde{Y}$ which ends up with $A$.
Since $\dim(\widetilde{Y})=\dim(A)+1$ and $\tau$ has connected fibres, any appearance of divisorial contraction or flip will create some proper $(\widetilde{\mathcal{I}}|_A)^{-1}$-periodic closed subvariety in $A$ by \cite[Lemma 7.5]{CMZ20}.
So $\tau$ has to be a Fano contraction by \cite[Lemma 4.1]{Men20}.
Note that $D_{\widetilde{Y}}:=(\widetilde{\pi}_r\circ \varphi)(\widetilde{Z})$ is an $(\widetilde{\mathcal{I}}|_{\widetilde{Y}})^{-1}$-periodic divisor in $\widetilde{Y}$ since $\widetilde{\pi}|_{\widetilde{Z}}:\widetilde{Z}\to A$ is finite surjective.
By Proposition \ref{prop-ti-kappa>0}, $\kappa(\widetilde{Y}, D_{\widetilde{Y}})>0$.
Note that $\widetilde{\pi}_r^{-1}(D_{\widetilde{Y}})$ is $\widetilde{\mathcal{I}}_r^{-1}$-periodic.
So $\widetilde{\pi}_r^{-1}(D_{\widetilde{Y}})\subseteq \widetilde{D}_r$ and hence $\kappa(\widetilde{X}_r, \widetilde{D}_r)>0$, a contradiction to Claim \ref{cl-Dr}.
The claim is proved.

\begin{cl}\label{tir3-cl-=1}
$\deg (\widetilde{\pi}_r|_{\widetilde{C}_r}: \widetilde{C}_r\to \widetilde{Y}=A) =1$ for any $\widetilde{\mathcal{I}}_r^{-1}$-periodic irreducible closed subvariety $\widetilde{C}_r$ of codimension $2$ in $\widetilde{X}_r$.
\end{cl}

By Claim \ref{cl-rep}, it suffices for us to show that $\widetilde{C}_r$ is dominated by some
$\widetilde{\mathcal{I}}^{-1}$-periodic irreducible closed subvariety of codimension $2$ in $\widetilde{X}$.
Note that $\widetilde{C}_r\subseteq \widetilde{D}_r=\varphi(\widetilde{D})$ by Claim \ref{cl-Dr}.
If $\dim(\varphi^{-1}(\widetilde{C}_r))=\dim (A)$, then we are done.
If $\dim(\varphi^{-1}(\widetilde{C}_r))=\dim (A)+1$, then $\varphi(E)=\widetilde{C}_r$ for some ($\varphi$-exceptional) irreducible component $E$ of $\widetilde{D}$.
Recall that $E$ is an irreducible component of $\widetilde{D}=\mu_X^{-1}(D)$.
So $\mu_X|_E:E\to D$ is finite surjective and there is at least one $\widetilde{\mathcal{I}}^{-1}$-periodic irreducible closed subvariety $\widetilde{C}\subseteq (\mu_X|_E)^{-1}(Z)$ of codimension $2$ in $\widetilde{X}$ dominating $A$.
Note that  $\varphi(E)=\varphi(\widetilde{C})=\widetilde{C}_r$.
So the claim is proved.

\par \vskip 0.5pc
{\it Completion of the proof of Theorem \ref{thm-tir-nn2}.}
By Claim \ref{tir3-cl-xrtir}, we may apply Claims \ref{tir3-cl-zti} and \ref{tir3-cl->=3} to $\widetilde{X}_r$. Then there are at least three $\widetilde{\mathcal{I}}_r^{-1}$-periodic irreducible closed subvarieties of codimension $2$ in $\widetilde{X}_r$ by Claim \ref{tir3-cl-=1}.
So we get a contradiction by applying Claim \ref{tir3-cl-<=2} to $\widetilde{X}_r$ again.
This completes the proof of Theorem \ref{thm-tir-nn2}.
\end{proof}

\begin{cor}\label{cor-iamp}
Let $X$ be a normal projective variety over $K$ such that $X_{ \overline{K}}$ has only $\Q$-factorial Kawamata log terminal (klt) singularities and admits an int-amplified endomorphism.
Then Conjecture \ref{conj_zf} holds for any surjective endomorphism on $X$ in the following cases:

\begin{itemize}
\item[(1)] The group $\pi_1^{\alg}((X_{ \overline{K}})_{\reg})$ is finite (e.g.~when $X$ is smooth and rationally connected).
\item[(2)] $X_{ \overline{K}}$ has only terminal singularities and $\dim(X)=3$.
\end{itemize}
\end{cor}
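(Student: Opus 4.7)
The plan is to combine Theorem \ref{thm_int-amp-TIR} with the two structural results Theorems \ref{thm_iamp-src} and \ref{thm-tir-nn2} already proved in this section. No new technique is needed; Corollary \ref{cor-iamp} is essentially a bookkeeping consequence of these three theorems.

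For part (1), I would first invoke Theorem \ref{thm_int-amp-TIR}(2), which says that for a $\Q$-factorial klt projective $X_{\overline K}$ admitting an int-amplified endomorphism, Conjecture \ref{conj_zf} holds for every surjective endomorphism $f$ of $X$ provided it holds for the restrictions $f|_{X_i}$ to those varieties $X_i$ arising in any $f$-equivariant MMP that fall into Case TIR. I would then apply Theorem \ref{thm_iamp-src} to $X_{\overline K}$: since $\pi_1^{\alg}((X_{\overline K})_{\reg})$ is finite, Case TIR simply never occurs along such an MMP. Hence the hypothesis of Theorem \ref{thm_int-amp-TIR}(2) is vacuously satisfied, and Conjecture \ref{conj_zf} follows for $f$. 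The parenthetical example of smooth rationally connected $X$ reduces to the classical fact that such varieties have trivial algebraic fundamental group (so a fortiori $\pi_1^{\alg}(X_{\reg})$ is trivial, as $X_{\reg}=X$).

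For part (2), I would again use Theorem \ref{thm_int-amp-TIR}(2). The key observation is that every step of an MMP (divisorial contraction, flip) preserves both the dimension and the property of being terminal (cf.~\cite[Proposition 3.37, Corollary 3.43]{KM98}). Thus any $X_i$ produced from $X_{\overline K}$ via an $f$-equivariant MMP is a terminal variety of dimension $3$. If some such $f|_{X_i}$ satisfied Case TIR, Theorem \ref{thm-tir-nn2} would force $\dim(X_i)\geq \dim(Y)+3\geq 4$, contradicting $\dim(X_i)=3$. Hence Case TIR never arises, and Theorem \ref{thm_int-amp-TIR}(2) delivers the conclusion.

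There is no substantive obstacle, since the heavy lifting has already been done in Theorems \ref{thm_int-amp-TIR}, \ref{thm_iamp-src}, \ref{thm-tir-nn2} (together with the lifting result Theorem \ref{thm-MY}). The only care needed is to check that passing to the base change $X_{\overline K}$ and extending $K$ to make an equivariant MMP descend does not disturb the hypotheses: finiteness of $\pi_1^{\alg}((X_{\overline K})_{\reg})$ and terminality of $X_{\overline K}$ are both intrinsic geometric properties, so this preservation is automatic.
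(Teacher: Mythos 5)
Your proposal is correct and follows exactly the paper's argument: combine Theorem \ref{thm_int-amp-TIR}(2) with Theorem \ref{thm_iamp-src} for case (1), and with Theorem \ref{thm-tir-nn2} plus the fact that a birational MMP preserves terminal singularities (so Case TIR cannot occur in dimension $3$) for case (2). No discrepancies to report.
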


\begin{proof}
It follows from Theorems \ref{thm_int-amp-TIR}, \ref{thm_iamp-src} and \ref{thm-tir-nn2}.
Note that for (2), if $X_{ \overline{K}}$ has terminal singularities, then the birational MMP involves only with the terminal singularities (cf.~\cite[Corollary 3.43]{KM98}).
\end{proof}

In the rest of this section, we show the $\Q$-factorial property
of $\widetilde{X}$ (Proposition \ref{prop-nqf}) used in the proof of Theorem \ref{thm-tir-nn2}
(i.e., in \ref{n:6.1}).
A local ring $(R, \mathfrak{m})$ is {\it $\Q$-factorial} if for any prime ideal $\mathfrak{p}$ of height one, $\mathfrak{p} = \sqrt{(f)}$ for some $f \in p$.

\begin{lem}\label{lem-nqf-ti}
Let $\pi:X\to Y$ be a finite surjective morphism of normal varieties. Let $x\in X$ be a closed point and $y=\pi(x)$.
Suppose the (completion) local ring $\hat{\mathcal{O}}_{X,x}$ is $\Q$-factorial.
Then so is $\hat{\mathcal{O}}_{Y,y}$.
\end{lem}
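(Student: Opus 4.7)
The plan is to reduce the statement to a norm argument for a finite injective extension $\hat{\mathcal{O}}_{Y,y}\hookrightarrow\hat{\mathcal{O}}_{X,x}$ of normal local domains. Write $A:=\mathcal{O}_{Y,y}$, $B:=\mathcal{O}_{X,x}$, $\hat{A}:=\hat{\mathcal{O}}_{Y,y}$ and $\hat{B}:=\hat{\mathcal{O}}_{X,x}$. First I would pick affine neighborhoods $\Spec(A_{0})\subseteq Y$ of $y$ and $\Spec(B_{0})=\pi^{-1}\Spec(A_{0})$ with $B_{0}$ finite over $A_{0}$, and set $\{x_{1}=x,\dots,x_{r}\}=\pi^{-1}(y)$. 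The $\mathfrak{m}_{y}$-adic completion of the semilocal ring $(B_{0})_{\mathfrak{m}_{y}}$ is then $\hat{A}\otimes_{A_{0}}B_{0}=\prod_{i=1}^{r}\hat{\mathcal{O}}_{X,x_{i}}$, which is a finite $\hat{A}$-algebra. Projecting to the factor corresponding to $x$ exhibits $\hat{B}$ as a finite $\hat{A}$-module.

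Next I would verify that $\hat{A}\to\hat{B}$ is actually injective. Since $X$, $Y$ are varieties their local rings are excellent, so $\hat{A}$ and $\hat{B}$ are normal Noetherian local rings; being connected normal Noetherian schemes they are irreducible, hence $\hat{A}$ and $\hat{B}$ are integral domains. If the kernel $I$ of $\hat{A}\to\hat{B}$ were nonzero, it would contain a nonzerodivisor, so Krull's principal ideal theorem would give $\dim(\hat{A}/I)<\dim\hat{A}$; but $\hat{B}$ is integral over $\hat{A}/I$, forcing $\dim\hat{B}\leq\dim(\hat{A}/I)<\dim\hat{A}$, which contradicts $\dim\hat{B}=\dim X=\dim Y=\dim\hat{A}$ (using that $\pi$ is finite surjective between varieties of the same dimension).

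With $\hat{A}\hookrightarrow\hat{B}$ now a finite injective extension of normal local domains, I would deduce the $\Q$-factoriality of $\hat{A}$ by a norm argument. Fix a height-one prime $\mathfrak{q}\subset\hat{A}$. By going-up and going-down (the latter holds for integral extensions of a normal domain), there exists a height-one prime $\mathfrak{p}\subset\hat{B}$ with $\mathfrak{p}\cap\hat{A}=\mathfrak{q}$. Using the assumed $\Q$-factoriality of $\hat{B}$, pick $g\in\hat{B}$ with $\mathfrak{p}=\sqrt{(g)}$. Let $K=\Frac(\hat{A})$, $L=\Frac(\hat{B})$ and set $f:=N_{L/K}(g)$; since $g$ is integral over the normal domain $\hat{A}$, we have $f\in\hat{A}$. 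Then $f\in\mathfrak{q}$: in the Galois closure of $L/K$, $f=g\cdot h$ where $h$ is the product of the non-identity conjugates of $g$, which is Galois-invariant over $L$ and integral over $\hat{A}$, hence lies in $\hat{B}$ by normality; thus $f\in g\hat{B}\subset\mathfrak{p}\cap\hat{A}=\mathfrak{q}$. For the converse, for any height-one prime $\mathfrak{q}'\subset\hat{A}$ I would invoke the standard norm-divisor formula
\[
v_{\mathfrak{q}'}(f)=\sum_{\mathfrak{p}'\cap\hat{A}=\mathfrak{q}'}f(\mathfrak{p}'/\mathfrak{q}')\,v_{\mathfrak{p}'}(g)
\]
(valid because each $\hat{B}_{\mathfrak{p}'}$ is a DVR by normality of $\hat{B}$). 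Since $\mathfrak{p}=\sqrt{(g)}$, we have $v_{\mathfrak{p}'}(g)=0$ for every height-one $\mathfrak{p}'\neq\mathfrak{p}$; so if $\mathfrak{q}'\neq\mathfrak{q}$, none of the primes above $\mathfrak{q}'$ is $\mathfrak{p}$, giving $v_{\mathfrak{q}'}(f)=0$ and $f\notin\mathfrak{q}'$. Hence $\mathfrak{q}$ is the only minimal prime of $(f)$, i.e. $\mathfrak{q}=\sqrt{(f)}$, proving that $\hat{A}$ is $\Q$-factorial.

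The main obstacle is the injectivity of $\hat{A}\to\hat{B}$: because the preimage $\pi^{-1}(y)$ may contain several points, $\hat{A}$ only embeds a priori into the full product $\prod_{i}\hat{\mathcal{O}}_{X,x_{i}}$, and passing to a single factor could in principle introduce a kernel. The dimension/Krull argument above is what rules this out; once injectivity is in hand, everything else is a routine application of the norm formula together with the normality of the completed local rings.
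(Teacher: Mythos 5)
Your proposal is correct and follows essentially the same route as the paper: reduce to a finite injective extension of normal complete local domains and push a radical generator of the height-one prime upstairs down via the norm. The only cosmetic differences are that you verify finiteness and injectivity of $\hat{\mathcal{O}}_{Y,y}\to\hat{\mathcal{O}}_{X,x}$ in more detail and conclude $\mathfrak{q}=\sqrt{(N(g))}$ via the valuation-theoretic norm formula, whereas the paper does a direct element computation ($\alpha^{k}=c\beta$, then take norms).
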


\begin{proof}
Set $A:=\hat{\mathcal{O}}_{Y,y}$ and $B:=\hat{\mathcal{O}}_{X,x}$.
Let $\varphi:A\to B$ be the induced local homomorphism, which is an injective finite morphism (cf.~\cite[Proposition 10.13]{AM69}).
We may regard $A$ as a subring of $B$.
Let $\mathfrak{p}$ be any prime ideal of height $1$ on $A$. Take a prime ideal $\mathfrak{q}$ in $B$ of height $1$ such that $\mathfrak{p} = \mathfrak{q}\cap A$.
Since $B$ is $\Q$-factorial, we can take $\beta \in \frak{q}$ such that $\mathfrak{q} =\sqrt{\beta B}$.
Take any $\alpha \in \mathfrak{p}$.
Now $\alpha\in \mathfrak{q}=\sqrt{\beta B}$.
So $\alpha^k=c\beta$ for some $k>0$ and $c\in B$.
Taking norm, we have
$\alpha^{kr} = N(c)N(\beta)$ where $r = \dim_{Q(A)} Q(B)$.
Note that $N(B)\subseteq A$ since $\varphi$ is finite.
Hence we have $\mathfrak{p} \subseteq \sqrt{(N(\beta))A}$.
Conversely,
$N(\beta)\in \mathfrak{p}$
since $N(\beta) \in \mathfrak{q}\cap A=\mathfrak{p}$.
So we have $\mathfrak{p}= \sqrt{N(\beta)A}$.
\end{proof}

\begin{lem}\label{lem-nqf-all}
Let $X$ be a normal variety. Then we have the following:
\begin{itemize}
\item[(1)] Let $x\in X$ be a closed point. If $\hat{\mathcal{O}}_{X,x}$ is $\Q$-factorial, then so is $\mathcal{O}_{X,x}$.
\item[(2)] If $\mathcal{O}_{X,x}$ is $\Q$-factorial for any closed point $x\in X$, then $X$ is $\Q$-factorial.
\end{itemize}
\end{lem}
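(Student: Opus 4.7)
The plan is to descend $\Q$-factoriality from $\hat R := \hat{\mathcal{O}}_{X,x}$ to $R := \mathcal{O}_{X,x}$ via the faithful flatness of the completion. First I would record the standard reformulation implicit in the paper's definition: for a Noetherian normal local domain, $\Q$-factoriality is equivalent to the divisor class group being torsion; indeed, in a Krull domain a principal ideal has no embedded primes, so $\mathfrak p = \sqrt{(f)}$ forces $(f)$ to be $\mathfrak p$-primary, and comparison with the DVR $R_{\mathfrak p}$ then gives $(f) = \mathfrak p^{(n)}$ for some $n$. Since $X$ is a variety over a field of characteristic zero, $R$ is excellent, hence analytically normal, and so $\hat R$ is again a normal Noetherian domain. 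I would then invoke the theorem of Mori (cf.~Fossum, \emph{The Divisor Class Group of a Krull Domain}, Ch.~6) that, under analytical normality, the pullback $\operatorname{Cl}(R) \to \operatorname{Cl}(\hat R)$ is injective. Torsion-ness of $\operatorname{Cl}(\hat R)$ then descends to torsion-ness of $\operatorname{Cl}(R)$: given a height-one $\mathfrak p \subseteq R$, the minimal primes $\hat{\mathfrak q}_i$ of $\mathfrak p \hat R$ all have height one by flatness, and a torsion relation for $[\sum V(\hat{\mathfrak q}_i)]$ pulls back to a torsion relation for $[V(\mathfrak p)]$, yielding the desired $f\in R$.

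\textbf{Part (2).} This is a standard Noetherian patching argument. Given a Weil divisor $D$ on $X$ and each closed point $x\in X$, the hypothesis provides an integer $n_x > 0$ such that $n_x D$ is locally principal in $\mathcal{O}_{X,x}$; a local generator trivializes the reflexive rank-one sheaf $\mathcal{O}_X(n_x D)$ on some open neighborhood $U_x \ni x$. Since every nonempty closed subset of a variety of finite type over a field contains a closed point (by the Nullstellensatz), any open subset of $X$ containing all closed points must equal $X$; hence the $\{U_x\}$ cover $X$. By quasi-compactness of the Noetherian scheme $X$, a finite subcover $U_1, \dots, U_r$ with corresponding $n_1, \dots, n_r$ suffices, and $n := \operatorname{lcm}(n_i)$ makes $nD$ Cartier on each $U_i$, hence on all of $X$. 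Since $D$ was arbitrary, $X$ is $\Q$-factorial.

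\textbf{Main obstacle.} The only delicate step is the injectivity of $\operatorname{Cl}(R) \to \operatorname{Cl}(\hat R)$ in (1); everything else is either formal commutative algebra or elementary sheaf-theoretic patching. The cleanest route is to cite Mori's theorem directly; a self-contained alternative would combine the primary decomposition of $\mathfrak p \hat R$ with faithful flatness and the analytical normality of excellent normal local rings to descend a radical-principal generator from $\hat R$ back to $R$, which is essentially the content of Mori's original argument.
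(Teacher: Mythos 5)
Your proof is correct, and in substance it runs parallel to the paper's. For (1) the paper works directly with the divisorial sheaf $\mathcal{O}_S(D)$ on $S=\Spec \mathcal{O}_{X,x}$: it pulls it back along the faithfully flat completion map $\hat S\to S$, identifies the pullback with $\mathcal{O}_{\hat S}(\hat D)$ for a Weil divisor $\hat D$, and uses fpqc descent of invertibility to conclude that $\mathcal{O}_S(nD)$ is a line bundle once $n\hat D$ is Cartier. You instead reformulate $\Q$-factoriality of a normal Noetherian local domain as torsion-ness of the divisor class group and quote Mori's theorem that $\operatorname{Cl}(\mathcal{O}_{X,x})\to \operatorname{Cl}(\hat{\mathcal{O}}_{X,x})$ is injective, using excellence to guarantee that $\hat{\mathcal{O}}_{X,x}$ is again normal (which the paper also needs, tacitly, to speak of $\hat D$). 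The two arguments rest on the same mechanism --- faithfully flat descent along the completion, with analytic normality as the enabling hypothesis --- so the difference is packaging: the paper's version is self-contained modulo standard descent of local freeness, while yours outsources the key step to a classical theorem whose proof is essentially that descent argument; your reduction of the radical condition $\mathfrak p=\sqrt{(f)}$ to $\mathfrak p^{(n)}=(f)$ via unmixedness of principal ideals is a worthwhile clarification that the paper's definition leaves implicit. For (2) both proofs spread the local condition out to an open neighborhood of each closed point; you phrase this sheaf-theoretically (a stalkwise trivialization of the reflexive sheaf $\mathcal{O}_X(n_xD)$ extends to a neighborhood) where the paper does it by hand with generators and denominators, and you make explicit the final step --- closed points are dense in a finite-type scheme, take a finite subcover, pass to the least common multiple of the local $n_x$ --- which the paper leaves to the reader. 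No gaps.
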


\begin{proof}
(1) Let $S=\Spec \mathcal{O}_{X,x}$ and $\hat{S}=\Spec \hat{\mathcal{O}}_{X,x}$.
Let $\pi:\hat{S}\to S$ be the completion morphism which is faithfully flat.
Since $S$ is normal, so is $\hat{S}$.
Take a prime divisor $D$ on $S$.
Since $\pi$ is flat, $\pi^*\mathcal{O}_S(D)$ is a reflexive sheaf of rank $1$ on $\hat{S}$.
So $\pi^*\mathcal{O}_S(D)=\mathcal{O}_{\hat{S}}(\hat{D})$ for some Weil divisor $\hat{D}$ on $\hat{S}$.
If $\hat{\mathcal{O}}_{X,x}$ is $\Q$-factorial, then $n\hat{D}$ is Cartier for some $n>0$.
Then $\pi^*\mathcal{O}_S(nD)\cong \pi^*((\mathcal{O}_S(D)^{\otimes n})^{\vee\vee})\cong (\pi^*\mathcal{O}_S(D)^{\otimes n})^{\vee\vee}\cong \mathcal{O}_{\hat{S}}(n\hat{D})$ is invertible.
By the fpqc descent along $\pi$, $\mathcal{O}_S(nD)$ is invertible.

(2) Take any prime divisor $D$ of $X$.
Take any closed point $x\in D$.
Take an affine neighbourhood $U=\Spec A\subseteq X$ of $x$.
Let $\mathfrak{p}\subseteq A$ be the prime ideal of height $1$ representing $D$ and $\mathfrak{m}\subseteq A$ the maximal ideal representing $x$.
By assumption, $\mathfrak{p}A_{\mathfrak{m}}=\sqrt{\alpha A_{\mathfrak{m}}}$ for some $\alpha\in \mathfrak{p}$.
Write $\mathfrak{p}=(f_1,\cdots, f_r)$.
Then there exsits $n>0$ such that $f_i^n\in \alpha A_{\mathfrak{m}}$ for all $i$.
Write $f_i^n=\alpha a_i/b_i$ for some $a_i\in A$ and $b_i\in A\backslash \mathfrak{m}$.
Then $f_i\in \sqrt{\alpha A_b}$ where $b=b_1\cdots b_r\in A\backslash \mathfrak{m}$.
In particular, $\mathfrak{p} A_b=\sqrt{\alpha A_b}$ and $\Spec A_b$ is an affine open neighbourhood of $x$.
\end{proof}

\begin{prop}\label{prop-nqf}
Let $\pi:X\to Y$ be a surjective morphism of normal projective varieties with connected fibres.
Let $f:X\to X$ and $g:Y\to Y$ be int-amplified endomorphisms such that $g\circ \pi=\pi\circ f$.
Suppose $Y$ is an abelian variety and $X$ has only terminal singularities.
Suppose further $\dim(X)\le \dim(Y)+2$.
Then $X$ is $\Q$-factorial.
\end{prop}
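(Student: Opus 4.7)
\textbf{Proof plan for Proposition \ref{prop-nqf}.}

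The approach is to establish the stronger assertion that $\hat{\mathcal{O}}_{X,x}$ is $\Q$-factorial for every closed point $x \in X$; by Lemma \ref{lem-nqf-all}, this is sufficient to conclude that $X$ itself is $\Q$-factorial. Smooth points are trivial, so fix $x \in \Sing X$.

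First I would use a dimension estimate. Since $X$ has only terminal singularities, $\Sing X$ is closed of codimension at least $3$ in $X$, and combined with the hypothesis $\dim X \le \dim Y + 2$ this yields $\dim \Sing X \le \dim Y - 1$. Thus $\pi(\Sing X)$ is a proper closed subset of $Y$. Next, using that $f \colon X \to X$ is a finite surjective endomorphism of the normal variety $X$, one checks (after possibly replacing $f$ by an iterate) that $f^{-1}(\Sing X) = \Sing X$, so that $\pi(\Sing X)$ is a $g^{-1}$-invariant proper closed subset of the abelian variety $Y$. Since $g$ is int-amplified, \cite[Lemma 4.1]{Men17} implies that any such subset is zero-dimensional; hence $\pi(\Sing X)$ is a finite set of $g$-periodic points.

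After iterating $f, g$ further and translating on $Y$ (which is permissible since $Y$ is an abelian variety), it is enough to treat a single singular point $x$ lying over $y = 0_Y$. The heart of the argument is then local: I would show that $\pi$ is formally isotrivial at $x$, i.e.
\[
\hat{\mathcal{O}}_{X,x} \;\cong\; \hat{\mathcal{O}}_{X_0,x}\;\hat\otimes_K\;\hat{\mathcal{O}}_{Y,0},
\]
by pulling back $\pi$ along the multiplication-by-$N$ isogeny $[N] \colon Y \to Y$ for growing $N$. Because the int-amplified endomorphism $f$ acts on the conormal direction of $X_0 \subset X$ with all eigenvalues of modulus strictly greater than $1$, the Kodaira--Spencer class of the deformation of $X_0$ over $\hat{\mathcal{O}}_{Y,0}$ is killed after pullback along $[N]$ for $N$ large enough, forcing the formal product decomposition above; by Lemma \ref{lem-nqf-ti} applied to the finite cover $X \times_Y Y' \to X$, the conclusion for $X$ follows from the conclusion for this product.

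Granting the isotriviality, the final step is to verify that $\hat{\mathcal{O}}_{X_0,x}$ itself is $\Q$-factorial. The fiber $X_0$ has dimension at most $2$; by adjunction applied to the (formally flat) morphism $\pi$ over the regular local ring $\hat{\mathcal{O}}_{Y,0}$, the singularities of $X_0$ at $x$ are Kawamata log terminal. In dimension at most $2$, klt singularities are quotient singularities, hence $\Q$-factorial, and this closes the argument. The main obstacle will be the formal isotriviality step: carefully implementing the base change along $[N]$ and extracting a product decomposition from the fact that $f^*$ is a contraction in the normal direction requires some care, although the ingredients (multiplication isogenies on $Y$ and the spectral gap of $f^*$) are exactly what the hypotheses supply.
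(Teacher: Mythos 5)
Your reduction via Lemma \ref{lem-nqf-all} to the $\Q$-factoriality of the completed local rings, and the dimension count $\dim \Sing(X)\le \dim(X)-3<\dim(Y)$, agree with the paper. But there are genuine gaps. First, the invariance claim $f^{-1}(\Sing X)=\Sing X$ is asserted ("one checks"), not proved, and it is not a formal consequence of $f$ being a finite surjective endomorphism: on the branch locus a smooth point may map to a singular one and vice versa. The paper avoids exactly this issue by working instead with the set $S$ of closed points at which $\hat{\mathcal{O}}_{X,x}$ fails to be $\Q$-factorial, and proving $f^{-1}(S)\subseteq S$ by applying the norm argument of Lemma \ref{lem-nqf-ti} to the finite morphism $f$ itself; since $S\subseteq \Sing(X)$ (regular complete local rings are factorial), the dimension count then applies to $\overline{S}$. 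Second, you under-quote \cite[Lemma 4.1]{Men17}: for an int-amplified endomorphism of an abelian variety, a \emph{proper} closed $g^{-1}$-periodic subset is empty, not merely zero-dimensional. With the correct statement the paper concludes $\pi(\overline{S})=\emptyset$, hence $S=\emptyset$, in a few lines; and had your invariance claim been available, the same lemma would force $\Sing(X)=\emptyset$ outright, making everything after your second paragraph moot.

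Consequently the formal-isotriviality programme --- the decomposition $\hat{\mathcal{O}}_{X,x}\cong \hat{\mathcal{O}}_{X_0,x}\hat\otimes_K \hat{\mathcal{O}}_{Y,0}$ via base change along $[N]$ and a Kodaira--Spencer vanishing --- is both unnecessary and, as you concede, unproved; it is by far the hardest step of your plan and cannot be waved through. Even granting it, the final step is shaky: $\pi$ need not be flat at a singular point, the special fiber $X_0$ over a point of $\pi(\Sing X)$ need not be normal or klt (adjunction at a special fiber is not automatic), and the claim that a completed tensor product of a $\Q$-factorial complete local ring with a power series ring is again $\Q$-factorial itself requires proof. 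The repair is to abandon the local analysis and run the global argument on the non-$\Q$-factorial locus of the completions, as the paper does.
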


\begin{proof}
Denote by $S$ the set of closed points $x\in X$ such that $\hat{\mathcal{O}}_{X,x}$ is not $\Q$-factorial.
By Lemma \ref{lem-nqf-ti}, $f^{-1}(S)\subseteq S$.
Then $f^{-1}(\overline{S})\subseteq \overline{S}$ by \cite[Lemma 7.2]{CMZ20}.
Hence $f^{-1}(\overline{S})=\overline{S}$.
By \cite[Lemma 7.5]{CMZ20}, $\pi(\overline{S})$ is $g^{-1}$-periodic.
Note that $S\subseteq \overline{S}\subseteq \Sing(X)$ and $\dim(\Sing(X))\le \dim(X)-3<\dim(Y)$ since $X$ has terminal singularities.
So $\pi(\overline{S})=\emptyset$ and hence $S=\emptyset$ by \cite[Lemma 4.1]{Men20}.
By Lemma \ref{lem-nqf-all}, $X$ is $\Q$-factorial.
\end{proof}

\section{fibre spaces; UBC vs sAND; Proofs of Theorems \ref{ThmB} and \ref{ThmC}}
\label{sec_fib}

In this section, we study the set $Z_f$ for endomorphisms $f$ on a fibration which fixes fibres set-theoretically, i.e.~a family of endomorphisms.
First we formulate a relative version of the sAND Conjecture.

Let $X, S$ be quasi-projective varieties over $K$ and $\pi: X \to S$ a projective morphism.
Let $f:X\to X$ be a surjective morphism over $S$.
Take any immersion $X \longrightarrow P$ into a projective variety $P$, any ample divisor $H$ on $P$, and height $h_{H}\geq1$ associated with $H$.
For any point $x\in X( \overline{K})$,
setting $X_{\pi(x)} :=\pi^{-1}(\pi(x))$,
we define
$$
\alpha_f(x) := \lim\limits_{n \to +\infty}h_{H}(f^{n}(x))^{1/n}=\lim\limits_{n \to +\infty} h_{H|_{X_{\pi(x)}}}((f|_{X_{\pi(x)}})^{n}(x))^{1/n}= \alpha_{f|_{X_{\pi(x)}}}(x)$$
which is well-defined in this relative setting,
since the arithmetic degree is well-defined and independent of the choice of ample height on any projective scheme over a number field.
We define $Z_{f}$ and $Z_{f}(d)$ in the same way as in Definition \ref{defn3.1.2}.

\begin{Conjecture}[Relative sAND Conjecture]\label{conj_relsand}
Let $X, S$ be quasi-projective varieties over $K$ and $\pi: X \to S$ a projective morphism.
Let $f:X\to X$ be a surjective morphism
with $\pi \circ f=\pi$.
Then the set $Z_f(d)$ is not  Zariski dense in $X_{\overline{K}}$
for any constant $d>0$.
\end{Conjecture}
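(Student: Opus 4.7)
The plan is to reduce the relative sAND Conjecture to two independent ingredients: a fiberwise absolute sAND statement, and a uniform boundedness statement controlling the fiberwise ``small arithmetic degree'' loci as the fiber varies. The setup is friendly: the definition given just before the conjecture identifies $\alpha_f(x)=\alpha_{f|_{X_{\pi(x)}}}(x)$, so for any $s\in S(\overline K)$ we have $Z_f\cap X_s(\overline K)=Z_{f|_{X_s}}$. Hence the question is whether the union $\bigcup_{s\in \pi(Z_f(d))} Z_{f|_{X_s}}(d)$ can be arranged to lie in a proper closed subset of $X_{\overline K}$, where $s$ itself ranges through a set of bounded degree (since $\pi$ is defined over $K$, bounding $[K(x):K]\le d$ also bounds $[K(\pi(x)):K]$).

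\textbf{Key steps in order.} First I would spread out and stratify: passing to a Zariski dense open $U\subseteq S$, one may assume $\pi|_{\pi^{-1}(U)}$ is smooth with geometrically integral fibers, and then induct on $\dim S$ to handle the complement. Second, on $U$ one tries to interpolate the fiberwise structural description of $Z_{f|_{X_s}}$ into a global closed subscheme $W\subset X$; this is possible when the fiberwise picture is rigid, e.g.\ when $f|_{X_s}$ is polarized by a relatively ample $H$ satisfying $f^{*}H\sim_{\pi}rH$ (then $Z_{f|_{X_s}}=\Prep(f|_{X_s})$ by Theorem \ref{thm_pol}), or when $X_s$ is abelian (then $Z_{f|_{X_s}}$ is a finite union of translates of a fixed abelian subvariety by torsion, by Theorem \ref{thm_ab2}). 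Third, I would invoke a uniform boundedness input—Conjecture \ref{conj_ubc} in the polarized case, Conjecture \ref{conj_tor} in the abelian case—to bound, independently of $s\in U(d')$, the number of the relevant fiberwise components meeting $X_s(d)$. This immediately yields a relative version of Lemma \ref{lem_surj} and lets one glue the pieces into a proper closed subset of $X$; combined with the base-induction this gives non-density of $Z_f(d)$.

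\textbf{Main obstacle.} The hardest part is step three: producing a uniformity statement as the fiber varies. Theorem \ref{ThmB} shows that this is not an artefact of the argument but is \emph{equivalent} to the Morton--Silverman UBC in the polarized family case, so one cannot expect to bypass a uniform boundedness hypothesis. Consequently, for a general $\pi$ I do not expect an unconditional proof; rather the plan is to establish the conjecture conditionally (as in Theorem \ref{ThmC}, where UBC for torsion points yields the abelian fibration case), and unconditionally only when the fibers and the family are structured enough that canonical heights $\hat h_{D,f}$ with $f^{*}D\equiv \lambda D$, $\lambda>\sqrt{\delta_{f}}$ exist relatively and satisfy a relative Northcott property. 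A secondary, but real, difficulty is that $\pi$ need not have connected fibers nor be proper over $K$; the spreading-out step must be carried out carefully so that the degree bounds $[K(x):K]\le d$ on $X$ translate into uniform degree bounds both on $S$ and within each fiber, which is exactly the dichotomy exploited in the passage from Theorem \ref{ThmB}(2) to (3).
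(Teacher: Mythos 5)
The statement you were asked to prove is a \emph{conjecture} (Conjecture \ref{conj_relsand}); the paper offers no proof of it in general, only special cases. You correctly diagnosed this: your reduction to (i) a fiberwise structural description of $Z_{f|_{X_s}}$ and (ii) a uniform boundedness input as $s$ varies is exactly how the paper handles the provable cases (Theorem \ref{ThmB} for polarized families via Conjecture \ref{conj_ubc}, Theorem \ref{ThmC} and Theorem \ref{thm_abfib} for abelian fibrations via Conjecture \ref{conj_tor}, and Theorems \ref{thm_projbdl}, \ref{thm_ellfib} unconditionally), and your observation that the uniformity step cannot be bypassed is precisely the content of the equivalence in Theorem \ref{ThmB}.
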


\begin{rem}
Suppose $\pi$ has geometrically irreducible fibres.
Then the dynamical degree of $f_{ \overline{K}} \colon X_{ \overline{K}} \longrightarrow X_{ \overline{K}}$ is equal to
the dynamical degree of $f|_{X_{s}} \colon X_{s} \longrightarrow X_{s}$ for general $s\in S( \overline{K})$,
by the product formula involving the relative dynamical degree
(cf.~\cite{DN11,Tru20}).
\end{rem}

\begin{rem}
If $X$ is projective, then Conjecture \ref{conj_relsand} is equivalent to Conjecture \ref{conj_zf}.
\end{rem}

First we consider projective bundles over projective bases.

\begin{thm}[cf.~\cite{Ame03}]\label{thm_projbdl}
Let $f$ be a surjective endomorphism on a projective bundle $\mathbb P_Y(E) \overset
{\pi}{\to} Y$ over a projective variety with $\pi \circ f=\pi$.
Then $Z_f(d)$ is not Zariski dense for any $d>0$.
\end{thm}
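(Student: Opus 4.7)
The plan is to reduce to the fiberwise polarized setting and use a global canonical height built from the relative Picard structure. If $\delta_f = 1$ then $Z_f = \emptyset$ (as $\alpha_f \geq 1$ always), so assume $q := \delta_f \geq 2$. The hypothesis $\pi \circ f = \pi$ makes each fiber $X_y \cong \PP(E_y) \cong \PP^{r-1}$ $f$-invariant, with restriction $f_y := f|_{X_y}$. The splitting $\Pic(\PP_Y(E)) = \pi^*\Pic(Y) \oplus \Z \langle \mathcal{O}_X(1) \rangle$ yields $f^*\mathcal{O}_X(1) \sim \mathcal{O}_X(q) \otimes \pi^*L$ for some $L \in \Pic(Y)$, so each $f_y$ is polarized of degree $q$. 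Applying Theorem~\ref{thm_pol} fiberwise and using $\alpha_f(x) = \alpha_{f_{\pi(x)}}(x)$ gives $Z_f = \Prep(f)$.

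Next I would set $D := \mathcal{O}_X(1) + \frac{1}{q-1}\pi^*L \in \Pic(X)_{\Q}$, arranged so that $f^*D \sim_{\Q} qD$. By the Tate limit recalled in~\ref{n:2.1}, $\hat h_D(x) := \lim_{n\to\infty} h_D(f^n(x))/q^n$ exists with $\hat h_D = h_D + O(1)$ and $\hat h_D \circ f = q\hat h_D$. On each fiber, $D|_{X_y} = \mathcal{O}_X(1)|_{X_y}$ is ample, and $\hat h_D|_{X_y}$ differs from the canonical height of $f_y$ by a $y$-dependent constant; Northcott on the fiber then identifies $\hat h_D(x) = 0$ with $x \in \Prep(f_{\pi(x)})$. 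Consequently
\[
Z_f(d) \subseteq S_M := \{x \in X(d) : h_D(x) \leq M\}
\]
for some uniform $M > 0$.

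The main obstacle is showing $S_M$ is not Zariski dense; this requires more than the Northcott property of $h_D$, since $D$---while $\pi$-ample---need not be globally big (for example, when the class of $L$ is numerically negative), so Corollary~2.3 of~\cite{Shi19} does not apply. Following~\cite{Ame03}, choose $H_Y$ ample on $Y$ so that $A := D + \pi^*H_Y$ is ample on $X$; then for $x \in Z_f(d)$ one obtains the height comparison $h_A(x) - h_{H_Y}(\pi(x)) \leq M + O(1)$, and on each fiber $X_y$ the slice $S_M \cap X_y$ is finite by Northcott with respect to the ample $A|_{X_y}$. The preperiodic loci $P_n := \{x \in X : f^{n+k}(x) = f^n(x) \text{ for some } 1 \leq k \leq n\}$ are closed subschemes of $X$ finite over $Y$, hence proper closed subsets of $X$ whenever $r \geq 2$. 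Combining these three ingredients via Amerik's specialization argument confines $Z_f(d)$ to a proper closed subset of $X_{\overline{K}}$. The technical heart---and main difficulty---is to organize the potentially infinitely many $P_n$'s that can contribute to $Z_f(d)$ without invoking the (conjectural) uniform boundedness of preperiodic points; this is where the projective bundle structure is essential.
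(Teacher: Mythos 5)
Your reduction to the fiberwise picture is fine: after replacing $f$ by a power you may assume $q=\delta_f>1$, each restriction $f_y$ is a $q$-polarized endomorphism of $\mathbb P^r$, Theorem \ref{thm_pol} gives $Z_f=\Prep(f)$, and the global eigendivisor $D$ with $f^*D\sim_{\Q}qD$ yields a Call--Silverman canonical height putting $Z_f(d)$ inside $\{x\in X(d): h_D(x)\le M\}$ with each fiber slice finite. But this is exactly where the real difficulty begins, and your proposal does not resolve it: a union of finite sets, one for each $y$ in the (typically dense) set $Y(d)$, can perfectly well be Zariski dense, and since $D$ is only $\pi$-ample the Northcott/big-divisor argument of Theorem \ref{thm_mds} is unavailable. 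You acknowledge this and defer to ``Amerik's specialization argument'' organizing the loci $P_n$, but no such argument is supplied, and without a uniformity statement across fibers -- precisely the kind of uniform boundedness of preperiodic points you say you want to avoid -- the sketch cannot be closed. So the proposal has a genuine gap at its final, essential step.

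What the paper actually uses from \cite{Ame03} is not a specialization argument but the affineness of the quotient $R^m(\mathbb P^r,\mathbb P^r)/\PGL(r+1)$ of the space of degree-$m$ endomorphisms by conjugation, valid once $m>r+1$ (which one arranges by iterating $f$). Local trivializations of the bundle give a classifying morphism $\Phi\colon Y\to R^m(\mathbb P^r,\mathbb P^r)/\PGL(r+1)$, which must be constant because $Y$ is projective and the target is affine. Hence the family of fiber maps is isotrivial: all $f_y$ are conjugate to one fixed $t\in R^m(\mathbb P^r,\mathbb P^r)$, and the conjugating elements assemble into an isomorphism $h\colon\tilde X\to X$ of projective bundles carrying $f$ to an endomorphism $\tilde f$ that is literally the constant map $t$ in local trivializations. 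For such $\tilde f$ the set $Z_{\tilde f}(d)$ lies, over each trivializing open set, in $U_\lambda\times F$ for a single finite set $F\subseteq\Prep(t)$, so it is not Zariski dense, and Lemma \ref{lem_surj} transfers this back to $f$. This isotriviality is the uniformity input your height-theoretic setup is missing; if you want to salvage your approach, you must first prove (or import) that the fiber maps do not actually vary.
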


\begin{proof}
We may assume that $\delta_f>1$.
Let $X:=\mathbb P_Y(E)$ and assume $E$ has rank $r+1$.
By Lemma \ref{lem_rat}, we may always replace $Y$ (and hence $X$) by a generically finite cover since $f$ lifts.
So first we may assume $Y$ is smooth.
Note that $\pi^{-1}(y)\cong \mathbb{P}^r$ for any $y\in Y$.
Then $f|_{\pi^{-1}(y)}$ is $\delta_f$-polarized.
In particular, $\deg f|_{\pi^{-1}(y)}>1$.
By \cite[Theorem 1]{Ame03}, after an (\'etale) base change of $Y$, we may assume $X=Y\times \mathbb{P}^r$.
Then we have the well defined map
$$\Phi: Y \to R^m(\mathbb P^r, \mathbb P^r)\, \text{ via } \, y\mapsto f|_{\pi^{-1}(y)}$$
where $R^m(\mathbb P^r, \mathbb P^r)$ is the space of endomorphisms
given by degree $m$ polynomials on $\mathbb P^r$.
Note that $R^m(\mathbb P^r, \mathbb P^r)$ is an affine variety (cf.~\cite[Lemma 1.2]{Ame03}).
Then $\Phi$ is a constant map.
In particular, $f$ splits as $\id_Y\times \Phi(Y)$.
So the assertion follows from Lemma \ref{lem_prod} and Theorem \ref{thm_pol}.
\end{proof}

The next result gives the structure of the small arithmetic degree set $Z_f$
in the case of abelian fibrations, and is the key
in proving Theorem \ref{ThmC}.

\begin{thm}\label{thm_abfib}
Let $\pi: X \to Y$ be an abelian fibration, that is, a projective surjective morphism of quasi-projective normal varieties such that $\pi_*\mathcal O_X=\mathcal O_Y$ and its generic fibre $X_\eta$ is an abelian variety.
Let $f: X \to X$ be a surjective endomorphism with $\delta_f>1$ and $\pi \circ f=\pi$.
Then,
after enlarging the ground field by a finite extension and taking base change along a finite cover of $Y$,
there is a Zariski open dense subset $V \subset Y$ such that $X_{V}=\pi^{-1}(V) \longrightarrow V$ has an abelian scheme structure compatible with
the abelian variety structure of the generic fibre $X_{\eta}$, and there are
a non-trivial $V$-group homomorphism $\beta: X_V \to X_V$
and a section $M \subset X_{V}$ of $X_V \to V$ such that,  for any $d>0$, we have:
\begin{align*}
Z_f(d) \subseteq
\pi^{-1}(Y\setminus V) \cup \left( M(d)+ \beta^{-1}(\Tor(X_V(d))) \right) .
\end{align*}
Here $\Tor(X_V(d))$ is the set of rational points in $X(d)$ which are torsion points of the fibres over $V$ and
$ M(d)+ \beta^{-1}(\Tor(X_V(d))) =\{a+b \mid \pi(a)=\pi(b), a \in M(d), b\in \beta^{-1}(\Tor(X_V(d)))   \}$.
\end{thm}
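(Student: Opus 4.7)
The plan is to carry out the proof of Theorem \ref{thm_ab} in the relative setting over $V$, using the relative Picard-functor machinery from Lemma \ref{lem_spreadout} to replace the ``absolute'' use of $\Phi^H_D$ and $i_H$ by their $V$-scheme analogues.

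First I would arrange the abelian scheme structure. Since the generic fiber $X_\eta$ is an abelian variety, after extending $K$ to get a $K$-rational point on $X_\eta$ and passing to a finite cover of $Y$ over which this section spreads out, one gets an abelian scheme structure on $X_V \to V$ for some Zariski dense open $V \subseteq Y$. The rigidity of morphisms of abelian varieties, applied to $f_\eta$, shows that $f|_{X_V} = \tau_\sigma \circ \phi$ for a $V$-group scheme endomorphism $\phi \colon X_V \to X_V$ and a translation $\tau_\sigma$ by a section $\sigma \colon V \to X_V$; shrinking $V$ further if necessary, $\phi$ is an isogeny on every geometric fiber. Here $\pi \circ f = \pi$ makes the rigidity argument genuinely fiberwise.

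Next I would produce the decomposition. Applying Poincaré reducibility to $\phi$ on the generic fiber as in Lemma \ref{lem_sil} yields a decomposition up to isogeny $X_\eta \sim A_{1,\eta} \times A_{2,\eta}$, $\phi$-invariant, with $1 - \phi_1$ surjective on $A_{1,\eta}$ and $\delta_{\phi_2} = 1$. Spreading these subabelian varieties out and shrinking $V$ as needed, I obtain a $V$-isogeny $m \colon A_1 \times_V A_2 \to X_V$ and write $\sigma = \sigma_1 + \sigma_2$ (after a further finite base change if necessary to split the isogeny). Because $1_{A_1} - \phi_1$ is surjective on every fiber, it is a surjective $V$-group morphism and admits a section-theoretic solution $p \colon V \to A_1$ of $p - \phi_1(p) = \sigma_1$ (after a final base change; shrinking $V$ so that the kernel remains finite flat). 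I take $M := p + (\text{zero section of } A_2) \subseteq X_V$.

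For the map $\beta$, I would pick a $\pi$-ample symmetric divisor $H$ on $X_V$, producing via Lemma \ref{lem_spreadout} and Remark \ref{rem_phi} the polarization $\varphi_H \colon X_V \to \mathbf{Pic}^0_{X_V/V}$. On the generic fiber Theorem \ref{thm_ksforab} provides a nef symmetric $\mathbb{R}$-divisor $D_\eta$ with $\phi_1^* D_\eta \equiv \delta_f D_\eta$ and $D_\eta \not\equiv 0$; spreading $D_\eta$ out (and shrinking $V$) gives $D$ on $X_V$, and I set $\beta := \varphi_H^{-1} \circ \varphi_D \in \mathrm{End}(X_V/V)_{\mathbb{R}}$, which is non-trivial because $D_\eta$ is not numerically trivial. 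Fiberwise, $\beta$ realises the $f$-invariant proper abelian subvariety $B$ of Theorem \ref{thm_ab} as (essentially) $\ker \beta$ up to isogeny.

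Finally I would verify the inclusion. For $x \in Z_f(d)$ with $\pi(x) = y \in V(\overline K)$, set $X_y = \pi^{-1}(y)$ and apply Theorem \ref{thm_ab2}(3) to $f|_{X_y}$, using the identity $\alpha_f(x) = \alpha_{f|_{X_y}}(x)$ already built into the relative definition. The theorem gives $x \in B_y + p_y + \mathrm{Tor}(X_y)$ where $B_y + p_y$ is the $f|_{X_y}$-invariant translate produced by the same recipe as in my Steps 2-3 specialised at $y$; by construction $p_y = M(y)$ and $B_y = \ker(\beta_y)^0$, so $x \in M(d) + \beta^{-1}(\mathrm{Tor}(X_V(d)))$ because $[L:K]$ is uniformly bounded for both components.

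The hard part will be Step 2 together with the relative version of the height/zero-locus statement Theorem \ref{thm_ksforab}. Making the Poincaré decomposition equivariant and defined over $V$ rather than only generically, and guaranteeing that the section $M$ solving $p - \phi_1(p) = \sigma_1$ exists over $V$ (not merely over some further étale cover), will require repeatedly shrinking $V$ and absorbing finite base changes into the ``enlarging the ground field and taking a finite cover of $Y$'' clause in the theorem statement. Equally, one must spread out the fiberwise statement ``the zero locus of the canonical height is a union of torsion translates of an abelian subvariety'' uniformly, which is where Lemma \ref{lem_spreadout} and the compatibility of $\mathbf{Pic}^0_{X_V/V}$ with base change become essential.
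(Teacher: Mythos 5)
Your overall architecture matches the paper's: split $f$ on the generic fiber via Lemma \ref{lem_sil} into an isogeny part (conjugate to $f_1$ by a translation section) and a part of dynamical degree one, spread everything out over a dense open $V$ after a finite cover and field extension, and then control the small-arithmetic-degree locus fiberwise via canonical heights. The place where your argument does not close is the construction of $\beta$. You set $\beta:=\varphi_H^{-1}\circ\varphi_D$, but this is an element of $\End(X_V/V)_{\R}$ --- a formal $\R$-linear combination of endomorphisms --- not a morphism of schemes, so $\beta^{-1}(\Tor(X_V(d)))$ is not defined, and the theorem explicitly asks for an honest $V$-group homomorphism. More importantly, the logical link between $\alpha_f(x)<\delta_f$ and membership in the preimage of torsion is missing: since $D$ is only nef, the height pairing attached to $D_y$ is merely positive semi-definite, and the vanishing of $\hat h_{D_y}(x)$ says nothing directly about $\Phi^{H_y}_{D_y}(x)$. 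The paper's route (Lemma \ref{lem_abfib-isog}) is to use Lemma \ref{lem_decomp} to write $\Phi^{H}_{D}=i_{H}(\alpha)\circ\alpha$ with $i_H(\alpha)=\alpha$, so that $\hat h_{D_y}(x)=\tfrac12\langle\alpha_y(x),\alpha_y(x)\rangle_{H_y}$; positive definiteness of $\langle\,,\,\rangle_{H_y}$ on $X_y(d)\otimes_{\Z}\R$ then forces $\alpha_y(x)=0$ there; finally, writing $\alpha=\sum_i\beta_i\otimes c_i$ with $\beta_i\in\End(X/Y)$ honest group endomorphisms and $c_i\in\R$ linearly independent over $\Q$, one concludes $\beta_{i,y}(x)=0$ in $X_y(d)\otimes_{\Z}\Q$, i.e.\ $\beta_{i,y}(x)$ is torsion, and any nonzero $\beta_i$ serves as $\beta$. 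Both the Rosati-symmetric square root and the $\Q$-linear-independence trick are essential and absent from your sketch.

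A secondary concern: in the last step you apply Theorem \ref{thm_ab2}(3) fiber by fiber and assert that the resulting translate $B_y+p_y$ is the fiber of a global family (with $p_y=M(y)$ and $B_y$ the kernel of $\beta_y$ up to isogeny). The absolute theorem gives no such compatibility between fibers; the paper avoids this issue entirely by never invoking the absolute statement on fibers --- the height computation in Lemma \ref{lem_abfib-isog} is carried out with the globally defined $\alpha$ and $\beta_i$, so uniformity in $y$ is automatic. You correctly flag this as the hard part, but as written it remains a gap rather than a proof.
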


Consider first the case when the endomorphism induces an isogeny on the generic fibre.

\begin{lem}\label{lem_abfib-isog}
Let $\pi: X \to Y$ be an abelian fibration and
$f: X \to X$ a surjective endomorphism such that $\delta_f>1$, $\pi \circ f=\pi$, and the induced endomorphism $f_\eta: X_\eta \to X_\eta$ on the generic fibre of $\pi$ is an isogeny.
Then, after enlarging the ground field by a finite extension and taking base change along a finite cover of $Y$,
there are a Zariski open dense subset $V \subset Y$ over which $X_{V}=\pi^{-1}(V) \longrightarrow V$ has an abelian scheme structure restricting to
the abelian variety structure of the generic fibre $X_{\eta}$
 and a non-trivial $V$-group homomorphism $\beta: X_V \to X_V$
 such that, for any $d>0$, we have:
\begin{align*}
Z_f(d) \subseteq \pi^{-1}(Y \setminus V) \cup \beta^{-1}(\Tor(X_V(d))) .
\end{align*}
\end{lem}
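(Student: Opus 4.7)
The plan is to spread the abelian group law and isogeny structure of $f_\eta$ out to a dense open $V\subseteq Y$, build $\beta$ first on the generic fibre (as an endomorphism whose kernel captures the ``small-dynamical-degree part''), spread $\beta$ back out over $V$, and finally verify the containment fibrewise using Theorem~\ref{thm_ab}.

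For the spreading-out step, since $f_\eta$ is an isogeny of the abelian variety $X_\eta$, standard spreading-out (cf.~Lemma~\ref{lem_spreadout}) allows me, after enlarging $K$ by a finite extension and taking a finite base change of $Y$, to choose a dense open $V\subseteq Y$ on which $X_V=\pi^{-1}(V)\to V$ carries an abelian-scheme structure whose zero section restricts to the origin of $X_\eta$ fixed by $f_\eta$, and on which $\phi := f|_{X_V}$ is a $V$-group homomorphism that is fibrewise an isogeny. On the generic fibre---an abelian variety over the algebraically closed field $\overline{K(Y)}$---I then pick, by the Perron--Frobenius theorem, a nef symmetric non-numerically-trivial $\mathbb R$-divisor $D_\eta$ with $\phi_\eta^* D_\eta \equiv \delta_f D_\eta$. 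The inductive construction behind Claim~\ref{claim_ab} (built from the purely algebraic Lemmas~\ref{lem_sil} and~\ref{lem_decomp}, which apply to isogenies of abelian varieties over any algebraically closed field of characteristic zero) produces a $\phi_\eta$-invariant proper abelian subvariety $B_\eta\subsetneq X_\eta$ maximal with $\delta_{\phi_\eta|_{B_\eta}}<\delta_f$. By Poincar\'e complete reducibility fix a $\phi_\eta$-invariant abelian subvariety $C_\eta\subseteq X_\eta$ complementing $B_\eta$ up to isogeny, and let $\beta_\eta: X_\eta \to X_\eta$ be the composition $X_\eta \twoheadrightarrow X_\eta/B_\eta \xrightarrow{\sim} C_\eta \hookrightarrow X_\eta$, so that $\Ker\beta_\eta = B_\eta$ and $\beta_\eta\neq 0$. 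After further shrinking $V$, $\beta_\eta$ spreads to a non-trivial $V$-group homomorphism $\beta:X_V\to X_V$ whose kernel $\mathcal B\subseteq X_V$ is an abelian subscheme with generic fibre $B_\eta$.

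To conclude, fix $d>0$ and $x\in Z_f(d)$ with $y:=\pi(x)\in V(\overline K)$. By Theorem~\ref{thm_ab} applied to the isogeny $f|_{X_y}$ on $X_y/\overline K$ (here $\delta_{f|_{X_y}}=\delta_f>1$ by the product formula), there is an $f|_{X_y}$-invariant abelian subvariety $B'_y\subseteq X_y$ with $x\in B'_y + \Tor(X_y(\overline K))$. Granting the key inclusion $B'_y\subseteq \mathcal B_y$, divisibility of the abelian variety $\mathcal B_y$ yields $\beta_y^{-1}(\Tor(X_y))=\mathcal B_y+\Tor(X_y)\supseteq B'_y+\Tor(X_y)$, so $\beta(x)\in\Tor(X_y)$; since $\beta$ is defined over $K$ we have $\beta(x)\in X(d)$, whence $\beta(x)\in\Tor(X_V(d))$ and $x\in\beta^{-1}(\Tor(X_V(d)))$, as required. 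The main technical obstacle is establishing $B'_y\subseteq \mathcal B_y$ for every $y\in V(\overline K)$, not just for very general $y$: only the opposite inclusion $\mathcal B_y\subseteq B'_y$ is immediate from upper semicontinuity of the dynamical degree, and at special $y$ the fibre $X_y$ can a priori acquire extra $f|_{X_y}$-invariant small-dynamical-degree subvarieties that are invisible on the generic fibre. Resolving this subtlety---by enlarging $\mathcal B$ beyond the naive spread, by refining the choice of $\beta$ via a family-wise version of the Lemma~\ref{lem_sil} decomposition of $\phi$ into an ``isogeny-without-fixed-locus'' part $\mathcal A_1$ and a $\delta=1$ part $\mathcal A_2$, or by further shrinking $V$ to remove any exceptional closed locus---is the heart of the argument.
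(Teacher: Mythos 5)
Your proposal has a genuine gap, and you have named it yourself: the inclusion $B'_y\subseteq \mathcal B_y$ for \emph{every} $y\in V(\overline K)$ is exactly the step you do not prove, and none of the three fixes you sketch is likely to work. In particular, ``further shrinking $V$'' cannot succeed, because the locus of $y$ where $\NS(X_y)$ or $\End(X_y)$ jumps (producing new nef eigendivisors for $f_y^*$ and hence a potentially larger zero-locus of the canonical height, i.e.\ a larger $B'_y$) is in general a countable union of proper closed subsets, not a closed subset one can delete from a Zariski open $V$. A secondary issue: you invoke ``the inductive construction behind Claim \ref{claim_ab}'' over $\overline{K(Y)}$, but that induction rests on Theorem \ref{thm_ksforab}, a height-theoretic statement over number fields (the zero locus of $\hat h_{D,f}$), not only on the algebraic Lemmas \ref{lem_sil} and \ref{lem_decomp}; you would have to replace it by a purely geometric characterization of the maximal invariant abelian subvariety of small dynamical degree before spreading it out.

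The paper's proof takes a different route that sidesteps the special-fibre problem entirely: it never compares a fibrewise $B'_y$ with a spread-out kernel. Instead it spreads out a nef eigendivisor $\overline D$ on the geometric generic fibre with $f_{\overline\eta}^*\overline D\sim_{\R}\delta\overline D$, applies Lemma \ref{lem_decomp} to write $\Phi^{H}_{D}=i_{H}(\alpha)\circ\alpha$ with $\alpha=\sum_i\beta_i\otimes c_i$, $\beta_i\in\End(X/Y)$ and the $c_i\in\R$ linearly independent over $\Q$, and then uses, on \emph{each} fibre $X_y$, the identity
\begin{equation*}
\hat h_{D_y}(x)=\tfrac12\left<\alpha_y(x),\alpha_y(x)\right>_{H_y}
\end{equation*}
together with positive definiteness of the N\'eron--Tate pairing on the finitely generated group $X_y(d)\otimes_{\Z}\R$. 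For $x\in Z_f(d)\cap X_y$ this forces $\alpha_y(x)=0$ in $X_y(d)\otimes_{\Z}\R$, hence each $\beta_{i,y}(x)$ torsion, and one takes $\beta$ to be any nonzero $\beta_i$. The point is that the restriction to points of bounded degree is used \emph{before} any geometric decomposition of the fibre, so no uniformity over special fibres of an invariant subvariety is ever needed; your approach tries to prove a stronger, purely geometric fibrewise statement that is precisely where the difficulty (and possibly a counterexample at jumping fibres) lies.
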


\begin{proof}
Let $X_{\overline{\eta}}$ be the geometric generic fibre, and $f_{\overline{\eta}}$ the induced endomorphism.
Then, by a calculation of intersection numbers, we have $ \delta_{f_{\overline{\eta}}}= \delta_{f}= \delta_{f_{y}}$ where $y$
is a general closed point of $Y$. Denote this number as $ \delta$.

We can find
nef $\mathbb R$-divisors $ \overline{D} \not\equiv 0$ on $X_{\overline{\eta}}$ such that
$f_{\overline{\eta}}^{*} \overline{D} \sim_{\R} \delta \overline{D}$ (cf.~\cite[Remark 5.11]{San17}).
Fix an ample divisor $H$ on $X$.
By Lemma \ref{lem_decomp}, there exists some $ \overline{ \alpha} \in \End(X_{\overline{\eta}})_{\R}$
such that $\Phi^{H_{ \overline{\eta}}}_{ \overline{D}}=i_{H_{ \overline{\eta}}}( \overline{ \alpha})\circ \overline{\alpha}$
and $i_{H_{ \overline{\eta}}}( \overline{ \alpha})= \overline{\alpha}$.
Note that $ \overline{\alpha}$ is not zero since $ \overline{D} \not\equiv 0$.

Take a sufficiently large finite extension $F$ of $k(\eta)$ so that
$ \overline{D}$ and $ \overline{ \alpha}$ descend to an $\R$-Cartier divisor $D_{F}$ on $X_{F}$ and $ \alpha_{F}\in \End(X_{F})_{\R}$
respectively.
Moreover, we can take $F$ so that $f_{F}^{*}D_{F}\sim_{\R} \delta D_{F}$,
 $\Phi^{H_{F}}_{D_{F}}=i_{H_{F}}( \alpha_{F})\circ\alpha_{F}$, and $i_{H_{F}}( \alpha_{F})=\alpha_{F}$.
(For the definition of  $\Phi^{H_{F}}_{D_{F}}$, see Lemma \ref{lem_spreadout} and Remark \ref{rem_phi}).
Represent $ \alpha_{F}$ as follows:
 \[
 \alpha_{F}=\sum_{i=1}^{r}\beta_{i,F}{\otimes}c_{i}
 \]
 where $\beta_{i,F}\in \End(X_{F})$ and $c_{i}\in \R$ are linearly independent over $\Q$.

Replacing $Y$ with its normalization in $F$ and $X$ with the base change, we may assume $F=k(\eta)$.
(Here we also extend the ground field so that $Y$ is geometrically irreducible.)
Moreover, by shrinking $Y$, we may assume $ \pi \colon X \longrightarrow Y$ has an abelian scheme structure compatible
with the one on the generic fibre. Further, one can find:
\begin{itemize}
\item an $\R$-Cartier divisor $D$ on X which restricts to $D_{F}$; and
\item $\beta_{i} \in \End(X/Y)$, group endomorphisms of abelian group scheme $X$ over $Y$ restricting to $\beta_{i,F}$
\end{itemize}
such that $f^{*}D \sim_{\R} \delta D$,
 $\varphi_{H}^{-1}\circ \varphi_{D}=i_{H}( \alpha)\circ \alpha$ in $\End(X/Y)_{\R}$
 where $ \alpha=\sum_{i=1}^{r}\beta_{i} {\otimes} c_{i}$,
 and $i_{H}( \alpha)= \alpha$.

Take any $y\in Y(\overline K)$.
On $X_{y}$ we have
\begin{itemize}
\item $f_{y}^{*}D_{y} \sim_{\R} \delta D_{y}$;
\item $\varphi_{H_{y}}^{-1}\circ \varphi_{D_{y}}=i_{H_{y}}( \alpha_{y})\circ \alpha_{y}$, $i_{H_{y}}( \alpha_{y})= \alpha_{y}$
in $\End(X_{y})_{\R}$; and
\item $ \alpha_{y}=\sum_{i=1}^{r} \beta_{i,y} {\otimes} c_{i}$.
\end{itemize}

Now, since the isogeny $f_y$ commutes with the multiplication map $[m]$ (with $m \ge 2$) on ${X_y}$, by \cite[Exercise B.5]{HS00},
we have
\[
\hat{h}_{D_{y}}(x)=\lim_{n\to +\infty }\frac{h_{D_{y}}(f_{y}^{n}(x))}{ \delta^{n}}=\lim_{n' = m^n\to +\infty}\frac{h_{D_{y}}(n' x)}{(n')^{2}}
\]
for all $x\in X_{y}( \overline{K})$.
Then as in the proof of \cite[Theorem 29]{KS16b}, we have
\[
\hat{h}_{D_{y}}(x)=\frac{1}{2} \left< \alpha_{y}(x), \alpha_{y}(x) \right>_{H_{y}}
\]
where $\left<, \right>_{H_y}$ is the height pairing associated with the quadratic part of the canonical height associated with $H_{y}$.

Fix some $d>0$.
Since $\left<, \right>_{H_y}$ is positive definite on $X_{y}(d) {\otimes}_{\Z}\R$,
for $x \in X_{y}(d)$, we have

\begin{align*}
\alpha_{f}(x)< \delta\  &\Longrightarrow\  \alpha_{f_{y}}(x)< \delta\  \Longrightarrow
\hat{h}_{D_{y}}(x)=0 \\
 &\Longrightarrow\  \alpha_{y}(x)=0 \text{ in $X_{y}(d) {\otimes}_{\Z}\R$} \\
&\Longrightarrow\ \sum_{i=1}^{r} \beta_{i,y}(x) {\otimes} c_{i}=0 \text{ in $X_{y}(d) {\otimes}_{\Z}\R$}.
\end{align*}

Since $c_{i}$ are linearly independent over $\Q$, the last equality implies
\begin{align*}
\beta_{i,y}(x)=0 \text{ in $X_{y}(d) \otimes_{\Z} \Q$ for all $i$}.
\end{align*}

There is at least one $\beta := \beta_i$ which is a non-zero $Y$-group endomorphism of the abelian scheme $X \longrightarrow Y$.
Then we have
$Z_f(d) \subseteq \beta^{-1}(\Tor(X(d)))$.
\end{proof}

\begin{proof}[Proof of Theorem \ref{thm_abfib}]
First we apply Lemma \ref{lem_sil} and Remark \ref{split_nonisog} to the endomorphism $f_{\overline \eta}: X_{\overline \eta} \to X_{\overline \eta}$ on the geometric generic fibre of $\pi$.
Then, enlarging the ground field by a finite extension and $Y$ with a Zariski open dense subset of its finite cover,
we may assume:

\begin{itemize}
\item $X=X_{1}\times_{Y}  X_{2}$ where $\pi_{i} : X_{i} \to Y$ are abelian scheme over $Y$ ($i=1,2$);
\item $f=f_{1} \times_Y  \ f_{2}$ where $f_{i}$ are surjective endomorphism of $X_{i}$ over $Y$ ($i=1,2$);
\item  there is a section $p_1: Y \to X_{1}$ such that
$g_1=\tau_{p_1}^{-1}\circ f_{1} \circ \tau_{p_1} : X_{1} \to X_{1}$ is a $Y$-group endomorphism of the abelian scheme $\pi_1: X_{1} \to Y$; and
\item $\delta_{f_2}=1$.
\end{itemize}

By Lemma \ref{lem_abfib-isog}, after enlarging the ground field by a finite extension and replacing $Y$ with a Zariski open dense subset of its finite cover again,
there exists a non-trivial $Y$-group homomorphism $\beta_1: X_1 \to X_1$  such that
$$Z_{g_1}(d) \subseteq  \beta_1^{-1}(\Tor(X_1(d))).$$
Then we have $Z_{f_1}(d) \subseteq M_1(d)+\beta_1^{-1}(\Tor(X_1(d)))$,
where $M_1 \subset X_1$ is the image of $p_1$.

Take a general point $y \in Y(d)$.
Then $$Z_{f_y}(d) \subseteq \left( p_1(y)+\beta_1^{-1}(\Tor((X_1)_y(d))) \right) \times (X_2)_y(d).$$
Extend $p_1, \beta_1$ to $p:Y \to X$, $\beta: X \to X$ by
$p(y)=(p_1(y),0)$, $\beta(x)=(\beta_1(x),0)$.
Let $M \subset X$ be the image of $p$.
Then we have
$Z_f(d) \subseteq M(d)+\beta^{-1}(\Tor(X(d)))$.
\end{proof}

Now we are ready to prove Theorem \ref{ThmC}.

\begin{proof}[Proof of Theorem \ref{ThmC}]
By Theorem \ref{thm_abfib}, after enlarging the ground field by a finite extension and replacing $Y$ with
a Zariski open dense subset of its finite cover,
there are a section $M \subset X$ of $\pi$ and a non-trivial $Y$-group homomorphism
$\beta: X \to X$ such that
$Z_f(d) \subseteq M(d)+\beta^{-1}(\Tor(X(d)))$.
By assumption, the multiplication by a sufficiently divisible integer $N$ maps all points of $\Tor(X(d))$ to
the zero section $E$.
So we have
$Z_f(d) \subseteq M(d) +  (\beta^{-1}([N]^{-1}E))$.
Thus $Z_f(d)$ is not Zariski dense in $X_{\overline{K}}$.
\end{proof}

As corollaries, Conjecture \ref{conj_relsand} for elliptic fibrations and trivial abelian fibrations follow.

\begin{thm}\label{thm_ellfib}
Let $f$ be a surjective endomorphism on an elliptic fibration $X \overset
{\pi}{\to} Y$ over a quasi-projective variety with $\pi \circ f=\pi$.
Then $Z_f(d)$ is not Zariski dense in $X_{\overline{K}}$ for any $d>0$.
\end{thm}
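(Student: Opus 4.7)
The plan is to derive Theorem \ref{thm_ellfib} as a direct application of Theorem \ref{ThmC}, using Merel's uniform boundedness theorem (Theorem \ref{thm_merel}) to supply the hypothesis on torsion in fibers.

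First I would dispose of the trivial case $\delta_f=1$. Since height functions are normalized so that $h_H\ge 1$, one has $\alpha_f(x)\ge 1$ for every $x\in X(\overline K)$; combined with the Kawaguchi--Silverman inequality $\alpha_f(x)\le \delta_f$, this forces $\alpha_f(x)=\delta_f$ for all $x$, so $Z_f=\emptyset$ and there is nothing to prove.

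Assume therefore $\delta_f>1$. Since a one-dimensional abelian variety is precisely an elliptic curve, the map $\pi$ is an abelian fibration in the sense of Theorem \ref{thm_abfib} (possibly after a finite base change of $Y$ to secure a section and an origin on the generic fiber, which is permitted by the construction in Theorem \ref{thm_abfib} and Theorem \ref{ThmC}). I would then shrink $Y$ to a Zariski dense open subset $U$ over which $\pi|_{X_U}\colon X_U\to U$ is a smooth family of elliptic curves.

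Next I would verify the torsion-bound hypothesis of Theorem \ref{ThmC}. Fix $d,d'>0$. For any point $y\in U(d')$, the residue field $K(y)$ is a number field with $[K(y):\Q]\le d'\cdot [K:\Q]$, and $X_y$ is an elliptic curve over $K(y)$. Any torsion point of $X_y$ lying in $X_y(d)$ is defined over a number field of degree at most $d\cdot d'\cdot [K:\Q]$ over $\Q$. By Merel's theorem (Theorem \ref{thm_merel}) together with the equivalence recorded in Remark \ref{r:UBC^2}(2), the number of such torsion points is bounded by a constant $C=C(d,d')$ depending only on $d$ and $d'$, uniformly in the elliptic curve $X_y$. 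This verifies the hypothesis of Theorem \ref{ThmC}, whose conclusion is precisely the non-density of $Z_f(d)$ in $X_{\overline K}$.

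The only point requiring care, rather than a genuine obstacle, is the bookkeeping of number-field degrees between Merel's theorem (stated for a fixed base field with extensions of bounded degree) and the relative setting here, where both $K(y)$ and the field of definition of a fiberwise torsion point vary; this is straightforward as above.
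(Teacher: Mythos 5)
Your argument is exactly the paper's: Theorem \ref{thm_ellfib} is stated there as an immediate corollary of Theorem \ref{ThmC}, with Merel's theorem (Theorem \ref{thm_merel}) supplying the uniform torsion bound on the elliptic fibers. Your extra care with the $\delta_f=1$ case and the degree bookkeeping for $K(y)$ and the torsion points is sound and fills in details the paper leaves implicit.
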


\begin{thm}\label{thm_trivabfib}
Let $f$ be a surjective endomorphism on a trivial abelian fibration $X=A \times Y \overset
{\pi}{\to} Y$ over a quasi-projective variety with $\pi \circ f=\pi$.
Then $Z_f(d)$ is not Zariski dense in $X_{\overline{K}}$ for any $d>0$.
\end{thm}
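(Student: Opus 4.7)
The plan is to deduce Theorem~\ref{thm_trivabfib} as a direct consequence of Theorem~\ref{ThmC}. First I would observe that $\pi: X = A \times Y \to Y$ is an abelian fibration in the sense required: the generic fiber is the abelian variety $A_{K(Y)}$, and $\pi_{*}\mathcal{O}_{X} = \mathcal{O}_{Y}$ follows from $H^{0}(A, \mathcal{O}_{A}) = K$ together with the projection formula. Since arithmetic degrees always satisfy $\alpha_{f}(x) \geq 1$, one may assume $\delta_{f} > 1$, as otherwise $Z_{f} = \emptyset$ trivially and there is nothing to prove.

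Next, to invoke Theorem~\ref{ThmC}, the task reduces to verifying the uniform torsion hypothesis. I would take $U := Y$: for any $y \in Y(\overline{K})$, the fiber $X_{y}$ is canonically identified with $A_{K(y)}$ via the first projection $A \times Y \to A$, and under this identification the torsion points of $X_{y}$ correspond bijectively to $\Tor(A(\overline{K}))$. A torsion point $(a, y) \in X_{y}(d)$ then satisfies $[K(a):K] \leq [K(a, y):K] \leq d$, so it suffices to bound $|\Tor(A(\overline{K})) \cap A(d)|$ for the fixed abelian variety $A$ and fixed $d$.

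For this bound I would invoke the N\'eron--Tate height machinery (the same tool used in the proof of Theorem~\ref{thm_pol}): fix a symmetric ample divisor $H$ on $A$ and let $\hat{h}_{H}$ be the associated canonical height, which satisfies $\hat{h}_{H} = h_{H} + O(1)$ and vanishes precisely on $\Tor(A(\overline{K}))$. Hence all torsion points have uniformly bounded Weil height with respect to $H$, and Northcott's finiteness theorem supplies a finite bound on $\Tor(A(\overline{K})) \cap A(d)$ depending only on $A$ and $d$. This provides a constant $C = C(d)$ independent of $y$ and $d'$, so Theorem~\ref{ThmC} will then apply to yield the non-density of $Z_{f}(d)$. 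Essentially no serious obstacle arises, since every ingredient is either established earlier in the paper or is a standard tool; the one mild point to check is the canonical identification of $X_{y}$ with $A$ in the trivial product case, which is immediate.
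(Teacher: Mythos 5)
Your proposal is correct and follows exactly the route the paper intends: Theorem \ref{thm_trivabfib} is stated as an immediate corollary of Theorem \ref{ThmC}, and the only thing to check is the uniform torsion hypothesis, which for the constant fiber $A$ reduces (as you observe) to the finiteness of $\Tor(A(\overline{K}))\cap A(d)$ via the N\'eron--Tate height and Northcott. Your verification of the remaining hypotheses ($\delta_f>1$ reduction, identification of fibers with $A$, uniformity in $y$ and $d'$) is accurate and complete.
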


Next, we consider endomorphisms on projective varieties of non-negative Kodaira dimension.
The monoid $\SEnd(X)$ of surjective endomorphisms of a variety of general type is a finite set, so Conjecture \ref{conj_zf} holds for them.
For a smooth projective variety $X$ with $\kappa(X)=\dim (X)-1$,
we can prove Conjecture \ref{conj_zf} as an application of Theorem \ref{thm_ellfib}.

\begin{thm}\label{thm_kappa=dim-1}
Let $X$ be a smooth projective variety with $\kappa(X)=\dim (X)-1$ and $f: X \to X$ a surjective endomorphism.
Then $Z_f(d)$ is not Zariski dense in $X_{\overline{K}}$ for any $d>0$.
\end{thm}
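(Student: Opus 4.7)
The plan is to reduce to Theorem \ref{thm_ellfib} via the Iitaka fibration. We may assume $\delta_f > 1$, since otherwise $1 \le \alpha_f(x) \le \delta_f$ forces $Z_f = \emptyset$. As $\kappa(X) = \dim X - 1 \ge 0$, the Iitaka fibration $\phi \colon X \dashrightarrow Y$ has $\dim Y = \dim X - 1$, and its general fiber is a smooth projective curve of Kodaira dimension $0$, namely an elliptic curve. By the universal property of the Iitaka fibration, $f$ induces a dominant rational self-map $g \colon Y \dashrightarrow Y$ with $\phi \circ f = g \circ \phi$.

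The first key step will be to show that $g$ has finite order as a birational self-map of $Y$. By Kodaira's canonical bundle formula for elliptic fibrations, there exists an effective $\Q$-divisor $\Delta_Y$ on $Y$ (combining the discriminant and moduli parts) such that $K_Y + \Delta_Y$ is big, using $\kappa(X) = \dim Y$. Because a general fiber of $\phi$ is an elliptic curve, $f$ restricted to a general fiber is the composition of an isogeny with a translation, hence \'etale; so the ramification divisor $R_f$ has no $\phi$-horizontal component. Comparing $f^*K_X = K_X + R_f$ with $K_X = \phi^*(K_Y + \Delta_Y) + E$ (with $E$ effective and $\phi$-vertical) should then yield $g^*(K_Y + \Delta_Y) \sim_{\Q} K_Y + \Delta_Y$, so $g$ lies in $\mathrm{Bir}(Y, K_Y + \Delta_Y)$, which is finite since $K_Y + \Delta_Y$ is big. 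Replacing $f$ by a positive iterate (Lemma \ref{lem_end_power}), I may then assume $g = \mathrm{id}_Y$, so $\phi \circ f = \phi$ as rational maps.

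The remaining step passes from rational maps to honest morphisms via the graph: let $\tilde X \subseteq X \times Y$ be the graph of $\phi$, with projections $p \colon \tilde X \to X$ (birational) and $\tilde \phi \colon \tilde X \to Y$ (a morphism). The restriction $\tilde f$ of $f \times \mathrm{id}_Y$ to $\tilde X$ will land in $\tilde X$ (because $\phi \circ f = \phi$ wherever $\phi$ is defined and $\tilde X$ is closed in $X \times Y$), giving a surjective endomorphism $\tilde f \colon \tilde X \to \tilde X$ with $\tilde \phi \circ \tilde f = \tilde \phi$, i.e.\ $\tilde f$ is over $Y$. By Lemma \ref{lem_rat}(2) applied to the birational map $p$, it suffices to prove non-density of $Z_{\tilde f}(d)$; and since $\tilde \phi \colon \tilde X \to Y$ is an elliptic fibration and $\tilde f$ is over $Y$, this follows directly from Theorem \ref{thm_ellfib}. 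The main obstacle I anticipate is the first step — verifying that $g$ has finite order — which requires careful bookkeeping with the canonical bundle formula and the verticality of $R_f$; the remaining steps are essentially formal.
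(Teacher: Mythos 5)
Your proposal follows the same route as the paper: pass to the Iitaka fibration, show the induced self-map $g$ of the base has finite order, replace $f$ by a power so that $\phi\circ f=\phi$, move to the graph via Lemma \ref{lem_rat}, and invoke Theorem \ref{thm_ellfib}. The one place where you diverge is exactly the step you flag as the main obstacle: the paper does not re-derive the finite-order statement but simply quotes \cite[Theorem A]{NZ09}, which says precisely that a surjective endomorphism of a smooth projective variety with $\kappa\ge 0$ descends to a finite-order automorphism of the base of the Iitaka fibration. As written, your sketch of that step is not yet a proof: knowing $g^*(K_Y+\Delta_Y)\sim_{\Q}K_Y+\Delta_Y$ does not by itself place $g$ in the log-birational automorphism group of the pair $(Y,\Delta_Y)$ (one needs $g$ to be crepant, i.e.\ to match the discriminant and moduli parts, whose functoriality under $g$ and under birational modifications of $Y$ must be checked), and the finiteness of that group for a big log canonical divisor is itself a nontrivial theorem requiring suitable singularity hypotheses on the pair. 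None of this is fatal --- the statement you need is a published result --- but the efficient fix is to cite \cite[Theorem A]{NZ09} rather than to rebuild it from the canonical bundle formula. The remaining steps (graph construction, Lemma \ref{lem_rat}(2), Theorem \ref{thm_ellfib}) are correct and match the paper.
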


\begin{proof}
By \cite[Theorem A]{NZ09}, there is an automorphism $g$ of finite order on the base $W$ of an
Iitaka fibration $\phi: X \dashrightarrow W$,
such that $\phi \circ f=g \circ \phi$.
Replacing $f$ by a positive power, we may assume that $g = \id$ and hence $\phi \circ f=\phi$.
By Lemma \ref{lem_rat}, we may replace $X$ with the normalization $\Gamma$ of the graph of $\phi$ and $f$ with its lifting to $\Gamma$. Thus we may assume that $\phi$ is an elliptic fibration.
Then apply Theorem \ref{thm_ellfib}.
\end{proof}

For a non-invertible endomorphism on a threefold $X$ with $\kappa(X) \geq 0$,
we can show the following weak version of non-density
thanks to the classification results by Fujimoto and Nakayama (\cite{Fuj02}, \cite{FN07}).

\begin{thm}\label{thm_kappa2}
Let $X$ be a smooth projective threefold with Kodaira dimension $\kappa(X) \geq 0$ and $f: X \to X$ a non-invertible surjective endomorphism.
Then
$Z_f(L)=\{ x \in X(L) \mid \alpha_f(x) < \delta_f \}$
is not Zariski dense in $X_{\overline{K}}$ for any finite extension field ($\overline{K} \supseteq$) $L \supseteq K$.
\end{thm}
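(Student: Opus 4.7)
The plan is to split by Kodaira dimension $\kappa(X) \in \{0,1,2,3\}$ and reduce every case either to a result already established in this paper or to the structural classification of smooth projective threefolds of non-negative Kodaira dimension admitting a non-invertible surjective endomorphism, due to Fujimoto \cite{Fuj02} and Fujimoto--Nakayama \cite{FN07}. Because the statement fixes the field $L$, I am free to enlarge $L$ finitely many times along the way: if $L \subseteq L'$ then $Z_f(L) \subseteq Z_f(L')$, so non-density of the latter implies the same for the former. The case $\kappa(X)=3$ is vacuous, since a smooth projective variety of general type has finite birational automorphism monoid and hence $f$ would have to be invertible. The case $\kappa(X)=2$ is immediate from Theorem \ref{thm_kappa=dim-1}, as $\kappa(X)=\dim X-1$ here.

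For $\kappa(X)\in\{0,1\}$, I would first replace $f$ by a suitable positive power via Lemma \ref{lem_end_power}. The Fujimoto--Nakayama classification then furnishes, after enlarging $L$ once, an $f$-equivariant finite \'etale cover $\tilde{X}\to X$ with a lift $\tilde{f}$, such that $\tilde{X}$ belongs to a short explicit list of model threefolds. By Lemma \ref{lem_surj}(3), non-density of $Z_{\tilde{f}}(L)$ in $\tilde{X}_{\overline K}$ (for $L$ possibly further enlarged to include the fields of definition of all geometric preimages of $X(L)$) transfers back to non-density of $Z_f(L)$ in $X_{\overline K}$, so it suffices to verify the statement on each item of the list.

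When $\kappa(X)=0$, $\tilde{X}$ is either an abelian threefold or a hyperelliptic threefold --- both $Q$-abelian and hence covered by Corollary \ref{cor_qab} --- or a Calabi--Yau threefold of very restricted type carrying either an equivariant abelian/elliptic fibration or an equivariant isotrivial bundle structure; these sub-cases are dispatched by Theorem \ref{thm_ellfib}, by Lemma \ref{lem_prod} combined with Theorem \ref{thm_surf} or Theorem \ref{thm_pol}, and by Theorem \ref{thm_projbdl} for the projective-bundle situations, all pulled back through Lemma \ref{lem_surj} when needed. When $\kappa(X)=1$, an $f$-equivariant resolution (Lemma \ref{lem_rat}) realizes the Iitaka fibration as a morphism $\pi \colon X \to C$ onto a curve whose general fiber is a smooth surface of Kodaira dimension zero admitting a non-invertible endomorphism; Fujimoto--Nakayama shows that after the equivariant \'etale cover absorbed into $L$, this fibration becomes a projection $A\times C'\to C'$ with $A$ abelian or hyperelliptic, and Theorem \ref{thm_trivabfib} (together with Corollary \ref{cor_qab} applied to the hyperelliptic factor if needed) concludes.

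The main obstacle is classificatory bookkeeping: each item of the Fujimoto--Nakayama list must be matched with an explicit combination of Corollary \ref{cor_qab}, Lemma \ref{lem_prod}, Theorem \ref{thm_surf}, Theorem \ref{thm_pol}, Theorem \ref{thm_ellfib}, Theorem \ref{thm_trivabfib}, and Theorem \ref{thm_projbdl}, and the various equivariant \'etale covers and base-field extensions appearing along the way must all be absorbed into the single field $L$. This last point is also why the statement is formulated with the fixed field $Z_f(L)$ rather than the uniform $Z_f(d)$: fixing $L$ lets us absorb all the finite extensions produced by the classification at once, whereas the $Z_f(d)$-version would further require controlling the Galois degree of torsion on the resulting abelian fibrations, a problem equivalent to (or implied by) the Uniform Boundedness Conjecture \ref{conj_tor}.
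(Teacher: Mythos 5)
Your treatment of $\kappa(X)=3$ (vacuous), $\kappa(X)=2$ (Theorem \ref{thm_kappa=dim-1}) and, essentially, $\kappa(X)=0$ (Beauville--Bogomolov cover, which is an abelian threefold or $E\times(\text{K3})$ with split endomorphism, handled by Lemma \ref{lem_surj}, Lemma \ref{lem_prod} and Theorem \ref{thm_ab2}) matches the paper. The gap is in the case $\kappa(X)=1$, which is the heart of the theorem. Your key structural claim --- that after a finite \'etale cover the Iitaka fibration becomes a trivial product $A\times C'\to C'$ with $A$ a fixed abelian or hyperelliptic surface, so that Theorem \ref{thm_trivabfib} applies --- is false in general: an Iitaka fibration with abelian or hyperelliptic surface fibers need not be isotrivial, and the Fujimoto--Nakayama classification does not provide such a trivialization. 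What it actually gives, and what the paper uses, is: (a) for hyperelliptic fibers, a factorization $X\to T\to C$ through a $\mathbb P^1$-fibration, after which one constructs the \emph{relative} Albanese $a\colon X_U\to A_U$ over an open $U\subseteq C$, obtains a finite surjective $X_U\to T_U\times_U A_U$, and compares $\delta_f$ with the dynamical degrees of the two factors, invoking Theorem \ref{thm_surf} and Theorem \ref{thm_ellfib}; (b) for abelian fibers in the primitive/simple case, a finite \'etale cover $W\to X$ where $W\to T$ is a smooth (generally non-isotrivial) abelian fibration over a curve $T$ with $\kappa(T)=1$, i.e.\ of genus $\ge 2$; here the paper concludes by Faltings' theorem plus Chevalley--Weil that $X$ is not even potentially dense. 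Only in the imprimitive abelian case does one get a splitting $E'\times S$ with the endomorphism splitting as (automorphism on $S$) $\times$ (isogeny on $E'$).

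This also means your closing diagnosis of why the statement is phrased with $Z_f(L)$ rather than $Z_f(d)$ is off the mark. Finite base-field extensions and equivariant finite covers are compatible with the uniform $Z_f(d)$ formulation via Lemma \ref{lem_surj}(3), and the elliptic-fibration input (Theorem \ref{thm_ellfib}) is already uniform in $d$ thanks to Merel. The genuine obstruction to a $Z_f(d)$ statement is the Seifert abelian fibration case (a) above is not relevant there; rather, case (b): the argument there is Faltings on the genus $\ge 2$ base, which gives finiteness of $T(L)$ for each fixed $L$ but nothing uniform over all $[L:K]\le d$. To repair your proof you must supply the relative Albanese/dynamical-degree comparison for hyperelliptic fibers and the Faltings--Chevalley--Weil argument for the Seifert abelian case; neither reduces to Theorem \ref{thm_trivabfib}.
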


\begin{proof}
The proof is in steps.

\par \vskip 1pc \noindent
{\bf Step 1.}
By \cite{Fuj02}, we can run an $f$-equivariant MMP
$$X=X_0 \to X_1 \to \cdots \to X_r=Y$$
such that each step is a blowdown of
a smooth surface to an elliptic curve
(so each $X_i$ is smooth) and $K_Y$ is nef and hence semi-ample by the known abundance for threefolds.
By Lemma \ref{lem_surj} and
replacing $X$ by $Y$, we may assume that $K_X$ is semi-ample.
Take the Iitaka fibration $\phi: X \to C$.
By \cite[Theorem A]{NZ09}, replacing $f$ by a positive power, we may assume that $\phi \circ f=\phi$.
Let $F$ be a general fibre of $\phi$. Then $K_F \sim_{\Q} 0$.

If $\kappa(X)=3$, then $f$ is an automorphism of finite order, contradicting the assumption.
The case $\kappa(X)=2$ is a special case of Theorem \ref{thm_kappa=dim-1}.

Assume $\kappa(X)=0$.
We use the Beauville--Bogomolov decomposition theorem over $\overline K$ (\cite[Proposition 3.1]{BGRS17}).
Then there is a finite \'etale cover $\phi: \tilde X \to X$ where $\tilde X$ is either an abelian threefold or the product of an elliptic curve and a K3 surface,
and $f$ lifts to an endomorphism $\tilde f$ on $\tilde X$
(cf.~\cite[Section 4]{LS21}).
Moreover, if $\tilde X$ is the product of an elliptic curve and a K3 surface,
then $\tilde f$ splits.
So the assertion follows from Lemma \ref{lem_surj}, Lemma \ref{lem_prod}, and Theorem \ref{thm_ab2}.

From now on, we assume $\kappa(X)=1$.
A general fibre $F$ has a non-invertible (necessarily \'etale) endomorphism $f|_F$, so it is either an abelian or a hyperelliptic surface, by surface theory or \cite[Theorem 3.2]{Fuj02}.

\par \vskip 1pc \noindent
{\bf Step 2.}
Assume that $F$ is a hyperelliptic surface.
By Fujimoto \cite[Theorem 5.10]{Fuj02}, $\phi$ decomposes as
$\phi: X \overset{\pi}{\to} T \overset{q}{\to} C$ where $T$ is a normal projective surface,
$\pi$ is an elliptic fibration, and $q$ is a $\mathbb P^1$-fibration.
Moreover, $f$ descends to a morphism $g: T \to T$.
Take a Zariski open dense subset $U \subset C$ such that
$\phi_U=\phi|_{X_U}:X_U =\phi^{-1}(U) \to U$ is smooth.
Then we can take the relative Albanese morphism $a: X_U \to A_U$ over $U$
(cf.~\cite[Proposition 9.5.20]{FAG05}).
Let $h_U: A_U \to A_U$ be the morphism induced from $f$.
Setting $T_U=q^{-1}(U)$, the natural morphism $\theta: X_U \to T_U \times_U A_U$ is induced.
Taking the fibres at a general point $c \in U$,
we can see that
$\theta_c: X_c \to T_c \times A_c$ is a finite  surjective morphism because of the hyperelliptic surface structure of $X_c$.
So $\theta$ is a finite surjective morphism.

Consider the endomorphism $g_U \times_U h_U$ on $T_U \times_U A_U$.
Since $f|_C = \id$ and by the product formula (cf.~\cite{DN11} and \cite{Tru20}), the dynamical degree of $g$ (resp.~$h_U, g_U \times_U h_U$) is equal to
the dynamical degree of the restriction $g_c$ (resp.~$h_c, g_c \times h_c$)
of  $g$ (resp.~$h_U, g_U \times_U h_U$) to the fibre at a general point $c \in U$.
Further, taking a general point $c \in U$, we have
$$\delta_{g_U \times_U h_U}=\delta_{g_c \times h_c} =\max \{ \delta_{g_c}, \delta_{h_c} \}
=\max \{\delta_g, \delta_{h_U} \}.$$
Now $\delta_f=\delta_{g_U \times_U h_U}$ since $\theta$ is finite surjective.
Therefore $\delta_f=\max \{\delta_g,\delta_{h_U}\}$.

Note that $Z_g(d)$ and $Z_{h_U}(d)$ are not Zariski dense
(Theorem \ref{thm_surf} and Theorem \ref{thm_ellfib}).
If $\delta_f=\delta_g$, then $Z_f(d) \subset \pi^{-1}(Z_g(d))$ is not Zariski dense.
Similarly if $\delta_f=\delta_{h_U}$, then
by applying Lemma \ref{lem_surj} fibrewise, we get $Z_f(d) \subseteq \phi^{-1}(C \setminus U) \cup a^{-1}(Z_{h_U}(d))$ and again
$Z_f(d)$ is not Zariski dense.

\par \vskip 1pc \noindent
{\bf Step 3.}
Assume that $F$ is an abelian surface.
If $F$ is a simple abelian surface or $\phi$ is primitive,
then $\phi$ is a Seifert abelian fibration in the sense of \cite[Definition 2.3]{FN07} (cf.~\cite[Theorem 4.1]{FN07}, \cite[Theorem 4.3 (1)]{FN07}).
Then $X$ admits an finite \'etale cover $W \to X$ from a smooth abelian fibration $W \to T$.
Now $\kappa(T)=\kappa(W)=\kappa(X)=1$ (cf.~e.g.~\cite[Lemma 2.1]{FN07}).
Thus $T$ is a curve of genus $\geq 2$, so it is not potentially dense by Faltings theorem.
So $W$ is not potentially dense, too.
The Chevalley--Weil theorem implies that $X$ is not potentially dense.
So $Z_f(L)$ is not Zariski dense for any finite extension field $L \supseteq K$.

Finally assume that $F$ is an abelian surface and $\phi$ is imprimitive.
By \cite[Theorem 5.6 and Corollary 5.8]{FN07},
$\phi$ decomposes as $X \overset{\pi}{\to} T \overset{q}{\to} C$ where
$T$ is a normal projective surface and $\pi, q$ are elliptic fibrations.
\cite[Proposition 5.7]{FN07} implies the following commutative diagram:

\[
\xymatrix{
\tilde X \ar[r]^{\alpha} \ar[d]_{\tilde \pi} & X \ar[d]^\pi  \\
\tilde T \ar[r]^\beta \ar[d]_{\tilde q} & T \ar[d]^q \\
\tilde C \ar[r]^\gamma & C
}
\]
where $\gamma$ is a finite Galois cover, $\tilde T, \tilde X$ are the normalizations of $T \times_C \tilde C, X \times_C \tilde C$, respectively, and $\tilde T$ is isomorphic to $E \times \tilde C$ for some elliptic curve $E$.
Let
$\rho: \tilde X \to E$ be the composition of $\tilde \pi: \tilde X \to \tilde T$ with the projection $\tilde T \cong E \times \tilde C \to E$.
By \cite[Theorem 5.10]{FN07},
there is a finite \'etale cover $\nu: E' \to E$ such that $\tilde X'=\tilde X \times_E E'$ is isomorphic to $E' \times S$ for some projective surface $S$ and $f$ on $X$ lifts to endomorphism $\tilde f': E' \times S \to E' \times S$ which splits as $\tilde f'=\phi' \times v$
where $\phi'$ is an automorphism on $S$ and $v$ is an endomorphism on $E$.
Then $Z_{\tilde f'}(d)$ is not Zariski dense by
Theorem \ref{thm_pol} and Theorem \ref{thm_surfauto}.
Now $\tilde X'$ is an equivariant \'etale cover of $X$,
so the theorem follows from Lemma \ref{lem_surj}.
\end{proof}

Finally, we show Theorem \ref{ThmB},
the equivalence of the uniform boundedness conjecture and the relative sAND conjecture for a family of polarized endomorphisms.

\begin{proof}[Proof of Theorem \ref{ThmB}]
\textbf{``(2) $\Rightarrow$ (1)''}
We can take a quasi-projective variety $U$ with a morphism $\phi: \mathbb P^N_U \to \mathbb P^N_U$ over $U$ which parametrizes all the endomorphisms on $\mathbb P^N$ of degree $r$.
Then any morphism $\varphi \colon \PP^{N} \longrightarrow \PP^{N}$ of degree $r$ defined over $L$, which is 
an extension of $K$ of degree $d$, corresponds to a point $s \in U(d)$, i.e. $\phi_{s} = \varphi$.
Note that $\phi_{s}$-preperiodic $L$-rational points are exactly the $L$-rational points at which 
$\phi_{s}$ is of arithmetic degree $1$ (or equivalently of arithmetic degree strictly less than $r$ since $\phi_{s}$ is polarized).
Hence we are done.

\textbf{``(1) $\Rightarrow$ (3)''}
Let $\eta \in S$ be the generic point.
Now $f_\eta: X_\eta \to X_\eta$ is a polarized endomorphism.
So $X_\eta$ can be embedded into a projective space over $f_\eta$-equivariantly
(cf.~\cite[Proposition 2.1]{Fak03}).
After shrinking $S$, we may assume that $X$ is a closed subvariety of $\mathbb P^N_S$ and $f$ extends to a surjective endomorphism $\phi: \mathbb P^N_S \to \mathbb P^N_S$ over $S$.

We claim that $Z_\phi(d)$ is contained in the closed subset
$$W=\{ x \in \mathbb P^N_S \mid \phi^m(x)=\phi^n(x) \}$$
 for some $m > n \geq 0$.
 Indeed, write the projection $\pi \colon \PP^{N}_S \longrightarrow S$ and take any point $x \in Z_{\phi}(d)$.
 Then $\pi(x) \in S(d)$ and $\phi_{\pi(x)} \colon \PP^{N} \longrightarrow \PP^{N}$ is defined over
 a field with extension degree at most $d$ over $K$.
 Note that $x \in Z_{\phi_{\pi(x)}}(d)$ as well and therefore $x$ is $\phi_{\pi(x)}$-preperiodic.
 Hence by assumption (1), the cardinality of the $\phi_{\pi(x)}$-orbit of $x$ is bounded depending only on $d, N$, and the degree of $\phi$.
 This implies that there are only finitely many possibilities of the $\phi$-orbit structure of points in $Z_{\phi}(d)$. 
 Thus it is enough to take $n$ larger than the maximum length of the orbits and take $m$ so that
 $m-n$ is divisible by all the orbits lengths.
  
Since $f=\phi|_X$ is of infinite order, $X \not\subseteq W$.
Hence $Z_f(d)=Z_\phi(d) \cap X \subseteq W \cap X$, a proper closed subset of $X$.
So $Z_f(d)$ is not Zariski dense in $X_{\overline K}$.

\textbf{``(3) $\Rightarrow$ (2)''}
We argue by induction on $\dim (X)$.
By assumption, $W=\overline{Z_f(d)}$ is a proper $f$-invariant closed subset of $X$.
Take the irreducible decomposition $W=W_1 \cup \cdots \cup W_p \cup W'_1 \cup \cdots \cup W'_q$ such that $W_1, \ldots, W_p$ are $f$-periodic and $W'_j$ maps to $\bigcup_{i=1}^p W_i$ by iteration of $f$ for $1 \leq j \leq q$.
Then $Z_f(d) \subseteq f^{-k}(\bigcup_{i=1}^p W_i)$ for some $k \geq 0$.

Take a sufficiently large $m>0$ such that $W_i$ is $f^m$-invariant for $1 \leq i \leq p$.
Set $g_i=f^m|_{W_i}$.
We may assume that
$\dim (W_i) = \dim (\pi(W_i))$ for $1 \leq i \leq r$ and
$\dim (W_i) > \dim (\pi(W_i))$ for $r+1 \leq i \leq p$.
Fix $i $ with $1 \leq i \leq r$.
Then $g_i$ is an automorphism of finite order on $W_i$ since $g_i$ induces an endomorphism on a general fibre of $\pi|_{W_i}$ which is a finite set.
Take some $M>0$ such that $g_i^M=\id_{W_i}$ for each $i$.
If there is a curve $C \subseteq W_i$ contracted by $\pi$,
then $0< (C \cdot H) = (C \cdot (g_i^{M})^{*}H)=r^{mM}(C \cdot H)$, which is a contradiction.
So $\pi|_{W_i}$ is finite for $1 \leq i \leq r$.

Now we have
$$Z_f(d)\subseteq f^{-k} \left( \bigcup_{i=1}^r W_i(d) \cup \bigcup_{j=r+1}^p Z_{g_j}(d) \right).$$
By induction hypothesis,
the number of the points in $Z_{g_j}(d) \cap X_s$ is uniformly bounded
for $r+1 \leq j \leq p$.
Moreover the number of the points of $W_i(d) \cap X_s$ is also uniformly bounded for
$1 \leq i \leq r$.
So the assertion (2) follows.
\end{proof}

\section{Further Generalizations}\label{Sect_ext}

We can generalize Conjecture \ref{conj_zf} to a more precise one:

\begin{Conjecture}\label{Conjecture2}
Let $f:X \to X$ be a surjective endomorphism on a projective variety
with the first dynamical degree $\delta_f>1$.
Let $\{ Z_\lambda \}_{\lambda \in \Lambda}$ be the collection of all $f$-preperiodic proper subvarieties of small dynamical degree in $X_{\overline{K}}$
(cf.~\S \ref{n:2.1} for the definition of subvarieties of small dynamical degree).
Then, for any $d>0$,
$Z_f(d) \subseteq Z_{\lambda_1} \cup \cdots \cup Z_{\lambda_m}$
for some $\lambda_1, \ldots, \lambda_m \in \Lambda$.

\end{Conjecture}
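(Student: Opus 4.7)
The plan is to deduce Conjecture \ref{Conjecture2} from Conjecture \ref{conj_zf} by induction on $n = \dim(X)$; note the two conjectures are in fact equivalent for irreducible $X$, since any finite union of proper subvarieties fails to be Zariski dense in $X_{\overline{K}}$. First observe that $Z_f(d)$ is $f$-invariant: if $x \in Z_f(d)$ then $\alpha_f(f(x)) = \alpha_f(x) < \delta_f$, and since $f$ is defined over $K$ we have $[K(f(x)):K] \leq [K(x):K] \leq d$. Consequently $W := \overline{Z_f(d)}$ is a closed $f$-invariant subset of $X_{\overline{K}}$.

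The base case $n = 0$ is vacuous. For the inductive step, assume Conjecture \ref{Conjecture2} is known in dimension $< n$. By Conjecture \ref{conj_zf}, $W \subsetneq X_{\overline{K}}$. Decompose $W = V_1 \cup \cdots \cup V_k$ into irreducible components. Since $f$ permutes the $V_i$, pick $N > 0$ such that each $V_i$ is $f^N$-invariant. By Lemma \ref{lem_end_power} one has $Z_f = Z_{f^N}$; moreover, using $\delta_{f^N} = \delta_f^N$ and the convention in \ref{n:2.1}, the class of $f$-preperiodic subvarieties of small dynamical degree agrees with that of $f^N$. So we may and do replace $f$ by $f^N$. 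Now fix $i$. If $\delta_{f|_{V_i}} < \delta_f$, then $V_i$ itself is an $f$-invariant (hence $f$-preperiodic) proper subvariety of small dynamical degree. Otherwise $\delta_{f|_{V_i}} = \delta_f$, and since $V_i \subsetneq X$ we have $\dim V_i < n$; for $x \in V_i(\overline{K}) \cap Z_f(d)$, Lemma \ref{lem_subvar} gives $\alpha_{f|_{V_i}}(x) = \alpha_f(x) < \delta_f = \delta_{f|_{V_i}}$, so $x \in Z_{f|_{V_i}}(d)$. The inductive hypothesis applied to $f|_{V_i} \colon V_i \to V_i$ then yields finitely many $f|_{V_i}$-preperiodic proper subvarieties $W_{i,j} \subsetneq V_i$ with $\delta_{f|_{W_{i,j}}} < \delta_f$ whose union contains $Z_{f|_{V_i}}(d)$. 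Each such $W_{i,j}$ is $f$-preperiodic in $X_{\overline{K}}$ with small dynamical degree, and together with those $V_i$ of the first type they provide the required finite cover of $Z_f(d)$.

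The main obstacle is that the argument is entirely conditional on the sAND Conjecture \ref{conj_zf}: without it one cannot even begin the decomposition step, and the edge case of a component with $\overline{Z_f(d)} \cap V_i$ Zariski dense in $V_i$ and $\delta_{f|_{V_i}} = \delta_f$ is precisely a relative instance of sAND. Consequently, Conjecture \ref{Conjecture2} is unconditionally established exactly in those settings where sAND has been verified in Theorem \ref{ThmA} (surfaces, Mori dream spaces, Hyperk\"ahler and $Q$-abelian varieties, suitable varieties admitting int-amplified endomorphisms, etc.), and any direct unconditional approach must confront all the intrinsic difficulties of sAND itself.
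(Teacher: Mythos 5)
Your overall strategy --- recognizing that the statement is a conjecture which can at present only be established conditionally on the sAND Conjecture \ref{conj_zf}, and deducing it from sAND by analyzing the closure $W=\overline{Z_f(d)}$ --- is the same as the paper's, which records exactly this implication as the Proposition in Section \ref{Sect_ext}. However, there is a genuine gap at the step ``Since $f$ permutes the $V_i$, pick $N>0$ such that each $V_i$ is $f^N$-invariant.'' The set $Z_f(d)$ is only \emph{forward} $f$-invariant: $f(Z_f(d))\subseteq Z_f(d)$ as you show, but a preimage of a point of degree $\le d$ generally has larger degree, so $Z_f(d)$ is not $f^{-1}$-invariant. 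Hence $W$ satisfies only $f(W)\subseteq W$, and $f$ need not permute its irreducible components: a component $V_1$ may satisfy $f(V_1)\subsetneq V_2$ for a different component $V_2$ and never return to itself, so it is not $f^N$-invariant for any $N$. For such a component your dichotomy on $\delta_{f|_{V_i}}$ is not even defined, $V_1$ need not itself be an $f$-preperiodic subvariety in the sense of \ref{n:2.1}, and the points of $Z_f(d)$ lying on it are left uncovered by your final list.

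The paper repairs precisely this point: it splits the components of $W$ into the $f$-periodic ones $W_i$ and the remaining ones $W_j'$, which are eventually mapped into $\bigcup_i W_i$, so that $Z_f(d)\subseteq f^{-s}\bigl(\bigcup_i W_i\bigr)$ for some $s>0$; every irreducible component of $f^{-s}\bigl(\bigcup_i W_i\bigr)$ is then $f$-preperiodic of small dynamical degree. For the periodic components the paper also argues more directly than you do: since $Z_f(d)\cap W_i$ is Zariski dense in $W_i$ (being a component of the closure of $Z_f(d)$), applying sAND to $f|_{W_i}$ forces $\delta_{f|_{W_i}}<\delta_f$ outright. In other words, your second case $\delta_{f|_{V_i}}=\delta_f$ is vacuous, so the induction on dimension --- and with it the extra hypothesis that Conjecture \ref{Conjecture2} is already known in lower dimensions --- is unnecessary; only sAND in dimensions $\le n$ is used. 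Once you insert the periodic/non-periodic splitting and the $f^{-s}$ pullback, your argument coincides with the paper's.
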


\begin{rem}
Conjecture \ref{Conjecture2} holds for the following cases.
\begin{itemize}
\item[(1)]
Polarized endomorphisms  (cf.~Theorem \ref{thm_pol}).
\item[(2)]
Surjective endomorphisms on abelian varieties (cf.~Theorem \ref{thm_ab2}).
\item[(3)]
Automorphisms on projective surfaces (cf.~Theorem \ref{thm_surfauto}).
\end{itemize}
In fact, in these cases, the respective theorems have given precise descriptions of $Z_f$.
\end{rem}

Clearly Conjecture \ref{Conjecture2} implies Conjecture \ref{conj_zf}.
For the converse, we have:

\begin{prop}
Fix a positive integer $n$.
If Conjecture \ref{conj_zf} holds in dimension $\leq n$,
then Conjecture \ref{Conjecture2} holds in dimension $n$.
\end{prop}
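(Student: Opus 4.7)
The plan is to let $W := \overline{Z_f(d)}$ (closure in $X_{\overline K}$) and show that each irreducible component of $W$ is itself an $f$-preperiodic proper subvariety of small dynamical degree; the desired finite union of $Z_\lambda$'s is then the set of components.

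First I would observe that $Z_f(d)$ is stable under $f$: since $f$ is defined over $K$, we have $[K(f(x)):K] \le [K(x):K] \le d$, and $\alpha_f(f(x)) = \alpha_f(x) < \delta_f$, so $f(Z_f(d)) \subseteq Z_f(d)$. As $f$ is a closed map on the projective variety $X_{\overline K}$, this gives $f(W) \subseteq W$. By the assumed Conjecture \ref{conj_zf} in dimension $n$, $W$ is a proper closed subset. Write $W = W_1 \cup \cdots \cup W_k$ as its irreducible decomposition. Since $f$ permutes the $W_i$'s (after possibly grouping), some iterate $f^N$ satisfies $f^N(W_i) \subseteq W_i$ for every $i$; in particular each $W_i$ is $f$-periodic, hence $f$-preperiodic. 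Also $\dim W_i \le n-1$ because $W \subsetneq X$.

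Next I need to verify that $\delta_{f|_{W_i}} < \delta_f$. Since taking closures commutes with finite unions, the equality $W = \bigcup_i \overline{Z_f(d) \cap W_i}$ forces $\overline{Z_f(d) \cap W_i} = W_i$ for each $i$ (otherwise $W_i$ would be contained in some other component, contradicting irreducible decomposition). Now recall $Z_f = Z_{f^N}$ (Lemma \ref{lem_end_power}) and $\delta_{f^N} = \delta_f^N$. Suppose for contradiction that $\delta_{f|_{W_i}} = \delta_f$, equivalently $\delta_{f^N|_{W_i}} = \delta_{f^N}$. By Lemma \ref{lem_subvar}, $\alpha_{f^N|_{W_i}}(x) = \alpha_{f^N}(x)$ for $x \in W_i$, so
\[
Z_{f^N|_{W_i}}(d) \;=\; Z_{f^N}(d) \cap W_i \;=\; Z_f(d) \cap W_i,
\]
which is dense in $W_i$. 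But $\dim W_i < n$, so by hypothesis Conjecture \ref{conj_zf} holds for $f^N|_{W_i} \colon W_i \to W_i$, contradicting density. Hence $\delta_{f^N|_{W_i}} < \delta_{f^N}$, and taking $N$-th roots gives $\delta_{f|_{W_i}} < \delta_f$ (using the definition of dynamical degree along an $f$-preperiodic subvariety from \ref{n:2.1}).

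Putting the pieces together, each $W_i$ belongs to the collection $\{Z_\lambda\}_{\lambda \in \Lambda}$ of $f$-preperiodic proper subvarieties of small dynamical degree, and by construction $Z_f(d) \subseteq W = W_1 \cup \cdots \cup W_k$, which is the desired finite union. The one point that requires a bit of care is step 5, namely promoting non-density of $Z_f(d)\cap W_i$ in $W_i$ (given by the inductive hypothesis) into the strict inequality $\delta_{f|_{W_i}} < \delta_f$; this is where Lemmas \ref{lem_subvar} and \ref{lem_end_power} combine with the general fact that $Z_f(d)\cap W_i$ is dense in each component $W_i$ of $\overline{Z_f(d)}$.
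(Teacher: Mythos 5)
There is a genuine gap at the start of your argument: the claim that $f$ permutes the irreducible components of $W=\overline{Z_f(d)}$, so that some iterate $f^N$ satisfies $f^N(W_i)\subseteq W_i$ for \emph{every} $i$. What is true is only that $f(W_i)$, being irreducible and contained in $W$, lies in \emph{some} component $W_{\sigma(i)}$; the induced self-map $\sigma$ of the finite set of components need not be a bijection, so the components off the cycles of $\sigma$ are never $f$-periodic. Concretely, take $X=\PP^1\times\PP^1$ and $f=g\times h$ with $g(z)=z^4$ and $\deg h=2$. Then $\delta_f=4$ and $Z_f=\Prep(g)\times\PP^1$ (Lemma \ref{lem_prod}), so $\overline{Z_f(d)}$ is the finite union of fibers $\{x\}\times\PP^1$ over $x\in\Prep(g)\cap\PP^1(d)$; the component $\{-1\}\times\PP^1$ satisfies $f^n(\{-1\}\times\PP^1)=\{1\}\times\PP^1$ for all $n\ge 1$, hence is strictly preperiodic and not periodic. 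For such a component there is no self-map $f^N|_{W_i}\colon W_i\to W_i$, so the inductive step as you wrote it cannot be applied to it; moreover, even the preperiodicity of such a component (which is needed just to make sense of $\delta_{f|_{W_i}}$) requires an argument when a low-dimensional component maps into a higher-dimensional one.

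The paper's proof uses the same closure $W$ and the same inductive mechanism, but applies it only to the genuinely periodic components: it writes $W=\bigcup_i W_i\cup\bigcup_j W_j'$ with the $W_i$ $f$-periodic and the $W_j'$ eventually mapped into $\bigcup_i W_i$, deduces $Z_f(d)\subseteq f^{-s}(\bigcup_i W_i)$ from the forward invariance of $Z_f(d)$, obtains $\delta_{f|_{W_i}}<\delta_f$ exactly as you do, and then notes that the irreducible components of $f^{-s}(\bigcup_i W_i)$ are $f$-preperiodic of small dynamical degree. Your steps establishing that $Z_f(d)\cap W_i$ is dense in each component, and the deduction $\delta_{f^N|_{W_i}}<\delta_{f^N}$ from Lemmas \ref{lem_subvar} and \ref{lem_end_power}, are correct for the periodic components; what is missing is the treatment of the transient ones, for which you need either the paper's $f^{-s}$ device or a separate argument that each such component is preperiodic with periodic image of small dynamical degree.
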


\begin{proof}
Let $f: X \to X$ be a surjective endomorphism on a projective variety $X$ of dimension $n$ with $\delta_f >1$.
Take any $d>0$.
Then $Z_f(d)$ is not Zariski dense by assumption.
Let $W$ be the Zariski closure of $Z_f(d)$ in $X_{\overline{K}}$.
We can take the irreducible decomposition of $W$ as $W=\bigcup_i W_i \cup \bigcup_j W_j'$ where each $W_i$ is $f$-periodic and each $W_j'$ is mapped to $\bigcup_i W_i$ by iteration of $f$.
Then we have $Z_f(d) \subseteq f^{-s}(\bigcup_i W_i)$ for some $s>0$.

Replacing $f$ by a positive power, we may assume that $W_i$ is $f$-invariant for each $i$.
Set $g=f|_{W_i}$.
By assumption, $Z_f(d) \cap W_i$ is Zariski dense in $W_i$.
So Conjecture \ref{conj_zf} for $g$ implies $\delta_g < \delta_f$.
Now we have $Z_f(d) \subseteq f^{-s}(\bigcup_i W_i)$, and each irreducible component of $f^{-s}(\bigcup_i W_i)$ is an $f$-preperiodic subvariety of small dynamical degree.
So we are done.
\end{proof}

Conjecture \ref{Conjecture2} entails that studying $f$-preperiodic subvarieties (especially those of small dynamical degree) is important in order to understand $Z_f$.
We conclude this section with some questions on $f$-preperiodic subvarieties.

\begin{ques}\label{ques_subvar}
Let $f: X \to X$ be a surjective endomorphism on a projective variety with $\delta_f>1$.
Fix  $d>0$.
\begin{itemize}
\item[(1)]
Let $Y$ be an $f$-invariant subvariety of $X_{\overline{K}}$.
Then, is $X(d) \cap (f^{-s}(Y) \setminus f^{-(s-1)}(Y))$  empty for $s \gg 0$?
\item[(2)]
Is there a positive integer $N$ such that $Y(d)$ is empty for any $f$-periodic subvariety $Y \subseteq X_{\overline K}$ of small dynamical degree whose period is larger than $N$?
\item[(3)]
Is the number of maximal $f$-invariant subvarieties of small dynamical degree in $X_{\overline{K}}$ finite?
\end{itemize}
\end{ques}

\begin{rem}
(1) and (2) are arithmetic questions but (3) is a purely geometric question.
Note that (3) is true for abelian varieties (cf.~Theorem \ref{thm_ab2}) and projective surfaces (cf.~Theorem \ref{thm_surfauto} and \cite[Section 5]{MZ19a}).
We refer the readers to \cite{MMSZZ20} for a recent progress of (3).
\end{rem}


\begin{thebibliography}{MMSZZ20}
%
%
%
%

\bibitem[Ame03]{Ame03}
E.~Amerik,
On endomorphisms of projective bundles,
Manuscripta Math.~{\textbf{111}} (2003), no.~1, 17--28.

%

\bibitem[AM69]{AM69}
M.~Atiyah and I.~Macdonald,
Introduction to commutative algebra,
Addison-Wesley Publishing Co., Reading, Mass.-London-Don Mills, Ont, 1969.

\bibitem[BGRS17]{BGRS17}
J.~P.~Bell, D.~Ghioca, Z.~Reichstein and M.~Santriano,
On the Medvedev--Scanlon conjecture for minimal threefolds of non-negative Kodaira dimension,
New York J. Math. \textbf{23} (2017), 1185-1203.

%

\bibitem[Birk]{Birk}
G.~Birkhoff, Linear transformations with invariant cones, Amer. Math. Monthly \textbf{74} (1967), 274-276.

\bibitem[CS93]{CS93}
G.~S.~Call and J.~H.~Silverman,
Canonical heights on varieties with morphisms,
Compositio Math.~\textbf{89} (1993), no.~2, 163--205.

\bibitem[CS18]{CS18}
G.~S.~Call and J.~H.~Silverman,
Degeneration of dynamical degrees in families of maps,
Acta Arith. 184 (2018), no. 2, 101--116.

\bibitem[CMZ20]{CMZ20}
P.~Cascini, S.~Meng and D.-Q.~Zhang,
Polarized endomorphisms of normal projective threefolds in arbitrary characteristic,
Math. Ann. \textbf{378} (2020), no. 1--2, 637--665.

\bibitem[Deb01]{Deb01}
O.~Debarre,
\textit{Higher-dimensional algebraic geometry}, Universitext, Springer-Verlag,
2001.

\bibitem[DN11]{DN11}
T-C.~Dinh, V-A.~Nguy\^en,
Comparison of dynamical degrees for semi-conjugate meromorphic maps,	
Comment. Math. Helv. 86 (2011), no. 4, 817--840. 



\bibitem[Fak03]{Fak03}
N.~Fakhruddin,
Questions on self-maps of algebraic varieties,
J. Ramanujan Math. Soc., \textbf{18}(2):109-122, 2003.

\bibitem[FAG05]{FAG05}
B.~Fantechi, L.~G\"ottsche, L.~Illusie, S.~L.~~Kleiman, N.~Nitsure and A.~Vistoli,
Fundamental algebraic geometry.
Grothendieck's FGA explained. Mathematical Surveys and Monographs, 123. American Mathematical Society, Providence, RI.

\bibitem[Fuj02]{Fuj02}
Y.~Fujimoto,
Endomorphisms of smooth projective 3-folds with nonnegative Kodaira dimension,
Publ.~RIMS, Kyoto Univ.~\textbf{38} (2002), 33--92.

\bibitem[FN07]{FN07}
Y.~Fujimoto and N.~Nakayama,
Endomorphisms of smooth projective 3-folds with nonnegative Kodaira dimension, II,
J.~Math.~Kyoto.~Univ.~\textbf{47} (2007), no.~1, 79--114.

%

\bibitem[HS00]{HS00}
M.~Hindry and J.~H.~Silverman,
\textit{Diophantine Geometry: An Introduction},
Springer-Verlag, New York, 2000.


%
%

\bibitem[Kaw08]{Kaw08}
S.~Kawaguchi,
Projective surface automorphisms of positive topological entropy from an
arithmetic viewpoint,
Amer.~J.~Math.~\textbf{130} (2008), no.~1, 159--186.

\bibitem[KS14]{KS14}
S.~Kawaguchi and J.~H.~Silverman,
Examples of dynamical degree equals arithmetic degree,
Michigan Math.~J.~{\textbf{63}} (2014), no.~1, 41--63.

\bibitem[KS16a]{KS16a}
S.~Kawaguchi and J.~H.~Silverman,
On the dynamical and arithmetic degrees of rational self-maps of algebraic varieties,
J.~Reine Angew.~Math.~{\textbf{713}} (2016), 21--48.

\bibitem[KS16b]{KS16b}
S.~Kawaguchi and J.~H.~Silverman,
Dynamical canonical heights for Jordan blocks, arithmetic degrees of orbits,
and nef canonical heights on abelian varieties,
Trans.~Amer.~Math.~Soc.~{\textbf{368}} (2016), no.~7, 5009--5035.



\bibitem[KM98]{KM98}
J.~Koll\'ar and S.~Mori,
\textit{Birational geometry of algebraic varieties},
Cambridge Univ. Press, 1998.

\bibitem[Lan83]{Lan83}
S.~Lang,
\textit{Fundamentals of Diophantine Geometry},
Springer-Verlag, New York, 1983.

%
%

\bibitem[LS21]{LS21}
J.~Lesieutre and M.~Santriano,
Canonical heights on hyper-K\"{a}hler varieties and the Kawaguchi--Silverman Conjecture,
Int. Math. Res. Not. \textbf{2021}, no. 10, 7677--7714. 

\bibitem[Mat20]{Mat20}
Y.~Matsuzawa,
On upper bounds of arithmetic degrees,
Amer. J. Math. \textbf{142} (2020), no. 6, 1797--1821.

\bibitem[Mat20a]{Mat20a}
Y.~Matsuzawa,
Kawaguchi--Silverman Conjecture for endomorphisms on several classes of varieties,
Adv. Math. \textbf{366} (2020), 107086, 26 pp.

\bibitem[MMSZZ20]{MMSZZ20}
Y.~Matsuzawa, S.~Meng, T.~Shibata, D.-Q.~Zhang and G.~Zhong,
Invariant subvarieties with small dynamical degree,
Int. Math. Res. Not., 2022, \textbf{2022}(15): 11448-11483.

%
%

\bibitem[MS20]{MS20}
Y.~Matsuzawa and K.~Sano,
Arithmetic and dynamical degrees of self-morphisms of semi-abelian varieties,
Ergodic Theory Dynam. Systems \textbf{40} (2020), no. 6, 1655--1672.
	
\bibitem[MSS18]{MSS18}
Y.~Matsuzawa, K.~Sano and T.~Shibata,
Arithmetic degrees and dynamical degrees of endomorphisms on surfaces,
Algebra Number Theory {\textbf{12}} (2018), no.~7, 1635--1657.

\bibitem[MY19]{MY19}
Y.~Matsuzawa and S.~Yoshikawa,
Kawaguchi--Silverman Conjecture for endomorphisms on rationally connected varieties admitting an int-amplified endomorphism,
Math. Ann. (to appear).

\bibitem[Men20]{Men20}
S.~Meng,
Building blocks of amplified endomorphisms of normal projective varieties,
Math.~Z. \textbf{294} (2020), no. 3, 1727-1747.

\bibitem[MZ18]{MZ18}
S.~Meng and D.-Q.~Zhang,
Building blocks of polarized endomorphisms of normal projective varieties,
Adv.~Math. \textbf{325} (2018), 243--273.

\bibitem[MZ19]{MZ19}
S.~Meng and D.-Q.~Zhang,
Characterizations of toric varieties via polarized endomorphisms,
Math.~Z. \textbf{292} (2019), no. 3-4, 1223-1231.

\bibitem[MZ19a]{MZ19a}
S.~Meng and D.-Q.~Zhang,
Kawaguchi--Silverman Conjecture for surjective endomorphisms,
Doc. Math. (to appear),
arXiv:\textbf{1908.01605}

\bibitem[MZ20]{MZ20}
S.~Meng and D.-Q.~Zhang,
Semi-group structure of all endomorphisms of a projective variety admitting a polarized endomorphism,
Math. Res. Lett. \textbf{27} (2020), no. 2, 523--550.

\bibitem[Mer96]{Mer96}
L.~Merel,
Bornes pour la torsion des courbes elliptiques sur les corps de nombres,
Invent.~Math.~{\textbf{124}} (1996), no.~1-3, 437--449.

\bibitem[MS94]{MS94}
P.~Morton and J.~H.~Silverman,
Rational periodic points of rational functions,
Internat.~Math.~Res.~Notices 1994, no.~2, 97--110.

\bibitem[Mum74]{Mum74}
D.~Mumford,
\textit{Abelian varieties},
With appendices by C. P. Ramanujam and Yuri Manin, Corrected reprint of the second (1974) edition.

\bibitem[Nak08]{Nak08}
N.~Nakayama,
On complex normal projective surfaces admitting non-isomorphic surjective endomorphisms,
preprint September 2008.

\bibitem[NZ09]{NZ09}
N.~Nakayama and D.-Q.~Zhang,
Building blocks of \'etale endomorphisms of complex projective manifolds,
Proc.~Lond.~Math.~Soc.~(3) {\textbf{99}} (2009), no.~3, 725--756.

\bibitem[NZ10]{NZ10}
N.~Nakayama and D.-Q.~Zhang,
Polarized endomorphisms of complex normal varieties,
Math.~Ann.~{\textbf{346}} (2010), no.~4, 991--1018.

\bibitem[San17]{San17}
K.~Sano,
The canonical heights for Jordan blocks of small eigenvalues, preperiodic points, and the arithmetic degrees,
arXiv:\textbf{1712.07533}

\bibitem[San20]{San20}
K.~Sano,
Dynamical degree and arithmetic degree of endomorphisms on product varieties,
Tohoku Math. J. (2) \textbf{72} (2020), no. 1, 1--13.

%

\bibitem[Shi19]{Shi19}
T.~Shibata,
Ample canonical heights for endomorphisms on projective varieties,
J.~Math.~Soc.~Japan {\textbf{71}} (2019), no.~2, 599--634.


\bibitem[Sil17]{Sil17}
J.~H.~Silverman,
Arithmetic and dynamical degrees on abelian varieties,
J.~Th\'eor.~Nombres Bordeaux \textbf{29} (2017), no.~1, 151--167.


\bibitem[Tru20]{Tru20}
T.~T.~Truong,
Relative dynamical degrees of correspondences over a field of arbitrary characteristic,
J. Reine Angew. Math. 758 (2020), 139--182. 
	


\bibitem[Uen75]{Uen75}
K.~Ueno,
\textit{Classification theory of algebraic varieties and compact complex spaces}, Lecture Notes in Mathematics, Vol. \textbf{439}, Springer-Verlag, Berlin, 1975.

\bibitem[War90]{War90}
J.~Wahl,
A characteristic number for links of surface singularities,
J.~Amer.~Math.~Soc.~{\textbf{3}} (1990), no.~3, 625--637.


\end{thebibliography}
\end{document}